\documentclass[11pt,letterpaper,reqno]{amsart}
\usepackage{amssymb,amsmath,amsfonts,amsthm,enumerate,stmaryrd}
\usepackage{mathtools}
\usepackage{mathrsfs}
\usepackage{empheq}
\usepackage[left=.75 in, right=.75 in,top=.75 in, bottom=.75 in]{geometry}
\usepackage{stackengine}
\usepackage{nameref,hyperref,cleveref}
\usepackage{soul}

\newtheorem{theorem}{Theorem}[section]
\newtheorem{lemma}[theorem]{Lemma}
\newtheorem{proposition}[theorem]{Proposition}

\numberwithin{equation}{section}

\newcommand{\abs}[1]{\left\lvert#1\right\rvert}

\newcommand{\norm}[1]{\left\lVert#1\right\rVert}

\newcommand{\re}{\mbox{Re}}
\newcommand{\im}{\mbox{Im}}
\newcommand{\supp}{\mbox{supp}}
\stackMath
\newcommand\tsup[2][2]{%
 \def\useanchorwidth{T}%
  \ifnum#1>1%
    \stackon[-.5pt]{\tsup[\numexpr#1-1\relax]{#2}}{\scriptscriptstyle\sim}%
  \else%
    \stackon[.5pt]{#2}{\scriptscriptstyle\sim}%
  \fi%
}

\title[Traveling wave solutions to general N-S-F]{Traveling wave solutions to a general incompressible Navier-Stokes-Fourier system with free boundary}

\author{Jae Ho Choi}
\address{
Department of Mathematical Sciences\\
Carnegie Mellon University\\
Pittsburgh, PA 15213, USA
}
\email[J.H. Choi]{jaechoi@andrew.cmu.edu}

\author{Ian Tice}
\address{
Department of Mathematical Sciences\\
Carnegie Mellon University\\
Pittsburgh, PA 15213, USA
}
\email[I. Tice]{iantice@andrew.cmu.edu}
\thanks{I. Tice was supported by an NSF Grant (DMS \#2508400)}

\subjclass[2010]{Primary 35Q30, 35R35, 35C07; Secondary 76D03, 35Q79, 76D45}

\keywords{Free boundary Navier-Stokes-Fourier, traveling waves}

\begin{document}

\begin{abstract} 
We study traveling wave solutions to the free boundary problem associated to a generalized Navier-Stokes Fourier system, which models a viscous, incompressible, heat-conducting fluid.  The fluid is assumed to occupy a horizontally infinite strip-like domain with flat rigid bottom and moving upper surface.  The fluid is acted upon by gravity as well as external sources of bulk force and boundary stress and an external heat source.  Additionally, we allow for temperature-dependent viscosity and capillary coefficients, the latter of which gives rise to  Marangoni stresses on the free surface.  We develop a small data well-posedness theory in Sobolev spaces that shows that if the sources of force, stress, and heat are small, then there exists a unique solution depending continuously on these data.  
\end{abstract}

\maketitle

\section{Introduction}

\subsection{Problem formulation}
In this paper, we consider the dynamics of a body of fluid occupying a ``layer-like" domain, an infinite expanse of finite depth that is bounded below by a rigid level surface and above by a free surface bordering the atmosphere. To describe the setting more precisely, we first make a few notational conventions. Throughout the paper, we assume that \(2 \leq n \in \mathbb{N}\) and represent a point \(x \in \mathbb{R}^{n}\) by \(x=(x',x_{n}) \in \mathbb{R}^{n-1} \times \mathbb{R}\). For any continuous function \(\mathrm{k}: \mathbb{R}^{n-1} \to (0,\infty)\), we write
\begin{align}
    \Omega_{\mathrm{k}} =\{x \in \mathbb{R}^{n} \mid 0<x_{n}< \mathrm{k}(x')\} \subseteq \mathbb{R}^{n} \text{ and }
    \Sigma_{\mathrm{k}} =\{x \in \mathbb{R}^{n} \mid x_n = \mathrm{k}(x') \mbox{ for all \(x' \in \mathbb{R}^{n-1}\)}\} \subseteq \mathbb{R}^{n}.
\end{align}
The layer-like domain occupied by the fluid at time \(t \geq 0\) is then \(\Omega_{b+\zeta(\cdot, t)} \subseteq \mathbb{R}^{n}\) for a given equilibrium depth parameter  \(b>0\) and a continuous ``free surface'' function \(\zeta: \mathbb{R}^{n-1} \times [0,\infty) \to (-b,0)\). At this time, the fluid is bounded below by the fixed surface \(\Sigma_{0}\) and above by the moving surface \(\Sigma_{b+\zeta(\cdot,t)}\).

For our system, we consider a heat-conducting, viscous, incompressible fluid of constant density (normalized to unity without loss of generality) obeying the system 
\begin{equation}\label{NSF_general}
\begin{cases}
    \partial_{t}v+v \cdot \nabla v + \nabla\cdot \mathrm{S}=\mathrm{F}  \\
    \nabla \cdot v = 0 \\ 
    \partial_{t}\theta+ v \cdot \nabla \theta + \nabla \cdot \Psi  = 0
\end{cases}
\end{equation}
at each time  \(t \geq 0\) in the fluid domain $\Omega_{b+\zeta(\cdot,t)}$, where \(v\) is the fluid velocity field,  \(\mathrm{S}\) is the stress tensor, \(\mathrm{F}\) is the external bulk force, $\theta$ is the temperature, and $\Psi$ is the heat flux.   The symmetric stress tensor \(\mathrm{S}\) is given by $\mathrm{S}=P I- \Pi$, where \(P\) is the scalar pressure, \(I \in \mathbb{R}^{n \times n}\) is the \(n \times n\) identity matrix, and \(\Pi\) is the viscous stress tensor.  To close the description of the bulk dynamics of the fluid, we now specify how \(\Pi\) and $\Psi$ depend on the unknowns of the system.

Denoting the set of real symmetric $n \times n$ matrices by \(\mathbb{R}_{sym}^{n \times n}\), we posit that $\Pi = \Pi(\theta,\mathbb{D} v)$ for a given smooth function \(\Pi: \mathbb{R} \times \mathbb{R}_{sym}^{n \times n} \to \mathbb{R}_{sym}^{n \times n}\), where \(\mathbb{D}v = (\nabla v)+(\nabla v)^{t} \in \mathbb{R}_{sym}^{n \times n}\) is the symmetrized gradient of $v$. In addition, we assume that there exists a reference temperature $\theta_0 \in \mathbb{R}$ at which the linearized identity $D \Pi(\theta_0,0)(\theta,M) = \mu M$ holds for a given viscosity coefficient $\mu >0$ and $\Pi(\cdot,0) =0$. The simplest example of such a $\Pi$ is $\Pi_{NS}(\theta,M) = \mu M$, which corresponds to a standard (Newtonian) Navier-Stokes system in which the viscosity does not depend on the temperature. The simplest model that allows for temperature dependence is perhaps $\Pi_{TD}(\theta,M)= m(\theta) M$ for $m : \mathbb{R} \to \mathbb{R}^+$ a given smooth function with $\mu = m(\theta_0)$.

For the heat flux, we posit that $\Psi = \Psi(\theta,\nabla \theta)$ for a given smooth function $\Psi : \mathbb{R} \times \mathbb{R}^n \to \mathbb{R}^n$. In addition, we assume that the linearized identity $D \Psi(\theta_0,0)(\theta,z) = -\kappa z$ holds for a given thermal conductivity coefficient $\kappa >0$ and $\Psi(0,\cdot)=0$. The simplest example is given by Fourier's law, $\Psi_F(\theta,z) = -\kappa z$.

When $\Pi = \Pi_{NS}$ and $\Psi = \Psi_F$, the system \eqref{NSF_general} is often referred to as the Navier-Stokes-Fourier (NSF) system. In summary, we consider the general viscous stress and heat flux for which the NSF system is the leading-order part of our system, perturbatively speaking. Although general continuum mechanics arguments (see, for instance, \cite{Gurtin_Fried_Anand_2010}) place various additional constraints on $\Pi$ and $\Psi$, we can avoid them due to the linearized identities and the NSF perturbation framework.

Having specified the bulk equations, we now turn to a discussion of the precise forms of the bulk force and the free boundary stress. We assume that the bulk force decomposes as $\mathrm{F} = -\mathfrak{g} e_n + \mathrm{f}_{r}$, where the first term corresponds to a constant gravitational field of strength \(\mathfrak{g}>0\) pointing downward and the second term is a residual bulk force to be specified later.  On the free surface, we assume that the fluid is subjected to a constant pressure due to the atmosphere, written \(P_{ext} \in \mathbb{R}\), and an external stress generated by the stress tensor $\mathrm{t}_{ext} : \Sigma_{b + \zeta(\cdot,t)} \to \mathbb{R}^{n \times n}_{sym}$.  We also  allow for an external source of heat, modeled by a term $\mathrm{h}_{ext} : \Sigma_{b + \zeta(\cdot,t)} \to \mathbb{R}$.  Lastly, the free surface is under the effect of surface tension, with the capillary coefficient given by $\varsigma(\theta)$ for $\varsigma : \mathbb{R} \to \mathbb{R}^+$ a given smooth function.

Combining these modeling assumptions with \eqref{NSF_general}, we arrive at the full system studied in this paper:
\begin{align} \label{eqns}
    \begin{cases}
        \partial_{t}v + v \cdot \nabla v + \nabla \cdot (PI - \Pi(\theta,\mathbb{D} v))= - \mathfrak{g} e_{n}+\mathrm{f}_{r} & \mbox{in \(\Omega_{b+\zeta(\cdot,t)}\)} \\
        \nabla \cdot v = 0 &\mbox{in \(\Omega_{b+\zeta(\cdot,t)}\)} \\
        \partial_{t}\theta+ v \cdot \nabla \theta + \nabla \cdot \Psi(\theta,\nabla\theta) = 0 &\mbox{in \(\Omega_{b+\zeta(\cdot,t)}\)} \\
        -(PI - \Pi(\theta,\mathbb{D} v)) \nu(\zeta) = \varsigma(\theta)\mathcal{H}(\zeta)\nu(\zeta) + \nabla_{\Sigma}[\varsigma(\theta)]-P_{ext}\nu(\zeta)+\mathrm{t}_{ext} \nu(\zeta) &\mbox{on \(\Sigma_{b+\zeta(\cdot,t)}\)} \\
        \partial_{t}\zeta= v \cdot (-\nabla'\zeta,1) &\mbox{on \(\Sigma_{b+\zeta(\cdot,t)}\)} \\
        \Psi(\theta,\nabla\theta) \cdot \nu(\zeta) = -\mathrm{h}_{ext} &\mbox{on \(\Sigma_{b+\zeta(\cdot,t)}\)} \\
        v=0 &\mbox{on \(\Sigma_{0}\)} \\
        \theta=\theta_0 &\mbox{on \(\Sigma_{0}\)}.
    \end{cases}
\end{align}
Here, \(\nu(\zeta)=(-\nabla' \zeta,1)/\sqrt{1+\lvert\nabla' \zeta\rvert^{2}}\) is the outward-pointing unit normal vector on \(\Sigma_{b+\zeta(\cdot,t)}\), \(\mathcal{H}\) is the mean curvature operator which acts on \(\zeta\) via $\mathcal{H}(\zeta)=\nabla' \cdot [ (1+\lvert\nabla' \zeta\rvert^{2})^{-1/2}\nabla' \zeta ]$, and \(\nabla_{\Sigma}\) is the surface gradient on \(\Sigma=\Sigma_{b+\zeta(\cdot,t)}\) which acts on a function \(\mathrm{k}: \Omega_{b+\zeta(\cdot,t)} \to \mathbb{R}\) via \(\nabla_{\Sigma}\mathrm{k}=(I-\nu \otimes \nu)\nabla \mathrm{k} \mid_{\Sigma}\). The term $\nabla_{\Sigma}[\varsigma(\theta)]$ is known as the Marangoni stress. The precise meaning of the reference temperature $\theta_0 \in \mathbb{R}$ introduced in the assumptions about $\Pi$ and $\Psi$ arises from the final equation, which states that the fluid's temperature match $\theta_0$ on the bottom surface. That is, $\theta_0$ is the fixed temperature of the rigid, level solid underneath the fluid.

\subsection{Traveling waves}\label{sec_intro_tw}

Our main goal in this paper is to construct traveling wave solutions to \eqref{eqns}, i.e., solutions that are stationary when viewed in a coordinate system moving parallel to $\Sigma_0$ with velocity $\gamma e_1$, where $\gamma \in \mathbb{R} \backslash \{0\}$. Here, the choice of direction $e_1 \in \mathbb{R}^n$ does not engender a loss of generality due to the invariance of \eqref{eqns} under horizontal rotations. For such solutions to exist, the residual bulk force \(\mathrm{f}_{r}\), external free boundary stress \(\mathrm{t}_{ext}\), and external heat source \(\mathrm{h}_{ext}\) must be stationary in the traveling frame, i.e.,
\begin{equation}
    \mathrm{f}_{r}(x,t) = \mathrm{f}(x- \gamma t e_1),  \; \mathrm{t}_{ext}(x,t)  = \mathrm{t}(x- \gamma t e_1), \text{ and }\mathrm{h}_{ext}(x,t) = \mathrm{h}(x- \gamma t e_1)
\end{equation}
for given $\mathrm{f}$, $\mathrm{t}$, and $\mathrm{h}$. We impose a traveling wave ansatz on the unknowns and remove the constant terms $-\mathfrak{g} e_n$, $P_{ext}$, and $\theta_0$ from the system by writing 
\begin{align}
 &\zeta(x,t) = \eta(x-\gamma t e_1), \quad v(x,t) = w(x-\gamma t e_1), \quad \theta(x,t) = \theta_0 + \phi(x-\gamma t e_1), \nonumber \\
 &\text{and }P(x,t) = q(x-\gamma t e_1) +P_{ext} - \mathfrak{g}(x_n-b-\eta(x-\gamma t e_1)).
\end{align}
Here, the new unknowns \(w\), \(q\), and \(\phi\) are defined on the stationary yet unknown set $\Omega_{b+\eta}$. For notational brevity, we shift the surface tension, heat flux, and viscous stress functions via \(\sigma(r)=\varsigma(r+\theta_0)\), \(\Phi(r,z)=\Psi(r+\theta_0,z)\), and \(\Gamma(r,M)=\Pi(r+\theta_0,M)\). In particular, \(\Phi\) and \(\Gamma\) satisfy  $D \Phi(0,0)(\theta,z) = -\kappa z$ and $\quad D \Gamma(0,0)(\theta,M) = \mu M$. For the new unknowns, the problem \eqref{eqns} then becomes
\begin{align} \label{eqnsmf}
    \begin{cases}
        -\gamma\partial_{1}w+w \cdot \nabla w + \nabla \cdot (qI - \Gamma(\phi,\mathbb{D}w)) +\mathfrak{g}(\nabla'\eta,0) = \mathrm{f}  &\mbox{ in \(\Omega_{b+\eta}\)} \\
        \nabla \cdot w = 0 &\mbox{ in \(\Omega_{b+\eta}\)} \\
        -\gamma\partial_{1} \phi + w \cdot \nabla \phi +\nabla \cdot \Phi(\phi,\nabla \phi) = 0 &\mbox{ in \(\Omega_{b+\eta}\)} \\
        -(qI - \Gamma(\phi,\mathbb{D}w))\nu(\eta) = \sigma(\phi)\mathcal{H}(\eta)\nu(\eta) + \nabla_{\Sigma}[\sigma(\phi)]+ \mathrm{t} \nu(\eta) &\mbox{ on \(\Sigma_{b+\eta}\)} \\
        -\gamma\partial_{1}\eta = w \cdot (-\nabla ' \eta,1) &\mbox{ on \(\Sigma_{b+\eta}\)} \\
        \Phi(\phi,\nabla\phi)\cdot \nu(\eta) = -\mathrm{h} &\mbox{ on \(\Sigma_{b+\eta}\)} \\
        w = 0 &\mbox{ on \(\Sigma_{0}\)} \\
        \phi = 0 &\mbox{ on \(\Sigma_{0}\)}.
    \end{cases}
\end{align}

Much is known about the construction of traveling wave solutions to the incompressible Euler system, a classic topic in mathematical fluid mechanics. We refer to the survey articles \cite{Toland_1996,Groves_2004,Strauss_2010,HHSTWWW_2022} for a thorough treatment of this literature and directions of modern research.  In contrast, the mathematical analysis of traveling wave solutions with viscosity is relatively new.  The first well-posedness theory for the non-heat conducting version of \eqref{eqnsmf} was developed in \cite{leoni2023traveling}. Since then, similar results have been established for multi-layer fluids \cite{MR4337506}, tilted and periodic fluids \cite{MR4609068,nguyen_Banihashemi_2025}, fluids with Navier-slip boundary conditions \cite{MR4785303}, stationary fluids \cite{MR4787851}, 
and compressible fluids \cite{stevenson2023wellposedness}. There are also well-posedness theories for traveling wave solutions for the closely related Darcy flow problem \cite{MR4690615,Nguyen_2026,MR4797733,nguyen_stevenson_2025} as well as the viscous Saint-Venant problem \cite{stevenson2023shallow,stev_tice_big_waves} and the bore-wave problem \cite{stev_tice_bore_waves}. Outside the free boundary context, the well-posedness of the full dynamic problem \eqref{eqns} and related models  has been studied extensively: see, for instance,  \cite{MR1373218,MR2410236,MR3280824,MR3680944,MR3828345,MR3794117,MR3850090,MR3904564,MR4172578,MR4423144,MR4357119,MR4896668}.  To the best of our knowledge, though, there are no results on the general dynamics of the free boundary problem \eqref{eqns}.

The highlight of this paper is that we show that a viscous traveling wave can emerge due to thermal effects. Prior works have shown that the bulk force $\mathrm{f}$ and external free boundary stress $\mathrm{t}$ can serve as mechanisms by which traveling waves are generated. In our model, we show that they can be neglected, as long as the heat source term $\mathrm{h}$ is not.  In other words, a traveling heat source can drive a viscous incompressible fluid into a traveling wave state by itself.

\subsection{Reformulation in a fixed coordinate system}

It is convenient to recast the problem \eqref{eqnsmf} in a known fixed coordinate system. To do this, we use the unknown free surface function $\eta$ to construct a diffeomorphism that maps into the flattened domain $\Omega = \Omega_b = \mathbb{R}^{n-1} \times (0,b)$. For any given  $\eta : \mathbb{R}^{n-1} \to (-b,\infty)$, we define the map \(\mathfrak{F}_{\eta}: cl(\Omega) \to cl(\Omega_{b+\eta})\) via $ \mathfrak{F}_{\eta}(x) = (x',x_n(1+ \eta(x')/b))$.
This map is a bijection with inverse $\mathfrak{F}_{\eta}^{-1}(y)=(y',y_{n}b/(b+\eta(y')))$. It is a homeomorphism when $\eta$ is continuous and a $C^k$ diffeomorphism when $\eta$ is $C^k$.  If \(\eta\) is differentiable, then
\begin{align}
    &\nabla \mathfrak{F}_{\eta}(x) =
    \begin{pmatrix}
        I_{(n-1) \times (n-1)} & 0_{(n-1) \times 1} \\
        x_{n}\frac{\nabla' \eta(x')}{b} & 1+\frac{\eta(x')}{b}
    \end{pmatrix}
\end{align}
and in this case, we define
\begin{equation} \label{AJ}
    J(x)= \det\nabla \mathfrak{F}_{\eta}(x) = 1+\frac{\eta(x')}{b} \text{ and } \mathcal{A}(x) = (\nabla \mathfrak{F}_{\eta}(x))^{-t} =
    \begin{pmatrix}
        I_{(n-1) \times (n-1)} & -x_{n}\frac{\nabla' \eta(x')}{b+\eta(x')} \\
        0_{1 \times (n-1)} & \frac{b}{b+\eta(x')}
    \end{pmatrix}
    .
\end{equation}
If \(\mathrm{k}: cl(\Omega_{b+\eta}) \to \mathbb{R}\) is a differentiable function, then for each \(i \in \{1, \dots, n\}\), \(\partial_{i}\mathrm{k} \circ \mathfrak{F}_\eta = \partial_{i}^{\mathcal{A}}(\mathrm{k} \circ \mathfrak{F}_{\eta})\), where \(\partial_{i}^{\mathcal{A}}=\mathcal{A}_{ij}\partial_{j}\) in Einstein notation.

Having defined the map $\mathfrak{F}_{\eta}$, we can now reformulate the problem \eqref{eqnsmf} in $\Omega$ by setting \(u=w \circ \mathfrak{F}_{\eta}\), \(p=q \circ \mathfrak{F}_{\eta}\) and \(\psi=\phi\circ\mathfrak{F}_{\eta}\). Provided that $\eta$ is at least $C^2$, we arrive at the equivalent reformulation
\begin{align} \label{fprsmf}
    \begin{cases}
        -\gamma \partial_{1}^{\mathcal{A}}u + u \cdot \nabla_{\mathcal{A}}u+\mathfrak{g}(\nabla'\eta,0) + \nabla_{\mathcal{A}}p-\nabla_{\mathcal{A}}\cdot \Gamma(\psi,\mathbb{D}_{\mathcal{A}}u)=\mathrm{f}\circ \mathfrak{F}_{\eta}  &\mbox{in \(\Omega\)}\\
        \nabla_{\mathcal{A}} \cdot u = 0 &\mbox{in \(\Omega\)} \\
        -\gamma \partial_{1}^{\mathcal{A}}\psi + u \cdot \nabla_{\mathcal{A}}\psi +\nabla_{\mathcal{A}} \cdot \Phi(\psi,\nabla_{\mathcal{A}}\psi) = 0 &\mbox{in \(\Omega\)} \\
        (-pI+\Gamma(\psi,\mathbb{D}_{\mathcal{A}}u))\nu(\eta) = \sigma(\psi)\mathcal{H}(\eta)\nu(\eta)+\nabla_{\Sigma}^{\mathcal{A}}[\sigma(\psi)]+(\mathrm{t} \circ \mathfrak{F}_{\eta})\nu(\eta) &\mbox{on \(\Sigma_{b}\)} \\
        -\gamma \partial_{1}\eta =u \cdot (-\nabla'\eta,1) &\mbox{on \(\Sigma_{b}\)} \\
        \Phi(\psi,\nabla_{\mathcal{A}}\psi)\cdot\nu(\eta) = -\mathrm{h} \circ \mathfrak{F}_{\eta}  &\mbox{on \(\Sigma_{b}\)} \\
        u = 0 &\mbox{on \(\Sigma_{0}\)} \\
        \psi = 0 &\mbox{on \(\Sigma_{0}\)},
    \end{cases}
\end{align}
where \(\nabla_{\mathcal{A}}=(\partial_{1}^{\mathcal{A}}, \dots, \partial_{n}^{\mathcal{A}})^{t}\); \(\mathbb{D}_{\mathcal{A}}=\nabla_{\mathcal{A}}+(\nabla_{\mathcal{A}})^{t}\), and \(\nabla_{\Sigma}^{\mathcal{A}}=(I-\nu \otimes \nu)\nabla_{\mathcal{A}}\mid_{\Sigma}\).

\subsection{Main theorems}\label{mtps}

In order to state the two main theorems of the paper, we need to establish four notational conventions. Firstly, solutions to \eqref{eqnsmf} are constructed using the Hilbert space 
\begin{align} \label{Xs}
    \mathcal{X}^{s}=\{(u,\psi,p,\eta) \in \prescript{}{0}{H}^{s+2}(\Omega;\mathbb{R}^{n}) \times \prescript{}{0}{H}^{s+2}(\Omega;\mathbb{R})\times H^{s+1}(\Omega;\mathbb{R}) \times X^{s+5/2}(\mathbb{R}^{n-1};\mathbb{R})\}, \quad s \geq 0,
\end{align}
where \(H^{t}(\Omega;\mathbb{R}^{m})\) is the standard \(L^{2}\)-based Sobolev space of order \(t \geq 0\) and \(\prescript{}{0}{H}^{t}(\Omega;\mathbb{R}^{m})\) is the subspace whose members have the boundary value of zero on \(\Sigma_{0}\). The space \(X^{s+5/2}(\mathbb{R}^{n-1};\mathbb{R})\), defined in \eqref{xxs}, is a specialized container space for the free surface function \(\eta\). Secondly, given $U \in \{\Omega,\mathbb{R}^{n-1}\}$ and a finite-dimensional normed vector space $V$, we write $C^k_{b}(U;V)$ for the space of $k-$times continuously differentiable maps from $U$ to $V$, whose derivatives are all bounded. We write $C^k_0(U;V)$ for the subspace whose members' derivatives all decay to zero as $|x| \to \infty$. Thirdly, we reference the norms of certain bilinear forms \(Q_{1}\) and \(Q_{2}\) defined in \eqref{Q1} and \eqref{Q2} to define a nonempty open set to which the various physical parameters must belong. These norms are constants that depend on the background dimension \(n \geq 2\) and the equilibrium depth parameter \(b>0\). To motivate the last notational convention, we note that under certain modeling contexts, it is reasonable to consider the residual bulk force \(\mathrm{f}\), free boundary stress \(\mathrm{t}\), and heat source \(\mathrm{h}\) that do not depend on the vertical spatial variable. In such cases, the regularity demands can be relaxed. To accommodate for these scenarios, we decompose these data into a part that depends on all spatial variables and a part that depends only on the horizontal ones. To execute these distinctions, we define and use the following maps. For any function \(\mathrm{k}:\mathbb{R}^{n-1} \to (0,\infty)\), \(L_{\mathrm{k}}:H^{s}(\mathbb{R}^{n-1};\mathbb{R}^{n}) \to H^{s}(\Omega_{\mathrm{k}};\mathbb{R}^{n})\) is defined via \((L_{\mathrm{k}}f)(x)=f(x')\) and \(S_{\mathrm{k}}: H^{s+1/2}(\mathbb{R}^{n-1};\mathbb{R}^{d}) \to H^{s+1/2}(\Sigma_{\mathrm{k}};\mathbb{R}^{d})\) is given by \((S_{\mathrm{k}}T)(x)=T(x')\). Having established these notational conventions, we are now ready to state the main theorems.

Our first result establishes the well-posedness of \eqref{fprsmf} for small data.

\begin{theorem} \label{1.1analog}
    Suppose that \(n \geq 2\), \(\mathbb{N} \ni s \geq 1+\lfloor n/2 \rfloor\), and \(\mathcal{X}^{s}\) is given by \eqref{Xs}. If \((\mu,\kappa,\sigma'(0)) \in \mathbb{R}^{+} \times \mathbb{R}^{+} \times \mathbb{R}\) satisfy
    \begin{align}
        \max\left\{\frac{\norm{Q_{1}}^{2}}{4},\frac{\norm{Q_{2}}^{2}}{4}\right\}\abs{\sigma'(0)}^{2} < 2\mu\kappa,
    \end{align}
    where \(Q_{1}\) and \(Q_{2}\) are bilinear forms defined in \eqref{Q1} and \eqref{Q2}, then there exist open sets
    \begin{align}
        \mathcal{U}^{s} \subseteq & \mathbb{R} \setminus \{0\} \times H^{s+1}(\mathbb{R}^{n};\mathbb{R}^{n}) \times H^{s}(\mathbb{R}^{n-1};\mathbb{R}^{n}) \times H^{s+2}(\mathbb{R}^{n};\mathbb{R}_{sym}^{n \times n}) \nonumber\\
        &\times H^{s+1/2}(\mathbb{R}^{n-1};\mathbb{R}_{sym}^{n \times n}) \times H^{s+2}(\mathbb{R}^{n};\mathbb{R}) \times H^{s+1/2}(\mathbb{R}^{n-1};\mathbb{R})
    \end{align}
    and \(\mathcal{O}^{s}\subseteq \mathcal{X}^{s}\) such that the following hold:
    \begin{enumerate}
        \item We have $(0,0,0,0) \in \mathcal{O}^{s}$, as well as the inclusion
        \begin{equation}
            \mathcal{O}^{s} \subseteq  C_{0}^{s+1-\lfloor n/2\rfloor}(\Omega;\mathbb{R}^{n}) \times C_{0}^{s+1-\lfloor n/2\rfloor}(\Omega;\mathbb{R}) \times C_{0}^{s-\lfloor n/2 \rfloor}(\Omega;\mathbb{R}) \times  C_{0}^{s+1-\lfloor (n-1)/2\rfloor}(\mathbb{R}^{n-1};\mathbb{R}). 
        \end{equation}
        Moreover, if \((u,\psi,p,\eta) \in \mathcal{O}^{s}\), then   
        \(\max_{\mathbb{R}^{n-1}}\abs{\eta} \leq b/2\)  and \(\mathfrak{F}_{\eta}: cl(\Omega) \to cl({\Omega}_{b+\eta})\) is a bi-Lipschitz homeomorphism and a \(C^{3+\lfloor s-n/2\rfloor}\) diffeomorphism.
        
        \item \(\mathbb{R} \setminus \{0\} \times \{0\} \times \{0\} \times \{0\} \times \{0\} \times \{0\}\times \{0\} \subseteq \mathcal{U}^{s}\).
        \item For each \((\gamma,\mathfrak{f},f, \mathcal{T},T,\mathfrak{H},H) \in \mathcal{U}^{s}\), there exists a unique \((u,\psi,p,\eta) \in \mathcal{O}^{s}\) classically solving
        \begin{align} \label{sysflattened}
            \begin{cases}
                -\gamma \partial_{1}^{\mathcal{A}}u + u \cdot \nabla_{\mathcal{A}}u+\mathfrak{g}(\nabla'\eta,0) + \nabla_{\mathcal{A}}p-\nabla_{\mathcal{A}}\cdot \Gamma(\psi,\mathbb{D}_{\mathcal{A}}u) = \mathfrak{f} \circ \mathfrak{F}_{\eta}+L_{b}f &\mbox{ in \(\Omega\)} \\
                \nabla_{\mathcal{A}}\cdot u=0 &\mbox{ in \(\Omega\)} \\
                -\gamma \partial_{1}^{\mathcal{A}}\psi + u \cdot \nabla_{\mathcal{A}}\psi +\nabla_{\mathcal{A}} \cdot \Phi(\psi,\nabla_{\mathcal{A}}\psi)=0 &\mbox{ in \(\Omega\)} \\
                -p\mathcal{N}(\eta)+\Gamma(\psi,\mathbb{D}_{\mathcal{A}}u)\mathcal{N}(\eta)-\sigma(\psi)\mathcal{H}(\eta)\mathcal{N}(\eta)-\nabla_{\Sigma}^{\mathcal{A}}[\sigma(\psi)]\abs{\mathcal{N}(\eta)} = (\mathcal{T} \circ \mathfrak{F}_{\eta} \mid_{\Sigma_{b}}+S_{b}T)\mathcal{N}(\eta) &\mbox{ on \(\Sigma_{b}\)} \\
                u \cdot \mathcal{N}(\eta)+\gamma \partial_{1}\eta=0 &\mbox{ on \(\Sigma_{b}\)} \\
                \Phi(\psi,\nabla_{\mathcal{A}}\psi)\cdot\nu(\eta)=-(\mathfrak{H} \circ \mathfrak{F}_{\eta}\mid_{\Sigma_{b}}+S_{b}H )&\mbox{ on \(\Sigma_{b}\)} \\
                u=0 &\mbox{ on \(\Sigma_{0}\)} \\
                \psi=0 &\mbox{ on \(\Sigma_{0}\)},
            \end{cases}
        \end{align}
        where \(\mathcal{N}(\eta)=(-\nabla' \eta,1)\).
        \item The map \((\gamma,\mathfrak{f},f, \mathcal{T},T,\mathfrak{H},H) \in \mathcal{U}^{s} \mapsto (u,\psi,p,\eta) \in \mathcal{O}^{s}\) is \(C^{1}\) and locally Lipschitz.
    \end{enumerate}
\end{theorem}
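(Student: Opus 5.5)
The proof follows the implicit function theorem strategy of \cite{leoni2023traveling}, adapted to the coupled velocity--temperature system. We define a nonlinear map
\[
\Xi : \mathcal{U}^s_{\mathrm{pre}} \times \mathcal{O}^s_{\mathrm{pre}} \to \mathcal{Y}^s,
\]
sending $(\gamma,\mathfrak{f},f,\mathcal{T},T,\mathfrak{H},H;u,\psi,p,\eta)$ to the tuple of left-minus-right sides of \eqref{sysflattened}. The target $\mathcal{Y}^s$ is the product space $H^s(\Omega;\mathbb{R}^n)\times H^{s+1}(\Omega)\times H^s(\Omega)\times H^{s+1/2}(\Sigma_b;\mathbb{R}^n)\times \mathcal{Y}_\eta\times H^{s+1/2}(\Sigma_b)$, where $\mathcal{Y}_\eta$ is the range space for the kinematic equation, compatible with $X^{s+5/2}$ as in \cite{leoni2023traveling}.

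The first step is to show that $\Xi$ is well defined and $C^1$ on an open set where $\max|\eta|\le b/2$. The main ingredients are:
\begin{itemize}
\item the embedding properties of $X^{s+5/2}$, which give $\eta\in C^{3+\lfloor s-n/2\rfloor}_0$, from which the diffeomorphism claims in item (1) follow;
\item Sobolev algebra and Moser-type composition estimates for the smooth maps $\Gamma$, $\Phi$, $\sigma$, valid since $s>n/2$;
\item the smoothness of $(\mathfrak{f},\eta)\mapsto \mathfrak{f}\circ\mathfrak{F}_\eta$, and of the analogous maps for $\mathcal{T}$ and $\mathfrak{H}$. This is the reason these data carry one extra derivative.
\end{itemize}
Since $\Gamma(\cdot,0)=0$, $\Phi(0,\cdot)=0$, and $\Phi$ vanishes at $(0,0)$, we have $\Xi(\gamma,0;0)=0$ for every $\gamma\neq0$.

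The second step computes $D_{(u,\psi,p,\eta)}\Xi$ at $(\gamma,0;0)$. Using $D\Gamma(0,0)(\theta,M)=\mu M$ and $D\Phi(0,0)(\theta,z)=-\kappa z$, the linearization is the following coupled system:
\begin{itemize}
\item in the bulk, $-\gamma\partial_1 u+\nabla p-\mu\Delta u+\mathfrak{g}(\nabla'\eta,0)$, $\nabla\cdot u$, and $-\gamma\partial_1\psi-\kappa\Delta\psi$;
\item on $\Sigma_b$, the dynamic condition $(pI-\mu\mathbb{D}u)e_n-\sigma(0)\Delta'\eta\, e_n-\sigma'(0)(\nabla'\psi,0)$, the kinematic condition $u_n+\gamma\partial_1\eta$, and the heat condition $\kappa\partial_n\psi$.
\end{itemize}
The Marangoni term $\sigma'(0)\nabla'\psi$ is the only coupling between the thermal and fluid parts; the flux condition is linear only through $\psi$. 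The whole difficulty is therefore to show that this linear operator is an isomorphism $\mathcal{X}^s\to\mathcal{Y}^s$.

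For this we argue in two stages. First we establish weak solvability and uniqueness of the coupled $(u,\psi,p)$ problem with $\eta$ given, via Lax--Milgram on ${}_0H^1_\sigma\times{}_0H^1$. Testing the momentum equation with $u$ and the heat equation with $\psi$ yields
\[
\frac{\mu}{2}\|\mathbb{D}u\|^2+\kappa\|\nabla\psi\|^2+\sigma'(0)\int_{\Sigma_b}\nabla'\psi\cdot u',
\]
since the $\gamma\partial_1$ terms are skew. The cross term is controlled by the bilinear form $Q_1$, whose norm is the best trace-type constant bounding $\int_{\Sigma_b}\nabla'\psi\cdot u'$ by $\|\mathbb{D}u\|\|\nabla\psi\|$ in the relevant norms; possibly $Q_2$ gives the analogous bound in another Fourier regime. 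A Young/discriminant argument shows that the hypothesis $\max\{\|Q_1\|^2,\|Q_2\|^2\}|\sigma'(0)|^2/4<2\mu\kappa$ is exactly positive-definiteness of the resulting $2\times2$ quadratic form, and hence gives coercivity.

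Elliptic regularity in the flat strip, obtained through horizontal difference quotients or Fourier multipliers, then upgrades the weak solution to ${}_0H^{s+2}\times{}_0H^{s+2}\times H^{s+1}$. Second, we eliminate $\eta$ through the kinematic condition, following the symbol analysis of \cite{leoni2023traveling}. We reduce to a pseudodifferential equation for $\hat\eta$ whose symbol does not vanish for $\gamma\neq0$ and degenerates only at low frequencies in the anisotropic way built into $X^{s+5/2}$. The coupling enters this symbol only through terms that are lower order and that we can absorb. I expect this step to be the main obstacle: the coercive bound must be uniform in frequency, so that it survives the Fourier-in-$x'$ ODE analysis.

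Once the isomorphism is established, the implicit function theorem yields neighborhoods $\mathcal{U}^s\ni(\gamma,0)$ and $\mathcal{O}^s\ni0$ together with a $C^1$ solution map, which is locally Lipschitz. We take unions over $\gamma\in\mathbb{R}\setminus\{0\}$ and intersect $\mathcal{O}^s$ suitably to preserve uniqueness, as in \cite{leoni2023traveling}. Finally, the embeddings $H^{s+2}\hookrightarrow C_0^{s+1-\lfloor n/2\rfloor}$ give classical solvability and the inclusion in item (1).
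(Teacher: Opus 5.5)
Your outer structure matches the paper: a $C^{1}$ map into $\mathcal{Y}^{s}$ vanishing at $(\gamma,0;0)$, invertibility of its partial derivative, the implicit function theorem, and a union over $\gamma\neq0$. Your forward reduction of the linear problem is also a legitimate alternative to the paper's adjoint route through \eqref{cc}: you solve the stress problem for $(u,\psi,p)$ and then substitute into the kinematic condition. By duality this produces the same multiplier $\rho$ of \eqref{rho}. It even spares you the low-frequency asymptotics of $\omega_{v},\omega_{q},\omega_{\phi}$ used in Lemma~\ref{xiintestimate}. The reason is that the right-hand side $\mathcal{F}(h-u^{0}_{n}\mid_{\Sigma_{b}})$, with $u^{0}$ the solution for $\eta=0$, is controlled at low frequencies directly by $h-\int_{0}^{b}g\,dx_{n}\in\dot{H}^{-1}$ and by $[u^{0}_{n}\mid_{\Sigma_{b}}-\int_{0}^{b}\nabla\cdot u^{0}\,dx_{n}]_{\dot{H}^{-1}}\lesssim\norm{u^{0}}_{L^{2}}$.

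\textbf{The gap.} It is the step you yourself single out. You never show that $\rho$ is nondegenerate with
\[
\abs{\rho(\xi)}^{2}\gtrsim\xi_{1}^{2}+\abs{\xi}^{4}\ \text{ for } \abs{\xi}\leq1, \qquad \abs{\rho(\xi)}\gtrsim\abs{\xi}\ \text{ for } \abs{\xi}>1.
\]
This is exactly what places $\eta$ in $X^{s+5/2}$ and yields surjectivity. The mechanism you offer, absorbing the thermal coupling as lower-order terms, is not available, because $\sigma'(0)$ is not small. Near $\xi=0$ the bound rests on exact structure: with $\omega=\omega_{v_{n}\mid_{\Sigma_{b}}}$, one needs $-\re\,\omega(\xi)\gtrsim\abs{\xi}^{2}$, $\abs{\omega(\xi)}\lesssim\abs{\xi}^{2}$, and the identity $\im(\rho\,\omega)=2\pi\gamma\xi_{1}\re\,\omega$. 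A coupling contribution to $\omega$ of the same order $\abs{\xi}^{2}$, with no sign information, could not be absorbed. The paper supplies this in Section~\ref{asymptotics}. It derives \eqref{revntrfs} from the coercive energy identity and a test field of prescribed normal trace (Lemma~\ref{energyestimate}), then combines it with $\abs{\omega}\lesssim\abs{\xi}^{2}$ from Theorem~\ref{lfasymp}.

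\textbf{A cheaper fix within your route.} You nearly state it when you observe that the Marangoni term is the only coupling. The third and sixth components of the linearization involve only $\psi$, so the operator is block triangular.
\begin{enumerate}
\item Solve $-\gamma\partial_{1}\psi-\kappa\Delta\psi=l$, $\kappa\partial_{n}\psi=m$ on $\Sigma_{b}$, $\psi=0$ on $\Sigma_{0}$ first.
\item Then $(u,p,\eta)$ solves the linearized free boundary Stokes problem of \cite{leoni2023traveling}, with stress datum $k-\sigma'(0)(\nabla'\psi\mid_{\Sigma_{b}},0)\in H^{s+1/2}$.
\end{enumerate}
The multiplier is then the one for the uncoupled problem, so their symbol analysis applies verbatim with nothing to absorb. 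The hypothesis on $\sigma'(0)$ is not even needed for this step. Without one of these two arguments, your proposal only asserts the conclusion of the linear theory.

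\textbf{Smaller corrections.}
\begin{itemize}
\item $\mathcal{Y}^{s}$ is not a product, since $g$ and $h$ are tied together by $h-\int_{0}^{b}g\,dx_{n}\in\dot{H}^{-1}$. Correspondingly, the divergence slot of the nonlinear map must be $J(\nabla_{\mathcal{A}}\cdot u)$, as in Proposition~\ref{8.3analog}, for the map to land in $\mathcal{Y}^{s}$.
\item Your linearized dynamic condition has a sign slip. Up to an overall sign it should read $(pI-\mu\mathbb{D}u)e_{n}+\sigma(0)\Delta'\eta\,e_{n}+\sigma'(0)(\nabla'\psi,0)$. With your signs the restoring coefficient becomes $\mathfrak{g}-4\pi^{2}\sigma(0)\abs{\xi}^{2}$, and for $n\geq3$ $\rho$ vanishes where $\xi_{1}=0$ and $4\pi^{2}\sigma(0)\abs{\xi}^{2}=\mathfrak{g}$.
\item $Q_{2}$ is not a second Fourier regime. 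It is the coupling bound for the adjoint problem, in which $\sigma'(0)$ sits in the heat-flux condition, so only $Q_{1}$ enters your forward route.
\end{itemize}
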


In the absence of forcing, Theorem \ref{1.1analog} remains silent on whether there exist large nontrivial solutions in \(\mathcal{X}^{s}\) outside of the open set $\mathcal{O}^{s}$, though we don't expect them to exist.  We note in particular that the theorem produces nontrivial solutions when $\mathfrak{f}$, $f$, $\mathcal{T}$, and $T$ all vanish but at least one of $\mathfrak{H},H$ does not.  This justifies our prior claim that a traveling heat source alone can induce nontrival traveling wave states in this model.

Our solutions in the flattened formulation are sufficiently regular to be transformed back to solutions in the original, Eulerian formulation.  We state this now.

\begin{theorem} \label{1.3analog}
    Suppose that \(n \geq 2\) and \(\mathbb{N} \ni s \geq 1+\lfloor n/2 \rfloor\). Let
    \begin{align}
        \mathcal{U}^{s} \subseteq &\mathbb{R} \setminus \{0\} \times H^{s+1}(\mathbb{R}^{n};\mathbb{R}^{n}) \times H^{s}(\mathbb{R}^{n-1};\mathbb{R}^{n}) \times H^{s+2}(\mathbb{R}^{n};\mathbb{R}_{sym}^{n \times n}) \nonumber \\
        &\times H^{s+1/2}(\mathbb{R}^{n-1};\mathbb{R}_{sym}^{n \times n}) \times H^{s+2}(\mathbb{R}^{n};\mathbb{R}) \times H^{s+1/2}(\mathbb{R}^{n-1};\mathbb{R})
    \end{align}
    and \(\mathcal{O}^{s}\subseteq \mathcal{X}^{s}\) be the open sets from Theorem \ref{1.1analog}. Then for each \((\gamma,\mathfrak{f},f, \mathcal{T},T,\mathfrak{H},H) \in \mathcal{U}^{s}\), there exist
    \begin{enumerate}
        \item a free surface function \(\eta \in X^{s+5/2}(\mathbb{R}^{n-1};\mathbb{R}) \cap C_{0}^{s+1-\lfloor (n-1)/2\rfloor}(\mathbb{R}^{n-1};\mathbb{R})\) such that \(\max_{\mathbb{R}^{n-1}}\abs{\eta}<b/2\) and \(\mathfrak{F}_{\eta}\) is a bi-Lipschitz homeomorphism and a \(C^{s+1-\lfloor (n-1)/2\rfloor}\) diffeomorphism,
        \item a velocity field \(w \in \prescript{}{0}{H}^{s+2}(\Omega_{b+\eta};\mathbb{R}^{n}) \cap C_{b}^{s+1-\lfloor n/2\rfloor}(\Omega_{b+\eta};\mathbb{R}^{n})\),
        \item a temperature \(\phi \in \prescript{}{0}{H}^{s+2}(\Omega_{b+\eta};\mathbb{R}) \cap C_{b}^{s+1-\lfloor n/2\rfloor}(\Omega_{b+\eta};\mathbb{R})\),
        \item a pressure \(q \in H^{s+1}(\Omega_{b+\eta};\mathbb{R}) \cap C_{b}^{s-\lfloor n/2 \rfloor}(\Omega_{b+\eta};\mathbb{R})\), and
        \item constants \(C,R>0\)
    \end{enumerate}
    such that
    \begin{enumerate}
        \item \((w,\phi,q,\eta)\) classically solve
        \begin{align} \label{sysori}
            \begin{cases}
                -\gamma\partial_{1}w+w \cdot \nabla w + \nabla \cdot(qI-\Gamma(\phi,\mathbb{D}w))+\mathfrak{g}(\nabla'\eta,0) = \mathfrak{f}+L_{b+\eta}f &\mbox{ in \(\Omega_{b+\eta}\)} \\
                \nabla \cdot w = 0 &\mbox{ in \(\Omega_{b+\eta}\)} \\
                -\gamma\partial_{1}\phi+ w \cdot \nabla \phi +\nabla \cdot \Phi(\phi,\nabla \phi) = 0 &\mbox{ in \(\Omega_{b+\eta}\)} \\
                -(qI-\Gamma(\phi,\mathbb{D}w))\nu(\eta) = \sigma(\phi)\mathcal{H}(\eta)\nu(\eta) + \nabla_{\Sigma}[\sigma(\phi)]+(\mathcal{T}+S_{b+\eta}T)\nu(\eta) &\mbox{ on \(\Sigma_{b+\eta}\)} \\
                -\gamma\partial_{1}\eta = w \cdot (-\nabla ' \eta,1) &\mbox{ on \(\Sigma_{b+\eta}\)} \\
                \Phi(\phi,\nabla\phi)\cdot \nu(\eta) = -(\mathfrak{H}+F_{b+\eta}H) &\mbox{ on \(\Sigma_{b+\eta}\)} \\
                w = 0 &\mbox{ on \(\Sigma_{0}\)} \\
                \phi = 0 &\mbox{ on \(\Sigma_{0}\)}.
            \end{cases}
        \end{align}
        \item \((w \circ \mathfrak{F}_{\eta}, \phi \circ \mathfrak{F}_{\eta}, q \circ \mathfrak{F}_{\eta}, \eta)\in \mathcal{O}^{s} \subseteq \mathcal{X}^{s}\).
        \item If \((\gamma^{*},\mathfrak{f}^{*},f^{*}, \mathcal{T}^{*},T^{*},\mathfrak{H}^{*},H^{*}) \in \mathcal{U}^{s}\) and
        \begin{align}
            &\abs{\gamma-\gamma^{*}}+\norm{\mathfrak{f}-\mathfrak{f}^{*}}_{H^{s+1}(\mathbb{R}^{n};\mathbb{R}^{n})}+\norm{f-f^{*}}_{H^{s}(\mathbb{R}^{n-1};\mathbb{R}^{n})}+\norm{\mathcal{T}-\mathcal{T}^{*}}_{H^{s+2}(\mathbb{R}^{n};\mathbb{R}_{sym}^{n\times n})} \nonumber \\
            &+\norm{T-T^{*}}_{H^{s+1/2}(\mathbb{R}^{n-1};\mathbb{R}_{sym}^{n \times n})}+\norm{\mathfrak{H}-\mathfrak{H}^{*}}_{H^{s+2}(\mathbb{R}^{n};\mathbb{R})}+\norm{H-H^{*}}_{H^{s+1/2}(\mathbb{R}^{n-1};\mathbb{R})}<R,
        \end{align}
        then we have the local Lipschitz estimate
        \begin{align}
            &\norm{(w \circ \mathfrak{F}_{\eta}, \phi \circ \mathfrak{F}_{\eta}, q \circ \mathfrak{F}_{\eta},\eta)-(w^{*} \circ \mathfrak{F}_{\eta^{*}}, \phi^{*} \circ \mathfrak{F}_{\eta^{*}}, q^{*} \circ \mathfrak{F}_{\eta^{*}},\eta^{*})}_{\mathcal{X}^{s}} \nonumber \\
            \leq &C(\abs{\gamma-\gamma^{*}}+\norm{\mathfrak{f}-\mathfrak{f}^{*}}_{H^{s+1}(\mathbb{R}^{n};\mathbb{R}^{n})}+\norm{f-f^{*}}_{H^{s}(\mathbb{R}^{n-1};\mathbb{R}^{n})}+\norm{\mathcal{T}-\mathcal{T}^{*}}_{H^{s+2}(\mathbb{R}^{n};\mathbb{R}_{sym}^{n\times n})} \nonumber \\
            &+\norm{T-T^{*}}_{H^{s+1/2}(\mathbb{R}^{n-1};\mathbb{R}_{sym}^{n \times n})}+\norm{\mathfrak{H}-\mathfrak{H}^{*}}_{H^{s+2}(\mathbb{R}^{n};\mathbb{R})}+\norm{H-H^{*}}_{H^{s+1/2}(\mathbb{R}^{n-1};\mathbb{R})}).
        \end{align}
    \end{enumerate}
\end{theorem}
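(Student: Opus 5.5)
Theorem \ref{1.3analog} follows from Theorem \ref{1.1analog} by undoing the flattening. Fix $(\gamma,\mathfrak{f},f,\mathcal{T},T,\mathfrak{H},H)\in\mathcal{U}^{s}$ and let $(u,\psi,p,\eta)\in\mathcal{O}^{s}$ be the unique solution of \eqref{sysflattened} given by Theorem \ref{1.1analog}. Item (1) of that theorem gives $\eta\in X^{s+5/2}\cap C_0^{s+1-\lfloor (n-1)/2\rfloor}$ and $\max\abs{\eta}\le b/2$. It also shows that $\mathfrak{F}_\eta$ is a bi-Lipschitz homeomorphism, and a diffeomorphism with regularity matching that of $\eta$, since $\mathfrak{F}_\eta$ is built from $\eta$ in a linear way.

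If the statement requires the strict bound $\max\abs{\eta}<b/2$, I would get it by shrinking: replace $\mathcal{O}^s$ by its intersection with the open set $\{\max\abs{\eta}<b/2\}$. This set is open because $X^{s+5/2}$ embeds continuously into $C_0$. I would then shrink $\mathcal{U}^s$ to the preimage of this set under the continuous solution map. This is harmless because the trivial data still map to $0$.

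Next I would define $w=u\circ\mathfrak{F}_\eta^{-1}$, $\phi=\psi\circ\mathfrak{F}_\eta^{-1}$, and $q=p\circ\mathfrak{F}_\eta^{-1}$ on $\Omega_{b+\eta}$. The key step is the Sobolev regularity of these composed functions: $w,\phi\in {}_0H^{s+2}(\Omega_{b+\eta})$ and $q\in H^{s+1}(\Omega_{b+\eta})$. I would prove this by a composition lemma, applied order by order via the chain rule. Here $\mathfrak{F}_\eta^{-1}(y)=(y',y_nb/(b+\eta(y')))$, so its derivatives involve $\nabla'\eta$ and $1/(b+\eta)$. The denominator is bounded below by $b/2$, so these factors lie in $C_b^{k}$ for $k$ large enough that multiplier estimates apply. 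The higher derivatives of $\eta$ are controlled in $L^2$-type spaces because $\nabla'\eta\in H^{s+3/2}$ by the definition of $X^{s+5/2}$; only the low-frequency part of $\eta$ itself is not in $L^2$, and it enters undifferentiated only through bounded functions.

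Two further facts are needed for the composition lemma:
\begin{itemize}
\item The Jacobian determinant $J$ is bounded above and below, so the change of variables preserves $L^2$.
\item Because $s\ge 1+\lfloor n/2\rfloor$, the relevant products of Sobolev functions are controlled by the standard algebra and multiplier estimates.
\end{itemize}
The vanishing on $\Sigma_0$ is preserved because $\mathfrak{F}_\eta$ fixes $\Sigma_0$. The $C_b$ regularity statements then follow from the inclusion in item (1) together with the $C^{k}$ regularity of $\mathfrak{F}_\eta^{-1}$.

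For the equations, I would verify \eqref{sysori} from the identity $\partial_i \mathrm{k}\circ\mathfrak{F}_\eta=\partial_i^{\mathcal{A}}(\mathrm{k}\circ\mathfrak{F}_\eta)$ stated before \eqref{fprsmf}, which is valid for classical solutions. The data terms transform as follows:
\begin{itemize}
\item $L_bf\circ\mathfrak{F}_\eta^{-1}=L_{b+\eta}f$, because $L_bf$ depends only on $x'$ and $\mathfrak{F}_\eta$ preserves $x'$. The same argument gives $S_bT\mapsto S_{b+\eta}T$ and $S_bH\mapsto S_{b+\eta}H$.
\item $\mathfrak{f}\circ\mathfrak{F}_\eta\circ\mathfrak{F}_\eta^{-1}=\mathfrak{f}$, and likewise for $\mathcal{T}$ and $\mathfrak{H}$.
\end{itemize}
The dynamic boundary condition in \eqref{sysflattened} uses the unnormalized normal $\mathcal{N}(\eta)=\abs{\mathcal{N}}\nu(\eta)$. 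Dividing by $\abs{\mathcal{N}}$ gives the fourth line of \eqref{sysori}, and the kinematic condition is literally the same equation. Item (2) of the conclusion holds by construction, since $(w\circ\mathfrak{F}_\eta,\phi\circ\mathfrak{F}_\eta,q\circ\mathfrak{F}_\eta,\eta)=(u,\psi,p,\eta)\in\mathcal{O}^s$.

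Finally, the local Lipschitz estimate is item (4) of Theorem \ref{1.1analog} restated. The solution map is $C^1$ on the open set $\mathcal{U}^s$, so I choose $R$ with the $R$-ball around the given data contained in $\mathcal{U}^s$ and with $\sup$ of the derivative of the solution map bounded on that ball. The mean value inequality on this convex ball then yields the estimate with $C$ equal to that derivative bound. The main obstacle is the composition estimate for $\mathfrak{F}_\eta^{-1}$ in the nonstandard space $X^{s+5/2}$; the rest is bookkeeping.
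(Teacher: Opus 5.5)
Your proposal is correct and follows the paper's proof. You take the solution of \eqref{sysflattened} from Theorem \ref{1.1analog}, pull it back by $\mathfrak{F}_\eta^{-1}$, get the Sobolev regularity of the composites, and read off \eqref{sysori} and the Lipschitz bound from the chain-rule identity and item (4); the only difference is that the paper cites Theorem 5.17 of \cite{leoni2023traveling} for the composition estimate you propose to prove by hand. The shrinking step is unnecessary, and it would in fact replace the sets $\mathcal{U}^s,\mathcal{O}^s$ that the statement fixes: the strict bound $\sup\abs{\eta}<b/2$ already holds because $\mathcal{O}^s\subseteq U^s_\delta$ with $\delta\le\delta_5$ from the proof of Proposition \ref{8.3analog}.
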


\subsection{Proof techniques and challenges} \label{ptc} 
As Theorem \ref{1.3analog} is a corollary of Theorem \ref{1.1analog}, we dedicate much of our paper to proving the latter. To prove Theorem \ref{1.1analog}, we view the system \eqref{fprsmf} through the lens of an implicit function theorem.  In this context, the problem presents all of the same challenges present in the earlier work on viscous traveling waves mentioned at the end of Section \ref{sec_intro_tw}, as well as some interesting new ones caused by the heat coupling.  In particular, the analysis of the linearized system and the detailed asymptotic development of a certain crucial Fourier multiplier are much more involved than in previous work.  We now aim to discuss this strategy and highlight new difficulties and techniques.

In the implicit function theorem context, the dependent variables are the velocity, pressure, and temperature of the fluid as well as the free boundary map \(\eta: \mathbb{R}^{n-1} \to (-b, \infty)\). The data on which these variables depend are \(\gamma\), \(\mathfrak{f}\), \(f\), \(\mathcal{T}\), \(T\), \(\mathfrak{H}\), and \(H\). In the absence of any forcing, \((u, \psi, p, \eta)=(0,0,0,0)\) is a solution to \eqref{fprsmf}. Linearizing the system with respect to the dependent variables around \((\gamma,\mathfrak{f},f, \mathcal{T},T,\mathfrak{H},H, u, \psi,p,\eta)=(\gamma,0,0,0,0,0,0,0,0,0,0)\), we obtain
\begin{align} \label{ls}
    \begin{cases}
    -\gamma \partial_{1}u + \nabla p - \mu \Delta u = -\mathfrak{g}(\nabla'\eta,0) &\mbox{in \(\Omega\)} \\
    \nabla \cdot u = 0 &\mbox{in \(\Omega\)} \\
    -\gamma \partial_{1}\psi - \kappa \Delta \psi = 0 &\mbox{in \(\Omega\)} \\
    (-pI+\mu\mathbb{D}u)e_{n} = (\sigma'(0)\nabla'\psi,0)+(0,\sigma(0)\Delta'\eta) &\mbox{on \(\Sigma_{b}\)} \\
    -\gamma\partial_{1}\eta-u_{n}=0 &\mbox{on \(\Sigma_{b}\)} \\
    -\kappa\nabla\psi \cdot e_{n} = 0 &\mbox{on \(\Sigma_{b}\)} \\
    u = 0 &\mbox{on \(\Sigma_{0}\)} \\
    \psi = 0 &\mbox{on \(\Sigma_{0}\)}.
    \end{cases}
\end{align}
We show that this corresponds to an isomorphism between the Banach spaces  \(\mathcal{X}^{s}\) and \(\mathcal{Y}^{s}\) defined in \eqref{Xs} and \eqref{Ys}, respectively.

To show that \eqref{ls} resolves into an isomorphism of Banach spaces between \(\mathcal{X}^{s}\) and \(\mathcal{Y}^{s}\), we first study the solvability of a system without $\eta$ but in a more general form in terms of the velocity, pressure, and temperature that encodes two distinct sub-problems that are essential for our analysis:
\begin{align}
    \begin{cases} \label{mothereqn}
        \widetilde{\gamma} \partial_{1}w + \nabla \cdot (rI-\mu\mathbb{D}w) = f &\mbox{in \(\Omega\)} \\
        \nabla \cdot w = g &\mbox{in \(\Omega\)} \\
        \widetilde{\gamma} \partial_{1}\theta -\kappa\Delta\theta = l &\mbox{in \(\Omega\)} \\
        (rI-\mu\mathbb{D}w)e_{n}+\alpha_{1}(\nabla'\theta,0) =k &\mbox{on \(\Sigma_{b}\)} \\
        \kappa\partial_{n}\theta +\alpha_{2}\nabla' \cdot w'= m &\mbox{on \(\Sigma_{b}\)} \\
        w = 0 &\mbox{on \(\Sigma_{0}\)} \\
        \theta = 0 &\mbox{on \(\Sigma_{0}\)}.
    \end{cases}
\end{align}
In Section \ref{gstokesadj}, we combine Lax-Milgram with regularity theory for elliptic systems to prove the well-posedness of \eqref{mothereqn} in an \(L^{2}\)-based Sobolev framework.  More precisely, we prove the following. 

\begin{proposition} \label{msys}
    Suppose that \(\widetilde{\gamma} \in \mathbb{R}\). For every \((\mu,\kappa,\alpha_{1}, \alpha_{2}) \in O_{n,b}\) in Proposition \ref{paramsp}, the bounded linear operator
    \begin{align}
        \Phi_{\widetilde{\gamma},\mu,\kappa,\alpha_{1},\alpha_{2}}: &\prescript{}{0}{H}^{s+2}(\Omega;\mathbb{R}^{n}) \times \prescript{}{0}{H}^{s+2}(\Omega;\mathbb{R}) \times H^{s+1}(\Omega;\mathbb{R}) \nonumber \\
        \to& H^{s}(\Omega;\mathbb{R}^{n}) \times H^{s+1}(\Omega;\mathbb{R}) \times H^{s}(\Omega;\mathbb{R})\times H^{s+1/2}(\Sigma_{b};\mathbb{R}^{n}) \times H^{s+1/2}(\Sigma_{b};\mathbb{R})
    \end{align}
    given by
    \begin{align}
        \Phi_{\widetilde{\gamma},\mu,\kappa,\alpha_{1},\alpha_{2}}(w,\theta,r) = (&\widetilde{\gamma}\partial_{1}w + \nabla \cdot (rI-\mu\mathbb{D}w), \nabla \cdot w, \widetilde{\gamma}\partial_{1}\theta-\kappa\Delta\theta, \nonumber\\
        &(rI-\mu\mathbb{D}w)e_{n}+\alpha_{1}(\nabla'\theta,0)\mid_{\Sigma_{b}},\kappa \partial_{n}\theta+\alpha_{2}(\nabla' \cdot w')\mid_{\Sigma_{b}})
    \end{align}
    is an isomorphism of Banach spaces.
\end{proposition}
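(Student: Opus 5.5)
We follow the route announced in the introduction: weak solvability via Lax--Milgram, then elliptic regularity, then injectivity and surjectivity of $\Phi_{\widetilde{\gamma},\mu,\kappa,\alpha_1,\alpha_2}$.

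\emph{Reduction to $g=0$.} Given $g \in H^{s+1}(\Omega)$, we use a right inverse of the divergence on $\prescript{}{0}{H}^{s+2}(\Omega;\mathbb{R}^n)$, for instance by solving an auxiliary Stokes or Laplace problem in the strip, to produce $\bar w$ with $\nabla\cdot \bar w = g$ and $\|\bar w\|_{H^{s+2}} \lesssim \|g\|_{H^{s+1}}$. We then subtract the contributions of $\bar w$ from $f$, $k$, $m$. This leaves a problem with divergence-free velocity.

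\emph{Weak formulation.} On the space $\mathcal{H}=\{w\in\prescript{}{0}{H}^1(\Omega;\mathbb{R}^n): \nabla\cdot w=0\}\times \prescript{}{0}{H}^1(\Omega;\mathbb{R})$ we define
\begin{align}
B((w,\theta),(v,\chi)) = \int_\Omega \widetilde{\gamma}\partial_1 w\cdot v + \frac{\mu}{2}\mathbb{D}w:\mathbb{D}v + \widetilde{\gamma}\partial_1\theta\,\chi + \kappa\nabla\theta\cdot\nabla\chi - \int_{\Sigma_b}\alpha_1 \theta\,\nabla'\cdot v' + \alpha_2 \chi\, \nabla'\cdot w'.
\end{align}
The boundary term $\int_{\Sigma_b}\alpha_1\nabla'\theta\cdot v'$ is integrated by parts horizontally. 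The transport terms are skew, since $\int \partial_1 w\cdot w = 0$, so they drop out of $B(x,x)$. Coercivity therefore reduces to bounding the cross term
\begin{align}
(\alpha_1+\alpha_2)\int_{\Sigma_b}\theta\,\nabla'\cdot w'
\end{align}
by $\frac{\mu}{2}\|\mathbb{D}w\|^2+\kappa\|\nabla\theta\|^2$ with a margin. This is exactly where the parameter set $O_{n,b}$ of Proposition \ref{paramsp} enters. The sharp constant of the bilinear trace form $(w,\theta)\mapsto\int_{\Sigma_b}\theta\,\nabla'\cdot w'$ relative to $\|\mathbb{D}w\|_{L^2}\|\nabla\theta\|_{L^2}$, computed via Korn's and Poincaré's inequalities in the strip (these are the norms $\|Q_i\|$), combined with Young's inequality, yields coercivity whenever $(\mu,\kappa,\alpha_1,\alpha_2)\in O_{n,b}$. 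Lax--Milgram then gives a unique weak $(w,\theta)$ for data $(f,l,k,m)$ in the dual of $\mathcal{H}$. The pressure $r$ is recovered from the annihilator of divergence-free fields by the standard de Rham / Ne\v{c}as argument in $\Omega$, with $r\in L^2$ and the estimate holding uniformly.

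\emph{Regularity.} The coupled system is a Stokes system plus a heat equation. Its boundary operators satisfy the complementing condition: the coupling terms $\alpha_1\nabla'\theta$ and $\alpha_2\nabla'\cdot w'$ are of the same order as $\mu\mathbb{D}w\,e_n$ and $\kappa\partial_n\theta$, so we cannot treat them as lower order naively. We therefore proceed in two steps.

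First we obtain horizontal regularity by difference quotients (equivalently, horizontal Fourier multipliers). Horizontal derivatives commute with the system, and coercivity gives $\partial'^{\beta}(w,\theta)\in H^1$ for all $|\beta|\le s+1$.

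Second, we treat the $\alpha$-terms as known data. With $\nabla'\theta$ and $\nabla'\cdot w'$ now controlled in $H^{s+1/2}(\Sigma_b)$, the problem decouples into a Stokes problem with stress boundary condition and a heat problem with Neumann--Dirichlet conditions. We apply the known regularity for each (the $\widetilde{\gamma}\partial_1$ term is lower order) and bootstrap vertical derivatives using the equations, solving for $\partial_n^2$. This gives $(w,\theta,r)\in H^{s+2}\times H^{s+2}\times H^{s+1}$ with estimates, hence surjectivity. Injectivity follows from uniqueness in Lax--Milgram, and boundedness of the inverse follows from the open mapping theorem or directly from the estimates.

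\emph{Main obstacle.} We expect the hardest step to be coercivity, namely identifying the sharp trace-type constants so that the cross term is absorbed under precisely the condition defining $O_{n,b}$. The unbounded horizontal domain also means we must rely on the horizontal-Fourier/Poincaré structure in $x_n$, using the Dirichlet condition at $x_n=0$, rather than on compactness.
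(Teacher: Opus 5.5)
Your proposal is correct and follows essentially the paper's route. The paper proves coercivity of the form under $O_{n,b}$ (Proposition \ref{paramsp}), then applies Lax--Milgram with recovery of the pressure (Proposition \ref{wks}), then elliptic regularity (Proposition \ref{classicalsoln}); your horizontal difference-quotient plus vertical bootstrap argument is the elementary alternative to Agmon--Douglis--Nirenberg that the paper points to via Theorem 2.5 of \cite{leoni2021travelingwavesolutionsfree}. One harmless sign slip: with your bulk signs, both boundary couplings in $B$ should enter as $+\alpha_{1}\int_{\Sigma_{b}}\theta\,\nabla'\cdot v'+\alpha_{2}\int_{\Sigma_{b}}\chi\,\nabla'\cdot w'$, which is what produces the diagonal cross term $(\alpha_{1}+\alpha_{2})\int_{\Sigma_{b}}\theta\,\nabla'\cdot w'$ that you then correctly use (since $Q_{1}(w,\theta)=-Q_{2}(w,\theta)$, the condition defining $O_{n,b}$ indeed suffices).
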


To parlay this well-posedness result for \eqref{mothereqn} into a well-posedness result for \eqref{ls}, we first ignore the free surface and view the resulting problem as an overdetermined one for the other unknowns:
\begin{align} \label{odp}
    \begin{cases}
        -\gamma \partial_{1}u + \nabla \cdot (pI-\mu\mathbb{D}u) = f &\mbox{in \(\Omega\)} \\
        \nabla \cdot u = g &\mbox{in \(\Omega\)} \\
        -\gamma \partial_{1}\psi -\kappa\Delta\psi = l &\mbox{in \(\Omega\)} \\
        (pI-\mu\mathbb{D}u)e_{n} +\sigma'(0)(\nabla'\psi,0)= k &\mbox{on \(\Sigma_{b}\)} \\
        u_{n}=h &\mbox{on \(\Sigma_{b}\)} \\
        \kappa\partial_{n}\psi =m &\mbox{on \(\Sigma_{b}\)} \\
        u = 0 &\mbox{on \(\Sigma_{0}\)} \\
        \psi = 0 &\mbox{on \(\Sigma_{0}\)}.
    \end{cases}
\end{align}
In general, there is no solution to \eqref{odp} because we can resolve \eqref{mothereqn} into \eqref{odp} minus the condition \(u_{n}=h\) on \(\Sigma_{b}\) by setting \((\widetilde{\gamma},\alpha_{1},\alpha_{2})=(-\gamma,\sigma'(0),0)\). To characterize the solvability of \eqref{odp}, we opt to characterize the range of the forward map as the annihilator of the kernel of the formal adjoint.  To identify this adjoint, we prove that if  \(u,v \in \prescript{}{0}{H}^{2}(\Omega;\mathbb{R}^{n})\), \(\psi,\phi \in \prescript{}{0}{H}^{2}(\Omega;\mathbb{R})\), \(p,q \in H^{1}(\Omega;\mathbb{R})\), and \((u,p,\psi)\) satisfy \eqref{odp}, then 
\begin{align}
    &\int_{\Omega}f \cdot v-g q + l\phi \nonumber\\
    = &\int_{\Omega}u \cdot (\gamma \partial_{1}v+\nabla \cdot (qI-\mu\mathbb{D}v)) -p\nabla \cdot v + \psi(\gamma \partial_{1}\phi -\kappa\Delta\phi) \nonumber\\
    &+\int_{\Sigma_{b}} k \cdot v -m \phi -h(qI-\mu\mathbb{D}v)e_{n}\cdot e_{n}-((qI-\mu\mathbb{D}v)e_{n})'\cdot u'  + \psi(\sigma'(0)\nabla' \cdot v' + \kappa \partial_{n}\phi).
\end{align}
Based on this identity, we can read off the formal adjoint of \eqref{odp}:
\begin{align} \label{ap}
    \begin{cases}
        \gamma \partial_{1}v + \nabla \cdot (qI-\mu\mathbb{D}v) = f &\mbox{in \(\Omega\)} \\
        \nabla \cdot v = g &\mbox{in \(\Omega\)} \\
        \gamma \partial_{1}\phi -\kappa\Delta\phi = l &\mbox{in \(\Omega\)} \\
        ((qI-\mu\mathbb{D}v)e_{n})' =k' &\mbox{on \(\Sigma_{b}\)} \\
        \kappa\partial_{n}\phi +\sigma'(0)\nabla' \cdot v'= m &\mbox{on \(\Sigma_{b}\)} \\
        v = 0 &\mbox{on \(\Sigma_{0}\)} \\
        \phi = 0 &\mbox{on \(\Sigma_{0}\)}.
    \end{cases}
\end{align}
This is an under-determined problem because \eqref{mothereqn} resolves into \eqref{ap} plus a normal stress condition on \(\Sigma_{b}\) when \((\widetilde{\gamma},\alpha_{1},\alpha_{2})=(\gamma,0,\sigma'(0))\). Proposition \ref{msys} ensures that there exists a unique solution to the formal adjoint \eqref{ap} if it is augmented with a normal stress \(\chi e_{n}\) on \(\Sigma_{b}\) for any \(\chi \in H^{s+1/2}(\Sigma_{b};\mathbb{R})\). We summarize this in the following.

\begin{proposition} \label{compcond}
    Let \(s \geq 0\). Suppose that \((u,p,\psi) \in \prescript{}{0}{H}^{s+2}(\Omega;\mathbb{R}^{n}) \times H^{s+1}(\Omega;\mathbb{R}) \times \prescript{}{0}{H}^{s+2}(\Omega;\mathbb{R})\) solve the overdetermined system (\ref{odp}). Let \((v,q,\phi) \in \prescript{}{0}{H}^{s+2}(\Omega;\mathbb{R}^{n}) \times H^{s+1}(\Omega;\mathbb{R}) \times \prescript{}{0}{H}^{s+2}(\Omega;\mathbb{R})\) be the unique solution to the adjoint system (\ref{ap}) with \((f,g,l,k',m)=(0,0,0,0,0)\) and the normal stress condition \((qI-\mu\mathbb{D}v)e_{n} \cdot e_{n}=\chi\) on \(\Sigma_{b}\), where \(\chi \in H^{s+1/2}(\Sigma_{b};\mathbb{R})\). Then
    \begin{align} \label{cc}
        \int_{\Omega}f \cdot v - gq+l\phi = \int_{\Sigma_{b}}k \cdot v-m\phi-h\chi.
    \end{align}
\end{proposition}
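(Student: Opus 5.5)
The plan is to prove the general integration-by-parts identity stated just before Proposition \ref{compcond}, valid for arbitrary $u,v \in \prescript{}{0}{H}^{2}$, $\psi,\phi \in \prescript{}{0}{H}^{2}$, $p,q \in H^{1}$ with $(u,p,\psi)$ solving \eqref{odp}. We then specialize it to the adjoint solution $(v,q,\phi)$. Throughout we work at the $H^2\times H^1$ level, since $s \geq 0$ guarantees this regularity. All integrals over the unbounded strip are justified by density of functions smooth up to the boundary with compact horizontal support; every term is a pairing of $L^2$ functions or of traces in $H^{1/2}(\Sigma_b)$.

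\textbf{Step 1 (momentum equation).} Dot the first equation of \eqref{odp} with $v$ and integrate over $\Omega$. For the transport term we have $\int -\gamma\partial_1 u\cdot v = \int u\cdot \gamma\partial_1 v$, with no boundary contribution because $\partial_1$ is horizontal. For the stress term,
\begin{equation*}
\int_\Omega \nabla\cdot(pI-\mu\mathbb{D}u)\cdot v = \int_{\Sigma_b}(pI-\mu\mathbb{D}u)e_n\cdot v - \int_\Omega p\nabla\cdot v + \frac{\mu}{2}\int_\Omega \mathbb{D}u:\mathbb{D}v ,
\end{equation*}
where there is no contribution from $\Sigma_0$ since $v=0$ there. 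Next, integrate by parts again, moving derivatives onto $v$:
\begin{equation*}
\frac{\mu}{2}\int_\Omega\mathbb{D}u:\mathbb{D}v = \int_\Omega u\cdot(-\mu\nabla\cdot\mathbb{D}v) + \int_{\Sigma_b}\mu\mathbb{D}v\, e_n\cdot u .
\end{equation*}
We also add and subtract $\int_\Omega u\cdot\nabla q$, which equals $-\int_\Omega q\nabla\cdot u + \int_{\Sigma_b} q u_n = -\int_\Omega gq + \int_{\Sigma_b} hq$.

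\textbf{Step 2 (collecting the momentum boundary terms).} On $\Sigma_b$, replace $(pI-\mu\mathbb{D}u)e_n$ by $k - \sigma'(0)(\nabla'\psi,0)$. Then split $u = (u',h)$ in the remaining boundary terms to obtain
\begin{equation*}
-((qI-\mu\mathbb{D}v)e_n)'\cdot u' - h\,(qI-\mu\mathbb{D}v)e_n\cdot e_n .
\end{equation*}
The leftover term $-\int_{\Sigma_b}\sigma'(0)\nabla'\psi\cdot v'$ is integrated by parts horizontally on $\Sigma_b\cong\mathbb{R}^{n-1}$, which gives $+\int_{\Sigma_b}\sigma'(0)\psi\,\nabla'\cdot v'$.

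\textbf{Step 3 (heat equation).} Multiply the third equation of \eqref{odp} by $\phi$ and apply Green's second identity. This gives $\int_\Omega\psi(\gamma\partial_1\phi-\kappa\Delta\phi)$ together with the boundary terms $-\int_{\Sigma_b}\kappa\partial_n\psi\,\phi + \int_{\Sigma_b}\psi\,\kappa\partial_n\phi$. There is no contribution from $\Sigma_0$, since $\psi=\phi=0$ there. Then substitute $\kappa\partial_n\psi = m$. Summing Steps 1 through 3 yields exactly the displayed identity.

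\textbf{Step 4 (specialization).} Now take $(v,q,\phi)$ to be the solution of \eqref{ap} with zero data and normal stress $\chi$; it exists uniquely by Proposition \ref{msys} applied with $(\widetilde\gamma,\alpha_1,\alpha_2)=(\gamma,0,\sigma'(0))$, assuming the parameters lie in the admissible set. The interior terms vanish because $\gamma\partial_1v+\nabla\cdot(qI-\mu\mathbb{D}v)=0$, $\nabla\cdot v=0$ and $\gamma\partial_1\phi-\kappa\Delta\phi=0$. On $\Sigma_b$, the tangential stress term vanishes, the combination $\psi(\sigma'(0)\nabla'\cdot v'+\kappa\partial_n\phi)$ vanishes, and the normal stress term becomes $-h\chi$. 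This leaves \eqref{cc}.

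The main obstacle is only bookkeeping: keeping track of the signs and the factor $\tfrac12$ in $\mathbb{D}u:\mathbb{D}v$. Justifying the integrations by parts on the unbounded domain is handled by the density argument described at the start, and needs no separate work.
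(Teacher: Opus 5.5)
Your proposal is correct and takes the same route as the paper. The paper's argument is the integration-by-parts identity pairing \eqref{odp} with its formal adjoint \eqref{ap}, specialized to the augmented adjoint solution: the interior terms, the tangential stress, and the combination $\psi(\sigma'(0)\nabla'\cdot v'+\kappa\partial_n\phi)$ all vanish, and the normal stress contributes $-h\chi$. Your signs, your factor $\tfrac12$, and your density justification on the unbounded strip all check out.
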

Remarkably, the conditions that the data \((f,g,l,k,h,m)\) satisfy \eqref{cc} for every \(\chi \in H^{s+1/2}(\Sigma_{b};\mathbb{R})\) and
\begin{align}
    h-\int_{0}^{b}g(\cdot,x_{n})dx_{n}\in \dot{H}^{-1}(\mathbb{R}^{n-1};\mathbb{R})
\end{align}
characterize the solvability of \eqref{odp}. For ease of reference, we call the former condition the adjoint kernel compatibility condition and the latter the divergence-trace compatibility condition. The divergence-trace compatibility condition is initially obtained as a necessary condition satisfied by a solution to \eqref{odp}. The solvability of \eqref{odp} is summarized as follows, recalling that \(\mathcal{X}^{s}\) and \(\mathcal{Y}^{s}\) are defined in \eqref{Xs} and \eqref{Ys}.

\begin{proposition} \label{Zsiso}
    Suppose that \(\gamma\in \mathbb{R}\) and \((\mu,\kappa,\sigma'(0)) \in \mathbb{R}^{+} \times \mathbb{R}^{+} \times \mathbb{R}\) satisfy
    \begin{align}
        \max\left\{\frac{\norm{Q_{1}}^{2}}{4},\frac{\norm{Q_{2}}^{2}}{4}\right\} \abs{\sigma'(0)}^{2}< 2\mu\kappa,
    \end{align}
    where \(Q_{1}\) and \(Q_{2}\) are bilinear forms defined in \eqref{Q1} and \eqref{Q2}. If
    \begin{align} \label{Zs}
        \mathcal{Z}^{s}=\{(f,g,l,k,h,m) \in \mathcal{Y}^{s}\mid \mbox{\((f,g,l,k,h,m)\) satisfies \eqref{cc} for every \(\chi \in H^{s+1/2}(\Sigma_{b};\mathbb{R})\)}\},\text{ } s \geq 0
    \end{align}
    then the bounded linear map \(T: \prescript{}{0}{H}^{s+2}(\Omega;\mathbb{R}^{n}) \times \prescript{}{0}{H}^{s+2}(\Omega;\mathbb{R})\times H^{s+1}(\Omega;\mathbb{R}) \to \mathcal{Z}^{s}\) given by
    \begin{align}
        T(u,\psi,p)=(&-\gamma \partial_{1}u + \nabla \cdot (pI-\mu\mathbb{D}u), \nabla \cdot u, -\gamma \partial_{1}\psi -\kappa\Delta\psi, \nonumber \\
        &(pI-\mu\mathbb{D}u) \mid_{\Sigma_{b}} e_{n} +\sigma'(0)(\nabla'\psi \mid_{\Sigma_{b}},0), u_{n} \mid_{\Sigma_{b}}, \kappa\partial_{n}\psi \mid_{\Sigma_{b}})
    \end{align}
    is an isomorphism of Banach spaces.
\end{proposition}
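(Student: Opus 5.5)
We need to show that $T$ is well defined into $\mathcal{Z}^{s}$, injective, and surjective onto $\mathcal{Z}^{s}$; boundedness is then clear and the bounded inverse theorem gives the isomorphism. For this we take as given the structure of $\mathcal{Y}^{s}$ from \eqref{Ys}, which we expect to encode the divergence-trace compatibility condition on $(g,h)$.

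\textbf{Well-definedness.} Proposition \ref{compcond} says that any $(u,\psi,p)$ in the domain produces data satisfying \eqref{cc} for every $\chi$. Membership in $\mathcal{Y}^{s}$ is the remaining point. Integrating $\nabla\cdot u=g$ in $x_n$ and using $u=0$ on $\Sigma_0$ gives
\[
h-\int_0^b g\,dx_n=-\nabla'\cdot\int_0^b u'\,dx_n ,
\]
and the right-hand side lies in $\dot H^{-1}(\mathbb{R}^{n-1})$.

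\textbf{Injectivity.} Suppose $T(u,\psi,p)=0$. Then $(u,\psi,p)$ solves \eqref{mothereqn} with $(\widetilde\gamma,\alpha_1,\alpha_2)=(-\gamma,\sigma'(0),0)$ and zero data. The smallness hypothesis on $\sigma'(0)$ should be exactly what places $(\mu,\kappa,\sigma'(0),0)$ in $O_{n,b}$ from Proposition \ref{paramsp}. Proposition \ref{msys} then forces $(u,\psi,p)=0$. The extra condition $u_n=0$ is not even needed here.

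\textbf{Surjectivity.} Fix $(f,g,l,k,h,m)\in\mathcal{Z}^{s}$. Proposition \ref{msys}, applied with $(\widetilde\gamma,\alpha_1,\alpha_2)=(-\gamma,\sigma'(0),0)$, gives the unique $(u,\psi,p)$ solving every equation of \eqref{odp} except possibly $u_n=h$. Set $\tilde h=u_n|_{\Sigma_b}$. By Proposition \ref{compcond}, $(f,g,l,k,\tilde h,m)$ satisfies \eqref{cc}. The original data also satisfy \eqref{cc}, so subtracting gives
\[
\int_{\Sigma_b}(h-\tilde h)\chi=0 \qquad \text{for every } \chi\in H^{s+1/2}(\Sigma_b).
\]
The adjoint solution exists for each such $\chi$ by Proposition \ref{msys}, applied with $(\widetilde\gamma,\alpha_1,\alpha_2)=(\gamma,0,\sigma'(0))$ and normal stress $\chi$. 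This also requires $(\mu,\kappa,0,\sigma'(0))\in O_{n,b}$, which is what the second entry of the max over $Q_1,Q_2$ is presumably designed to guarantee. Since $h-\tilde h\in H^{s+3/2}\subset L^2$ and $H^{s+1/2}$ is dense in $L^2$, we get $h=\tilde h$. So $(u,\psi,p)$ solves \eqref{odp} and $T$ is onto $\mathcal{Z}^{s}$.

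\textbf{Conclusion and main obstacle.} $\mathcal{Z}^{s}$ is closed in $\mathcal{Y}^{s}$, being an intersection of kernels of bounded functionals: each \eqref{cc} functional is bounded because the adjoint solution depends continuously on $\chi$. Hence the open mapping theorem applies and $T$ is an isomorphism. The main obstacle is verifying that the single hypothesis on $(\mu,\kappa,\sigma'(0))$ places both parameter choices, $(\alpha_1,\alpha_2)=(\sigma'(0),0)$ and $(0,\sigma'(0))$, inside the coercivity region $O_{n,b}$. That amounts to checking that Proposition \ref{paramsp} describes $O_{n,b}$ through inequalities of the form $\|Q_i\|^2\alpha_i^2/4<2\mu\kappa$ controlling the cross terms in the Lax--Milgram form. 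A secondary subtlety is confirming that the divergence-trace condition is built into $\mathcal{Y}^{s}$ rather than needing to be derived separately.
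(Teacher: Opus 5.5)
Your proposal is correct and follows the paper's proof essentially step for step. Well-definedness comes from Proposition \ref{compcond} together with the divergence-trace bound; your identity $h-\int_0^b g\,dx_n=-\nabla'\cdot\int_0^b u'\,dx_n$ is exactly the content of the cited Theorem 3.1 of Leoni--Tice. Injectivity and surjectivity then come from Proposition \ref{msys} with $(\widetilde\gamma,\alpha_1,\alpha_2)=(-\gamma,\sigma'(0),0)$ for the forward problem and $(\gamma,0,\sigma'(0))$ for the adjoint, plus the cancellation $\int_{\Sigma_b}(h-u_n)\chi=0$.

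The obstacle you flag is immediate. For these two parameter choices the defining inequality of $O_{n,b}$ reduces to $\norm{Q_1}^2\abs{\sigma'(0)}^2/4<2\mu\kappa$ and $\norm{Q_2}^2\abs{\sigma'(0)}^2/4<2\mu\kappa$, and the max hypothesis gives both.

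Your closedness argument for $\mathcal{Z}^{s}$, which the paper leaves implicit, is also fine, though the reason is simpler than you state. For each fixed $\chi$ the adjoint triple $(v,q,\phi)$ is just a fixed element. The \eqref{cc} functional is therefore bounded on $\mathcal{Y}^{s}$ without any need for continuous dependence on $\chi$.
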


To address the question of solvability of \eqref{ls}, we view it as an instance of \eqref{odp} in which all the terms containing \(\eta\) are part of the data. This allows us to take advantage of Proposition \ref{Zsiso}. To study the adjoint kernel compatibility condition which now contains \(\eta\), we take its Fourier transform with respect to the horizontal variable \(x' \in \mathbb{R}^{n-1}\). In Section \ref{fst}, we apply a general theory of Fourier symbols for translation commuting bounded linear maps to instantiate the Fourier symbols associated with the maps that send \(\chi \in H^{s+1/2}(\Sigma_{b};\mathbb{R})\) to the \(v\), \(q\), and \(\phi\) in Proposition \ref{compcond} and the traces \(v \mid_{\Sigma_{b}}\), \(\phi\mid_{\Sigma_{b}}\), and \(v_{n}\mid_{\Sigma_{b}}\) and separate off \(\chi\) from the compatibility condition. These Fourier symbols are denoted by \(\omega_{v}\), \(\omega_{q}\), \(\omega_{\phi}\), \(\omega_{v\mid_{\Sigma_{b}}}\), \(\omega_{\phi\mid_{\Sigma_{b}}}\), and \(\omega_{v_{n}\mid_{\Sigma_{b}}}\), respectively. Once the appearance of \(\chi\) is removed, the compatibility condition becomes a Fourier multiplier equation:
\begin{align} \label{wfer}
    \mbox{\(W(\xi)=\mathcal{F}(\eta)(\xi)\rho(\xi)\) for a.e. \(\xi \in \mathbb{R}^{n-1}\)},
\end{align}
where
\begin{align}
    W(\xi)=&\int_{0}^{b}\mathcal{F}(f)(\xi,x_{n})\cdot \overline{\omega_{v}(\xi,x_{n})}-\mathcal{F}(g)(\xi,x_{n})\overline{\omega_{q}(\xi,x_{n})}+\mathcal{F}(l)(\xi,x_{n})\overline{\omega_{\phi}(\xi,x_{n})}dx_{n} \nonumber\\
    &-\mathcal{F}(k)(\xi)\cdot\overline{\omega_{v\mid_{\Sigma_{b}}}(\xi)}+\mathcal{F}(m)(\xi)\overline{\omega_{\phi\mid_{\Sigma_{b}}}(\xi)}+\mathcal{F}(h)(\xi)
\end{align}
and
\begin{align}
    \rho(\xi)=(\sigma(0)4\pi^{2}\abs{\xi}^{2}+\mathfrak{g})\overline{\omega_{v_{n}\mid_{\Sigma_{b}}}(\xi)}+\gamma 2\pi i \xi_{1}.
\end{align}
The function space to which $\eta$ will belong is thus entirely determined by the symbol $\rho$ and the form of $W$, and as such we need a relatively precise understanding of behavior of $\omega_{v_{n}\mid_{\Sigma_{b}}}$.  This turns out to be much trickier than the corresponding problems for the model without heat conduction.  We study the asymptotics of  \(W(\xi)\) and \(\rho(\xi)\) in Sections \ref{instfs} and \ref{odetheory} by using a delicate synthesis of Fourier multiplier techniques and explicit ODE solution computations.  Then in Section \ref{asymptotics}, we study the asymptotics of \(\rho\) itself. Based on this knowledge, we conclude that \(X^{s+5/2}(\mathbb{R}^{n-1};\mathbb{R})\) is the container space for \(\eta\), where for $t \ge 0$ the Hilbert space
\begin{align} \label{xxs}
    X^{t}(\mathbb{R}^{d};\mathbb{R})=\{f \in \mathscr{S}'(\mathbb{R}^{d};\mathbb{R}) \mid \mbox{\(f=\overline{f}\) and \(\hat{f} \in L_{loc}^{1}(\mathbb{R}^{d};\mathbb{R})\)}\} 
\end{align}
is defined via the inner-product
\begin{align}\label{df}
    (f,g)_{X_{t}}=\int_{\mathbb{R}^{d}}\omega_{t}(\xi)\hat{f}(\xi)\overline{\hat{g}(\xi)}d\xi
\text{ for }  
\omega_{t}(\xi)=\frac{\xi_{1}^{2}+\abs{\xi}^{4}}{\abs{\xi}^{2}}\chi_{B(0,1)}(\xi)+(1+\abs{\xi}^{2})^{t}\chi_{B(0,1)^{c}}(\xi).
\end{align}
This space was first identified in \cite{leoni2023traveling} and has played the role of the container space for $\eta$ in most subsequent work.  In Section \ref{welldef}, we establish that \eqref{ls} does indeed resolve into an isomorphism of Banach spaces between \(\mathcal{X}^{s}\) and \(\mathcal{Y}^{s}\).

Lastly, we check that the map that sends \((\gamma,\mathfrak{f},f, \mathcal{T},T,\mathfrak{H},H, u, \psi,p,\eta)\) to \eqref{fprsmf} is \(C^{1}\).  This is accomplished by using a variety of nonlinear functional analytic results (products, compositions, etc) valid in our Sobolev framework.  With this in hand, we then prove Theorem \ref{1.1analog} by way of the implicit function theorem, as discussed above.

To conclude this section we establish some additional notational conventions and make existing ones more precise. Throughout the paper, we assume that \(1 \leq d \in \mathbb{N}\). If \(\zeta: \mathbb{R}^{n-1} \to \mathbb{R}\) is a Lipschitz function such that \(\inf \zeta >0\), then for \(s>1/2\), we let \(\prescript{}{0}{H}^{s}(\Omega_{\zeta};\mathbb{R}^{n})=\{u \in H^{s}(\Omega_{\zeta};\mathbb{R}^{n}) \mid u=0 \mbox{ on \(\Sigma_{0}\)}\}\), which is well-defined due to trace theory. We endow \(\prescript{}{0}{H}^{1}(\Omega;\mathbb{R}^{n})\) with the inner product
\begin{align}
    (u,v)_{\prescript{}{0}{H}^{1}(\Omega;\mathbb{R}^{n})}=\frac{1}{2}\int_{\Omega}\mathbb{D}u:\mathbb{D}v
\end{align}
where \(\mathbb{D}\mathrm{k}=\nabla \mathrm{k}+(\nabla \mathrm{k})^{t}\) is the symmetrized gradient of \(\mathrm{k} \in H^{1}(\Omega;\mathbb{R}^{n})\). This is indeed an inner product on \(\prescript{}{0}{H}^{1}(\Omega;\mathbb{R}^{n})\) due to Korn's inequality and generates the same topology as the standard \(L^{2}\) inner product. We denote by \(\prescript{}{0}{H}^{1}_{\sigma}(\Omega;\mathbb{R}^{n})=\{u \in \prescript{}{0}{H}^{1}(\Omega; \mathbb{R}^{n}) \mid \nabla \cdot u =0\}\) the closed subspace of solenoidal vector fields. For \(s \geq 0\), we equip the Hilbert space
\begin{align}
    \mathcal{X}^{s}=\{(u,\psi,p,\eta) \in \prescript{}{0}{H}^{s+2}(\Omega;\mathbb{R}^{n}) \times \prescript{}{0}{H}^{s+2}(\Omega;\mathbb{R})\times H^{s+1}(\Omega;\mathbb{R}) \times X^{s+5/2}(\mathbb{R}^{n-1};\mathbb{R})\}
\end{align}
with the norm
\begin{align}
    \norm{(u,\psi,p,\eta)}_{\mathcal{X}^{s}}= \left(\norm{u}_{\prescript{}{0}{H}^{s+2}(\Omega;\mathbb{R}^{n})}^2 + \norm{\psi}_{\prescript{}{0}{H}^{s+2}(\Omega;\mathbb{R})}^2 + \norm{p}_{H^{s+1}(\Omega;\mathbb{R})}^2 + \norm{\eta}_{X^{s+5/2}(\mathbb{R}^{n-1};\mathbb{R})}^2 \right)^{1/2}.
\end{align}

For \(r<0\),
\begin{align} \label{hdspace}
    \dot{H}^{r}(\mathbb{R}^{d};\mathbb{R}) = \{f \in \mathscr{S}(\mathbb{R}^{d};\mathbb{R}) \mid \mbox{\(f=\overline{f}\), \(\hat{f} \in L^{1}_{loc}(\mathbb{R}^{d};\mathbb{R})\), and \([f]_{\dot{H}^{r}}<\infty\)}\}
\end{align}
where
\begin{align}
    [f]^{2}_{\dot{H}^{r}}=\int_{\mathbb{R}^{d}}\abs{\xi}^{2r}\abs{\hat{f}(\xi)}^{2}d\xi
\end{align}
is the homogeneous \(L^{2}\)-based Sobolev space of real-valued functions of order \(r\).
For \(s \geq 0\), we equip the Hilbert space
\begin{align} \label{Ys}
    \mathcal{Y}^{s}=\biggl\{&(f,g,l,k,h,m) \in H^{s}(\Omega;\mathbb{R}^{n}) \times H^{s+1}(\Omega;\mathbb{R}) \times H^{s}(\Omega;\mathbb{R}) \times H^{s+1/2}(\Sigma_{b};\mathbb{R}^{n}) \nonumber \\
    &\times H^{s+3/2}(\Sigma_{b};\mathbb{R}) \times H^{s+1/2}(\Sigma_{b};\mathbb{R}) \mid h-\int_{0}^{b}g(\cdot,x_{n})dx_{n}\in \dot{H}^{-1}(\mathbb{R}^{n-1};\mathbb{R})\biggr\}
\end{align}
with the norm
\begin{align}
    &\norm{(f,g,l,k,h,m)}_{\mathcal{Y}^{s}} \nonumber \\
    =&\biggl(\norm{f}^{2}_{H^{s}(\Omega;\mathbb{R}^{n})}+\norm{g}^{2}_{H^{s+1}(\Omega;\mathbb{R})}+\norm{l}^{2}_{H^{s}(\Omega;\mathbb{R})}+\norm{k}^{2}_{H^{s+1/2}(\Sigma_{b};\mathbb{R}^{n})}+\norm{h}^{2}_{H^{s+3/2}(\Sigma_{b};\mathbb{R})} \nonumber \\
    &+\norm{m}^{2}_{H^{s+1/2}(\Sigma_{b};\mathbb{R})}+\left[h-\int_{0}^{b}g(\cdot,x_{n})dx_{n}\right]_{\dot{H}^{-1}(\mathbb{R}^{n-1};\mathbb{R})}^{2}\biggr)^{1/2}.
\end{align}

Given a finite-dimensional inner product space \(W\), we identify \(H^{s}(\Sigma_{b};W)\) with \(H^{s}(\mathbb{R}^{n-1};W)\) for each \(s \in \mathbb{R}\). This is justified by the fact that \(\Sigma_{b}\) is canonically diffeomorphic to \(\mathbb{R}^{n-1}\) via the map \((x',b) \in \Sigma_{b} \mapsto x' \in \mathbb{R}^{n-1}\). We write \(\mathcal{F}(\mathrm{k})\) and \(\mathcal{F}^{-1}(\mathrm{k})\) for the Fourier transform and its inverse acting on \(\mathrm{k} \in H^{s}(\Sigma_{b};W)\), respectively.

\section{Solvability of the \(\widetilde{\gamma}\)-Stokes System \eqref{mothereqn}} \label{gstokesadj}
\subsection{The bilinear form} \label{blform}

To establish the weak solvability of \eqref{mothereqn}, we first construct the associated bilinear form. For a sufficiently regular solution triple \((w, \theta, r)\) of \eqref{mothereqn} and sufficiently regular test functions \(v\) and \(\psi\),
\begin{align}
    \int_{\Omega}v\cdot f-\int_{\Sigma_{b}} v \cdot k &= \int_{\Omega}-\widetilde{\gamma} \partial_{1}v \cdot w +\frac{\mu}{2}\mathbb{D}v:\mathbb{D}w-\alpha_{1}\int_{\Sigma_{b}}\nabla' \theta \cdot v' \label{wf1} \\
    \int_{\Omega}\psi l +\int_{\Sigma_{b}}\psi m &=\int_{\Omega}-\widetilde{\gamma}\partial_{1}\psi \theta+\kappa \nabla\psi \cdot \nabla \theta +\alpha_{2}\int_{\Sigma_{b}} (\nabla' \cdot w') \psi. \label{wf2}
\end{align}
Unfortunately, \eqref{wf1} and \eqref{wf2} are ill-defined for \(w,v \in \prescript{}{0}{H}^{1}_{\sigma}(\Omega;\mathbb{R}^{n})\), \(\theta,\psi \in \prescript{}{0}{H}^{1}(\Omega;\mathbb{R})\), and \(r \in H^{1}(\Omega;\mathbb{R})\), because neither \(\theta\) nor \(w\) is regular enough to have a trace on \(\Sigma_{b}\) to ensure that the right-most integrals in \eqref{wf1} and \eqref{wf2} are well-defined. To address this issue, we observe that for \(\mathrm{f}\in\mathscr{S}(\mathbb{R}^{n-1};\mathbb{R})\) and \(\mathrm{g}\in\mathscr{S}(\mathbb{R}^{n-1};\mathbb{R})\),
\begin{align}
    \int_{\Sigma_{b}}\partial_{i}\mathrm{f}\mathrm{g}=\int_{\Sigma_{b}}2\pi i \xi_{i}\hat{\mathrm{f}}(\xi)\overline{\hat{\mathrm{g}}(\xi)} \mbox{\quad for any \(i \in \{1, \dots, n-1\}\)}.
\end{align}
The bilinear form \((\mathrm{f},\mathrm{g}) \in \mathscr{S}(\mathbb{R}^{n-1};\mathbb{R}) \times \mathscr{S}(\mathbb{R}^{n-1};\mathbb{R}) \mapsto \int_{\Sigma_{b}}2\pi i \xi_{i}\hat{\mathrm{f}}(\xi)\overline{\hat{\mathrm{g}}(\xi)} \in \mathbb{R}\) then extends continuously to \(H^{1/2}(\Sigma_{b};\mathbb{R}) \times H^{1/2}(\Sigma_{b};\mathbb{R})\). Now, let \(Q_{1}: H^{1}(\Omega;\mathbb{R}^{n}) \times H^{1}(\Omega;\mathbb{R}) \to \mathbb{R}\) and \(Q_{2}: H^{1}(\Omega;\mathbb{R}^{n}) \times H^{1}(\Omega;\mathbb{R}) \to \mathbb{R}\) be bilinear forms given by 
\begin{align}
    &Q_{1}(v,\theta)=\int_{\Sigma_{b}}2\pi i\xi_{j}\mathcal{F}(\theta \mid_{\Sigma_{b}})(\xi)\overline{\mathcal{F}(v_{j}\mid_{\Sigma_{b}})(\xi)} d \xi \label{Q1} \\
    &Q_{2}(w,\psi)=\int_{\Sigma_{b}}2\pi i \xi_{j}\mathcal{F}(w_{j}\mid_{\Sigma_{b}})(\xi)\overline{\mathcal{F}(\psi\mid_{\Sigma_{b}})(\xi)}  d\xi \label{Q2}
\end{align}
in Einstein notation. The norms of \(Q_{1}\) and \(Q_{2}\) are denoted by \(\norm{Q_{1}}=\norm{Q_{1}}_{B(H^{1}(\Omega;\mathbb{R}^{n}),H^{1}(\Omega;\mathbb{R}))}\) and \(\norm{Q_{2}}=\norm{Q_{2}}_{B(H^{1}(\Omega;\mathbb{R}^{n}),H^{1}(\Omega;\mathbb{R}))}\), respectively. In terms of \(Q_{1}\) and \(Q_{2}\), we can define a bilinear form \(B_{\widetilde{\gamma},\mu,\kappa,\alpha_{1},\alpha_{2}}: (\prescript{}{0}{H}^{1}(\Omega;\mathbb{R}^{n}) \times \prescript{}{0}{H}^{1}(\Omega;\mathbb{R}))^{2} \to \mathbb{R}\) associated with \eqref{mothereqn} by
\begin{align}
    B_{\widetilde{\gamma},\mu,\kappa,\alpha_{1},\alpha_{2}}((w,\theta),(v,\psi))=&\int_{\Omega}-\widetilde{\gamma}\partial_{1}v \cdot w+\frac{\mu}{2}\mathbb{D}v:\mathbb{D}w-\widetilde{\gamma}\partial_{1}\psi \theta + \kappa \nabla\psi \cdot \nabla\theta \nonumber \\
    &-\alpha_{1}Q_{1}(v ,\theta)+\alpha_{2}Q_{2}(w,\psi). \label{bform}
\end{align}

\begin{proposition} \label{paramsp}
    If \(\widetilde{\gamma} \in \mathbb{R}\) and \((\mu,\kappa,\alpha_{1},\alpha_{2}) \in O_{n,b}\), where
    \begin{align}
        O_{n,b} = \left\{(\mu',\kappa',\alpha_{1}',\alpha_{2}') \in \mathbb{R}^{+} \times \mathbb{R}^{+} \times \mathbb{R} \times \mathbb{R} \mid \left(\frac{\abs{\alpha_{1}'}\norm{Q_{1}}+\abs{\alpha_{2}'}\norm{Q_{2}}}{2}\right)^{2} < 2\mu'\kappa'\right\},
    \end{align}
    then the map \(B_{\widetilde{\gamma},\mu,\kappa,\alpha_{1},\alpha_{2}}\) defined in \eqref{bform} is a coercive bounded bilinear form.
\end{proposition}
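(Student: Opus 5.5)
Boundedness comes first and is direct. Each integral in \eqref{bform} is controlled by Cauchy--Schwarz: the transport terms by \(\abs{\widetilde{\gamma}}\norm{v}_{H^{1}}\norm{w}_{L^{2}}\), the diffusion terms by \(\mu\norm{v}_{H^{1}}\norm{w}_{H^{1}}\) and \(\kappa\norm{\psi}_{H^{1}}\norm{\theta}_{H^{1}}\), and the boundary terms \(Q_{1},Q_{2}\) by their finite operator norms. These norms are finite because the trace \(H^{1}(\Omega)\to H^{1/2}(\Sigma_{b})\) is bounded and \(\abs{2\pi i\xi_{j}\hat{\mathrm{f}}\overline{\hat{\mathrm{g}}}}\le \pi(1+\abs{\xi}^{2})^{1/2}(\abs{\hat{\mathrm{f}}}^{2}+\abs{\hat{\mathrm{g}}}^{2})\)-type bounds hold. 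Since \(\prescript{}{0}{H}^{1}\) carries an equivalent norm (Korn's inequality for \(w\), Poincaré's inequality for \(\theta\), thanks to the vanishing trace on \(\Sigma_{0}\) and the finite depth), \(B\) is a bounded bilinear form on \((\prescript{}{0}{H}^{1}(\Omega;\mathbb{R}^{n})\times\prescript{}{0}{H}^{1}(\Omega;\mathbb{R}))^{2}\).

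For coercivity we evaluate \(B((w,\theta),(w,\theta))\). The \(\widetilde{\gamma}\) terms vanish: \(\int_{\Omega}\partial_{1}w\cdot w=\frac12\int_{\Omega}\partial_{1}\abs{w}^{2}=0\), by density of compactly supported smooth functions in the horizontal direction, or equivalently because the Fourier symbol \(2\pi i\xi_{1}\) is odd and purely imaginary. The same holds for \(\theta\). This leaves
\[
B((w,\theta),(w,\theta))=\mu\norm{w}_{\prescript{}{0}{H}^{1}}^{2}+\kappa\norm{\nabla\theta}_{L^{2}}^{2}-\alpha_{1}Q_{1}(w,\theta)+\alpha_{2}Q_{2}(w,\theta),
\]
where \(\norm{w}_{\prescript{}{0}{H}^{1}}^{2}=\frac12\int_{\Omega}\abs{\mathbb{D}w}^{2}\).

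The key step is to absorb the two cross terms. We bound \(\abs{\alpha_{1}Q_{1}(w,\theta)}+\abs{\alpha_{2}Q_{2}(w,\theta)}\le (\abs{\alpha_{1}}\norm{Q_{1}}+\abs{\alpha_{2}}\norm{Q_{2}})\norm{w}\norm{\theta}\), with the norms on \(\prescript{}{0}{H}^{1}\) taken to be the ones used to define \(\norm{Q_{i}}\). This must match the normalisation implicit in \(O_{n,b}\); we read the operator norm as computed with respect to the inner product \(\frac12\int\mathbb{D}:\mathbb{D}\) on the vector factor and \(\int\nabla\cdot\nabla\) on the scalar factor. Writing \(a=\norm{w}\), \(c=\norm{\theta}\) and \(\beta=(\abs{\alpha_{1}}\norm{Q_{1}}+\abs{\alpha_{2}}\norm{Q_{2}})/2\), we get \(B\ge \mu a^{2}-2\beta ac+\kappa c^{2}\). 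This quadratic form is positive definite precisely when \(\beta^{2}<\mu\kappa\), and its smallest eigenvalue then gives a coercivity constant \(\lambda=\frac{\mu+\kappa}{2}-\sqrt{(\frac{\mu-\kappa}{2})^{2}+\beta^{2}}>0\).

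The main obstacle is reconciling the constant: the hypothesis reads \(\beta^{2}<2\mu\kappa\), not \(\beta^{2}<\mu\kappa\). I expect this to be resolved by the normalisation. If \(\norm{Q_i}\) is measured with \(\norm{w}^{2}=\int\abs{\mathbb{D}w}^{2}\), which is twice the paper's inner product, then the diagonal terms become \(\frac{\mu}{2}\int\abs{\mathbb{D}w}^{2}=\frac{\mu}{2}\tilde a^{2}\), and the discriminant condition becomes \(\beta^{2}<\mu\kappa/2\). That goes the wrong way, so instead I would track which factor of two the paper's Korn norm produces and verify that \(O_{n,b}\) is exactly the discriminant condition \(\beta^{2}<2\cdot\frac{\mu}{2}\cdot\ldots\) under the paper's conventions. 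If it is not, the fallback is Young's inequality \(2\beta ac\le \epsilon a^{2}+\beta^{2}c^{2}/\epsilon\), with \(\epsilon\) chosen to split \(\mu\) and \(\kappa\) evenly. Once positivity of the \(2\times 2\) form is established, coercivity follows with constant \(\lambda\), and the proof is complete.
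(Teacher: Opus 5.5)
Your boundedness argument, the cancellation of the \(\widetilde{\gamma}\) terms, and the reduction of coercivity to positivity of a \(2\times 2\) quadratic form all match the paper. The gap is that you never close the last step. You fix the norms \(\frac12\int_\Omega\abs{\mathbb{D}w}^2\) on the vector factor and \(\int_\Omega\abs{\nabla\theta}^2\) on the scalar factor, and compute \(\norm{Q_i}\) against these. With that choice your lower bound \(\mu a^2-2\beta ac+\kappa c^2\) is positive definite only when \(\beta^2<\mu\kappa\). For \(\mu\kappa\le\beta^2<2\mu\kappa\), which the hypothesis allows, it is indefinite, so no argument built on that inequality can succeed. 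In particular your Young's-inequality fallback does not help: \(2\beta ac\le\epsilon a^2+\beta^2c^2/\epsilon\) yields positivity only if \(\epsilon<\mu\) and \(\beta^2<\epsilon\kappa\), i.e.\ again \(\beta^2<\mu\kappa\). Your attempted renormalization also goes the wrong way. Enlarging the vector norm to \(\int_\Omega\abs{\mathbb{D}w}^2\) divides each \(\norm{Q_i}\) by \(\sqrt2\) but halves the diagonal coefficient \(\mu\), so the underlying condition is unchanged.

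The missing idea is that \(O_{n,b}\) is written in terms of \(\norm{Q_1},\norm{Q_2}\), which depend on the normalization. The factor \(2\) comes from the convention the paper uses, implicitly, on the scalar space: \(\norm{\theta}_{\prescript{}{0}{H}^{1}(\Omega;\mathbb{R})}^2=\frac12\int_\Omega\abs{\nabla\theta}^2\), so that \(\kappa\int_\Omega\abs{\nabla\theta}^2=2\kappa\norm{\theta}^2\). The paper's proof runs as follows.
\begin{enumerate}
\item Write \(B((w,\theta),(w,\theta))=\mu\norm{w}^2+2\kappa\norm{\theta}^2+Q(w,\theta)\).
\item Bound \(\abs{Q(w,\theta)}\le 2\beta\norm{w}\norm{\theta}\), with \(\norm{Q_i}\) taken relative to these norms.
\item Choose \(C_0>0\) with \(\beta^2\le(\mu-C_0)(2\kappa-C_0)\).
\item Conclude \(B((w,\theta),(w,\theta))\ge C_0(\norm{w}^2+\norm{\theta}^2)\), since the symmetric matrix with diagonal entries \(\mu-C_0,\ 2\kappa-C_0\) and off-diagonal entries \(-\beta\) is positive semidefinite.
\end{enumerate}
Equivalently, in your normalization: dividing the scalar norm by \(\sqrt2\) multiplies each \(\norm{Q_i}\), hence \(\beta\), by \(\sqrt2\). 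So the hypothesis \(\beta^2<2\mu\kappa\) read in the paper's normalization is exactly your condition \(\beta^2<\mu\kappa\) read in yours. Adopt that scalar norm, and state the convention explicitly since the constant in \(O_{n,b}\) is tied to it. Your eigenvalue argument then goes through with \(\lambda=\frac{\mu+2\kappa}{2}-\sqrt{(\frac{\mu-2\kappa}{2})^2+\beta^2}>0\).
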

\begin{proof}
    Let \(\widetilde{\gamma} \in \mathbb{R}\) and \((\mu,\kappa,\alpha_{1},\alpha_{2}) \in O_{n,b}\). It is clear that \(B_{\widetilde{\gamma},\mu,\kappa,\alpha_{1},\alpha_{2}}\) is bilinear over \(\mathbb{R}\). Moreover, \(B_{\widetilde{\gamma},\mu,\kappa,\alpha_{1},\alpha_{2}}\) is bounded since
    \begin{align}
        \abs{B_{\widetilde{\gamma},\mu,\kappa,\alpha_{1},\alpha_{2}}((w,\theta),(v,\psi))} \leq& \abs{\widetilde{\gamma}}\abs{\int_{\Omega}\partial_{1}v \cdot w}+\frac{\mu}{2}\abs{\int_{\Omega}\mathbb{D}v:\mathbb{D}w}+\abs{\widetilde{\gamma}}\abs{\int_{\Omega}\partial_{1}\psi\theta}+\kappa\abs{\int_{\Omega}\nabla\psi \cdot \nabla\theta} \nonumber\\
        &+\abs{\alpha_{1}}\abs{Q_{1}(v,\theta)}+\abs{\alpha_{2}}\abs{Q_{2}(w,\psi)} \nonumber\\
        \lesssim&_{\widetilde{\gamma},\mu,\kappa,\alpha_{1},\alpha_{2},n,b}(\norm{w}_{H^{1}(\Omega;\mathbb{R}^{n})}+\norm{\theta}_{H^{1}(\Omega;\mathbb{R})})(\norm{v}_{H^{1}(\Omega;\mathbb{R}^{n})}+\norm{\psi}_{H^{1}(\Omega;\mathbb{R})}).
\end{align}
It remains to show the coercivity. For any \(w \in \prescript{}{0}{H}^{1}(\Omega;\mathbb{R}^{n})\) and \(\theta \in \prescript{}{0}{H}^{1}(\Omega;\mathbb{R})\),
\begin{align}
    B_{\widetilde{\gamma},\mu,\kappa,\alpha_{1},\alpha_{2}}((w,\theta),(w,\theta))=\mu\norm{w}_{\prescript{}{0}{H}^{1}(\Omega;\mathbb{R}^{n})}^{2}+2\kappa\norm{\theta}_{\prescript{}{0}{H}^{1}(\Omega;\mathbb{R})}^{2}+Q(w,\theta),
\end{align}
where
\begin{align} \label{q}
    Q(w,\theta)=-\alpha_{1}Q_{1}(w,\theta)+\alpha_{2}Q_{2}(w,\theta).
\end{align}
We have
\begin{align}
    &B_{\widetilde{\gamma},\mu,\kappa,\alpha_{1},\alpha_{2}}((w,\theta),(w,\theta)) \nonumber \\
    \geq &\mu\norm{w}_{\prescript{}{0}{H}^{1}(\Omega;\mathbb{R}^{n})}^{2}+2\kappa\norm{\theta}_{\prescript{}{0}{H}^{1}(\Omega;\mathbb{R})}^{2}-(\abs{\alpha_{1}}\norm{Q_{1}}+\abs{\alpha_{2}}\norm{Q_{2}})\norm{w}_{\prescript{}{0}{H}^{1}(\Omega;\mathbb{R}^{n})}\norm{\theta}_{\prescript{}{0}{H}^{1}(\Omega;\mathbb{R})}.
\end{align}
Since \((\mu,\kappa,\alpha_{1},\alpha_{2}) \in O_{n,b}\), there exists \(C_{0}>0\) such that
\begin{align}
    \biggl(\frac{\abs{\alpha_{1}}\norm{Q_{1}}+\abs{\alpha_{2}}\norm{Q_{2}}}{2}\biggr)^{2} \leq (\mu-C_{0})(2\kappa-C_{0}).
\end{align}
This inequality is equivalent to the condition that the real symmetric matrix
\begin{align}
    M=
    \begin{pmatrix}
        \mu-C_{0} & -\frac{\abs{\alpha_{1}}\norm{Q_{1}}+\abs{\alpha_{2}}\norm{Q_{2}}}{2} \\
        -\frac{\abs{\alpha_{1}}\norm{Q_{1}}+\abs{\alpha_{2}}\norm{Q_{2}}}{2} & 2\kappa - C_{0}
    \end{pmatrix}
\end{align}
be positive semi-definite, or equivalently,
\begin{align}
    &\mu\norm{w}_{\prescript{}{0}{H}^{1}(\Omega;\mathbb{R}^{n})}^{2}+2\kappa\norm{\theta}_{\prescript{}{0}{H}^{1}(\Omega;\mathbb{R})}^{2}-(\abs{\alpha_{1}}\norm{Q_{1}}+\abs{\alpha_{2}}\norm{Q_{2}})\norm{w}_{\prescript{}{0}{H}^{1}(\Omega;\mathbb{R}^{n})}\norm{\theta}_{\prescript{}{0}{H}^{1}(\Omega;\mathbb{R})} \nonumber\\
    \geq &C_{0}(\norm{w}_{\prescript{}{0}{H}^{1}(\Omega;\mathbb{R}^{n})}^{2}+\norm{\theta}_{\prescript{}{0}{H}^{1}(\Omega;\mathbb{R})}^{2}).
\end{align}
Hence, \(B_{\widetilde{\gamma},\mu,\kappa,\alpha_{1},\alpha_{2}}\) is indeed coercive.
\end{proof}

\subsection{Introduction of the pressure term}
We use an argument similar to that in Section \(2\) of \cite{leoni2023traveling} to introduce a pressure term to the bilinear form defined in Section \ref{blform}. Proposition \ref{wks} summarizes the result whose proof is suppressed to avoid redundancy. 
\begin{proposition} \label{wks}
    Let \(\widetilde{\gamma} \in \mathbb{R}\) and \((\mu,\kappa,\alpha_{1},\alpha_{2}) \in O_{n,b}\) as in Proposition \ref{paramsp}. If \(F \in (\prescript{}{0}{H}^{1}(\Omega;\mathbb{R}^{n}) \times \prescript{}{0}{H}^{1}(\Omega;\mathbb{R}))^{*}\) and \(g \in L^{2}(\Omega;\mathbb{R})\), then there exist unique \((u,\psi) \in \prescript{}{0}{H}^{1}(\Omega;\mathbb{R}^{n}) \times \prescript{}{0}{H}^{1}(\Omega;\mathbb{R})\) and \(p \in L^{2}(\Omega;\mathbb{R})\) such that for all \((v,\phi) \in \prescript{}{0}{H}^{1}(\Omega;\mathbb{R}^{n}) \times \prescript{}{0}{H}^{1}(\Omega;\mathbb{R})\),
    \begin{align}
    B_{\widetilde{\gamma},\mu,\kappa,\alpha_{1},\alpha_{2}}((u,\psi),(v,\phi))-\langle F, (v,\phi)\rangle = \int_{\Omega}p(\nabla \cdot v)
    \end{align}
    and \(\nabla \cdot u =g\). Moreover, \(\norm{u}_{\prescript{}{0}{H}^{1}(\Omega;\mathbb{R}^{n})}+\norm{\psi}_{\prescript{}{0}{H}^{1}(\Omega;\mathbb{R})}+\norm{p}_{L^{2}(\Omega;\mathbb{R})} \lesssim_{n,b} \norm{F}_{(\prescript{}{0}{H}^{1}(\Omega;\mathbb{R}^{n}) \times \prescript{}{0}{H}^{1}(\Omega;\mathbb{R}))^{*}}+\norm{g}_{L^{2}(\Omega;\mathbb{R})}\).
\end{proposition}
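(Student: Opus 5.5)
The plan follows the standard route: first remove the divergence constraint with a right inverse of the divergence, then solve on the solenoidal subspace by Lax--Milgram using Proposition \ref{paramsp}, and finally recover the pressure by a de Rham/closed-range argument.

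\emph{Step 1 (divergence lifting).} I will use the existence of a bounded right inverse of the divergence on the slab:
\begin{align}
    \mathcal{B}: L^{2}(\Omega;\mathbb{R}) \to \prescript{}{0}{H}^{1}(\Omega;\mathbb{R}^{n}), \qquad \nabla \cdot \mathcal{B}g = g, \qquad \norm{\mathcal{B}g}_{\prescript{}{0}{H}^{1}} \lesssim_{n,b} \norm{g}_{L^{2}}.
\end{align}
Since functions only vanish on $\Sigma_{0}$ and $\Sigma_{b}$ is free, such an operator exists; this is the fact used in Section 2 of \cite{leoni2023traveling}. A concrete construction solves a Neumann problem on the top and Dirichlet problem on the bottom for a potential, or uses Fourier analysis in $x'$. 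Writing $u = \mathcal{B}g + \bar u$ reduces the problem to finding $\bar u$ with $\nabla\cdot \bar u = 0$. The data become $\tilde F = F - B((\mathcal{B}g,0),\cdot)$, which is bounded by boundedness of $B$.

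\emph{Step 2 (solenoidal solve).} Restrict $B_{\widetilde{\gamma},\mu,\kappa,\alpha_{1},\alpha_{2}}$ to the closed subspace $\prescript{}{0}{H}^{1}_{\sigma}(\Omega;\mathbb{R}^{n}) \times \prescript{}{0}{H}^{1}(\Omega;\mathbb{R})$. The restriction is still bounded and coercive by Proposition \ref{paramsp}, so Lax--Milgram gives a unique $(\bar u,\psi)$ with
\begin{align}
    B((\bar u,\psi),(v,\phi)) = \langle \tilde F,(v,\phi)\rangle \qquad \text{for all solenoidal } v \text{ and all } \phi,
\end{align}
together with the expected bound. Taking $\phi$ arbitrary and $v = 0$ already yields the full temperature equation.

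\emph{Step 3 (pressure recovery).} The functional $\Lambda(v) = B((u,\psi),(v,0)) - \langle F,(v,0)\rangle$ is bounded on $\prescript{}{0}{H}^{1}$ and vanishes on $\prescript{}{0}{H}^{1}_{\sigma}$. Since $\operatorname{div}: \prescript{}{0}{H}^{1} \to L^{2}$ is surjective with bounded right inverse $\mathcal{B}$ (Step 1), its adjoint has closed range equal to the annihilator of its kernel. Hence there is a unique $p \in L^{2}$ with $\Lambda(v) = \int_{\Omega} p\,\nabla\cdot v$. Concretely, set $\langle p, g\rangle := \Lambda(\mathcal{B}g)$; this is well defined because $\mathcal{B}g - \mathcal{B}'g$ is solenoidal for any other right inverse $\mathcal{B}'$, and it gives $\norm{p}_{L^{2}} \lesssim \norm{\Lambda}$.

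Uniqueness follows by taking differences and testing with solenoidal $v$, which gives zero velocity and temperature by coercivity, and then $p = 0$ by surjectivity of the divergence. The estimates chain together from Steps 1--3.

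The main obstacle is Step 1, the construction of $\mathcal{B}$ on the unbounded slab with no decay assumption on $g$ and with the bottom Dirichlet condition only. I would cite \cite{leoni2023traveling} directly. Failing that, I would build $\mathcal{B}$ via horizontal Fourier transform: solve the ODE in $x_{n}$ for each $\xi$, and treat low frequencies by putting the mean part into the vertical component, $u_{n}(x',x_{n}) = \int_{0}^{x_{n}} g$, to which a suitable horizontal correction is added.
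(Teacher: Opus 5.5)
Your proposal is correct and is essentially the argument the paper suppresses by deferring to Section 2 of \cite{leoni2023traveling}. That argument has three parts: a bounded right inverse $\mathcal{B}$ of the divergence into $\prescript{}{0}{H}^{1}(\Omega;\mathbb{R}^{n})$, then Lax--Milgram on the solenoidal subspace using the coercivity from Proposition \ref{paramsp}, then recovery of $p$ from $g \mapsto \Lambda(\mathcal{B}g)$, with uniqueness coming from coercivity plus surjectivity of the divergence.

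The one slip is in your aside on constructing $\mathcal{B}$. A potential with $\varphi = 0$ on $\Sigma_{0}$ and $\partial_{n}\varphi = 0$ on $\Sigma_{b}$ gives $\nabla\varphi = (0,\partial_{n}\varphi) \neq 0$ on $\Sigma_{0}$, so it would need a further correction. The Fourier route works directly for all $\xi$, for example with $\hat{u}_{n}(\xi,x_{n}) = \int_{0}^{x_{n}} e^{-\abs{\xi}(x_{n}-t)}\hat{g}(\xi,t)\,dt$ and $\hat{u}'(\xi,x_{n}) = \frac{\xi}{2\pi i\abs{\xi}}\hat{u}_{n}(\xi,x_{n})$. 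Also, as your chain of estimates shows, the final constant depends on $\widetilde{\gamma},\mu,\kappa,\alpha_{1},\alpha_{2}$ as well as on $n,b$.
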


\subsection{Promotion of regularity of the weak solution}
We use standard regularity results for elliptic systems to promote the regularity of the weak solution in Proposition \ref{wks}.
\begin{proposition} \label{classicalsoln}
    Let \(\widetilde{\gamma} \in \mathbb{R}\) and \((\mu,\kappa,\alpha_{1},\alpha_{2}) \in O_{n,b}\) as in Proposition \ref{paramsp}. Suppose that \(s \geq 0\), \(f \in H^{s}(\Omega;\mathbb{R}^{n})\), \(g \in H^{s+1}(\Omega;\mathbb{R})\), \(k \in H^{s+1/2}(\Sigma_{b};\mathbb{R}^{n})\), \(l \in H^{s}(\Omega;\mathbb{R})\), and \(m \in H^{s+1/2}(\Sigma_{b};\mathbb{R})\). If \((u,\psi) \in \prescript{}{0}{H}^{1}(\Omega;\mathbb{R}^{n}) \times \prescript{}{0}{H}^{1}(\Omega;\mathbb{R})\) and \(p \in L^{2}(\Omega;\mathbb{R})\) satisfy \(\nabla \cdot u = g\) in \(\Omega\) and for all \((v,\phi) \in \prescript{}{0}{H}^{1}(\Omega;\mathbb{R}^{n}) \times \prescript{}{0}{H}^{1}(\Omega;\mathbb{R})\)
    \begin{align}
        B_{\widetilde{\gamma},\mu,\kappa,\alpha_{1},\alpha_{2}}((u,\psi),(v,\phi))-\int_{\Omega}p(\nabla \cdot v) = \langle F, (v,\phi) \rangle,
    \end{align}
    where
    \begin{align}
        \langle F, (v,\phi) \rangle = \int_{\Omega}v \cdot f + \phi l +\int_{\Sigma_{b}}-v \cdot k+\phi m,
    \end{align}
    then \((u, \psi) \in \prescript{}{0}{H}^{s+2}(\Omega; \mathbb{R}^{n}) \times \prescript{}{0}{H}^{s+2}(\Omega;\mathbb{R})\) and \(p \in H^{s+1}(\Omega;\mathbb{R})\). Moreover,
    \begin{align}
        &\norm{u}_{\prescript{}{0}{H}^{s+2}(\Omega;\mathbb{R}^{n})}+\norm{\psi}_{\prescript{}{0}{H}^{s+2}(\Omega;\mathbb{R})}+\norm{p}_{H^{s+1}(\Omega;\mathbb{R})} \nonumber\\
        \lesssim& \norm{g}_{H^{s+1}(\Omega;\mathbb{R})}+\norm{f}_{H^{s}(\Omega;\mathbb{R}^{n})}+\norm{k}_{H^{s+1/2}(\Sigma_{b};\mathbb{R}^{n})}+\norm{l}_{H^{s}(\Omega;\mathbb{R})}+\norm{m}_{H^{s+1/2}(\Sigma_{b};\mathbb{R})}.
    \end{align}
\end{proposition}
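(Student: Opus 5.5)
We follow the standard route for Stokes-type systems on the flat slab \(\Omega=\mathbb{R}^{n-1}\times(0,b)\): first obtain one derivative more than the weak solution provides (the \(s=0\) case) by horizontal difference quotients and a recovery of vertical derivatives from the equations, and then induct on \(s\). The structural point is that \(\Omega\) is invariant under horizontal translations, all coefficients are constant, and \(B_{\widetilde{\gamma},\mu,\kappa,\alpha_{1},\alpha_{2}}\) commutes with horizontal translations. The forms \(Q_{1},Q_{2}\) are Fourier multipliers in \(x'\), so they commute with horizontal difference quotients \(D^{h}_{j}\) for \(j\le n-1\), and \(D_j^{-h}\) is (up to sign) the adjoint of \(D_j^h\) for them as well.

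\textbf{Step 1 (horizontal regularity).} Test the weak identity with \((v,\phi)=(D^{-h}_{j}D^{h}_{j}u,\,D^{-h}_{j}D^{h}_{j}\psi)\), which is admissible because the space \(\prescript{}{0}{H}^{1}\) is preserved. Since \(\nabla\cdot u=g\) is not zero, the pressure term does not vanish. It equals \(\int_\Omega D^h_j p\, D^h_j g\), so we handle it with the weak-solution pressure estimate of Proposition \ref{wks} applied to the difference-quotient system. Concretely, \((D_j^h u, D_j^h\psi, D_j^hp)\) solves the same weak problem with data \(D_j^h F\) and \(D_j^h g\), and by uniqueness in Proposition \ref{wks} its norms are bounded by \(\|D^h_jF\|_{(\cdot)^*}+\|D^h_j g\|_{L^2}\). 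These in turn are bounded by \(\|f\|_{L^2}+\|l\|_{L^2}+\|k\|_{H^{1/2}}+\|m\|_{H^{1/2}}+\|g\|_{H^1}\): move the difference quotient onto the test function in the bulk, and use the \(H^{1/2}\) trace duality \(\int_{\Sigma_b}D^h_j k\cdot v=-\int k\cdot D^{-h}_jv\) together with \(\|D^{-h}_j v\|_{H^{-1/2}(\Sigma_b)}\lesssim\|v\|_{H^{1/2}}\). Letting \(h\to0\) gives \(\partial_j u,\partial_j\psi\in H^1\) and \(\partial_j p\in L^2\) for \(j\le n-1\).

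\textbf{Step 2 (vertical derivatives and boundary conditions).} Testing with compactly supported functions shows that the equations of \eqref{mothereqn} hold in the distributional sense. From \(\partial_n u_n=g-\sum_{j<n}\partial_ju_j\) we get \(\partial_n^2u_n\in L^2\) and \(\partial_n^2 u_n \in L^2\). Next, the \(n\)-th momentum equation gives \(\partial_n p=\mu\Delta u_n+\mu\partial_n g-\widetilde\gamma\partial_1u_n+f_n\in L^2\). The tangential momentum equations then give \(\mu\partial_n^2u_i\in L^2\) for \(i<n\), and the heat equation gives \(\kappa\partial_n^2\psi=\widetilde\gamma\partial_1\psi-\kappa\Delta'\psi-l\in L^2\). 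Thus \((u,\psi,p)\in H^2\times H^2\times H^1\) with the stated bound. Integrating by parts against general test functions then identifies the Neumann-type boundary conditions on \(\Sigma_b\); the \(Q_1,Q_2\) terms become genuine traces \(\alpha_1(\nabla'\psi,0)\) and \(\alpha_2\nabla'\cdot u'\) now that the traces lie in \(H^{3/2}\).

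\textbf{Step 3 (induction).} Assume the result at level \(s\) and take data at level \(s+1\). Horizontal derivatives \(\partial_j u\), \(j<n\), solve the same system with data \(\partial_j f,\partial_j g,\ldots\) at level \(s\), so \(\partial_j(u,\psi,p)\in H^{s+2}\times H^{s+2}\times H^{s+1}\). The vertical derivatives are then recovered exactly as in Step 2, one order higher, by repeatedly differentiating the equations in \(x_n\). Non-integer \(s\) follows by interpolation between integer levels, since the solution map is linear and bounded at every integer level.

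The main obstacle is the pressure together with the nonhomogeneous divergence in Step 1. The cleanest route, as outlined above, is to invoke the uniqueness and stability of Proposition \ref{wks} on the difference-quotient system rather than estimating \(\int p\,\nabla\cdot v\) directly. One must also check carefully that the coupling forms \(Q_1,Q_2\) commute with \(D^h_j\), which holds because they are defined through the horizontal Fourier transform of traces.
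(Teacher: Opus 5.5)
Your argument is correct. It is essentially the elementary route the paper only gestures at (``an argument similar to that of Theorem 2.5'' of Leoni--Tice). The paper's actual justification is a citation of the Agmon--Douglis--Nirenberg boundary regularity theory, and the two routes differ in what must be verified.

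\textbf{The ADN route.} You would have to check the complementing condition for the mixed-order Stokes--heat system with the coupled boundary operators \((rI-\mu\mathbb{D}w)e_n+\alpha_1(\nabla'\theta,0)\) and \(\kappa\partial_n\theta+\alpha_2\nabla'\cdot w'\). You would then absorb the lower-order terms of the resulting estimates, uniformly over the unbounded slab, using the weak bound of Proposition \ref{wks}.

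\textbf{Your route.} You exploit that \(\Omega\) is flat and horizontally translation invariant with constant coefficients. Applying Proposition \ref{wks} to the difference-quotient system uses only the coercivity already established there. So \(Q_1,Q_2\) enter solely through their translation invariance and never need to be analyzed as boundary operators, and every vertical derivative is then read off from the interior equations. The cost is that the argument is tied to this geometry. Non-integer \(s\) must also come from interpolation. That is legitimate, because uniqueness in Proposition \ref{wks} makes the solution map a single linear operator bounded at every integer level.

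\textbf{Minor slips.}
\begin{enumerate}
\item The divergence step should read \(\partial_n u_n\in H^1\), hence \(\partial_n^2u_n\in L^2\).
\item The opening test with \(D_j^{-h}D_j^h u\) is never used and can be dropped.
\item In Step 3, moving \(\partial_j\) onto test functions in \(\prescript{}{0}{H}^1\) needs a density argument, or a limit of the Step 1 difference-quotient identities, since \(\partial_j v\notin H^1\) in general.
\end{enumerate}
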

\begin{proof}
    This is a consequence of the regularity results for elliptic systems proved in \cite{agmon1964estimates}. A more elementary proof can be constructed using an argument similar to that of Theorem \(2.5\) in \cite{leoni2021travelingwavesolutionsfree}.
\end{proof}
Proposition \ref{msys} is now an immediate consequence of Proposition \ref{classicalsoln}.

\subsection{Characterization of solvability of \eqref{odp}}
Now that we have established the solvability of \eqref{mothereqn}, we proceed to prove Proposition \ref{Zsiso}, which characterizes the solvability of \eqref{odp}.

\begin{proof}[Proof of Proposition \ref{Zsiso}]
    Suppose that \(\gamma\in \mathbb{R}\) and \((\mu,\kappa,\sigma'(0)) \in \mathbb{R}^{+} \times \mathbb{R}^{+} \times \mathbb{R}\) satisfy
    \begin{align}
        \max\left\{\frac{\norm{Q_{1}}^{2}}{4},\frac{\norm{Q_{2}}^{2}}{4}\right\}\abs{\sigma'(0)}^{2} < 2\mu\kappa
    \end{align}
    where \(Q_{1}\) and \(Q_{2}\) are the bilinear forms defined in \eqref{Q1} and \eqref{Q2}. Let \(s \geq 0\) and \((u,\psi,p) \in \prescript{}{0}{H}^{s+2}(\Omega;\mathbb{R}^{n}) \times \prescript{}{0}{H}^{s+2}(\Omega;\mathbb{R})\times H^{s+1}(\Omega;\mathbb{R})\). By Theorem \(3.1\) of \cite{leoni2023traveling},
    \begin{align}
        \biggl[T_{5}(u,\psi,p)-\int_{0}^{b}T_{2}(u,\psi,p)(\cdot,x_{n})d x_{n}\biggr]_{\dot{H}^{-1}(\mathbb{R}^{n-1};\mathbb{R})} =& \biggl[u_{n} \mid_{\Sigma_{b}}-\int_{0}^{b}(\nabla \cdot u)(\cdot,x_{n})d x_{n}\biggr]_{\dot{H}^{-1}(\mathbb{R}^{n-1};\mathbb{R})} \nonumber\\
        \leq&2\pi\sqrt{b}\norm{u}_{L^{2}(\Omega;\mathbb{R}^{n})}.
    \end{align}
    For \(\chi \in H^{s+1/2}(\Sigma_{b};\mathbb{R})\), let \((v,q,\phi) \in \prescript{}{0}{H}^{s+2}(\Omega;\mathbb{R}^{n}) \times H^{s+1}(\Omega;\mathbb{R}) \times \prescript{}{0}{H}^{s+2}(\Omega;\mathbb{R})\) be the unique solution to the adjoint system (\ref{ap}) with \((f,g,l,k',m)=(0,0,0,0,0)\) and the normal stress condition \((qI-\mu\mathbb{D}v)e_{n} \cdot e_{n}=\chi\) on \(\Sigma_{b}\). Such a solution is guaranteed by Proposition \ref{msys} with \(\widetilde{\gamma}=\gamma\) and \((\mu,\kappa,\alpha_{1},\alpha_{2})=(\mu,\kappa,0,\sigma'(0)) \in O_{n,b}\). By Proposition \ref{compcond}, \(T(u,\psi,p) \in \mathcal{Z}^{s}\). Hence, \(T: \prescript{}{0}{H}^{s+2}(\Omega;\mathbb{R}^{n}) \times \prescript{}{0}{H}^{s+2}(\Omega;\mathbb{R})\times H^{s+1}(\Omega;\mathbb{R}) \to \mathcal{Z}^{s}\) is well-defined. To show injectivity of \(T\), let \(T(u,\psi,p) =0\). By Proposition \ref{msys} with \(\widetilde{\gamma}=-\gamma\) and \((\mu,\kappa,\alpha_{1},\alpha_{2})=(\mu,\kappa,\sigma'(0),0)\), we obtain
    \begin{align}
        &\Phi_{-\gamma,\mu,\kappa,\sigma'(0),0}(u,\psi,p) \nonumber\\
        =&(-\gamma\partial_{1}u+\nabla\cdot (pI-\mu\mathbb{D}u), \nabla \cdot u, -\gamma\partial_{1}\psi-\kappa\Delta\psi,(pI-\mu\mathbb{D}u)\mid_{\Sigma_{b}}e_{n}+(\sigma'(0)\nabla'\psi\mid_{\Sigma_{b}},0),\kappa\partial_{n}\psi \mid_{\Sigma_{b}}) \nonumber\\
        =&(0,0,0,0,0).
    \end{align}
    Then \((u,\psi,p)=(0,0,0)\), which shows that \(T\) is injective. To show surjectivity of \(T\), let \((f,g,l,k,h,m) \in \mathcal{Z}^{s}\). For any \(\chi \in H^{s+1/2}(\Sigma_{b};\mathbb{R})\), let \((v,q,\phi) \in \prescript{}{0}{H}^{s+2}(\Omega;\mathbb{R}^{n}) \times H^{s+1}(\Omega;\mathbb{R}) \times \prescript{}{0}{H}^{s+2}(\Omega;\mathbb{R})\) be the unique solution to the adjoint system (\ref{ap}) with \((f,g,l,k',m)=(0,0,0,0,0)\) and the normal stress condition \((qI-\mu\mathbb{D}v)e_{n} \cdot e_{n}=\chi\) on \(\Sigma_{b}\). Such a solution is guaranteed by Proposition \ref{msys} with \(\widetilde{\gamma}=\gamma\) and \((\mu,\kappa,\alpha_{1},\alpha_{2})=(\mu,\kappa,0,\sigma'(0)) \in O_{n,b}\). Then
    \begin{align}
        \int_{\Omega}f \cdot v -gq+l\phi=\int_{\Sigma_{b}}k \cdot v-m\phi-h\chi.
    \end{align}
    By Proposition \ref{msys} with \(\widetilde{\gamma}=-\gamma\) and \((\mu,\kappa,\alpha_{1},\alpha_{2})=(\mu,\kappa,\sigma'(0),0)\), there exist \((u,\psi,p) \in \prescript{}{0}{H}^{s+2}(\Omega;\mathbb{R}^{n}) \times \prescript{}{0}{H}^{s+2}(\Omega;\mathbb{R}) \times H^{s+1}(\Omega;\mathbb{R})\) such that
    \begin{align}
        &\Phi_{-\gamma,\mu,\kappa,\sigma'(0),0}(u,\psi,p) \nonumber\\
        =&(-\gamma\partial_{1}u+\nabla\cdot (pI-\mu\mathbb{D}u), \nabla \cdot u, -\gamma\partial_{1}\psi-\kappa\Delta\psi,(pI-\mu\mathbb{D}u)\mid_{\Sigma_{b}}e_{n}+(\sigma'(0)\nabla'\psi\mid_{\Sigma_{b}},0),\kappa\partial_{n}\psi \mid_{\Sigma_{b}}) \nonumber\\
        =&(f,g,l,k,m).
    \end{align}
    Then by Proposition \ref{compcond},
    \begin{equation}
        \int_{\Omega}f \cdot v -gq+l\phi=\int_{\Sigma_{b}}k \cdot v-m\phi-u_{n}\mid_{\Sigma_{b}}\chi \text{ and }\int_{\Sigma_{b}} h \chi=\int_{\Sigma_{b}}u_{n} \mid_{\Sigma_{b}}\chi,
    \end{equation}
    from which we deduce that \(h=u_{n} \mid_{\Sigma_{b}}\). Since \(T(u,\psi,p)=(f,g,l,k,h,m)\), \(T\) is indeed surjective. Lastly, it is straightforward to see that \(T\) is linear and bounded.
\end{proof}

\section{Fourier Symbols: The Hilbert Space Setting} \label{fst}
In this section, we adapt to our setting the theory of Fourier symbols for translation commuting, bounded linear maps as presented in Section \(3.1\) of \cite{MR4787851}. We record some essential definitions first. Let \(V\) and \(W\) be separable Hilbert spaces over \(\mathbb{C}\). A bounded linear map \(T:L^{2}(\mathbb{R}^{d};V) \to L^{2}(\mathbb{R}^{d};W)\) is translation commuting if \((Tf)(x + h)=T(f(\cdot + h))(x)\) for every \(f \in L^{2}(\mathbb{R}^{d};V)\) and every \(x, h \in \mathbb{R}^{d}\). Let \(X\) be a complete \(\sigma\)-finite measure space and \(Y\) a Banach space. A function \(g:X \to Y\) is Bochner measurable if it is the almost everywhere limit of a sequence of finite-valued measurable simple functions. Let \(\mathcal{L}(V,W)\) be the set of all linear maps from \(V\) to \(W\). A function \(f: X \to \mathcal{L}(V,W)\) is strongly measurable if for all \(v \in V\), the map \(f_{v}: X \to W\) is Bochner measurable. We let
\begin{align}
    &L_{*}^{\infty}(X;\mathcal{L}(V,W)) =\{[f]\mid \mbox{ \(f: X \to \mathcal{L}(V,W)\) is strongly measurable and \(\norm{[f]}_{L_{*}^{\infty}\mathcal{L}(V;W)}<\infty\)}\},
\end{align}
where \([f]\) is the equivalence class of \(f\) formed via almost everywhere equality and
\begin{align}
    \norm{[f]}_{L_{*}^{\infty}\mathcal{L}(V;W)}=&\mbox{esssup}_{x \in X}\norm{f(x)}_{\mathcal{L}(V;W)} =\inf\{\mbox{\(C \in \mathbb{R}^{+} \mid \norm{f}_{\mathcal{L}(V;W)} \leq C\) a.e.}\}.
\end{align}
This space extends the notion of essential supremum to the space of strongly measurable \(\mathcal{L}(V;W)\)-valued maps. From now on, we omit the equivalence class notation. For a bounded linear map \(T: L^{2}(\mathbb{R}^{d};V) \to L^{2}(\mathbb{R}^{d};W)\), if there exists \(m \in L^{\infty}_{*}(\mathbb{R}^{d};\mathcal{L}(V;W))\) such that \(Tf=\mathcal{F}^{-1}(m\mathcal{F}(f))\) for every \(f\in L^{2}(\mathbb{R}^{d};V)\), then we write \(T=m(D)\). If \(\sigma \in L^{\infty}(\mathbb{R}^{d};\mathbb{C})\), then we define \(\sigma(D): \mathscr{S}(\mathbb{R}^{d};X) \to \mathscr{S}'(\mathbb{R}^{d};X)\) via \(\sigma(D)f=\mathcal{F}^{-1}(\sigma\mathcal{F}f)\). To see that \(\sigma(D)\) is well-defined, observe that since \(\sigma \in L^{\infty}(\mathbb{R}^{d};\mathbb{C})\) and \(\mathcal{F}f \in \mathscr{S}(\mathbb{R}^{d};X)\), we have \(\sigma\mathcal{F}f \in L^{2}(\mathbb{R}^{d};X)\). Then \(\sigma(D)f=\mathcal{F}^{-1}(\sigma\mathcal{F}f) \in L^{2}(\mathbb{R}^{d};X)\). Since \(g \in \mathscr{S}(\mathbb{R}^{d};X) \mapsto \int_{\mathbb{R}^{d}}(\sigma(D)f)(x)g(x)dx\) is well-defined, \(\sigma(D)f \in \mathscr{S}'(\mathbb{R}^{d};X)\).

In the rest of the section, we prove the following result, which is an analog of Corollary \(3.6\) of \cite{MR4787851}.
\begin{proposition} \label{fs}
    Let \(V\) and \(W\) be separable Hilbert spaces over \(\mathbb{C}\) and \(t,s \in \mathbb{R}\). If \(T: H^{t}(\mathbb{R}^{d};V) \to H^{s}(\mathbb{R}^{d};W)\) is a translation commuting and continuous linear map, then there exists a unique (up to modification on sets of measure zero) locally essentially bounded and strongly measurable function \(m: \mathbb{R}^{d} \to \mathcal{L}(V;W)\) such that \(T=\mathcal{F}^{-1}m\mathcal{F}\). For \(\xi \in \mathbb{R}^{d}\) a.e.,
    \begin{align}
        \langle \xi \rangle^{s}\norm{m(\xi)x}_{W} \lesssim \norm{T}\langle \xi\rangle^{t}\norm{x}_{V} \mbox{ for all \(x \in V\),}
    \end{align}
    where the implicit constant depends only on \(d\), \(s\), and \(t\).
\end{proposition}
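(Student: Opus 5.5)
The plan is to reduce to the $L^{2}\to L^{2}$ case, where the symbol exists by the theory recalled above (Section 3.1 of \cite{MR4787851}), and then undo the reduction with Bessel potentials. For $r\in\mathbb{R}$ let $J^{r}=\langle D\rangle^{r}$, the multiplier with symbol $\langle\xi\rangle^{r}$. It is an isometric isomorphism $H^{a}(\mathbb{R}^{d};V)\to H^{a-r}(\mathbb{R}^{d};V)$ for every $a$, if we use the norm $\norm{f}_{H^{a}}=\norm{\langle\xi\rangle^{a}\hat f}_{L^{2}}$; any other choice of norm changes constants depending only on $d,s,t$. It also commutes with translations. We set
\begin{align}
\tilde T = J^{s}\circ T\circ J^{-t}: L^{2}(\mathbb{R}^{d};V)\to L^{2}(\mathbb{R}^{d};W).
\end{align}
This is a bounded linear map with $\norm{\tilde T}\lesssim_{d,s,t}\norm{T}$, and it is translation commuting because it is a composition of translation commuting maps.

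Next we invoke the $L^{2}$ representation theorem, the analogue of Theorem 3.5 of \cite{MR4787851}. It gives $\tilde m\in L^{\infty}_{*}(\mathbb{R}^{d};\mathcal{L}(V;W))$ with $\tilde T=\tilde m(D)$ and $\norm{\tilde m}_{L^{\infty}_{*}}\le\norm{\tilde T}$.

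If we must reprove this representation in the vector-valued setting, the route is as follows.
\begin{enumerate}
\item Conjugate by $\mathcal{F}$. Then $S=\mathcal{F}\tilde T\mathcal{F}^{-1}$ commutes with multiplication by the characters $e^{2\pi i h\cdot\xi}$.
\item Linear combinations of characters are weak-$*$ dense in $L^{\infty}$, so $S$ commutes with multiplication by every bounded scalar function.
\item Fix an orthonormal basis $\{e_{j}\}$ of $V$ and a Gaussian $g$. Define $m(\xi)e_{j}=S(g e_{j})(\xi)/g(\xi)$ and extend linearly on finite combinations.
\item Show the bound $\norm{m(\xi)x}_{W}\le\norm{S}\norm{x}_{V}$ for almost every $\xi$, simultaneously for $x$ in a countable dense set with rational coefficients. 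To do this, test $S$ on $\mathbb{1}_{E}g x$ for sets $E$ of small measure.
\item Extend $m(\xi)$ by density to all of $V$. Strong measurability is inherited.
\item Prove $S f=m f$ first on simple functions, then on all of $L^{2}(\mathbb{R}^{d};V)$ by density.
\end{enumerate}
The step I expect to be the main obstacle is this measure-theoretic part: making the a.e.\ operator bound hold on one null set for all $x$ at once. Separability of $V$ is exactly what makes the countable dense set available.

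We then define $m(\xi)=\langle\xi\rangle^{-s}\tilde m(\xi)\langle\xi\rangle^{t}$. This $m$ is strongly measurable and locally essentially bounded, since $\langle\xi\rangle^{t-s}$ is continuous. For $f\in H^{t}$,
\begin{align}
Tf=J^{-s}\tilde T J^{t}f=\mathcal{F}^{-1}\bigl(\langle\xi\rangle^{-s}\tilde m\langle\xi\rangle^{t}\hat f\bigr)=\mathcal{F}^{-1}(m\mathcal{F}f).
\end{align}
For almost every $\xi$ and every $x\in V$,
\begin{align}
\langle\xi\rangle^{s}\norm{m(\xi)x}_{W}=\langle\xi\rangle^{t}\norm{\tilde m(\xi)x}_{W}\le\norm{\tilde T}\langle\xi\rangle^{t}\norm{x}_{V}\lesssim_{d,s,t}\norm{T}\langle\xi\rangle^{t}\norm{x}_{V},
\end{align}
which is the stated estimate.

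For uniqueness, suppose $m_{1},m_{2}$ both represent $T$. Then $(m_{1}-m_{2})(\xi)\hat f(\xi)=0$ almost everywhere for every $f\in H^{t}$. Take $\hat f=\varphi e_{j}$ with $\varphi\in C^{\infty}_{c}$, running over countably many cutoffs covering $\mathbb{R}^{d}$ and over all $j$. This shows $m_{1}(\xi)e_{j}=m_{2}(\xi)e_{j}$ for all $j$ off a single null set. Local boundedness and linearity then give $m_{1}=m_{2}$ almost everywhere.
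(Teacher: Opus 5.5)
Your proof is correct, but it takes a different and more direct route than the paper.

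The paper never inverts the Bessel potential on the source side. It splits frequency space into dyadic annuli $A_l$ and applies the $L^{2}$ representation theorem (Theorem 3.5 of \cite{MR4787851}) separately to each $\mathcal{J}^{s}\circ T\circ\chi_{A_l}(D)$, whose norm it bounds by $\langle 2^{l}\rangle^{t}\norm{T}$. It then glues the disjointly supported symbols $m_l=\langle\xi\rangle^{-s}\widetilde{m_l}\chi_{A_l}$. The weighted bound comes from comparing $\langle 2^{l}\rangle$ with $\langle\xi\rangle$ on $A_l$, which is where the $s,t$-dependent constant enters.

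Your conjugation $\tilde T=J^{s}TJ^{-t}$ replaces all of this with a single application of the $L^{2}$ theorem, and this buys you three things:
\begin{enumerate}
\item The bound comes out with constant $1$ for the $\langle\xi\rangle$-weighted norms.
\item The identity $T=\mathcal{F}^{-1}m\mathcal{F}$ holds on all of $H^{t}$ at once. In the paper one still has to sum $T\chi_{A_l}(D)f$ over $l$ and use continuity of $T$, a step left implicit there.
\item You give the uniqueness argument, which the paper does not write out.
\end{enumerate}

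What the dyadic route buys is localized information. For a.e.\ $\xi\in A_l$ it gives $\norm{m(\xi)}\le\langle\xi\rangle^{-s}$ times the $L^{2}\to L^{2}$ norm of $\mathcal{J}^{s}T\chi_{A_l}(D)$. That would matter if one only controlled $T$ on frequency-localized inputs, but the statement as posed does not need it.

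Your optional sketch of the vector-valued $L^{2}$ theorem is sound but unnecessary, since the paper also just cites it.
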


\begin{proof}
    For \(0 \leq l \in \mathbb{N}\), let $A_0 = B(0,1)$ and $A_{l} = B(0,2^{l}) \setminus B(0,2^{l-1})$ for $l \geq 1$.     Since \(\chi_{A_{l}} \in L^{\infty}(\mathbb{R}^{d};\mathbb{C})\), the map \(\chi_{A_{l}}(D):\mathscr{S}(\mathbb{R}^{d};X) \to \mathscr{S}'(\mathbb{R}^{d};X)\) is well-defined. Letting \(\norm{x}_{V^{(l)}}=\langle 2^{l} \rangle^{t}\norm{x}_{V}\) and \(\norm{y}_{W^{(l)}}=\langle 2^{l} \rangle^{s}\norm{y}_{W}\) for \(x \in V\) and \(y \in W\), we define \(W^{(l)}\) to be the \(W\) equipped with the norm \(\norm{\cdot}_{W^{(l)}}\) and \(V^{(l)}\) to be the \(V\) equipped with the norm \(\norm{\cdot}_{V^{(l)}}\). For \(f \in H^{s}(\mathbb{R}^{d};W)\), let \((\mathcal{J}^{s}f)(x)=\mathcal{F}^{-1}(\langle \cdot \rangle^{s}\mathcal{F}(f)(\cdot))(x) \in L^{2}(\mathbb{R}^{d};W)\), which is well-defined since
    \begin{align}
        \norm{\mathcal{J}^{s}f}_{L^{2}(\mathbb{R}^{d};W)}^{2}=\int_{\mathbb{R}^{d}}\norm{\langle \xi \rangle^{s}\mathcal{F}(f)(\xi)}_{W}^{2}d\xi=\int_{\mathbb{R}^{d}}\langle \xi \rangle^{2s}\norm{\mathcal{F}(f)(\xi)}_{W}^{2}d\xi=\norm{f}_{H^{s}}^{2}<\infty.
    \end{align}
    Letting \(T_{l}=T \circ \chi_{A_{l}}(D)\), we define \(\widetilde{T_{l}}:L^{2}(\mathbb{R}^{d};V) \to L^{2}(\mathbb{R}^{d};W)\) by \(\widetilde{T_{l}}=\mathcal{J}^{s}\circ T_{l}\). The map \(\widetilde{T_{l}}\) is bounded because if \(f \in L^{2}(\mathbb{R}^{d};V)\) then
    \begin{align}
        \norm{\widetilde{T_{l}}f}_{L^{2}(\mathbb{R}^{d};W)}^{2}=&\int_{\mathbb{R}^{d}}\norm{(\widetilde{T_{l}}f)(x)}_{W}^{2}dx =\int_{\mathbb{R}^{d}}\norm{\langle \xi \rangle^{s}\mathcal{F}(T_{l}f)(\xi)}_{W}^{2}d\xi =\int_{\mathbb{R}^{d}}\langle\xi\rangle^{2s}\norm{\mathcal{F}(T_{l}f)(\xi)}_{W}^{2}d\xi \nonumber \\
        =& \norm{T\chi_{A_{l}}(D)f}_{H^{s}(\mathbb{R}^{d};W)}^{2} \leq \norm{T}_{\mathcal{L}(H^{t}(\mathbb{R}^{d};V);H^{s}(\mathbb{R}^{d};W))}^{2}\norm{\chi_{A_{l}}(D)f}_{H^{t}(\mathbb{R}^{d};V)}^{2},
    \end{align}
    where
    \begin{align}
        \norm{\chi_{A_{l}}(D)f}_{H^{t}(\mathbb{R}^{d};V)}^{2}=&\int_{\mathbb{R}^{d}}\langle \xi\rangle^{2t}\norm{\mathcal{F}(\chi_{A_{l}}(D)f)(\xi)}_{V}^{2}d\xi =\int_{\mathbb{R}^{d}}\langle\xi \rangle^{2t}\norm{\chi_{A_{l}}(\xi)\mathcal{F}(f)(\xi)}_{V}^{2}d\xi \nonumber\\
        =&\int_{A_{l}}\langle \xi \rangle^{2t}\norm{\mathcal{F}(f)(\xi)}_{V}^{2}d\xi \leq \langle 2^{l} \rangle^{2t}\norm{f}_{L^{2}(\mathbb{R}^{d};V)}^{2}<\infty.
    \end{align}
    It follows that \(\norm{\widetilde{T_{l}}f}_{L^{2}(\mathbb{R}^{d};W)} \leq \norm{T}_{\mathcal{L}(H^{t}(\mathbb{R}^{d};V);H^{s}(\mathbb{R}^{d};W))}\langle 2^{l} \rangle^{t}\norm{f}_{L^{2}(\mathbb{R}^{d};V)}\). Hence,
    \begin{align}
        \norm{\widetilde{T_{l}}}_{\mathcal{L}(L^{2}(\mathbb{R}^{d};V);L^{2}(\mathbb{R}^{d};W))} \leq \langle 2^{l} \rangle^{t}\norm{T}_{\mathcal{L}(H^{t}(\mathbb{R}^{d};V);H^{s}(\mathbb{R}^{d};W))}.
    \end{align}
    If \(\tau^{\alpha}\) is the operator for translation by \(-\alpha \in \mathbb{R}^{d}\), then
    \begin{align}
        \mathcal{F}(\mathcal{J}^{s}\tau^{\alpha}f)(\xi)=\langle \xi\rangle^{s}\mathcal{F}(\tau^{\alpha}f)(\xi)=\langle\xi \rangle^{s}e^{-2\pi i \alpha \cdot \xi}\mathcal{F}(f)(\xi) =e^{-2\pi i \alpha \cdot \xi}\mathcal{F}(\mathcal{J}^{s}f)(\xi)=\mathcal{F}(\tau^{\alpha}\mathcal{J}^{s}f)(\xi).
    \end{align}
    Thus, \(\mathcal{J}^{s}\tau^{\alpha}f = \tau^{\alpha}\mathcal{J}^{s}f\). Since \(T_{l}\) is translation commuting, we conclude that \(\widetilde{T_{l}}\) is translation commuting. By Theorem \(3.5\) of \cite{MR4787851}, there exists \(\widetilde{m_{l}} \in L^{\infty}_{*}(\mathbb{R}^{d};\mathcal{L}(V,W))\) such that \(\widetilde{T_{l}}=\widetilde{m_{l}}(D)\) and
    \begin{align}
        \norm{\widetilde{m_{l}}}_{L^{\infty}_{*}(\mathbb{R}^{d};\mathcal{L}(V,W))}=\norm{\widetilde{T_{l}}}_{\mathcal{L}(L^{2}(\mathbb{R}^{d};V); L^{2}(\mathbb{R}^{d};W))}.
    \end{align}
    Note that  \(\mathcal{J}^{-s}\circ \mathcal{J}^{s}=id\) implies  \(T_{l}=\mathcal{J}^{-s}\circ \widetilde{T_{l}}=\mathcal{J}^{-s}\circ \widetilde{m_{l}}(D)\). Similarly,  $T_{l} \circ \chi_{A_{l}}(D)   = (T \circ \chi_{A_{l}}(D))\circ \chi_{A_{l}}(D)   =T \circ \chi_{A_{l}}(D)   = T_{l}$. Therefore, \(T_{l}=\mathcal{J}^{-s}\circ \widetilde{m_{l}}(D) \circ \chi_{A_{l}}(D)\). It follows that \(T_{l}\) has a Fourier multiplier \(m_{l}(\xi)=\langle \xi\rangle^{-s}\widetilde{m_{l}}(\xi)\chi_{A_{l}}(\xi)\). Since
    \begin{align}
        \norm{m_{l}(\xi)}_{\mathcal{L}(V;W)}\leq& \langle 2^{l} \rangle^{-s}\chi_{A_{l}}(\xi)\norm{\widetilde{m_{l}}(\xi)}_{\mathcal{L}(V;W)} \leq \langle 2^{l} \rangle^{-s} \norm{\widetilde{m_{l}}}_{L^{\infty}_{*}(\mathbb{R}^{d};\mathcal{L}(V;W))} \nonumber\\
        =&\langle 2^{l}\rangle^{-s} \norm{\widetilde{T_{l}}}_{\mathcal{L}(L^{2}(\mathbb{R}^{d};V);L^{2}(\mathbb{R}^{d};W))} \leq \langle 2^{l} \rangle^{-s}\langle 2^{l}\rangle^{t}\norm{T}_{\mathcal{L}(H^{t}(\mathbb{R}^{d};V);H^{s}(\mathbb{R}^{d};W))},
    \end{align}
    we obtain \(\norm{m_{l}(\xi)x}_{W} \leq  \langle 2^{l} \rangle^{-s} \langle 2^{l}\rangle^{t}\norm{T}_{\mathcal{L}(H^{t}(\mathbb{R}^{d};V);H^{s}(\mathbb{R}^{d};W))}\norm{x}_{V}\) for all \(x \in V\). Let us take \(m=\sum_{l=0}^{\infty}m_{l}\). Then
    \begin{align}
        m(\xi) = \sum_{l=0}^{\infty}m_{l}(\xi) = \sum_{l=0}^{\infty}\langle \xi\rangle^{-s}\widetilde{m_{l}}(\xi)\chi_{A_{l}}(\xi),
    \end{align}
    where the summands have pairwise disjoint supports. For each \(\xi \in \mathbb{R}^{d}\), there exists \(l^{*} \in \mathbb{N}\) such that for all \(x \in V\),
    \begin{align}
        \norm{m(\xi)x}_{W} \leq& \norm{m(\xi)}_{\mathcal{L}(V;W)}\norm{x}_{V} =\norm{\langle \xi \rangle^{-s}\widetilde{m_{l^{*}}}(\xi)\chi_{A_{l^{*}}}(\xi)}_{\mathcal{L}(V;W)}\norm{x}_{V}=\norm{m_{l^{*}}(\xi)}_{\mathcal{L}(V;W)}\norm{x}_{V}.
    \end{align}
    Since
    \begin{align}
        \norm{m_{l^{*}}(\xi)}_{\mathcal{L}(V;W)}\norm{x}_{V} &\leq \langle 2^{l^{*}} \rangle^{-s}\langle 2^{l^{*}} \rangle^{t}\norm{T}_{\mathcal{L}(H^{t}(\mathbb{R}^{d};V); H^{s}(\mathbb{R}^{d};W))}\norm{x}_{V} \nonumber\\
        &\lesssim \langle\xi \rangle^{-s}\langle\xi \rangle^{t}\norm{T}_{\mathcal{L}(H^{t}(\mathbb{R}^{d};V);H^{s}(\mathbb{R}^{d};W))}\norm{x}_{V},
    \end{align}
    we obtain \(\langle \xi \rangle^{s}\norm{m(\xi)x}_{W} \lesssim \norm{T}_{\mathcal{L}(H^{t}(\mathbb{R}^{d};V);H^{s}(\mathbb{R}^{d};W))}\langle \xi\rangle^{t}\norm{x}_{V}\).
\end{proof}

\section{Instantiation of Fourier Symbols} \label{instfs}
In this section, we apply Proposition \ref{fs} to instantiate Fourier symbols associated with the adjoint system \eqref{ap} with \((f,g,l,k',m)=(0,0,0,0,0)\) and a normal stress condition on \(\Sigma_{b}\). As Proposition \ref{fs} is formulated with \(\mathbb{C}\) as the base field, we need to extend all of the maps associated with the augmented adjoint system to the complex setting. Let \(\chi_{r},\chi_{i} \in H^{s+1/2}(\mathbb{R}^{n-1};\mathbb{R})\). They are the real and imaginary parts, respectively, of the normal stress \(\chi=\chi_{r}+i\chi_{i}\). By Proposition \ref{msys}, if \(\widetilde{\gamma} \in \mathbb{R}\) and \((\mu, \kappa,\alpha_{1},\alpha_{2}) \in O_{n,b}\) in Proposition \ref{paramsp}, then the adjoint system augmented with a real normal stress condition admits an isomorphism of Banach spaces
\begin{align}
    \Phi_{\widetilde{\gamma},\mu,\kappa,\alpha_{1},\alpha_{2}}:&\prescript{}{0}{H}^{s+2}(\Omega;\mathbb{R}^{n}) \times \prescript{}{0}{H}^{s+2}(\Omega;\mathbb{R}) \times H^{s+1}(\Omega;\mathbb{R}) \nonumber\\
    &\to H^{s}(\Omega;\mathbb{R}^{n}) \times H^{s+1}(\Omega;\mathbb{R}) \times H^{s}(\Omega;\mathbb{R}) \times H^{s+1/2}(\Sigma_{b};\mathbb{R}^{n}) \times H^{s+1/2}(\Sigma_{b};\mathbb{R})
\end{align}
that maps the solution triple \((w,\theta,r)\) to the data \((f,g,l,k,m)\). For
\begin{equation}
    (v_{r},\phi_{r},q_{r}) = \Phi^{-1}_{\gamma,\mu,\kappa,0,\sigma'(0)}(0,0,0,\chi_{r}e_{n},0) \text{ and }
    (v_{i},\phi_{i},q_{i}) =\Phi^{-1}_{\gamma,\mu,\kappa,0,\sigma'(0)}(0,0,0,\chi_{i}e_{n},0),
\end{equation}
consider the maps
\begin{align}
    &\chi_{r,i} \in H^{s+1/2}(\mathbb{R}^{n-1};\mathbb{R}) \xmapsto{\mathfrak{T}_{v}} v_{r,i} \in H^{s+2}(\mathbb{R}^{n-1}; L^{2}((0,b);\mathbb{R}^{n})) \label{rvmap} \\
    &\chi_{r,i} \in H^{s+1/2}(\mathbb{R}^{n-1};\mathbb{R}) \xmapsto{\mathfrak{T}_{v_{n}\mid_{\Sigma_{b}}}} (v_{r,i})_{n} \mid_{\Sigma_{b}} \in H^{s+3/2}(\mathbb{R}^{n-1}; \mathbb{R}) \\
    &\chi_{r,i} \in H^{s+1/2}(\mathbb{R}^{n-1};\mathbb{R}) \xmapsto{\mathfrak{T}_{\phi}} \phi_{r,i} \in H^{s+2}(\mathbb{R}^{n-1}; L^{2}((0,b);\mathbb{R})) \\
    &\chi_{r,i} \in H^{s+1/2}(\mathbb{R}^{n-1};\mathbb{R}) \xmapsto{\mathfrak{T}_{\phi\mid_{\Sigma_{b}}}} \phi_{r,i}\mid_{\Sigma_{b}} \in H^{s+3/2}(\mathbb{R}^{n-1}; \mathbb{R}) \\
    &\chi_{r,i} \in H^{s+1/2}(\mathbb{R}^{n-1};\mathbb{R}) \xmapsto{\mathfrak{T}_{q}} q_{r,i} \in H^{s+1}(\mathbb{R}^{n-1}; L^{2}((0,b);\mathbb{R})), \label{rqmap}
\end{align}
extended in the obvious way to complex setting via $\chi = \chi_r + i \chi_i \mapsto \mathfrak{T}_\ast \chi_r + i \mathfrak{T}_\ast \chi_i = \mathfrak{T}_\ast \chi$.  Applying  Proposition \ref{fs} to these, 
we obtain locally essentially bounded and strongly measurable functions $\omega_{v}: \mathbb{R}^{n-1} \to \mathcal{L}(\mathbb{C}; L^{2}((0,b);\mathbb{C}^{n}))$ $\omega_{v_{n}\mid_{\Sigma_{b}}} : \mathbb{R}^{n-1} \to \mathcal{L}(\mathbb{C}; \mathbb{C})$, 
$\omega_{\phi} : \mathbb{R}^{n-1} \to \mathcal{L}(\mathbb{C}; L^{2}((0,b);\mathbb{C}))$, 
$\omega_{\phi\mid_{\Sigma_{b}}}: \mathbb{R}^{n-1} \to \mathcal{L}(\mathbb{C}; \mathbb{C})$, and $\omega_{q} : \mathbb{R}^{n-1} \to \mathcal{L}(\mathbb{C}; L^{2}((0,b);\mathbb{C}))$
that satisfy \(T_{v}=\mathcal{F}^{-1}\omega_{v}\mathcal{F}\); \(T_{v_{n}\mid_{\Sigma_{b}}}=\mathcal{F}^{-1}\omega_{v_{n}\mid_{\Sigma_{b}}}\mathcal{F}\); \(T_{\phi}=\mathcal{F}^{-1}\omega_{\phi}\mathcal{F}\); \(T_{\phi\mid_{\Sigma_{b}}}=\mathcal{F}^{-1}\omega_{\phi\mid_{\Sigma_{b}}}\mathcal{F}\); and \(T_{q}=\mathcal{F}^{-1}\omega_{q}\mathcal{F}\), and for a.e. \(\xi \in \mathbb{R}^{n-1}\),
\begin{align}
    \norm{\omega_{v}(\xi)x}_{L^{2}((0,b);\mathbb{C}^{n})} &\lesssim \langle\xi \rangle^{-(s+2)}\norm{T_{v}}\langle \xi\rangle^{s+1/2} \abs{x}, 
    &\abs{\omega_{v_{n} \mid_{\Sigma_{b}}}(\xi)x} &\lesssim \langle\xi \rangle^{-(s+3/2)}\norm{T_{v_{n}\mid_{\Sigma_{b}}}}\langle \xi\rangle^{s+1/2} \abs{x},  \nonumber\\
    \norm{\omega_{\phi}(\xi)x}_{L^{2}((0,b);\mathbb{C})} &\lesssim \langle\xi \rangle^{-(s+2)}\norm{T_{\phi}}\langle \xi\rangle^{s+1/2} \abs{x}, &
    \abs{\omega_{\phi\mid_{\Sigma_{b}}}(\xi)x}&\lesssim \langle\xi \rangle^{-(s+3/2)}\norm{T_{\phi\mid_{\Sigma_{b}}}}\langle \xi\rangle^{s+1/2} \abs{x}, \nonumber\\
    \norm{\omega_{q}(\xi)x}_{L^{2}((0,b);\mathbb{C})} &\lesssim \langle\xi \rangle^{-(s+1)}\norm{T_{q}}\langle \xi\rangle^{s+1/2} \abs{x},
\end{align}
for all \(x \in \mathbb{C}\).
Consequently, for a.e. \(\abs{\xi} >1\),
\begin{align}
\int_{0}^{b}\abs{\omega_{v}(\xi,x_{n})}^{2}dx_{n} &\lesssim \abs{\xi}^{-3},  &  \abs{\omega_{v_{n}\mid_{\Sigma_{b}}}(\xi)}  &\lesssim \abs{\xi}^{-1},  &  \int_{0}^{b}\abs{\omega_{\phi}(\xi,x_{n})}^{2}dx_{n} &\lesssim \abs{\xi}^{-3}, \nonumber \\
\abs{\omega_{\phi\mid_{\Sigma_{b}}}(\xi)} &\lesssim (1+\abs{\xi}^{2})^{-1/2},  &   \int_{0}^{b}\abs{\omega_{q}(\xi,x_{n})}^{2}dx_{n} &\lesssim (1+\abs{\xi}^{2})^{-1/2}.
\end{align}
 In Section  \ref{odetheory}, we obtain the asymptotics of these Fourier symbols for \(\abs{\xi} \leq 1\) via the ODE formulation of (\ref{mothereqn}).

\section{Estimates of Fourier Symbols via ODE Theory} \label{odetheory}
In this section, we compute the asymptotics of the Fourier symbols instantiated in Section \ref{instfs} for \(\abs{\xi} \leq 1\) via an ODE formulation of \eqref{mothereqn}. Applying the horizontal Fourier transform to \eqref{mothereqn} with \((\widetilde{\gamma},\alpha_{1},\alpha_{2})=(\gamma,0,\sigma'(0))\) and \((f,g,l,k,m)=(0,0,0,\chi e_{n},0)\), where \(\chi \in H^{1/2}(\mathbb{R}^{n-1};\mathbb{R})\), we obtain
\begin{align} \label{hftlinsys}
    \begin{cases}
        \gamma 2\pi i \xi_{1}\mathcal{F}(w')(\xi,x_{n})+2\pi i \xi\mathcal{F}(r)(\xi,x_{n}) -\mu(\partial_{n}^{2}-4\pi^{2}\abs{\xi}^{2})\mathcal{F}(w')(\xi,x_{n})=0 &\mbox{in \(\mathbb{R}^{n-1} \times (0,b)\)} \\
        \gamma 2\pi i \xi_{1}\mathcal{F}(w_{n})(\xi,x_{n})+\partial_{n}\mathcal{F}(r)(\xi,x_{n}) -\mu(\partial_{n}^{2}-4\pi^{2}\abs{\xi}^{2})\mathcal{F}(w_{n})(\xi,x_{n})=0 &\mbox{in \(\mathbb{R}^{n-1} \times (0,b)\)} \\
        2\pi i \xi \cdot \mathcal{F}(w')(\xi,x_{n})+\partial_{n}\mathcal{F}(w_{n})(\xi,x_{n})=0 &\mbox{in \(\mathbb{R}^{n-1} \times (0,b)\)} \\
        \gamma 2\pi i \xi_{1}\mathcal{F}(\theta)(\xi,x_{n})-\kappa(\partial_{n}^{2}-4\pi^{2}\abs{\xi}^{2})\mathcal{F}(\theta)(\xi,x_{n}) =0 &\mbox{in \(\mathbb{R}^{n-1} \times (0,b)\)} \\
        -\mu(\partial_{n}\mathcal{F}(w')(\xi,b)+2\pi i \xi\mathcal{F}(w_{n})(\xi,b))=0 &\mbox{on \(\mathbb{R}^{n-1}\)} \\
        \mathcal{F}(r)(\xi,b)-2\mu\partial_{n}\mathcal{F}(w_{n})(\xi,b)=\mathcal{F}(\chi)(\xi) &\mbox{on \(\mathbb{R}^{n-1}\)} \\
        \sigma'(0)2\pi i \xi \cdot \mathcal{F}(w')(\xi,b)+\kappa\partial_{n}\mathcal{F}(\theta)(\xi,b) = 0 &\mbox{on \(\mathbb{R}^{n-1}\)} \\
        \mathcal{F}(w)(\xi,0)=0 &\mbox{on \(\mathbb{R}^{n-1}\)} \\
        \mathcal{F}(\theta)(\xi,0) = 0 &\mbox{on \(\mathbb{R}^{n-1}\)}.        
    \end{cases}
\end{align}
Let \(q(\xi,x_{n})=\mathcal{F}(r)(\xi,x_{n})\); \(\varphi(\xi,x_{n}) = \mathcal{F}(w')(\xi,x_{n}) \cdot \frac{i\xi}{\abs{\xi}}\); \(\psi(\xi,x_{n}) = \mathcal{F}(w_{n})(\xi,x_{n})\); \(\delta(\xi,x_{n})=\mathcal{F}(\theta)(\xi,x_{n})\); and \(\beta(\xi,x_{n}) = (1-\frac{\xi \otimes \xi}{\abs{\xi}^{2}})\mathcal{F}(w')(\xi,x_{n})\). Then \(\mathcal{F}(w')(\xi,x_{n})=-i\varphi(\xi,x_{n})\frac{\xi}{\abs{\xi}}+\beta(\xi,x_{n})\). Under this change of variables, we obtain
\begin{align}
    \begin{cases}
        \gamma 2\pi i \xi_{1}\varphi - 2\pi \abs{\xi}q+\mu(-\partial_{n}^{2}+4\pi^{2}\abs{\xi}^{2})\varphi = 0 &\mbox{in \((0,b)\)} \\
        \gamma 2\pi i \xi_{1}\psi + \partial_{n}q+\mu(-\partial_{n}^{2}+4\pi^{2}\abs{\xi}^{2})\psi = 0 &\mbox{in \((0,b)\)} \\
        2\pi\abs{\xi}\varphi + \partial_{n}\psi = 0 &\mbox{in \((0,b)\)} \\
        \gamma 2\pi i \xi_{1}\delta + \kappa(-\partial_{n}^{2}+4\pi^{2}\abs{\xi}^{2})\delta = 0 &\mbox{in \((0,b)\)} \\
        -\mu(\partial_{n}\varphi - 2\pi\abs{\xi}\psi)= 0 &\mbox{for \(x_{n}=b\)} \\
        q - 2\mu\partial_{n}\psi = \mathcal{F}(\chi)(\xi) &\mbox{for \(x_{n}=b\)} \\
        \sigma'(0) 2\pi \abs{\xi}\varphi + \kappa\partial_{n}\delta = 0 &\mbox{for \(x_{n}=b\)} \\
        \varphi = \psi = \delta = 0 &\mbox{for \(x_{n}=0\)},
    \end{cases}
\end{align}
where \(\beta\) is governed by
\begin{align}
    \begin{cases}
        \gamma 2\pi i \xi_{1}\beta - \mu(\partial_{n}^{2}-4\pi^{2}\abs{\xi}^{2})\beta = 0 &\mbox{in \((0,b)\)} \\
        -\mu\partial_{n}\beta = 0 &\mbox{for \(x_{n}=b\)} \\
        \beta = 0 &\mbox{for \(x_{n}=0\)}.
    \end{cases}
\end{align}
Let us consider this system with general data. For \(F=(F_{1},F_{2}) \in L^{2}((0,b);\mathbb{C}^{2})\), \(G \in H^{1}((0,b);\mathbb{C})\), \(L \in L^{2}((0,b);\mathbb{C})\), \(K=(K_{1},K_{2}) \in \mathbb{C}^{2}\), and \(M \in \mathbb{C}\), consider
\begin{align}
    \begin{cases}
        \gamma 2\pi i \xi_{1}\varphi-2\pi\abs{\xi}q+\mu(-\partial_{n}^{2}+4\pi^{2}\abs{\xi}^{2})\varphi = F_{1} &\mbox{in \((0,b)\)} \\
        \gamma 2\pi i \xi_{1}\psi+ \partial_{n}q+\mu(-\partial_{n}^{2}+4\pi^{2}\abs{\xi}^{2})\psi = F_{2} &\mbox{in \((0,b)\)} \\
        2\pi \abs{\xi}\varphi + \partial_{n}\psi = G &\mbox{in \((0,b)\)} \\
        \gamma 2\pi i \xi_{1}\delta+\kappa(-\partial_{n}^{2}+4\pi^{2}\abs{\xi}^{2})\delta = L &\mbox{in \((0,b)\)} \\
        -\mu(\partial_{n}\varphi -2\pi\abs{\xi}\psi) = K_{1} &\mbox{for \(x_{n}=b\)} \\
        q-2\mu\partial_{n}\psi=K_{2} &\mbox{for \(x_{n}=b\)} \\
        \sigma'(0)2\pi \abs{\xi}\varphi +\kappa\partial_{n}\delta = M &\mbox{for \(x_{n}=b\)} \\
        \varphi=\psi=\delta=0 &\mbox{for \(x_{n}=0\)}.
    \end{cases}
\end{align}
Letting \(y = (\varphi,\psi,\delta,q,\partial_{n}\varphi,\partial_{n}\delta)\), we obtain
\begin{align} \label{odebulk}
    \begin{cases}
        \partial_{n}y_{1}=y_{5} &\mbox{in \((0,b)\)} \\
        \partial_{n}y_{2}=G-2\pi\abs{\xi}y_{1} &\mbox{in \((0,b)\)} \\
        \partial_{n}y_{3}=y_{6} &\mbox{in \((0,b)\)} \\
        \partial_{n}y_{4}=F_{2}+\mu(-2\pi\abs{\xi}y_{5}+\partial_{n}G)-\mu 4\pi^{2}\abs{\xi}^{2}y_{2} -\gamma 2\pi i \xi_{1}y_{2} &\mbox{in \((0,b)\)} \\
        \partial_{n}y_{5}=-\frac{1}{\mu}(F_{1}-\mu 4\pi^{2}\abs{\xi}^{2}y_{1}+2\pi \abs{\xi}y_{4}-\gamma 2\pi i \xi_{1}y_{1}) &\mbox{in \((0,b)\)} \\
        \partial_{n}y_{6}=-\frac{1}{\kappa}(L-\kappa 4\pi^{2}\abs{\xi}^{2}y_3 - \gamma 2\pi i \xi_{1} y_3) &\mbox{in \((0,b)\)}
    \end{cases}
\end{align}
with boundary conditions
\begin{align} \label{odebound}
    \begin{cases}
        -\mu(y_{5}-2\pi\abs{\xi}y_{2}) =K_{1} &\mbox{for \(x_{n}=b\)} \\
        y_{4}-2\mu(G-2\pi\abs{\xi}y_{1})=K_{2} &\mbox{for \(x_{n}=b\)} \\
        \kappa y_{6}+\sigma'(0) 2\pi \abs{\xi}y_{1}=M &\mbox{for \(x_{n}=b\)} \\
        y_{1}=y_{2}=y_{3}=0 &\mbox{for \(x_{n}=0\)}.
    \end{cases}
\end{align}
More compactly, we have
\begin{align}
    \begin{cases}
        \partial_{n}y=Ay+z &\mbox{in \((0,b)\)} \\
        My(0)+Ny(b)=d,
    \end{cases}
\end{align}
where
\begin{align}
    A=&
    \begin{pmatrix}
        0 & 0 & 0 & 0 & 1 & 0 \\
        -2\pi \abs{\xi} & 0 & 0 & 0 & 0 & 0 \\
        0 & 0 & 0 & 0 & 0 & 1 \\
        0 & -\mu 4\pi^{2}\abs{\xi}^{2}-\gamma 2\pi i \xi_{1} & 0 & 0 & -\mu 2\pi \abs{\xi} & 0 \\
        4\pi^{2}\abs{\xi}^{2}+\frac{\gamma 2\pi i \xi_{1}}{\mu} & 0 & 0 & -\frac{2\pi\abs{\xi}}{\mu} & 0 & 0 \\
        0 & 0 & 4\pi^{2}\abs{\xi}^{2}+\frac{\gamma 2\pi i \xi_{1}}{\kappa} & 0 & 0 & 0
    \end{pmatrix}
    ;
\end{align}
\(z=(0,G,0,F_{2}+\mu\partial_{n}G,-\frac{F_{1}}{\mu},-\frac{L}{\kappa})^{t}\);
\begin{align}
    M=&
    \begin{pmatrix}
        I_{3 \times 3} & 0_{3 \times 3} \\
        0_{3 \times 3} & 0_{3 \times 3}
    \end{pmatrix}
    , \quad N=
    \begin{pmatrix}
        0_{3 \times 3} & 0_{3 \times 3} \\
        N_{1} & N_{2}
    \end{pmatrix}
\end{align}
in block matrix notation, with \(3\times 3\) identity matrix \(I_{3 \times 3}\), \(3 \times 3\) zero matrix \(0_{3 \times 3}\), and 
\begin{align}
    N_{1}=
    \begin{pmatrix}
        0 & \mu 2\pi\abs{\xi} & 0 \\
        2\mu 2\pi \abs{\xi} & 0 & 0 \\
        \sigma'(0)2\pi \abs{\xi} & 0 & 0
    \end{pmatrix}
    , \quad N_{2}=
    \begin{pmatrix}
        0 & -\mu & 0 \\
        1 & 0 & 0 \\
        0 & 0 & \kappa
    \end{pmatrix}
    ;
\end{align}
and \(d=(0,0,0,K_{1},K_{2}+2\mu G,M)^{t}\). Given \(z \in L^{2}((0,b); \mathbb{C}^{6})\),
\begin{align} \label{yansatz}
    y(x_{n})=\exp(x_{n}A)y_{0} +\int_{0}^{x_{n}}\exp((x_{n}-t)A)z(t)dt
\end{align}
is a solution to the initial value problem
\begin{align}
    \begin{cases}
        \partial_{n}y = Ay+z &\mbox{in \((0,b)\)} \\
        y(0) = y_{0}.
    \end{cases}
\end{align}
Hence, the question of solvability of the system
\begin{align} \label{ode}
    \begin{cases}
        \partial_{n}y=Ay+z &\mbox{in \((0,b)\)} \\
        My(0)+Ny(b)=d
    \end{cases}
\end{align}
reduces to whether there exists \(y_{0} \in \mathbb{C}^{6}\) such that \(d=My_{0}+Ny(b)\), where \(y(b)\) is given by (\ref{yansatz}) with \(x_{n}=b\). If we let \(B=M+N\exp(bA) \in \mathbb{C}^{6 \times 6}\), then the question becomes whether there exists \(y_{0} \in \mathbb{C}^{6}\) such that
\begin{align}
    By_{0}=My_{0}+N\exp(bA)y_{0}=d-Ny(b)+N\exp(bA)y_{0}=d-N\int_{0}^{b}\exp((b-t)A)z(t)dt.
\end{align}
We answer in the positive by showing that \(B\) is invertible for each \(\xi \in \mathbb{R}^{n-1}\). In block matrix notation, we write
\begin{align}
    \exp(bA)=
    \begin{pmatrix}
        C_{1} & C_{2} \\
        C_{3} & C_{4}
    \end{pmatrix}
    ,
\end{align}
where \(C_{1}\), \(C_{2}\), \(C_{3}\), and \(C_{4}\) are the \(3 \times 3\) block matrices that make up \(\exp(bA)\). Then
\begin{align}
    B=&M+N\exp(bA)=
    \begin{pmatrix}
        I_{3 \times 3} & 0_{3 \times 3} \\
        N_{1}C_{1}+N_{2}C_{3} & N_{1}C_{2}+N_{2}C_{4}
    \end{pmatrix}
    .
\end{align}
For \(B\) to be invertible for each \(\xi \in \mathbb{R}^{n-1}\), it suffices to show that \(N_{1}C_{2}+N_{2}C_{4}\) is invertible for each \(\xi \in \mathbb{R}^{n-1}\). First, suppose that \(\xi = 0\). Then
\begin{align}
    \exp(bA\mid_{\xi = 0 })=\sum_{m=0}^{\infty}\frac{(bA\mid_{\xi = 0})^{m}}{m!}=I+bA\mid_{\xi = 0}= I_{6 \times 6} + b(e_1 \otimes e_5 + e_3 \otimes e_6 ),
\end{align}
which implies that
\begin{align}
    N_{1}C_{2}+N_{2}C_{4}\mid_{\xi = 0}=N_{2}C_{4}\mid_{\xi = 0}=
    \begin{pmatrix}
        0 & -\mu & 0 \\
        1 & 0 & 0 \\
        0 & 0 & \kappa
    \end{pmatrix}
    \begin{pmatrix}
        1 & 0 & 0 \\
        0 & 1 & 0 \\
        0 & 0 & 1
    \end{pmatrix}
    =
    \begin{pmatrix}
        0 & -\mu & 0 \\
        1 & 0 & 0 \\
        0 & 0 & \kappa
    \end{pmatrix}
    .
\end{align}
Since \(\det(N_{1}C_{2}+N_{2}C_{4}\mid_{\xi = 0})=\mu\kappa \neq 0\), we conclude that \(B\mid_{\xi = 0}\) is invertible.

Now, suppose that \(\xi \neq 0\). Choose \(m \in \mathbb{N}\) and a radial function \(\zeta \in C_{c}^{\infty}(\mathbb{R}^{n-1})\) such that \(\zeta=1\) on \(B(0,2^{m}) \setminus \overline{B(0,2^{-m})}\). We define \(k^{1}, k^{2} \in \mathscr{S}(\mathbb{R}^{n-1};\mathbb{C}^{n})\) via \(\mathcal{F}(k^{1})(\xi)=(\zeta(\xi)\frac{-i\xi}{\abs{\xi}},0)^{t}\) and \(\mathcal{F}(k^{2})(\xi)=\zeta(\xi)e_{n}\). Let \(j \in \{1,2\}\). Since \(\overline{\mathcal{F}(k^{j})(\xi)}=\mathcal{F}(k^{j})(-\xi)\), \(k^{j}\) takes values in \(\mathbb{R}^{n}\) by Lemma \(A.2\) of \cite{leoni2023traveling}. As \(k^{j}\in \cap_{s \geq 0}H^{s+1/2}(\Sigma_{b};\mathbb{R}^{n})\), we obtain \((u^{j}, \psi^{j}, p^{j}) \in \cap_{s \geq 0}\prescript{}{0}{H}^{s+2}(\Omega;\mathbb{R}^{n}) \times \prescript{}{0}{H}^{s+2}(\Omega;\mathbb{R}) \times H^{s+1}(\Omega;\mathbb{R})\) from Proposition \ref{classicalsoln} by letting \((f,g,l,k,m)=(0,0,0,k^{j},0)\). Let us define \(y^{j}(\xi,\cdot):[0,b] \to \mathbb{C}^{6}\) via
\begin{align}
    &y^{j}(\xi,x_{n}) \nonumber\\
    =&\left(\mathcal{F}((u^{j})')(\xi,x_{n})\cdot\frac{i\xi}{\abs{\xi}},\mathcal{F}(u_{n}^{j})(\xi,x_{n}), \mathcal{F}(\psi^{j})(\xi,x_{n}),\mathcal{F}(p^{j})(\xi,x_{n}),\partial_{n}\mathcal{F}((u^{j})')(\xi,x_{n}) \cdot \frac{i\xi}{\abs{\xi}},\partial_{n}\mathcal{F}(\psi^{j})(\xi,x_{n})\right)^{t}.
\end{align}
Then for \(2^{-m}<\abs{\xi}<2^{m}\),
\begin{align}
    By^{j}(\xi,0)=
    e_{j+3}    
    -N\int_{0}^{b}\exp((b-t)A) 0    dt=e_{j+3}.
\end{align}
Since \(y^{j}(\xi,0) \cdot e_{1}=y^{j}(\xi,0) \cdot e_{2} =y^{j}(\xi,0) \cdot e_{3}=0\), let us write \(y^{j}(\xi,0)=(0_{3 \times 1},\nu^{j}(\xi))^{t}\) in block vector notation. Then
\begin{align}
    By^{j}(\xi,0)=
    \begin{pmatrix}
        I_{3 \times 3} & 0_{3 \times 3} \\
        B_{3} & B_{4}
    \end{pmatrix}
    \begin{pmatrix}
        0_{3 \times 1} \\
        \nu^{j}(\xi)
    \end{pmatrix}
    =
    \begin{pmatrix}
        0_{3 \times 1} \\
        (B_{4}^{11}, B_{4}^{12}, B_{4}^{13}) \cdot \nu^{j}(\xi) \\
        (B_{4}^{21}, B_{4}^{22}, B_{4}^{23}) \cdot \nu^{j}(\xi) \\
        (B_{4}^{31}, B_{4}^{32}, B_{4}^{33}) \cdot \nu^{j}(\xi)
    \end{pmatrix}
    =e_{j+3},
\end{align}
where \(B_{3}=(B_{3}^{ij})_{1 \leq i,j \leq 3}=N_{1}C_{1}+N_{2}C_{3}\) and \(B_{4}=(B_{4}^{ij})_{1 \leq i,j \leq 3}=N_{1}C_{2}+N_{2}C_{4}\). Hence, \(B_{4}\nu^{j}(\xi)=e_{j}\). Next, we obtain \((u^{3}, \psi^{3}, p^{3}) \in \cap_{s \geq 0}\prescript{}{0}{H}^{s+2}(\Omega;\mathbb{R}^{n}) \times \prescript{}{0}{H}^{s+2}(\Omega;\mathbb{R}) \times H^{s+1}(\Omega)\) from Proposition \ref{classicalsoln} by letting \((f,g,l,k,m)=(0,0,0,0,m^{1})\), where \(\mathcal{F}(m^{1})(\xi)=\zeta(\xi)\). Let us define \(y^{3}(\xi,\cdot): [0,b] \to \mathbb{C}^{6}\) via
\begin{align}
    &y^{3}(\xi,x_{n}) \nonumber\\
    =&\left(\mathcal{F}((u^{3})')(\xi,x_{n})\cdot\frac{i\xi}{\abs{\xi}},\mathcal{F}(u_{n}^{3})(\xi,x_{n}), \mathcal{F}(\psi^{3})(\xi,x_{n}),\mathcal{F}(p^{3})(\xi,x_{n}),\partial_{n}\mathcal{F}((u^{3})')(\xi,x_{n}) \cdot \frac{i\xi}{\abs{\xi}},\partial_{n}\mathcal{F}(\psi^{3})(\xi,x_{n})\right)^{t}.
\end{align}
Then for \(2^{-m}<\abs{\xi}<2^{m}\),
\begin{align}
    By^{3}(\xi,0) = 
    e_6
    -N\int_{0}^{b}\exp((b-t)A) 0
    dt=
    e_6    .
\end{align}
Since \(y^{3}(\xi,0) \cdot e_{1}=y^{3}(\xi,0)\cdot e_{2}=y^{3}(\xi,0)\cdot e_{3}=0\), let us write \(y^{3}(\xi,0)=(0_{3 \times 1},\nu(\xi))^{t}\) in block vector notation. Then
\begin{align}
    By^{3}(\xi,0)=
    \begin{pmatrix}
        I_{3 \times 3} & 0_{3 \times 3} \\
        B_{3} & B_{4}
    \end{pmatrix}
    \begin{pmatrix}
        0_{3 \times 1} \\
        \nu(\xi)
    \end{pmatrix}
    =
    \begin{pmatrix}
        0_{3 \times 1} \\
        (B_{4}^{11}, B_{4}^{12}, B_{4}^{13})\cdot \nu(\xi) \\
        (B_{4}^{21}, B_{4}^{22}, B_{4}^{23})\cdot \nu(\xi) \\
        (B_{4}^{31}, B_{4}^{32}, B_{4}^{33})\cdot \nu(\xi)
    \end{pmatrix}
    = e_6    .
\end{align}
Hence, \(B_{4}\nu(\xi)=e_{3}\). In summary, we have shown that \(B_{4}\) has rank \(3\) for any \(\xi \in \mathbb{R}^{n-1}\) such that \(2^{-m}<\abs{\xi} <2^{m}\). As \(m \in \mathbb{N}\) is arbitrary, we conclude that \(B_{4}\) is invertible for all \(\xi \neq 0\). Consequently, \(B\) is invertible for any \(\xi \in \mathbb{R}^{n-1}\) with inverse
\begin{align}
    B^{-1}=
    \begin{pmatrix}
        I_{3 \times 3} & 0_{3 \times 3} \\
        -B_{4}^{-1} B_{3} & B_{4}^{-1}
    \end{pmatrix}
    ,
\end{align}
and
\begin{align}
    y(x_{n})&=\exp(x_{n}A)y_{0} +\int_{0}^{x_{n}}\exp((x_{n}-t)A)z(t)dt \nonumber\\
    &=\exp(x_{n}A)B^{-1}\left(d-N\int_{0}^{b}\exp((b-t)A)z(t)dt\right)+\int_{0}^{x_{n}}\exp((x_{n}-t)A)z(t)dt
\end{align}
is a solution to (\ref{ode}). Setting \(d=(0,0,0,0,\mathcal{F}(\chi)(\xi),0)^{t}\) and \(z=(0,0,0,0,0,0)^{t}\), we obtain
\begin{align}
    &\exp(x_{n}A)B^{-1}d \nonumber\\
    =&(\mathcal{F}(w')(\xi,x_{n})\cdot\frac{i\xi}{\abs{\xi}},\mathcal{F}(w_{n})(\xi,x_{n}),\mathcal{F}(\theta)(\xi,x_{n}), \mathcal{F}(r)(\xi,x_{n}),\partial_{n}\mathcal{F}(w')(\xi,x_{n})\cdot\frac{i\xi}{\abs{\xi}},\partial_{n}\mathcal{F}(\theta)(\xi,x_{n})).
\end{align}
Feeding this equation to a computer algebra system, we obtain the following low-frequency asymptotics for the Fourier symbols instantiated in Section \ref{instfs}.
\begin{theorem} \label{lfasymp}
    For \(\abs{\xi}\) small,
    \begin{align}
        \omega_{v_{n}}(\xi,x_{n})&=-\frac{2(-x_{n}+3b)x_{n}^{2}\pi^{2}}{3\mu}\abs{\xi}^{2}+\mathcal{O}(\abs{\xi}^{3}), & 
        \omega_{v_{n}\mid_{\Sigma_{b}}}(\xi) &=-\frac{4\pi^{2}b^{3}}{3\mu}\abs{\xi}^{2}+\mathcal{O}(\abs{\xi}^{3}), \nonumber \\
        \omega_{\phi}(\xi,x_{n})&=\frac{-2\pi^{2}b^{2}\sigma'(0)x_{n}}{\kappa\mu}\abs{\xi}^{2}+\mathcal{O}(\abs{\xi}^{3}), & 
        \omega_{\phi\mid_{\Sigma_{b}}}(\xi)&=\frac{-2\sigma'(0)\pi^{2}b^{3}}{\mu\kappa}\abs{\xi}^{2}+\mathcal{O}(\abs{\xi}^{3}), \nonumber\\
        \omega_{q}(\xi,x_{n})&=1+(-2b^{2}\pi^{2}-4b\pi^{2}x_{n}+2\pi^{2}x_{n}^{2})\abs{\xi}^{2}+\mathcal{O}(\abs{\xi}^{3}),  \nonumber \\
        \abs{\omega_{v'}(\xi,x_{n})}^{2}&=\varphi\overline{\varphi}=\frac{4\pi^{2}(b-x_{n}/2)^{2}x_{n}^{2}}{\mu^{2}}\abs{\xi}^{2}+\mathcal{O}(\abs{\xi}^{3}) \label{vntrlow}.
\end{align}
\end{theorem}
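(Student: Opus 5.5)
The plan is to compute the low-frequency expansions directly from the ODE boundary value problem \eqref{ode} with $z=0$ and $d=(0,0,0,0,\mathcal{F}(\chi)(\xi),0)^{t}$, normalized to $\mathcal{F}(\chi)(\xi)=1$. There are three steps.

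\textbf{Step 1: identify the symbols with the ODE solution.} For a.e. $\xi$, the symbols $\omega_{*}(\xi)$ coincide with the components of $y(\xi,x_{n})=\exp(x_{n}A)B^{-1}e_{5}$. To see this, test with data $\chi$ whose Fourier transform is a cutoff $\zeta$, exactly as in the construction of $y^{j}$ above. The horizontal Fourier transform of the unique solution from Proposition \ref{msys} then solves \eqref{ode}. Since $B(\xi)$ is invertible for every $\xi$, the ODE solution is unique, so the two must agree.

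\textbf{Step 2: regularity in $\xi$.} The entries of $A$, $N$ and $M$ are polynomials in $(\abs{\xi},\xi_{1})$, so $\exp(x_{n}A)$ is real-analytic in $(\abs{\xi},\xi_{1},x_{n})$ near $\xi=0$. We showed $\det B_{4}|_{\xi=0}=\mu\kappa\neq0$, so $B^{-1}$ is analytic in a neighborhood of $\xi=0$. Hence $y$ admits a Taylor expansion in the variables $(\abs{\xi},\xi_{1})$, with remainder $\mathcal{O}(\abs{\xi}^{3})$ uniform in $x_{n}\in[0,b]$ (using $\abs{\xi_{1}}\le\abs{\xi}$). This is what the computer algebra computation produces. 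I would instead derive the coefficients by an ansatz $y=y^{(0)}+y^{(1)}+y^{(2)}+\mathcal{O}(\abs{\xi}^{3})$ and match orders in the scalar system.

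\textbf{Step 3: solve order by order.}

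\emph{Order $0$.} The system reduces to $\varphi''=0$, $\psi'=0$, $q'=\mu\psi''$ and $\delta''=0$. The boundary conditions are $\varphi'(b)=0$, $q(b)-2\mu\psi'(b)=1$, $\delta'(b)=0$, together with zero values at $x_{n}=0$. This forces $\varphi=\psi=\delta=0$ and $q=1$, giving the leading term of $\omega_{q}$.

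\emph{Order $1$.} The equation $-\mu\varphi''=2\pi\abs{\xi}q=2\pi\abs{\xi}$ with $\varphi(0)=0$ and $\varphi'(b)=0$ gives $\varphi=\frac{2\pi\abs{\xi}}{\mu}(bx_{n}-x_{n}^{2}/2)$. Squaring yields the stated formula for $\abs{\omega_{v'}}^{2}$. The term $\gamma2\pi i\xi_{1}\varphi$ is already of order $2$, so it does not enter here.

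\emph{Order $2$.}
\begin{itemize}
\item \emph{Vertical velocity.} From $\psi'=-2\pi\abs{\xi}\varphi$ and $\psi(0)=0$,
\[
\psi=-\frac{2\pi^{2}}{3\mu}(3bx_{n}^{2}-x_{n}^{3})\abs{\xi}^{2}.
\]
This gives $\omega_{v_{n}}$, and setting $x_{n}=b$ gives $\omega_{v_{n}\mid_{\Sigma_{b}}}=-\frac{4\pi^{2}b^{3}}{3\mu}\abs{\xi}^{2}$.
\item \emph{Temperature.} We have $\delta''=0$ at this order, with $\delta(0)=0$ and
\[
\kappa\delta'(b)=-\sigma'(0)2\pi\abs{\xi}\varphi(b)=-\sigma'(0)\frac{2\pi^{2}b^{2}}{\mu}\abs{\xi}^{2}.
\]
Hence $\delta=-\frac{2\pi^{2}b^{2}\sigma'(0)}{\kappa\mu}x_{n}\abs{\xi}^{2}$, which gives $\omega_{\phi}$ and $\omega_{\phi\mid_{\Sigma_{b}}}$.
\item \emph{Pressure.} Integrate $q'=\mu\psi''-\mu4\pi^{2}\abs{\xi}^{2}\psi-\gamma2\pi i\xi_{1}\psi$ and keep only terms of order $2$, i.e. $q'=\mu\psi''$ with $\psi$ of order $2$. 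Fix the constant using $q(b)=1+2\mu\psi'(b)$. This produces the quadratic polynomial in $x_{n}$ multiplying $\abs{\xi}^{2}$ in $\omega_{q}$.
\end{itemize}
The $\gamma\xi_{1}$ contributions enter only at order $\abs{\xi}^{3}$ in each displayed quantity, which is why $\gamma$ does not appear in the theorem.

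The main obstacle is Step 1 together with the uniformity in Step 2. One must check that the a.e.-defined operator-valued symbols from Proposition \ref{fs} agree with the pointwise ODE solution, including the reality and normalization conventions for $\varphi=\mathcal{F}(w')\cdot i\xi/\abs{\xi}$. One must also check that the remainder bounds hold in the norms needed later, namely $L^{2}((0,b))$ for $\omega_{v}$, $\omega_{\phi}$ and $\omega_{q}$. Uniform-in-$x_{n}$ analyticity of $\exp(x_{n}A)B^{-1}$ on a compact neighborhood of $\xi=0$ handles the second point.
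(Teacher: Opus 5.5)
Your proposal is correct and takes essentially the paper's route: reduce to the ODE boundary-value problem in $x_n$, use the invertibility of $B$ (in particular $\det B_4|_{\xi=0}=\mu\kappa\neq 0$) to identify the symbols with the components of $\exp(x_nA)B^{-1}d$, and Taylor-expand near $\xi=0$. The only difference is that you carry out the expansion by hand, via the order-by-order hierarchy with an explicit analyticity justification in $(\abs{\xi},\xi_1)$, whereas the paper uses a computer algebra system; your hand computation reproduces every stated coefficient, including the $\abs{\xi}^2$ term of $\omega_q$, namely $\mu\psi^{(2)\prime}(x_n)+\mu\psi^{(2)\prime}(b)=\left(-2\pi^2b^2-4\pi^2bx_n+2\pi^2x_n^2\right)\abs{\xi}^2$.
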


\section{Solvability of the Overdetermined Problem with Free Boundary Terms} \label{welldef}
Let us consider the system
\begin{align} \label{lfrs}
    \begin{cases}
        -\gamma \partial_{1}u + \nabla \cdot (pI-\mu\mathbb{D}u) + \mathfrak{g}(\nabla'\eta,0) = f &\mbox{in \(\Omega\)} \\
        \nabla \cdot u = g &\mbox{in \(\Omega\)} \\
        -\gamma \partial_{1}\psi - \kappa \Delta \psi = l &\mbox{in \(\Omega\)} \\
        (pI-\mu\mathbb{D}u)e_{n} +\sigma'(0)(\nabla'\psi,0)+\sigma(0)(0,\Delta'\eta)=k &\mbox{on \(\Sigma_{b}\)} \\
        u_{n}+\gamma\partial_{1}\eta=h &\mbox{on \(\Sigma_{b}\)} \\
        \kappa\partial_{n}\psi = m &\mbox{on \(\Sigma_{b}\)} \\
        u = 0 &\mbox{on \(\Sigma_{0}\)} \\
        \psi = 0 &\mbox{on \(\Sigma_{0}\)}.
    \end{cases}
\end{align}
We show that \eqref{lfrs} resolves into an isomorphism \(\Upsilon\) of Banach spaces between \(\mathcal{X}^{s}\) and \(\mathcal{Y}^{s}\), defined in \eqref{Xs} and \eqref{Ys}, respectively, for any \(s \geq 0\).
\subsection{Boundedness of \(\Upsilon\)}
\begin{lemma} \label{upsilonbounded}
    Let \(s \geq 0\) and \((u,\psi,p,\eta) \in \mathcal{X}^{s}\). Define \((f,g,l,k,h,m)\) via \eqref{lfrs}. Then \((f,g,l,k,h,m) \in \mathcal{Y}^{s}\) and \(\norm{(f,g,l,k,h,m)}_{\mathcal{Y}^{s}} \leq c\norm{(u,\psi,p,\eta)}_{\mathcal{X}^{s}}\) for some \(c>0\).
\end{lemma}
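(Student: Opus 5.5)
We estimate each component of \((f,g,l,k,h,m)\) separately, splitting every component into the part already handled by the map \(T\) of Proposition \ref{Zsiso} and the part involving \(\eta\). The terms not involving \(\eta\) are bounded by standard facts: \(u,\psi \in H^{s+2}\) and \(p \in H^{s+1}\) give \(-\gamma\partial_{1}u+\nabla\cdot(pI-\mu\mathbb{D}u)\in H^{s}\), \(\nabla\cdot u\in H^{s+1}\), and \(-\gamma\partial_1\psi-\kappa\Delta\psi\in H^s\). The trace theorem gives \((pI-\mu\mathbb{D}u)e_n\vert_{\Sigma_b}, \nabla'\psi\vert_{\Sigma_b}, \kappa\partial_n\psi\vert_{\Sigma_b}\in H^{s+1/2}\) and \(u_n\vert_{\Sigma_b}\in H^{s+3/2}\). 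Each of these has norm controlled by \(\norm{(u,\psi,p,\eta)}_{\mathcal{X}^s}\).

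For the \(\eta\)-terms we work on the Fourier side, using the weight \(\omega_{s+5/2}\) from \eqref{df}.
\begin{itemize}
\item For the bulk term \(\mathfrak{g}(\nabla'\eta,0)\), viewed as constant in \(x_n\), we have \(\norm{\nabla'\eta}_{H^s(\Omega)}^2 \lesssim b\int|\xi|^2\langle\xi\rangle^{2s}|\hat\eta|^2\). On \(B(0,1)\) the factor \(|\xi|^2\) is bounded by \((\xi_1^2+|\xi|^4)/|\xi|^2\), because \(|\xi|^4\le |\xi|^4\) trivially after dividing. Off \(B(0,1)\) it is bounded by \((1+|\xi|^2)^{s+5/2}\).
\item For \(\sigma(0)\Delta'\eta\) in \(H^{s+1/2}\), the weight is \(|\xi|^4\langle\xi\rangle^{2s+1}\). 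This is bounded by \(|\xi|^4/|\xi|^2\) near the origin and by \(\langle\xi\rangle^{2s+5}\) away from it.
\item For \(\gamma\partial_1\eta\) in \(H^{s+3/2}\), the weight is \(\xi_1^2\langle\xi\rangle^{2s+3}\). Near the origin this is bounded by \(\xi_1^2/|\xi|^2\), and the \(\xi_1^2\) term in \(\omega\) is exactly what allows this. Away from the origin it is bounded by \(\langle\xi\rangle^{2s+5}\).
\end{itemize}
This shows \(f,k,h\) lie in the correct spaces with the required bounds. The fact that the distribution \(\eta\) has locally integrable \(\hat\eta\) makes these Fourier computations legitimate.

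The remaining and most delicate point is the divergence-trace quantity \(h-\int_0^b g\,dx_n\in\dot H^{-1}\). It splits as
\begin{align}
\biggl[u_n\vert_{\Sigma_b}-\int_0^b(\nabla\cdot u)\,dx_n\biggr]_{\dot H^{-1}}+\gamma[\partial_1\eta]_{\dot H^{-1}}.
\end{align}
The first bracket is bounded by \(2\pi\sqrt b\norm{u}_{L^2}\), by Theorem 3.1 of \cite{leoni2023traveling}; this is the same bound used in the proof of Proposition \ref{Zsiso}. For the second term, \([\partial_1\eta]_{\dot H^{-1}}^2=4\pi^2\int \xi_1^2|\xi|^{-2}|\hat\eta|^2\). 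Near the origin this is controlled precisely by the low-frequency part of \(\omega_{s+5/2}\), and for \(|\xi|\ge1\) by \(\int|\hat\eta|^2\langle\xi\rangle^{2s+5}\). We expect this low-frequency \(\dot H^{-1}\) control to be the main obstacle; it is exactly why the anisotropic term \(\xi_1^2/|\xi|^2\) appears in the definition of \(X^{t}\). Summing the squares of all the estimates gives the claimed bound with \(c\) depending on \(\gamma,\mu,\kappa,\sigma,\mathfrak g,b,n,s\).
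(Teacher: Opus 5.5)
Your proposal is correct and follows essentially the same route as the paper: a component-by-component bound, with the $\dot{H}^{-1}$ divergence-trace quantity split into the $u$-part (Theorem 3.1 of \cite{leoni2023traveling}) and $\gamma[\partial_1\eta]_{\dot H^{-1}}$. The only difference is that you verify the mapping properties of $\nabla'$, $\Delta'$, and $\partial_1$ on $X^{s+5/2}$ directly against the weight $\omega_{s+5/2}$, whereas the paper cites them from Theorem 5.6 of \cite{leoni2023traveling}; your weight comparisons are all correct.
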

\begin{proof}
    Let \(s \geq 0\) and \((u,\psi,p,\eta) \in \mathcal{X}^{s}\). Since \(u \in \prescript{}{0}{H}^{s+2}(\Omega;\mathbb{R}^{n})\), \(\psi \in \prescript{}{0}{H}^{s+2}(\Omega;\mathbb{R})\), \(p \in H^{s+1}(\Omega;\mathbb{R})\), and \(\eta \in X^{s+5/2}(\mathbb{R}^{n-1};\mathbb{R})\), we obtain \(-\gamma\partial_{1}u \in H^{s+1}(\Omega;\mathbb{R}^{n})\), \(\nabla p \in H^{s}(\Omega;\mathbb{R}^{n})\), \(-\mu\Delta u \in H^{s}(\Omega;\mathbb{R}^{n})\), and \(-\mu\nabla(\nabla \cdot u) \in H^{s}(\Omega;\mathbb{R}^{n})\). By the seventh item of Theorem \(5.6\) of \cite{leoni2023traveling}, the map \(\nabla':X^{s+5/2}(\mathbb{R}^{n-1};\mathbb{R}) \to H^{s+3/2}(\mathbb{R}^{n-1};\mathbb{R}^{n-1})\) is linear and bounded. Therefore, \(f=-\gamma\partial_{1}u+\nabla p-\mu(\Delta u+\nabla(\nabla \cdot u))+\mathfrak{g}(\nabla'\eta,0) \in H^{s}(\Omega;\mathbb{R}^{n})\) and \(\norm{f}_{H^{s}(\Omega;\mathbb{R}^{n})} \lesssim \norm{u}_{H^{s+2}(\Omega;\mathbb{R}^{n})}+\norm{p}_{H^{s+1}(\Omega;\mathbb{R})}+\norm{\eta}_{X^{s+5/2}(\mathbb{R}^{n-1};\mathbb{R})}\). We note that \(g = \nabla \cdot u \in H^{s+1}(\Omega;\mathbb{R})\) and \(\norm{g}_{H^{s+1}(\Omega;\mathbb{R})} \lesssim \norm{u}_{H^{s+2}(\Omega;\mathbb{R}^{n})}\). By the eighth item of Theorem \(5.6\) of \cite{leoni2023traveling}, the map \(\partial_{1}:X^{s+5/2}(\mathbb{R}^{n-1};\mathbb{R}) \to H^{s+3/2}(\mathbb{R}^{n-1};\mathbb{R})\cap\dot{H}^{-1}(\mathbb{R}^{n-1};\mathbb{R})\) is linear and bounded. Therefore, \(h=u_{n}+\gamma\partial_{1}\eta \in H^{s+3/2}(\mathbb{R}^{n-1};\mathbb{R})\) and \(\norm{h}_{H^{s+3/2}(\mathbb{R}^{n-1};\mathbb{R})} \lesssim \norm{u}_{H^{s+2}(\Omega;\mathbb{R}^{n})}+\norm{\eta}_{X^{s+5/2}(\mathbb{R}^{n-1};\mathbb{R})}\). By Theorem \(3.1\) of \cite{leoni2023traveling},
    \begin{align}
        \left[u_{n}\mid_{\Sigma_{b}}-\int_{0}^{b}g(\cdot,x_{n})dx_{n}\right]_{\dot{H}^{-1}(\mathbb{R}^{n-1};\mathbb{R})} \leq 2\pi\sqrt{b}\norm{u}_{L^{2}(\Omega;\mathbb{R}^{n})}.
    \end{align}
    Then
    \begin{align}
        \left[h-\int_{0}^{b}g(\cdot,x_{n})dx_{n}\right]_{\dot{H}^{-1}(\mathbb{R}^{n-1};\mathbb{R})} \leq& \left[u_{n}(\cdot,b)-\int_{0}^{b}g(\cdot,x_{n})dx_{n}\right]_{\dot{H}^{-1}(\mathbb{R}^{n-1};\mathbb{R})}+\left[\gamma\partial_{1}\eta\right]_{\dot{H}^{-1}(\mathbb{R}^{n-1};\mathbb{R})} \nonumber\\
        \lesssim &\norm{\eta}_{X^{s+5/2}(\mathbb{R}^{n-1};\mathbb{R})}+\norm{u}_{L^{2}(\Omega;\mathbb{R}^{n})}.
    \end{align}
    Consequently, \(h-\int_{0}^{b}g(\cdot,x_{n})dx_{n} \in \dot{H}^{-1}(\mathbb{R}^{n-1};\mathbb{R})\). By Theorem \(5.6\) of \cite{leoni2023traveling}, the map \(\Delta': X^{s+5/2}(\mathbb{R}^{n-1};\mathbb{R}) \to H^{s+1/2}(\mathbb{R}^{n-1};\mathbb{R})\) is linear and bounded. Therefore, \(k=(\sigma'(0)\nabla'\psi,\sigma(0)\Delta'\eta)+(0',p)-\mu(\nabla u_{n}+\partial_{n}u)\in H^{s+1/2}(\mathbb{R}^{n-1};\mathbb{R}^{n})\) and
    \begin{align}
        \norm{k}_{H^{s+1/2}(\mathbb{R}^{n-1};\mathbb{R}^{n})} \lesssim \norm{\psi}_{H^{s+2}(\Omega;\mathbb{R})}+\norm{\eta}_{X^{s+5/2}(\mathbb{R}^{n-1};\mathbb{R})}+\norm{p}_{H^{s+1}(\Omega;\mathbb{R})}+\norm{u}_{H^{s+2}(\Omega;\mathbb{R}^{n})}.
    \end{align}
    We note that \(l=-\gamma\partial_{1}\psi-\kappa\Delta\psi \in H^{s}(\Omega;\mathbb{R})\) and \(\norm{l}_{H^{s}(\Omega;\mathbb{R})} \lesssim \norm{\psi}_{H^{s+2}(\Omega;\mathbb{R})}+\norm{u}_{H^{s+2}(\Omega;\mathbb{R}^{n})}\) and that \(m=\kappa\partial_{n}\psi \in H^{s+1/2}(\Sigma_{b};\mathbb{R})\) and \(\norm{m}_{H^{s+1/2}(\Sigma_{b};\mathbb{R})} \lesssim \norm{\psi}_{H^{s+2}(\Omega;\mathbb{R})}\). Then
    \begin{align}
        &\norm{(f,g,l,k,h,m)}_{\mathcal{Y}^{s}}^{2} \nonumber \\
        =&\norm{f}^{2}_{H^{s}(\Omega;\mathbb{R}^{n})}+\norm{g}^{2}_{H^{s+1}(\Omega;\mathbb{R})}+\norm{l}^{2}_{H^{s}(\Omega;\mathbb{R})}+\norm{k}^{2}_{H^{s+1/2}(\mathbb{R}^{n-1};\mathbb{R}^{n})} \nonumber \\
        &+\norm{h}^{2}_{H^{s+3/2}(\mathbb{R}^{n-1};\mathbb{R})}+\norm{m}^{2}_{H^{s+1/2}(\mathbb{R}^{n-1};\mathbb{R})}+\left[h-\int_{0}^{b}g(\cdot,x_{n})dx_{n}\right]_{\dot{H}^{-1}(\mathbb{R}^{n-1};\mathbb{R})}^{2} \nonumber \\
        \lesssim&\norm{u}_{H^{s+2}(\Omega;\mathbb{R}^{n})}^{2}+\norm{p}_{H^{s+1}(\Omega;\mathbb{R})}^{2}+\norm{\eta}_{X^{s+5/2}(\mathbb{R}^{n-1};\mathbb{R})}^{2}+\norm{\psi}_{H^{s+2}(\Omega;\mathbb{R})}^{2} \nonumber \\
        \lesssim&\norm{(u,\psi,p,\eta)}_{\mathcal{X}^{s}}^{2}.
    \end{align}
\end{proof}

\subsection{Injectivity of \(\Upsilon\)} \label{injectivity}
\begin{lemma}
    For any \(s \geq 0\), the bounded linear operator \(\Upsilon:\mathcal{X}^{s} \to \mathcal{Y}^{s}\) defined by
    \begin{align}
        \Upsilon(u,\psi,p,\eta)=\biggl(&-\gamma\partial_{1}u+\nabla \cdot (pI-\mu\mathbb{D}u)+\mathfrak{g}(\nabla'\eta,0),\nabla \cdot u, -\gamma\partial_{1}\psi-\kappa\Delta\psi,   \nonumber \\
        &(pI-\mu\mathbb{D}u)\mid_{\Sigma_{b}}e_{n}+(\sigma'(0)\nabla'\psi\mid_{\Sigma_{b}},\sigma(0)\Delta'\eta),u_{n}\mid_{\Sigma_{b}}+\gamma\partial_{1}\eta,\kappa\partial_{n}\psi \mid_{\Sigma_{b}}\biggr) \label{upsilondef}
    \end{align}
    is injective.
\end{lemma}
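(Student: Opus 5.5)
The plan is to reduce injectivity of \(\Upsilon\) to the injectivity already proved in Proposition \ref{Zsiso}. The free surface enters only through the data, and the compatibility condition of Proposition \ref{compcond} is what forces \(\eta=0\).

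\textbf{Step 1: set up the compatibility identity.} Suppose \(\Upsilon(u,\psi,p,\eta)=0\) with \((u,\psi,p,\eta)\in\mathcal{X}^{s}\). Then \((u,\psi,p)\) solves the overdetermined system \eqref{odp} with data
\begin{align}
(f,g,l,k,h,m)=\left(-\mathfrak{g}(\nabla'\eta,0),\,0,\,0,\,-(0,\sigma(0)\Delta'\eta),\,-\gamma\partial_{1}\eta,\,0\right).
\end{align}
By Lemma \ref{upsilonbounded} and the mapping properties of \(\nabla',\Delta',\partial_1\) on \(X^{s+5/2}\) from \cite{leoni2023traveling}, these data lie in the right Sobolev spaces. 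Proposition \ref{compcond} then gives \eqref{cc} for every \(\chi\in H^{s+1/2}(\Sigma_b;\mathbb{R})\). I extend this complex-linearly to \(\chi\) with \(\overline{\hat\chi(\xi)}=\hat\chi(-\xi)\).

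\textbf{Step 2: pass to Fourier symbols.} I apply Plancherel and insert the symbols \(\omega_v,\omega_{v_n\mid_{\Sigma_b}}\) from Section \ref{instfs}. The bulk term \(\int_\Omega -\mathfrak{g}\nabla'\eta\cdot v'\) is integrated by parts horizontally and combined with the divergence-free condition and \(v=0\) on \(\Sigma_0\). It becomes \(\mathfrak{g}\int_{\Sigma_b}\eta\, v_n\). The boundary term contributes \(\sigma(0)\int_{\Sigma_b}\Delta'\eta\, v_n\). Since \(\chi\) is arbitrary, \eqref{cc} becomes the multiplier identity \eqref{wfer} with \(W=0\), namely
\begin{align}
\hat\eta(\xi)\rho(\xi)=0 \quad\text{for a.e. }\xi,\qquad \rho(\xi)=(\sigma(0)4\pi^{2}|\xi|^{2}+\mathfrak{g})\overline{\omega_{v_n\mid_{\Sigma_b}}(\xi)}+2\pi i\gamma\xi_1 .
\end{align}
Once I know \(\rho\neq0\) a.e., I get \(\hat\eta=0\) a.e. Because \(\hat\eta\in L^1_{loc}\), this gives \(\eta=0\).

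\textbf{Step 3: conclude.} With \(\eta=0\), the identity \(\Upsilon(u,\psi,p,0)=0\) says exactly that \(T(u,\psi,p)=0\) for the map \(T\) of Proposition \ref{Zsiso}. Proposition \ref{Zsiso} (equivalently Proposition \ref{msys} with \((\widetilde\gamma,\alpha_1,\alpha_2)=(-\gamma,\sigma'(0),0)\)) then yields \((u,\psi,p)=0\).

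\textbf{Main obstacle: showing \(\rho\neq0\) a.e.} It suffices to prove \(\operatorname{Re}\omega_{v_n\mid_{\Sigma_b}}(\xi)<0\) for every \(\xi\neq0\), since the real part of \(\rho\) is \((\sigma(0)4\pi^2|\xi|^2+\mathfrak{g})\operatorname{Re}\omega_{v_n\mid_{\Sigma_b}}\). I would use an energy identity.
\begin{itemize}
\item Testing the augmented adjoint system against its own solution \((v,\phi)\) gives
\begin{align}
B_{\gamma,\mu,\kappa,0,\sigma'(0)}((v,\phi),(v,\phi))=-\int_{\Sigma_b}\chi v_n ,
\end{align}
because the \(\partial_1\) terms drop out.
\item Coercivity from Proposition \ref{paramsp} then gives
\begin{align}
-\int\operatorname{Re}\omega_{v_n\mid_{\Sigma_b}}(\xi)|\hat\chi(\xi)|^{2}\,d\xi\gtrsim\|v\|_{H^1}^2+\|\phi\|_{H^1}^2 .
\end{align}
\item Localizing \(\hat\chi\) near an arbitrary frequency gives \(\operatorname{Re}\omega_{v_n\mid_{\Sigma_b}}\le 0\) a.e.
\end{itemize}
Strictness is the delicate point. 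I would argue pointwise using the ODE of Section \ref{odetheory}: for fixed \(\xi\neq0\), the invertibility of \(B\) gives a unique solution \(y\). This solution is nonzero because \(q(b)-2\mu\partial_n\psi(b)=1\). The frequency-localized coercivity bound should then force the energy, and hence \(-\operatorname{Re}\omega_{v_n\mid_{\Sigma_b}}(\xi)\), to be strictly positive. Near \(\xi=0\), Theorem \ref{lfasymp} confirms this directly, since \(\omega_{v_n\mid_{\Sigma_b}}\sim-\frac{4\pi^2b^3}{3\mu}|\xi|^2<0\).
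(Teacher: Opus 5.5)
Your argument works, but it takes a genuinely different and much heavier route than the paper's.

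\textbf{What the paper does.} The paper never invokes the adjoint problem or the symbol $\rho$ for injectivity. It Fourier transforms $\Upsilon(u,\psi,p,\eta)=0$ in $x'$ and, for each fixed $\xi$, tests the resulting ODEs against $\overline{\mathcal{F}(u)(\xi,\cdot)}$ and $\overline{\mathcal{F}(\psi)(\xi,\cdot)}$. The linearization is only one-way coupled: the heat problem, with $\kappa\partial_n\psi=0$ on $\Sigma_b$ and $\psi=0$ on $\Sigma_0$, does not involve $u$. So the real part of the temperature identity gives $\mathcal{F}(\psi)(\xi,\cdot)=0$, and the Marangoni term drops out. In the velocity identity, the kinematic condition $\mathcal{F}(u_n)(\xi,b)=-2\pi i\gamma\xi_1\mathcal{F}(\eta)(\xi)$ makes the gravity and capillary terms purely imaginary. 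The real part then forces the dissipation, and hence $\mathcal{F}(u)(\xi,\cdot)$, to vanish. Next, $\gamma\neq0$ gives $\mathcal{F}(\eta)=0$, and the horizontal momentum equation gives $\mathcal{F}(p)=0$. This is elementary and uses no smallness of $\sigma'(0)$.

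\textbf{What your route costs and buys.} Your duality argument needs $(\mu,\kappa,0,\sigma'(0))\in O_{n,b}$ to solve the augmented adjoint problem behind Proposition \ref{compcond}, and also for your coercivity step. Your Step 3 via Proposition \ref{Zsiso} needs $(\mu,\kappa,\sigma'(0),0)\in O_{n,b}$. So you prove injectivity only under the parameter hypothesis of Theorem \ref{1.1analog}, which is admittedly the setting where $\Upsilon$ is used. In return, injectivity becomes a corollary of $\rho\neq0$ a.e., the same fact that drives surjectivity. The paper proves that fact in Section \ref{asymptotics} in quantitative form \eqref{revntrfs}, namely $\min\{\abs{\xi}^2,\abs{\xi}^{-1}\}\lesssim-\re\,\omega_{v_n|_{\Sigma_b}}(\xi)$ a.e., so you could simply cite it.

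\textbf{The strictness step needs tightening.} If you prove strictness yourself, ``the solution $y$ is nonzero because $q(b)-2\mu\partial_n\psi(b)=1$'' is not enough. The energy only sees the velocity and temperature components, not $q$. The following closes it:
\begin{itemize}
\item By Plancherel, your coercivity bound localizes to $-\re\,\omega_{v_n|_{\Sigma_b}}(\xi)\ge C_0E(\xi)$ for a.e. $\xi$, where $E(\xi)$ is the per-frequency $H^1$-energy of $(\omega_v,\omega_\phi)(\xi,\cdot)$.
\item Suppose $E(\xi)=0$ with $\xi\neq0$. Then $\omega_v(\xi,\cdot)=\omega_\phi(\xi,\cdot)=0$, since both vanish at $x_n=0$.
\item The horizontal momentum equation then reduces to $2\pi i\xi\,\omega_q(\xi,\cdot)=0$, so $\omega_q(\xi,\cdot)=0$.
\item This contradicts the normal-stress condition $\omega_q(\xi,b)-2\mu\partial_n\omega_{v_n}(\xi,b)=1$.
\end{itemize}
With that step added, your proof is complete under the stated parameter condition.
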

\begin{proof}
    Let \(s \geq 0\). By Lemma \ref{upsilonbounded}, \(\Upsilon\) is a well-defined bounded linear operator. Since \(u \in \prescript{}{0}{H}^{s+2}(\Omega;\mathbb{R}^{n})\),
    \begin{align}
        \infty>&\int_{0}^{b}\int_{\mathbb{R}^{n-1}}\abs{u(x',x_{n})}^{2}dx'dx_{n}=\int_{\mathbb{R}^{n-1}}\int_{0}^{b}\abs{\mathcal{F}(u)(\xi,x_{n})}^{2}dx_{n}d\xi, \nonumber\\
        \infty>&\int_{0}^{b}\int_{\mathbb{R}^{n-1}}\abs{\partial_{n}u(x',x_{n})}^{2}dx'dx_{n}=\int_{\mathbb{R}^{n-1}}\int_{0}^{b}\abs{\partial_{n}\mathcal{F}(u)(\xi,x_{n})}^{2}dx_{n}d\xi, \nonumber\\
        \infty>&\int_{0}^{b}\int_{\mathbb{R}^{n-1}}\abs{\partial_{n}^{2}u(x',x_{n})}^{2}dx'dx_{n}=\int_{\mathbb{R}^{n-1}}\int_{0}^{b}\abs{\partial_{n}^{2}\mathcal{F}(u)(\xi,x_{n})}^{2}dx_{n}d\xi.
    \end{align}
    Therefore, \(\mathcal{F}(u)(\xi,\cdot) \in H^{2}((0,b);\mathbb{C}^{n})\) for a.e. \(\xi \in \mathbb{R}^{n-1}\). Similarly, \(\mathcal{F}(\psi)(\xi,\cdot) \in H^{2}((0,b);\mathbb{C})\) and \(\mathcal{F}(p)(\xi,\cdot) \in H^{1}((0,b);\mathbb{C})\) for a.e. \(\xi \in \mathbb{R}^{n-1}\). By Lemma \(5.4\) of \cite{leoni2023traveling}, for any \(R>0\), there exists \(c=c(s,R,d)>0\) such that
    \begin{align}
        \int_{B(0,R)}\abs{\mathcal{F}(\eta)(\xi)}d\xi+\left(\int_{B(0,R)^{c}}(1+\abs{\xi}^{2})^{s+5/2}\abs{\mathcal{F}(\eta)(\xi)}^{2}d\xi\right)^{1/2} \leq c\norm{\eta}_{X^{s+5/2}(\mathbb{R}^{n-1};\mathbb{R})}.
    \end{align}
    Applying the horizontal Fourier transform to \eqref{lfrs}, we obtain that \(w:=\mathcal{F}(u)(\xi,x_{n})\), \(\theta:=\mathcal{F}(\psi)(\xi,x_{n})\), and \(q:=\mathcal{F}(p)(\xi,x_{n})\) satisfy
    \begin{align} \label{hftsys}
        \begin{cases}
            -\gamma2\pi i \xi_{1}w'+2\pi i \xi q-\mu(\partial_{n}^{2}-4\pi^{2}\abs{\xi}^{2})w'-\mu 2\pi i \xi \partial_{n}w_{n}+\mu 4\pi^{2}\xi(\xi \cdot w')=-\mathfrak{g}2\pi i \xi\mathcal{F}(\eta)(\xi) &\mbox{in \((0,b)\)} \\
            -\gamma 2\pi i \xi_{1}w_{n}+\partial_{n}q-\mu(\partial_{n}^{2}-4\pi^{2}\abs{\xi}^{2})w_{n}-\mu\partial_{n}^{2}w_{n}-\mu 2\pi i \xi \cdot \partial_{n}w'=0 &\mbox{in \((0,b)\)}\\
            2\pi i \xi \cdot w'+\partial_{n}w_{n}=0 &\mbox{in \((0,b)\)} \\
            -\gamma2\pi i \xi_{1}\theta-\kappa(\partial_{n}^{2}-4\pi^{2}\abs{\xi}^{2})\theta=0 &\mbox{in \((0,b)\)} \\
            q-2\mu\partial_{n}w_{n}=\mathcal{F}(-\sigma(0)\Delta'\eta)(\xi) &\mbox{for \(x_{n}=b\)} \\
            -\mu(\partial_{n}w'+2\pi i \xi w_{n})+\sigma'(0)2\pi i \xi\theta=0 &\mbox{for \(x_{n}=b\)} \\
            w_{n}=\mathcal{F}(-\gamma\partial_{1}\eta)(\xi) &\mbox{for \(x_{n}=b\)} \\
            \kappa\partial_{n}\theta=0 &\mbox{for \(x_{n}=b\)} \\
            w=0 &\mbox{for \(x_{n}=0\)} \\
            \theta=0 &\mbox{for \(x_{n}=0\)}.
        \end{cases}
    \end{align}
    Let \(v \in H^{1}((0,b);\mathbb{C}^{n})\) and \(\phi \in H^{1}((0,b);\mathbb{C})\) such that \(v(0)=0\) and \(\phi(0)=0\). Then
    \begin{align}
        &\int_{0}^{b}-\gamma 2\pi i \xi_{1}w'\cdot\overline{v'}=\int_{0}^{b}-2\pi i \xi q \cdot\overline{v'}+\mu((\partial_{n}^{2}-4\pi^{2}\abs{\xi}^{2})w')\cdot\overline{v'}+\int_{0}^{b}-\mathfrak{g}2\pi i \xi\mathcal{F}(\eta)(\xi)\cdot\overline{v'} \nonumber\\
        &\int_{0}^{b}-\gamma 2\pi i \xi_{1}w_{n}\overline{v_{n}}=\int_{0}^{b}-\partial_{n}q\overline{v_{n}}+\mu((\partial_{n}^{2}-4\pi^{2}\abs{\xi}^{2})w_{n})\overline{v_{n}} \nonumber\\
        &\int_{0}^{b}-\gamma 2\pi i \xi_{1}\theta\overline{\phi}=\int_{0}^{b}\kappa((\partial_{n}^{2}-4\pi^{2}\abs{\xi}^{2})\theta)\overline{\phi}.
    \end{align}
   Since
    \begin{align}
        -2\pi i\xi q+\mu(\partial_{n}^{2}-4\pi^{2}\abs{\xi}^{2})w'=& -2\pi i\xi q+\mu\partial_{n}(\partial_{n}w'+2\pi i \xi w_{n})+\mu(w' \otimes 2\pi i \xi+2\pi i \xi \otimes w')2\pi i \xi \nonumber\\
        -\partial_{n}q+\mu(\partial_{n}^{2}-4\pi^{2}\abs{\xi}^{2})w_{n}=&\partial_{n}(-q+2\mu\partial_{n}w_{n})-\mu(2\pi i\xi \cdot (-2\pi i \xi w_{n}-\partial_{n}w')) \nonumber \\
        \kappa(\partial_{n}^{2}-4\pi^{2}\abs{\xi}^{2})\theta=&\kappa\partial_{n}^{2}\theta-\kappa 4\pi^{2}\abs{\xi}^{2}\theta,
    \end{align}
    integration by parts yields
    \begin{align}
        \int_{0}^{b}-\gamma 2\pi i \xi_{1}w' \cdot\overline{v'}=&\int_{0}^{b}-2\pi i q\xi \cdot \overline{v'}+\mu(w' \otimes 2\pi i\xi+2\pi i \xi \otimes w')2\pi i \xi\cdot\overline{v'}-\mathfrak{g}2\pi i \xi\mathcal{F}(\eta)(\xi)\cdot\overline{v'} \nonumber\\
        &-\mu(\partial_{n}w'+2\pi i \xi w_{n})\cdot\partial_{n}\overline{v'}+(\sigma'(0)2\pi i \xi\theta(b))\cdot\overline{v'(b)} \nonumber\\
        \int_{0}^{b}-\gamma2\pi i\xi_{1}w_{n}\overline{v_{n}}=&\int_{0}^{b}\mu 2\pi i \xi \cdot (2\pi i \xi w_{n}+\partial_{n}w')\overline{v_{n}}-(-q+2\mu\partial_{n}w_{n})\partial_{n}\overline{v_{n}}+\mathcal{F}(\sigma(0)\Delta'\eta)(\xi)\overline{v_{n}(b)} \nonumber\\
        \int_{0}^{b}-\gamma2\pi i \xi_{1}\theta\overline{\phi}=&\int_{0}^{b}-\kappa 4\pi^{2}\abs{\xi}^{2}\theta\overline{\phi}-\kappa\partial_{n}\theta\partial_{n}\overline{\phi}.
    \end{align}
    Letting \(v=w\) and \(\phi=\theta\), we obtain that
    \begin{align}
        \int_{0}^{b}-\gamma 2\pi i \xi_{1}\abs{w'}^{2} =& \int_{0}^{b}-2\pi i q \xi \cdot\overline{w'}+\mu(w' \otimes 2\pi i \xi + 2\pi i \xi \otimes w')2\pi i \xi \cdot\overline{w'} \nonumber\\
        &+\mathfrak{g}\mathcal{F}(\eta)(\xi)\overline{\mathcal{F}(\nabla' \cdot u')(\xi,x_{n})}-\mu(\partial_{n}w'+2\pi i \xi w_{n})\cdot\partial_{n}\overline{w'} \nonumber\\
        &+(\sigma'(0)2\pi i \xi\theta(b))\cdot\overline{w'(b)} \nonumber\\
        \int_{0}^{b}-\gamma 2\pi i \xi_{1}\abs{w_{n}}^{2}=&\int_{0}^{b}\mu 2\pi i \xi \cdot (2\pi i \xi w_{n}+\partial_{n}w')\overline{w_{n}}-(-q+2\mu\partial_{n}w_{n})\partial_{n}\overline{w_{n}} \nonumber\\
        &+\mathcal{F}(\sigma(0)\Delta'\eta)(\xi)\overline{w_{n}(b)} \nonumber\\
        \int_{0}^{b}-\gamma 2\pi i \xi_{1}\abs{\theta}^{2}=&\int_{0}^{b}-\kappa 4\pi^{2}\abs{\xi}^{2}\abs{\theta}^{2}-\kappa\abs{\partial_{n}\theta}^{2}. \label{tempid}
    \end{align}
    Taking the real part of (\ref{tempid}), we obtain that \(\theta=0\) for all \(\xi \in \mathbb{R}^{n-1} \setminus \{0\}\). Since
    \begin{align}
        &(w' \otimes 2\pi i \xi +2\pi i \xi \otimes w')2\pi i \xi\cdot\overline{w'}=-\frac{1}{2}(w' \otimes 2\pi i \xi + 2\pi i \xi \otimes w'):\overline{w' \otimes 2\pi i \xi + 2\pi i \xi \otimes w'} \nonumber \\
        -\mu(\partial_{n}&w'+2\pi i \xi w_{n})\cdot \partial_{n}\overline{w'}+\mu 2\pi i \xi \cdot(2\pi i \xi w_{n}+\partial_{n}w')\overline{w_{n}} =-\mu(\partial_{n}w'+2\pi i \xi w_{n})(\overline{\partial_{n}w'+2\pi i \xi w_{n}}),
    \end{align}
    we obtain
    \begin{align}
        \int_{0}^{b}&-\gamma2\pi i \xi_{1}\abs{w'}^{2}-\gamma2\pi i \xi_{1}\abs{w_{n}}^{2} \nonumber\\
        = \int_{0}^{b}&-\frac{\mu}{2}\abs{w' \otimes 2\pi i \xi + 2\pi i \xi \otimes w'}^{2}-\mu\abs{\partial_{n}w'+2\pi i \xi w_{n}}^{2}-2\mu\abs{\partial_{n}w_{n}}^{2}   \nonumber\\
        &+(-\sigma(0)4\pi^{2}\abs{\xi}^{2}-\mathfrak{g})\gamma2\pi i \xi_{1}\abs{\mathcal{F}(\eta)(\xi)}^{2}. \label{velid}
    \end{align}
    Taking the real part of this identity, we obtain that \(w' \otimes 2\pi i \xi + 2\pi i \xi \otimes w'=0\), \(\partial_{n}w'+2\pi i \xi w_{n}=0\), and \(\partial_{n}w_{n}=0\). The last equation, combined with the initial condition \(w(0)=0\), implies that \(w_{n}=0\) on \([0,b]\). The second equation then reduces to \(\partial_{n}w'=0\), which, when combined with the initial condition \(w(0)=0\), implies that \(w'=0\) on \([0,b]\). Moreover, \(\mathcal{F}(-\gamma\partial_{1}\eta)(\xi)=w_{n}(b)=0\). Since \(\gamma \neq 0\), we conclude that \(\mathcal{F}(\eta)(\xi)=0\) for a.e. \(\xi \in \mathbb{R}^{n-1}\). The first equation of (\ref{hftsys}) then reduces to \(2\pi i \xi q=0\) in \((0,b)\) for a.e. \(\xi \in \mathbb{R}^{n-1}\), which implies that \(q=0\) for a.e. \(\xi \in \mathbb{R}^{n-1}\). In summary, for a.e. \(\xi \in \mathbb{R}^{n-1}\), we have \(\mathcal{F}(u)(\xi,\cdot)=0\), \(\mathcal{F}(\psi)(\xi,\cdot)=0\), \(\mathcal{F}(p)(\xi,\cdot)=0\), and \(\mathcal{F}(\eta)(\xi)=0\), which implies the injectivity of \(\Upsilon\).
\end{proof}

\subsection{The asymptotics of \(\rho(\xi)\)} \label{asymptotics}
When we incorporate all the terms involving \(\eta\) as part of the data in \eqref{lfrs}, the compatibility condition \eqref{cc} implies that for a.e. \(\xi \in \mathbb{R}^{n-1}\),
\begin{align}
    &\int_{0}^{b}\mathcal{F}(f-\mathfrak{g}(\nabla'\eta,0))(\xi,x_{n})\cdot \overline{\mathcal{F}(v)(\xi,x_{n})}-\mathcal{F}(g)(\xi,x_{n})\overline{\mathcal{F}(q)(\xi,x_{n})}+\mathcal{F}(l)(\xi,x_{n})\overline{\mathcal{F}(\phi)(\xi,x_{n})}dx_{n} \nonumber \\
    =&\mathcal{F}(k-(0,\sigma(0)\Delta'\eta))(\xi)\cdot \overline{\mathcal{F}(v)(\xi,b)}-\mathcal{F}(m)(\xi)\overline{\mathcal{F}(\phi)(\xi,b)}-\mathcal{F}(h-\gamma\partial_{1}\eta)(\xi)\overline{\mathcal{F}(\chi)(\xi)}.
\end{align}
Since
\begin{align}
    \int_{0}^{b}\mathfrak{g}\mathcal{F}((\nabla'\eta,0))(\xi) \cdot \overline{\mathcal{F}(v)(\xi,x_{n})}dx_{n}=\int_{0}^{b}\mathfrak{g}\mathcal{F}(\eta)(\xi)\cdot\overline{\partial_{n}\mathcal{F}(v_{n})(\xi,x_{n})}dx_{n}=\mathfrak{g}\overline{\mathcal{F}(v_{n})(\xi,b)}\mathcal{F}(\eta)(\xi),
\end{align}
we obtain
\begin{align}
    &\overline{\mathcal{F}(\chi)(\xi)}\int_{0}^{b}\mathcal{F}(f)(\xi,x_{n})\cdot \overline{\omega_{v}(\xi,x_{n})}-\mathcal{F}(g)(\xi,x_{n})\overline{\omega_{q}(\xi,x_{n})}+\mathcal{F}(l)(\xi,x_{n})\overline{\omega_{\phi}(\xi,x_{n})}dx_{n} \nonumber\\
    =&\overline{\mathcal{F}(\chi)(\xi)}\biggl(\mathcal{F}(k)(\xi)\cdot\overline{\omega_{v\mid_{\Sigma_{b}}}(\xi)}-\mathcal{F}(m)(\xi)\overline{\omega_{\phi\mid_{\Sigma_{b}}}(\xi)}-\mathcal{F}(h)(\xi)+\mathcal{F}(\eta)(\xi)\rho(\xi)\biggr)
\end{align}
for a.e. \(\xi \in \mathbb{R}^{n-1}\), where
\begin{align} \label{rho}
    \rho(\xi)=(\sigma(0)4\pi^{2}\abs{\xi}^{2}+\mathfrak{g})\overline{\omega_{v_{n}\mid_{\Sigma_{b}}}(\xi)}+\gamma 2\pi i \xi_{1}.
\end{align}
Let us now study the asymptotics of \(\rho(\xi)\), which is essential to show the surjectivity of \(\Upsilon\).

Suppose that \(\gamma\in \mathbb{R}\) and \((\mu,\kappa,\sigma'(0)) \in \mathbb{R}^{+} \times \mathbb{R}^{+} \times \mathbb{R}\) satisfy
\begin{align} \label{paramcond}
    \max\left\{\frac{\norm{Q_{1}}^{2}}{4},\frac{\norm{Q_{2}}^{2}}{4}\right\} \abs{\sigma'(0)}^{2} < 2\mu\kappa
\end{align}
where \(Q_{1}\) and \(Q_{2}\) are the bilinear forms defined in \eqref{Q1} and \eqref{Q2}. If \(\chi_{r},\chi_{i} \in H^{-1/2}(\Sigma_{b};\mathbb{R})\) are the real and imaginary parts of the normal stress \(\chi=\chi_{r}+i\chi_{i} \in H^{-1/2}(\Sigma_{b};\mathbb{C})\) on \(\Sigma_{b}\), then by Proposition \ref{wks} there exist \((u_{r},p_{r},\psi_{r}), (u_{i},p_{i},\psi_{i}) \in \prescript{}{0}{H}^{1}_{\sigma}(\Omega;\mathbb{R}^{n}) \times \prescript{}{0}{H}^{1}(\Omega;\mathbb{R}) \times L^{2}(\Omega;\mathbb{R})\) that uniquely satisfy
\begin{align} \label{weakform}
    B_{\gamma,\mu,\kappa,0,\sigma'(0)}((u_{r,i},p_{r,i},\psi_{r,i}),(v,\phi))=\langle F_{r,i},(v,\phi) \rangle,
\end{align}
where
\begin{align}
    B_{\gamma,\mu,\kappa,0,\sigma'(0)}((u,p,\psi),(v,\phi))=\int_{\Omega}&-\gamma \partial_{1}v \cdot u +\frac{\mu}{2}\mathbb{D}v:\mathbb{D}u-p(\nabla \cdot v)-\gamma\partial_{1}\phi\psi \nonumber\\
    &+\kappa\nabla\phi \cdot \nabla \psi +\sigma'(0)Q_{2}(u,\phi) \nonumber\\
    \langle F_{r,i},(v,\phi)\rangle = -\int_{\Sigma_{b}}&v_{n}\chi_{r,i}.
\end{align}
Letting \((v,\phi)=(u_{r,i},\psi_{r,i})\) in the bilinear form, we obtain
\begin{align}
    &\mu\norm{u_{r,i}}_{\prescript{}{0}{H}^{1}(\Omega;\mathbb{R}^{n})}^{2}+2\kappa\norm{\psi_{r,i}}_{\prescript{}{0}{H}^{1}(\Omega;\mathbb{R})}^{2}+\sigma'(0)Q_{2}(u_{r,i},\psi_{r,i}) = -\int_{\Sigma_{b}}(u_{r,i})_{n}\chi_{r,i}.
\end{align}
By \eqref{paramcond}, there exists some constant \(C_{3}(n,b) >0\) such that \(C_{3}(n,b)\sigma'(0)^{2} < \kappa\mu\). Then
\begin{align}
    \abs{Q(u_{r,i},\psi_{r,i})}=&\abs{\sigma'(0)Q_{2}(u_{r,i},\psi_{r,i})} \lesssim \frac{1}{2}\biggl(\mu\norm{u_{r,i}}_{\prescript{}{0}{H}^{1}_{\sigma}(\Omega;\mathbb{R}^{n})}^{2}+2\kappa\norm{\psi_{r,i}}_{\prescript{}{0}{H}^{1}(\Omega;\mathbb{R})}^{2}\biggr),
\end{align}
where \(Q\) is defined in (\ref{q}). It follows that
\begin{align}
    &\mu\norm{u_{r,i}}_{\prescript{}{0}{H}^{1}(\Omega;\mathbb{R}^{n})}^{2}+2\kappa\norm{\psi_{r,i}}_{\prescript{}{0}{H}^{1}(\Omega;\mathbb{R})}^{2} =-\int_{\Sigma_{b}}(u_{r,i})_{n}\chi_{r,i}-\sigma'(0)Q_{2}(u_{r,i},\psi_{r,i}) \nonumber\\
    =& -\int_{\Sigma_{b}}(u_{r,i})_{n}\chi_{r,i}-Q(u_{r,i},\psi_{r,i}) \lesssim-\int_{\Sigma_{b}}(u_{r,i})_{n}\chi_{r,i}+\frac{1}{2}\biggl(\mu\norm{u_{r,i}}_{\prescript{}{0}{H}^{1}_{\sigma}(\Omega;\mathbb{R}^{n})}^{2}+2\kappa\norm{\psi_{r,i}}_{\prescript{}{0}{H}^{1}(\Omega;\mathbb{R})}^{2}\biggr),
\end{align}
which implies that
\begin{align}
    \frac{1}{2}\biggl(\mu\norm{u_{r,i}}_{\prescript{}{0}{H}^{1}(\Omega;\mathbb{R}^{n})}^{2}+2\kappa\norm{\psi_{r,i}}_{\prescript{}{0}{H}^{1}(\Omega;\mathbb{R})}^{2}\biggr) \leq -\int_{\Sigma_{b}}(u_{r,i})_{n}\chi_{r,i}.
\end{align}
We note that
\begin{align}
    &-\int_{\Sigma_{b}}(u_{r,i})_{n}\chi_{r,i}=-\langle \chi_{r,i},(u_{r,i})_{n}\mid_{\Sigma_{b}} \rangle_{H^{-1/2}(\Sigma_{b};\mathbb{R}),H^{1/2}(\Sigma_{b};\mathbb{R})}=-\int_{\mathbb{R}^{n-1}}\mathcal{F}(\chi_{r,i})(\xi)\mathcal{F}((u_{r,i})_{n}\mid_{\Sigma_{b}})(\xi)d\xi \nonumber \\
    \leq&\biggl(\int_{\mathbb{R}^{n-1}}\min\{\abs{\xi}^{2},\abs{\xi}^{-1}\}\abs{\mathcal{F}(\chi_{r,i})(\xi)}^{2}d\xi\biggr)^{1/2} \biggl(\int_{\mathbb{R}^{n-1}}\max\{\abs{\xi}^{-2},\abs{\xi}\}\abs{\mathcal{F}((u_{r,i})_{n}\mid_{\Sigma_{b}})(\xi)}^{2}d\xi\biggr)^{1/2}.
\end{align}
By Theorem \(3.1\) of \cite{leoni2023traveling}, \([(u_{r,i})_{n}\mid_{\Sigma_{b}}]_{\dot{H}^{-1}(\mathbb{R}^{n-1};\mathbb{R})}\leq 2\pi\sqrt{b}\norm{u_{r,i}}_{L^{2}(\Omega;\mathbb{R}^{n})}\). Then
\begin{align}
    &\int_{\mathbb{R}^{n-1}}\max\{\abs{\xi}^{-2},\abs{\xi}\}\abs{\mathcal{F}((u_{r,i})_{n}\mid_{\Sigma_{b}})(\xi)}^{2}d\xi \nonumber\\
    =&\int_{\abs{\xi}\leq 1}\abs{\xi}^{-2}\abs{\mathcal{F}((u_{r,i})_{n}\mid_{\Sigma_{b}})(\xi)}^{2}d\xi+\int_{\abs{\xi}>1}\abs{\xi}\abs{\mathcal{F}((u_{r,i})_{n}\mid_{\Sigma_{b}})(\xi)}^{2}d\xi \nonumber\\
    \leq&[(u_{r,i})_{n}\mid_{\Sigma_{b}})]_{\dot{H}^{-1}(\mathbb{R}^{n-1};\mathbb{R})}^{2}+\norm{(u_{r,i})_{n}\mid_{\Sigma_{b}}}_{H^{1/2}(\Sigma_{b};\mathbb{R})}^{2} \lesssim\norm{u_{r,i}}_{\prescript{}{0}{H}^{1}(\Omega;\mathbb{R}^{n})}^{2}.
\end{align}
Hence,
\begin{align}
    &\frac{1}{2}\biggl(\mu\norm{u_{r,i}}_{\prescript{}{0}{H}^{1}(\Omega;\mathbb{R}^{n})}^{2}+2\kappa\norm{\psi_{r,i}}_{\prescript{}{0}{H}^{1}(\Omega;\mathbb{R})}^{2}\biggr) \\
    \lesssim&\biggl(\int_{\mathbb{R}^{n-1}}\min\{\abs{\xi}^{2},\abs{\xi}^{-1}\}\abs{\mathcal{F}(\chi_{r,i})(\xi)}^{2}d\xi\biggr)^{1/2}\norm{u_{r,i}}_{\prescript{}{0}{H}^{1}(\Omega;\mathbb{R}^{n})}.
\end{align}
Letting \(u=u_{r}+iu_{i}\) and \(\psi=\psi_{r}+i\psi_{i}\), we then have
\begin{align}
    \frac{1}{2}\biggl(\mu\norm{u}_{\prescript{}{0}{H}^{1}(\Omega;\mathbb{R}^{n})}^{2}+2\kappa\norm{\psi}_{\prescript{}{0}{H}^{1}(\Omega;\mathbb{R})}^{2}\biggr) \lesssim &\norm{u}_{\prescript{}{0}{H}^{1}(\Omega;\mathbb{R}^{n})}\biggl(\biggl(\int_{\mathbb{R}^{n-1}}\min\{\abs{\xi}^{2},\abs{\xi}^{-1}\}\abs{\mathcal{F}(\chi_{r})(\xi)}^{2}d\xi\biggr)^{1/2}  \nonumber\\
    &+\biggl(\int_{\mathbb{R}^{n-1}}\min\{\abs{\xi}^{2},\abs{\xi}^{-1}\}\abs{\mathcal{F}(\chi_{i})(\xi)}^{2}d\xi\biggr)^{1/2}\biggr). \label{compnorm}
\end{align}
\begin{lemma} \label{energyestimate}
    We have the following estimate.
    \begin{align}
        &\biggl(\int_{\mathbb{R}^{n-1}}\min\{\abs{\xi}^{2},\abs{\xi}^{-1}\}\abs{\mathcal{F}(\chi_{r})(\xi)}^{2}d\xi\biggr)^{1/2} \lesssim_{\gamma,\kappa} \norm{u_{r}}_{\prescript{}{0}{H}^{1}(\Omega;\mathbb{R}^{n})}, \label{estimater} \\
        &\biggl(\int_{\mathbb{R}^{n-1}}\min\{\abs{\xi}^{2},\abs{\xi}^{-1}\}\abs{\mathcal{F}(\chi_{i})(\xi)}^{2}d\xi\biggr)^{1/2} \lesssim_{\gamma,\kappa} \norm{u_{i}}_{\prescript{}{0}{H}^{1}(\Omega;\mathbb{R}^{n})}. \label{estimatei}
    \end{align}
\end{lemma}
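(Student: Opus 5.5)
The idea is to recover $\chi_{r}$ (and likewise $\chi_{i}$) from the weak formulation \eqref{weakform} by duality. I test it against a well-chosen solenoidal velocity field with vanishing temperature component. Then the pressure and the Marangoni coupling drop out, and only the bulk terms involving $u_{r}$ remain. The weights satisfy
\begin{align}
\min\{\abs{\xi}^{2},\abs{\xi}^{-1}\}\cdot\max\{\abs{\xi}^{-2},\abs{\xi}\}=1,
\end{align}
so the left side of \eqref{estimater} is the dual norm of $\chi_{r}$ against the Hilbert space $\mathcal{V}$ of traces $\tau$ normed by
\begin{align}
\norm{\tau}_{\mathcal{V}}^{2}=\int_{\mathbb{R}^{n-1}}\max\{\abs{\xi}^{-2},\abs{\xi}\}\abs{\hat{\tau}(\xi)}^{2}\,d\xi ,
\end{align}
that is, $\mathcal{V}=H^{1/2}\cap\dot{H}^{-1}$ up to equivalent norms. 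Hence it suffices to bound $\abs{\int_{\Sigma_{b}}\chi_{r}\tau}\lesssim\norm{u_{r}}_{\prescript{}{0}{H}^{1}}\norm{\tau}_{\mathcal{V}}$ for $\tau$ in a dense class, say Schwartz functions whose Fourier transform vanishes near the origin.

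\textbf{Step 1 (solenoidal extension).} For such $\tau$ I construct $v\in\prescript{}{0}{H}^{1}_{\sigma}(\Omega;\mathbb{R}^{n})$ with $v_{n}\mid_{\Sigma_{b}}=\tau$ and $\norm{v}_{\prescript{}{0}{H}^{1}}\lesssim\norm{\tau}_{\mathcal{V}}$. I would do this explicitly in horizontal Fourier variables. Set $\hat{v}_{n}(\xi,x_{n})=\hat{\tau}(\xi)a(\xi,x_{n})$, where $a(\xi,0)=0$, $a(\xi,b)=1$, and $a$ is a profile such as $a=x_{n}/b$ for $\abs{\xi}\le1$ and $a=e^{-\abs{\xi}(b-x_{n})}$ times a cutoff for $\abs{\xi}>1$. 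Then set
\begin{align}
\hat{v}'=\frac{i\xi}{2\pi\abs{\xi}^{2}}\,\partial_{n}\hat{v}_{n},
\end{align}
so that $2\pi i\xi\cdot\hat{v}'+\partial_{n}\hat{v}_{n}=0$ and $v'(0)=0$.

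At low frequency this gives $\abs{\hat{v}'}\sim\abs{\hat{\tau}}/\abs{\xi}$, so $\norm{v}_{L^{2}}$ is controlled by $[\tau]_{\dot{H}^{-1}}$. The $x_{n}$-derivatives are controlled in the same way, and the horizontal derivatives are even better. At high frequency the exponential profile yields the usual $H^{1/2}$ trace-lifting bound. Real-valuedness follows from Hermitian symmetry of the symbols, via Lemma $A.2$ of \cite{leoni2023traveling}. This is essentially the right inverse underlying Theorem $3.1$ of \cite{leoni2023traveling}, and I could alternatively cite it from there.

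\textbf{Step 2 (testing).} Insert $(v,0)$ into \eqref{weakform}. Since $\nabla\cdot v=0$, the pressure term vanishes. Since $\phi=0$, the heat terms and $Q_{2}(u_{r},0)$ vanish. We obtain
\begin{align}
-\int_{\Sigma_{b}}\tau\chi_{r}=\int_{\Omega}-\gamma\partial_{1}v\cdot u_{r}+\frac{\mu}{2}\mathbb{D}v:\mathbb{D}u_{r}.
\end{align}
Its absolute value is bounded by $(\abs{\gamma}+\mu)\norm{v}_{\prescript{}{0}{H}^{1}}\norm{u_{r}}_{\prescript{}{0}{H}^{1}}\lesssim\norm{\tau}_{\mathcal{V}}\norm{u_{r}}_{\prescript{}{0}{H}^{1}}$.

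Here $\chi_{r}\in H^{-1/2}$, so the boundary pairing is a priori only the $H^{-1/2}$–$H^{1/2}$ duality. It coincides with the $\mathcal{V}^{*}$–$\mathcal{V}$ pairing on the dense class, and taking the supremum over $\tau$ yields \eqref{estimater}. The estimate \eqref{estimatei} follows identically, since the real and imaginary problems decouple.

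The main obstacle is Step 1: the lifting must be divergence free, which forces $\hat{v}'\sim\hat{\tau}/\abs{\xi}$ at low frequency. This is exactly where the $\abs{\xi}^{-2}$ weight, and hence the $\min\{\abs{\xi}^{2},\cdot\}$ weight on $\chi$, is needed. I expect the bookkeeping of $L^{2}$ and $H^{1}$ norms across the frequency regions, together with the density and pairing-identification argument, to be the only delicate points.
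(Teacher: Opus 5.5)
Your proposal follows essentially the same route as the paper. You test \eqref{weakform} with $(w,0)$, where $w$ is a solenoidal field vanishing on $\Sigma_{0}$ whose normal trace on $\Sigma_{b}$ lies in $H^{1/2}\cap\dot{H}^{-1}$, so the pressure and Marangoni terms drop out. The differences are cosmetic. The paper does not take a supremum over a dense class; it inserts the extremal trace directly, $\mathcal{F}(\varphi)=\min\{\abs{\xi}^{2},\abs{\xi}^{-1}\}\mathcal{F}(\chi_{r})$, which makes $-\int_{\Sigma_{b}}w_{n}\chi_{r}$ equal to the square of the left side of \eqref{estimater}. It also takes the lift $w=G(0,\varphi)$ from Proposition 2.4 of \cite{MR4337506} rather than building it by hand.

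There is one concrete slip in your explicit lift in Step 1. With the low-frequency profile $a=x_{n}/b$ you have $\partial_{n}a(\xi,0)=1/b$. Hence $\hat{v}'(\xi,0)=\frac{i\xi}{2\pi b\abs{\xi}^{2}}\hat{\tau}(\xi)\neq 0$, so $v\notin\prescript{}{0}{H}^{1}(\Omega;\mathbb{R}^{n})$ and cannot be used as a test function; your claim $v'(0)=0$ is false for that choice. Because $\hat{v}'$ is built from $\partial_{n}\hat{v}_{n}$, the profile must satisfy $a(\xi,0)=\partial_{n}a(\xi,0)=0$. For example, take $a=(x_{n}/b)^{2}$ for $\abs{\xi}\le 1$, and at high frequency use a cutoff that vanishes near $x_{n}=0$. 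With that change, your $\abs{\hat{\tau}}/\abs{\xi}$ bookkeeping, the density argument, and the rest of the proof go through.
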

\begin{proof}
    We focus on the proof of (\ref{estimater}), as the proof of (\ref{estimatei}) is similar. Consider the divergence problem
    \begin{align} \label{divprob}
        \begin{cases}
            \nabla \cdot u = g &\mbox{in \(\Omega\)} \\
            u_{n}=h &\mbox{on \(\Sigma_{b}\)}.
        \end{cases}
    \end{align}
    By Proposition \(2.4\) of \cite{MR4337506}, there exists a bounded linear operator \(G:\mathcal{H}(\Omega;\mathbb{R}) \to \prescript{}{0}{H}^{1}(\Omega;\mathbb{R}^{n})\) such that \(u=G(g,h)\) satisfies (\ref{divprob}), where \(\mathcal{H}(\Omega;\mathbb{R})=\{(g,h)\in L^{2}(\Omega;\mathbb{R}) \times H^{1/2}(\Sigma_{b};\mathbb{R})\mid \norm{(g,h)}_{\mathcal{H}}<\infty\}\), in which \(\norm{(g,h)}_{\mathcal{H}(\Omega;\mathbb{R})}^{2}=\norm{g}_{L^{2}(\Omega;\mathbb{R})}^{2}+\norm{h}_{H^{1/2}(\Sigma_{b};\mathbb{R})}^{2}+[h-\int_{0}^{b}g(\cdot,x_{n})dx_{n}]_{\dot{H}^{-1}(\mathbb{R}^{n-1};\mathbb{R})}^{2}\). Let us define \(\varphi\) via \(\mathcal{F}(\varphi)(\xi)=\min\{\abs{\xi}^{2},\abs{\xi}^{-1}\}\mathcal{F}(\chi_{r})(\xi)\). Observe that
    \begin{align}
        &\norm{\varphi}_{H^{1/2}(\Sigma_{b};\mathbb{R})\cap \dot{H}^{-1}(\mathbb{R}^{n-1};\mathbb{R})}^{2} \nonumber \\
        =&\int_{\abs{\xi} \leq 1}(1+\abs{\xi}^{2})^{1/2}\abs{\mathcal{F}(\varphi)(\xi)}^{2}d\xi+\int_{\abs{\xi}>1}(1+\abs{\xi}^{2})^{1/2}\abs{\mathcal{F}(\varphi)(\xi)}^{2}d\xi+\int_{\mathbb{R}^{n-1}}\abs{\xi}^{-2}\abs{\mathcal{F}(\varphi)(\xi)}^{2}d\xi \nonumber\\
        \leq&\int_{\abs{\xi}\leq 1}\sqrt{2}\abs{\xi}^{-2}\abs{\mathcal{F}(\varphi)(\xi)}^{2}d\xi+\int_{\abs{\xi}>1}\sqrt{2}\abs{\xi}\abs{\mathcal{F}(\varphi)(\xi)}^{2}d\xi+\int_{\mathbb{R}^{n-1}}\abs{\xi}^{-2}\abs{\mathcal{F}(\varphi)(\xi)}^{2}d\xi \nonumber\\
        \leq&(1+\sqrt{2})\int_{\mathbb{R}^{n-1}}\abs{\xi}^{-2}\abs{\mathcal{F}(\varphi)(\xi)}^{2}d\xi+\sqrt{2}\int_{\mathbb{R}^{n-1}}\abs{\xi}\abs{\mathcal{F}(\varphi)(\xi)}^{2}d\xi \nonumber\\
        =&2(1+\sqrt{2})\int_{\mathbb{R}^{n-1}}\min\{\abs{\xi}^{2},\abs{\xi}^{-1}\}\abs{\mathcal{F}(\chi_{r})(\xi)}^{2}d\xi.
    \end{align}
    Since \(\abs{\xi}^{2} \lesssim (1+\abs{\xi}^{2})^{-1/2}\) for \(\abs{\xi} \leq 1\) and \(\abs{\xi}^{-1} \lesssim (1+\abs{\xi}^{2})^{-1/2}\) for \(\abs{\xi}>1\), we obtain
    \begin{align}
        \norm{\varphi}_{H^{1/2}(\Sigma_{b};\mathbb{R}) \cap \dot{H}^{-1}(\mathbb{R}^{n-1};\mathbb{R})}^{2} \leq& 2(1+\sqrt{2})\norm{\chi_{r}}_{H^{-1/2}(\Sigma_{b};\mathbb{R})}^{2}<\infty.
    \end{align}
    Then by Proposition \(2.4\) of \cite{MR4337506}, \(w=G(0,\varphi)\) solves (\ref{divprob}). Letting \((v,\phi)=(w,0) \in \prescript{}{0}{H}^{1}_{\sigma}(\Omega;\mathbb{R}^{n}) \times \prescript{}{0}{H}^{1}(\Omega;\mathbb{R})\) in the weak formulation (\ref{weakform}) with \(\chi_{r} \in H^{-1/2}(\Sigma_{b};\mathbb{R})\), we obtain that for all \(\phi \in \prescript{}{0}{H}^{1}(\Omega;\mathbb{R})\),
    \begin{align} \label{weakform2}
        &\int_{\Omega}-\gamma\partial_{1}w\cdot u_{r} +\frac{\mu}{2}\mathbb{D}w:\mathbb{D}u_{r} =-\int_{\Sigma_{b}}w_{n}\chi_{r}.
    \end{align}
    Note that
    \begin{align}
        -\int_{\Sigma_{b}}w_{n}\chi_{r} =& -\int_{\mathbb{R}^{n-1}}\mathcal{F}(\chi_{r})(\xi)\mathcal{F}(\varphi)(\xi) =-\int_{\mathbb{R}^{n-1}}\min\{\abs{\xi}^{2},\abs{\xi}^{-1}\}\abs{\mathcal{F}(\chi_{r})(\xi)}^{2}d\xi.
    \end{align}
    Hence,
    \begin{align}
        \int_{\mathbb{R}^{n-1}}\min\{\abs{\xi}^{2},\abs{\xi}^{-1}\}\abs{\mathcal{F}(\chi_{r})(\xi)}^{2}d\xi &=\int_{\Omega}\gamma\partial_{1}w\cdot u_{r} -\frac{\mu}{2}\mathbb{D}w:\mathbb{D}u_{r} \nonumber \\
        &\lesssim_{\gamma,\kappa} \norm{w}_{\prescript{}{0}{H}^{1}(\Omega;\mathbb{R}^{n})}\norm{u_{r}}_{\prescript{}{0}{H}^{1}(\Omega;\mathbb{R}^{n})}.
    \end{align}
    Since
    \begin{align}
        \norm{w}_{\prescript{}{0}{H}^{1}(\Omega;\mathbb{R}^{n})}^{2}=&\norm{G(0,\varphi)}_{\prescript{}{0}{H}^{1}(\Omega;\mathbb{R}^{n})}^{2} \lesssim \norm{(0,\varphi)}_{\mathcal{H}(\Omega;\mathbb{R})}^{2} =\norm{\varphi}_{H^{1/2}(\Sigma_{b};\mathbb{R})}^{2}+[\varphi]_{\dot{H}^{-1}(\mathbb{R}^{n-1};\mathbb{R})}^{2} \nonumber\\
        \lesssim& \int_{\mathbb{R}^{n-1}}\min\{\abs{\xi}^{2},\abs{\xi}^{-1}\}\abs{\mathcal{F}(\chi_{r})(\xi)}^{2}d\xi,
    \end{align}
    we obtain
    \begin{align}
        &\int_{\mathbb{R}^{n-1}}\min\{\abs{\xi}^{2},\abs{\xi}^{-1}\}\abs{\mathcal{F}(\chi_{r})(\xi)}^{2}d\xi \nonumber\\
        \lesssim_{\gamma,\kappa} &\norm{u_{r}}_{\prescript{}{0}{H}^{1}(\Omega;\mathbb{R}^{n})}\biggl(\int_{\mathbb{R}^{n-1}}\min\{\abs{\xi}^{2},\abs{\xi}^{-1}\}\abs{\mathcal{F}(\chi_{r})(\xi)}^{2}d\xi\biggr)^{1/2}.
    \end{align}
    Hence,
    \begin{align}
        \biggl(\int_{\mathbb{R}^{n-1}}\min\{\abs{\xi}^{2},\abs{\xi}^{-1}\}\abs{\mathcal{F}(\chi_{r})(\xi)}^{2}d\xi\biggr)^{1/2} \lesssim_{\gamma,\kappa} \norm{u_{r}}_{\prescript{}{0}{H}^{1}(\Omega;\mathbb{R}^{n})}.
    \end{align}
\end{proof}
By the inequality in (\ref{compnorm})  and Lemma \ref{energyestimate}, it follows that
\begin{align}
    \norm{u}_{\prescript{}{0}{H}^{1}(\Omega;\mathbb{R}^{n})}\asymp_{\gamma,\kappa}& \biggl(\int_{\mathbb{R}^{n-1}}\min\{\abs{\xi}^{2},\abs{\xi}^{-1}\}\abs{\mathcal{F}(\chi_{r})(\xi)}^{2}d\xi\biggr)^{1/2} \nonumber \\
    &+\biggl(\int_{\mathbb{R}^{n-1}}\min\{\abs{\xi}^{2},\abs{\xi}^{-1}\}\abs{\mathcal{F}(\chi_{i})(\xi)}^{2}d\xi\biggr)^{1/2}.
\end{align}
Let \(\varphi_{r},\varphi_{i} \in L^{1}(\mathbb{R}^{n-1};\mathbb{R})\) such that \(\varphi_{r}, \varphi_{i} \geq 0\) a.e. with compact support. Define \(\phi_{r},\phi_{i} \in \cap_{s\in \mathbb{R}}H^{s}(\mathbb{R}^{n-1};\mathbb{C})\) via \(\phi_{r}=\mathcal{F}^{-1}(\sqrt{\varphi_{r}})\) and \(\phi_{i}=\mathcal{F}^{-1}(\sqrt{\varphi_{i}})\). Letting \(\chi_{r}=\phi_{r}\) and \(\chi_{i}=\phi_{i}\), we obtain
\begin{align}
    &\int_{\mathbb{R}^{n-1}}\min\{\abs{\xi}^{2},\abs{\xi}^{-1}\}\varphi_{r}(\xi)d\xi = \int_{\mathbb{R}^{n-1}}\min\{\abs{\xi}^{2},\abs{\xi}^{-1}\}\abs{\mathcal{F}(\chi_{r})(\xi)}^{2}d\xi\lesssim_{\gamma,\kappa} \norm{u}_{\prescript{}{0}{H}^{1}(\Omega;\mathbb{R}^{n})}^{2} \nonumber \\
    \lesssim&(\mu\norm{u}_{\prescript{}{0}{H}^{1}(\Omega;\mathbb{R}^{n})}^{2}+2\kappa\norm{\psi}_{\prescript{}{0}{H}^{1}(\Omega;\mathbb{R})}^{2})-\frac{1}{2}(\mu\norm{u}_{\prescript{}{0}{H}^{1}(\Omega;\mathbb{R}^{n})}^{2}+2\kappa\norm{\psi}_{\prescript{}{0}{H}^{1}(\Omega;\mathbb{R})}^{2}).
\end{align}
Since
\begin{align}
    &\frac{1}{2}(\mu\norm{u}_{\prescript{}{0}{H}^{1}(\Omega;\mathbb{R}^{n})}^{2}+2\kappa\norm{\psi}_{\prescript{}{0}{H}^{1}(\Omega;\mathbb{R})}^{2}) \nonumber\\
    =&\frac{1}{2}(\mu\norm{u_{r}}_{\prescript{}{0}{H}^{1}(\Omega;\mathbb{R}^{n})}^{2}+2\kappa\norm{\psi_{r}}_{\prescript{}{0}{H}^{1}(\Omega;\mathbb{R})}^{2})+\frac{1}{2}(\mu\norm{u_{i}}_{\prescript{}{0}{H}^{1}(\Omega;\mathbb{R}^{n})}^{2}+2\kappa\norm{\psi_{i}}_{\prescript{}{0}{H}^{1}(\Omega;\mathbb{R})}^{2}) \nonumber \\
    \gtrsim&-\sigma'(0)Q_{2}(u_{r},\psi_{r})-\sigma'(0)Q_{2}(u_{i},\psi_{i}),
\end{align}
we have
\begin{align}
    &(\mu\norm{u}_{\prescript{}{0}{H}^{1}(\Omega;\mathbb{R}^{n})}^{2}+2\kappa\norm{\psi}_{\prescript{}{0}{H}^{1}(\Omega;\mathbb{R})}^{2})-\frac{1}{2}(\mu\norm{u}_{\prescript{}{0}{H}^{1}(\Omega;\mathbb{R}^{n})}^{2}+2\kappa\norm{\psi}_{\prescript{}{0}{H}^{1}(\Omega;\mathbb{R})}^{2}) \nonumber\\
    \leq&(\mu\norm{u_{r}}_{\prescript{}{0}{H}^{1}(\Omega;\mathbb{R}^{n})}^{2}+2\kappa\norm{\psi_{r}}_{\prescript{}{0}{H}^{1}(\Omega;\mathbb{R})}^{2})+\sigma'(0)Q_{2}(u_{r},\psi_{r}) \nonumber\\
    &+(\mu\norm{u_{i}}_{\prescript{}{0}{H}^{1}(\Omega;\mathbb{R}^{n})}^{2}+2\kappa\norm{\psi_{i}}_{\prescript{}{0}{H}^{1}(\Omega;\mathbb{R})}^{2})+\sigma'(0)Q_{2}(u_{i},\psi_{i}) \nonumber\\
    =&-\int_{\Sigma_{b}}(u_{r})_{n}\chi_{r}-\int_{\Sigma_{b}}(u_{i})_{n}\chi_{i} =-\re\int_{\Sigma_{b}}u_{n}\overline{\chi} =-\re\int_{\mathbb{R}^{n-1}}\mathcal{F}(u_{n}\mid_{\Sigma_{b}})(\xi)\overline{\mathcal{F}(\chi)(\xi)} \nonumber\\
    =&-\int_{\mathbb{R}^{n-1}}\re(\omega_{v_{n}\mid_{\Sigma_{b}}}(\xi))\abs{\mathcal{F}(\chi)(\xi)}^{2} \nonumber\\
    =&-\int_{\mathbb{R}^{n-1}}\re(\omega_{v_{n}\mid_{\Sigma_{b}}}(\xi))\abs{\mathcal{F}(\chi_{r})(\xi)}^{2} -\int_{\mathbb{R}^{n-1}}\re(\omega_{v_{n}\mid_{\Sigma_{b}}}(\xi))\abs{\mathcal{F}(\chi_{i})(\xi)}^{2} \nonumber\\
    =&-\int_{\mathbb{R}^{n-1}}\re(\omega_{v_{n}\mid_{\Sigma_{b}}}(\xi))\varphi_{r}(\xi)-\int_{\mathbb{R}^{n-1}}\re(\omega_{v_{n}\mid_{\Sigma_{b}}}(\xi))\varphi_{i}(\xi).
\end{align}
Thus,
\begin{align}
    \int_{\mathbb{R}^{n-1}}\min\{\abs{\xi}^{2},\abs{\xi}^{-1}\}\varphi_{r}(\xi)d\xi \lesssim -\int_{\mathbb{R}^{n-1}}\re(\omega_{v_{n}\mid_{\Sigma_{b}}}(\xi))\varphi_{r}(\xi)-\int_{\mathbb{R}^{n-1}}\re(\omega_{v_{n}\mid_{\Sigma_{b}}}(\xi))\varphi_{i}(\xi).
\end{align}
It follows that for a.e. \(\xi \in \mathbb{R}^{n-1}\),
\begin{align} \label{revntrfs}
    \min\{\abs{\xi}^{2},\abs{\xi}^{-1}\} \lesssim -\re(\omega_{v_{n} \mid_{\Sigma_{b}}}(\xi)).
\end{align}
Recalling (\ref{rho}), we have
\begin{equation}
    \re\rho(\xi)=(\sigma(0)4\pi^{2}\abs{\xi}^{2}+\mathfrak{g})\re(\omega_{v_{n}\mid_{\Sigma_{b}}}(\xi)) \text{ and }
    \im\rho(\xi)=-(\sigma(0)4\pi^{2}\abs{\xi}^{2}+\mathfrak{g})\im(\omega_{v_{n}\mid_{\Sigma_{b}}}(\xi))+\gamma 2\pi \xi_{1}.
\end{equation}
In view of (\ref{revntrfs}) and (\ref{vntrlow}), we observe that for a.e. \(\abs{\xi} \leq 1\),
\begin{align}
    \abs{\xi_{1}}\abs{\xi}^{2} \lesssim&-\abs{\xi_{1}}\re(\omega_{v_{n}\mid_{\Sigma_{b}}}(\xi)) \lesssim \abs{2\pi \gamma\xi_{1}\re(\omega_{v_{n}\mid_{\Sigma_{b}}}(\xi))} \nonumber\\
    =&\abs{\im\biggl(2\pi i \gamma\xi_{1}\omega_{v_{n}\mid_{\Sigma_{b}}}(\xi)+(\sigma(0)4\pi^{2}\abs{\xi}^{2}+\mathfrak{g})\overline{\omega_{v_{n}\mid_{\Sigma_{b}}}(\xi)}\omega_{v_{n}\mid_{\Sigma_{b}}}(\xi)\biggr)} \nonumber\\
    =&\abs{2\pi i \gamma\xi_{1}+(\sigma(0)4\pi^{2}\abs{\xi}^{2}+\mathfrak{g})\overline{\omega_{v_{n}\mid_{\Sigma_{b}}}(\xi)}}\abs{\omega_{v_{n}\mid_{\Sigma_{b}}}(\xi)} \nonumber\\
    =&\abs{\rho(\xi)}\abs{\omega_{v_{n}\mid_{\Sigma_{b}}}(\xi)} \nonumber\\
    \lesssim&\abs{\rho(\xi)}\abs{\xi}^{2}
\end{align}
and
\begin{align}
    \abs{\xi}^{2} \lesssim &-\re(\omega_{v_{n}\mid_{\Sigma_{b}}}(\xi))\lesssim \abs{(\sigma(0)4\pi^{2}\abs{\xi}^{2}+\mathfrak{g})\re(\omega_{v_{n}\mid_{\Sigma_{b}}}(\xi))} =\abs{\re\rho(\xi)} \leq \abs{\rho(\xi)}.
\end{align}
Hence, \(\abs{\xi_{1}}^{2} + \abs{\xi}^{4} \lesssim \abs{\rho(\xi)}^{2}\) for a.e. \(\abs{\xi} \leq 1\). Since \(\sigma(0) \neq 0\), in view of (\ref{revntrfs}), we obtain that for a.e. \(\abs{\xi} >1\),
\begin{align}
    \abs{\xi} =\abs{\xi}^{2}\abs{\xi}^{-1} \lesssim -\abs{\xi}^{2}\re(\omega_{v_{n} \mid_{\Sigma_{b}}}(\xi)) \lesssim \abs{\re\rho(\xi)} \leq \abs{\rho(\xi)}.
\end{align}
Hence, \(1+\abs{\xi}^{2} \lesssim \abs{\xi}^{2} \lesssim \abs{\rho(\xi)}^{2}\) for a.e. \(\abs{\xi}>1\).

\subsection{Surjectivity of \(\Upsilon\)}
In Section \ref{injectivity}, we have shown that for \(s \geq 0\) the bounded linear operator \(\Upsilon: \mathcal{X}^{s} \to \mathcal{Y}^{s}\) defined in \eqref{upsilondef} is injective. Now, we prove its surjectivity. Fix \((f,g,l,k,h,m) \in \mathcal{Y}^{s}\). Let us define
\begin{align}
    \Xi(\xi)=&\int_{0}^{b}\mathcal{F}(f)(\xi,x_{n})\cdot\overline{\omega_{v}(\xi,x_{n})}-\mathcal{F}(g)(\xi,x_{n})\overline{\omega_{q}(\xi,x_{n})}+\mathcal{F}(l)(\xi,x_{n})\overline{\omega_{\phi}(\xi,x_{n})}dx_{n}\nonumber \\
    &-\mathcal{F}(k)(\xi)\cdot\overline{\omega_{v\mid_{\Sigma_{b}}}(\xi)}+\mathcal{F}(m)(\xi)\overline{\omega_{\phi\mid_{\Sigma_{b}}}(\xi)}+\mathcal{F}(h)(\xi)
\end{align}
and \(\mathcal{F}(\eta)(\xi)=\Xi(\xi)/\rho(\xi)\) for a.e. \(\xi \in \mathbb{R}^{n-1}\).
In view of the results in Section \ref{asymptotics}, we observe that
\begin{align}
    &\int_{B'(0,1)}\frac{\xi_{1}^{2}+\abs{\xi}^{4}}{\abs{\xi}^{2}}\abs{\mathcal{F}(\eta)(\xi)}^{2}d\xi+\int_{B'(0,1)^{c}}(1+\abs{\xi}^{2})^{s+5/2}\abs{\mathcal{F}(\eta)(\xi)}^{2}d\xi \nonumber\\
    \lesssim&\int_{B'(0,1)}\frac{\abs{\rho(\xi)}^{2}}{\abs{\xi}^{2}}\abs{\mathcal{F}(\eta)(\xi)}^{2}d\xi+\int_{B'(0,1)^{c}}(1+\abs{\xi}^{2})^{s+3/2}\abs{\rho(\xi)}^{2}\abs{\mathcal{F}(\eta)(\xi)}^{2}d\xi \nonumber\\
    =&\int_{B'(0,1)}\frac{\abs{\Xi(\xi)}^{2}}{\abs{\xi}^{2}}d\xi+\int_{B'(0,1)^{c}}(1+\abs{\xi}^{2})^{s+3/2}\abs{\Xi(\xi)}^{2}d\xi.
\end{align}
\begin{lemma} \label{xiintestimate}
The following estimate holds:
    \begin{align}
        \int_{B'(0,1)}\frac{\abs{\Xi(\xi)}^{2}}{\abs{\xi}^{2}}d\xi+\int_{B'(0,1)^{c}}(1+\abs{\xi}^{2})^{s+3/2}\abs{\Xi(\xi)}^{2}d\xi\lesssim \norm{(f,g,l,k,h,m)}_{\mathcal{Y}^{s}}^{2}.
    \end{align}
\end{lemma}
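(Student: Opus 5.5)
The plan is to split $\Xi$ into its six constituent pieces and estimate each separately in the two frequency regimes, using the symbol bounds of Section \ref{instfs} for $\abs{\xi}>1$ and the low-frequency asymptotics of Theorem \ref{lfasymp} for $\abs{\xi}\le 1$. By Cauchy--Schwarz in $x_n$,
\begin{align}
\abs{\Xi(\xi)} \le{}& \norm{\mathcal{F}(f)(\xi,\cdot)}_{L^2}\norm{\omega_v(\xi,\cdot)}_{L^2} + \norm{\mathcal{F}(l)(\xi,\cdot)}_{L^2}\norm{\omega_\phi(\xi,\cdot)}_{L^2} + \abs{\mathcal{F}(k)(\xi)}\abs{\omega_{v\mid_{\Sigma_b}}(\xi)} + \abs{\mathcal{F}(m)(\xi)}\abs{\omega_{\phi\mid_{\Sigma_b}}(\xi)} \nonumber\\
&+ \abs{\mathcal{F}(h)(\xi) - \textstyle\int_0^b \mathcal{F}(g)(\xi,x_n)\overline{\omega_q(\xi,x_n)}\,dx_n}.
\end{align}
I keep the $g$ and $h$ terms together on purpose, since the only low-frequency control on $h$ in $\mathcal{Y}^s$ is the divergence-trace quantity $[h-\int_0^b g]_{\dot H^{-1}}$.

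For the high-frequency integral, each symbol is bounded by the estimates listed at the end of Section \ref{instfs}. For example, $\norm{\omega_v(\xi,\cdot)}_{L^2}^2\lesssim\abs{\xi}^{-3}$, so $(1+\abs{\xi}^2)^{s+3/2}\abs{\xi}^{-3}\norm{\mathcal{F}(f)(\xi,\cdot)}^2_{L^2}\lesssim \langle\xi\rangle^{2s}\norm{\mathcal{F}(f)(\xi,\cdot)}^2_{L^2}$, and this integrates to at most $\norm{f}_{H^s}^2$ via the horizontal Fourier characterization of $H^s(\Omega)$ (horizontal derivatives only). The terms $l,k,m$ are handled the same way: $\abs{\omega_{v\mid_{\Sigma_b}}}$ and $\abs{\omega_{\phi\mid_{\Sigma_b}}}$ are $O(\langle\xi\rangle^{-1})$, which converts $H^{s+1/2}$ data into the weight $\langle\xi\rangle^{2s+3}$. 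Here I would need $\abs{\omega_{v\mid_{\Sigma_b}}}\lesssim\langle\xi\rangle^{-1}$; this follows from Proposition \ref{fs} applied to the trace map into $H^{s+3/2}$, exactly as for $v_n$. For $g$, the bound $\norm{\omega_q}_{L^2}\lesssim\langle\xi\rangle^{-1/2}$ together with $g\in H^{s+1}$ gives the weight $\langle\xi\rangle^{2s+2}$, and $h\in H^{s+3/2}$ is handled directly. On $\abs{\xi}>1$ I simply split the $g$ and $h$ terms by the triangle inequality.

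For the low-frequency integral, the weight is $\abs{\xi}^{-2}$. Theorem \ref{lfasymp} gives $\norm{\omega_v}_{L^2}\lesssim\abs{\xi}$, $\norm{\omega_\phi}_{L^2}\lesssim\abs{\xi}^2$ and $\abs{\omega_{\phi\mid_{\Sigma_b}}}\lesssim\abs{\xi}^2$. For the tangential trace I would use $\abs{\omega_{v\mid_{\Sigma_b}}}\lesssim\abs{\xi}$, from the formula for $\abs{\omega_{v'}}^2$ at $x_n=b$, and $\omega_{v_n\mid_{\Sigma_b}}=O(\abs{\xi}^2)$. Each of these symbols absorbs the $\abs{\xi}^{-2}$ weight, leaving $L^2$ norms of $f,l,k,m$ over the unit ball. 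For the coupled term, write $\omega_q=1+O(\abs{\xi}^2)$ uniformly in $x_n$. Then
\[
\mathcal{F}(h)-\int_0^b\mathcal{F}(g)\overline{\omega_q}\,dx_n=\mathcal{F}\Bigl(h-\int_0^b g\,dx_n\Bigr)+O(\abs{\xi}^2)\norm{\mathcal{F}(g)(\xi,\cdot)}_{L^1(0,b)},
\]
so its contribution is bounded by $[h-\int_0^b g]^2_{\dot H^{-1}}+\norm{g}^2_{L^2}$.

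The main obstacle is that Theorem \ref{lfasymp} provides only pointwise asymptotics as $\xi\to0$. What is actually needed is uniform bounds $\sup_{\abs{\xi}\le1}\abs{\xi}^{-1}\norm{\omega_v(\xi,\cdot)}_{L^2}<\infty$, and similarly for the other symbols, including the uniform-in-$x_n$ remainder for $\omega_q-1$. I would first obtain these from the explicit ODE representation $y=\exp(x_nA)B^{-1}d$. The entries of $A$ are analytic in $(\abs{\xi},\xi_1)$ and $B_4$ is invertible at $\xi=0$, so on a small ball the symbols are smooth and the Taylor remainders are uniform in $x_n\in[0,b]$. On the intermediate annulus, the invertibility of $B$ for every $\xi$ together with continuity gives boundedness by compactness, and in that range $\abs{\xi}^{-2}$ is harmless.
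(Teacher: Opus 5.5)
Your proposal is correct and follows essentially the same route as the paper. The paper also applies Cauchy--Schwarz in $x_n$, uses the Proposition~\ref{fs} symbol bounds on $\abs{\xi}>1$ and Theorem~\ref{lfasymp} on $\abs{\xi}\le 1$, and writes $\omega_q=1+(\omega_q-1)$ so that only $h-\int_0^b g$ carries the $\dot{H}^{-1}$ weight. Your treatment is actually more careful than the paper's write-up at three points: you make the low-frequency bounds uniform in $x_n$ and over the whole unit ball (the annulus away from $0$ is in fact already covered by the a.e.\ global bounds of Proposition~\ref{fs}), you use the sharp $\abs{\xi}^{-3}$ high-frequency bound for $\omega_v,\omega_\phi$, and you use the $O(\abs{\xi})$ bound for the tangential trace symbol.
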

\begin{proof}
    Since
    \begin{align}
        \Xi(\xi)=&\int_{0}^{b}\mathcal{F}(f)(\xi,x_{n})\cdot \overline{\omega_{v}(\xi,x_{n})}-\mathcal{F}(g)(\xi,x_{n})(\overline{\omega_{q}(\xi,x_{n})}-1)+\mathcal{F}(l)(\xi,x_{n})\overline{\omega_{\phi}(\xi,x_{n})}dx_{n} \nonumber\\
        &-\mathcal{F}(k)(\xi)\cdot\overline{\omega_{v\mid_{\Sigma_{b}}}(\xi)}+\mathcal{F}(m)(\xi)\overline{\omega_{\phi\mid_{\Sigma_{b}}}(\xi)}+\mathcal{F}(h)(\xi)-\int_{0}^{b}\mathcal{F}(g)(\xi,x_{n})dx_{n},
    \end{align}
    by the Cauchy-Schwarz inequality,
    \begin{align}
        \abs{\Xi(\xi)}^{2} \lesssim&\left(\int_{0}^{b}\abs{\mathcal{F}(f)(\xi,x_{n})}^{2}dx_{n}\right)\left(\int_{0}^{b}\abs{\omega_{v}(\xi,x_{n})}^{2}dx_{n}\right) \nonumber\\
        &+\left(\int_{0}^{b}\abs{\mathcal{F}(g)(\xi,x_{n})}^{2}dx_{n}\right)\left(\int_{0}^{b}\abs{\omega_{q}(\xi,x_{n})-1}^{2}dx_{n}\right) \nonumber\\
        &+\left(\int_{0}^{b}\abs{\mathcal{F}(l)(\xi,x_{n})}^{2}dx_{n}\right)\left(\int_{0}^{b}\abs{\omega_{\phi}(\xi,x_{n})}^{2}dx_{n}\right) \nonumber\\
        &+\abs{\mathcal{F}(k)(\xi)}^{2}\abs{\omega_{v\mid_{\Sigma_{b}}}(\xi)}^{2} +\abs{\mathcal{F}(m)(\xi)}^{2}\abs{\omega_{\phi\mid_{\Sigma_{b}}}(\xi)}^{2}+\abs{\mathcal{F}(h)(\xi)-\int_{0}^{b}\mathcal{F}(g)(\xi,x_{n})dx_{n}}^{2}.
    \end{align}
    By Theorem \ref{lfasymp}, we obtain that for \(\abs{\xi} \leq 1\),
    \begin{align}
        \frac{\abs{\Xi(\xi)}^{2}}{\abs{\xi}^{2}} \lesssim &\int_{0}^{b}\abs{\mathcal{F}(f)(\xi,x_{n})}^{2}dx_{n}+\int_{0}^{b}\abs{\mathcal{F}(g)(\xi,x_{n})}^{2}dx_{n}+\int_{0}^{b}\abs{\mathcal{F}(l)(\xi,x_{n})}^{2}dx_{n} \nonumber \\
        &+\abs{\mathcal{F}(k)(\xi)}^{2}+\abs{\mathcal{F}(m)(\xi)}^{2}+\frac{1}{\abs{\xi}^{2}}\abs{\mathcal{F}(h)(\xi)-\int_{0}^{b}\mathcal{F}(g)(\xi,x_{n})dx_{n}}^{2}.
    \end{align}
    Then
    \begin{align}
        &\int_{B'(0,1)}\frac{\abs{\Xi(\xi)}^{2}}{\abs{\xi}^{2}}d\xi \nonumber\\
        \lesssim &\int_{B'(0,1)}\int_{0}^{b}\abs{\mathcal{F}(f)(\xi,x_{n})}^{2}dx_{n}d\xi+\int_{B'(0,1)}\int_{0}^{b}\abs{\mathcal{F}(g)(\xi,x_{n})}^{2}dx_{n}d\xi \nonumber\\
        &+\int_{B'(0,1)}\int_{0}^{b}\abs{\mathcal{F}(l)(\xi,x_{n})}^{2}dx_{n}d\xi +\int_{B'(0,1)}\abs{\mathcal{F}(k)(\xi)}^{2}d\xi+\int_{B'(0,1)}\abs{\mathcal{F}(m)(\xi)}^{2}d\xi \nonumber \\
        &+\int_{B'(0,1)}\frac{1}{\abs{\xi}^{2}}\abs{\mathcal{F}(h)(\xi)-\int_{0}^{b}\mathcal{F}(g)(\xi,x_{n})dx_{n}}^{2}d\xi \nonumber\\
        \lesssim& \norm{f}_{L^{2}(\Omega;\mathbb{R}^{n})}^{2}+\norm{g}_{L^{2}(\Omega;\mathbb{R})}^{2}+\norm{l}_{L^{2}(\Omega;\mathbb{R})}^{2}+\norm{k}_{L^{2}(\Sigma_{b};\mathbb{R}^{n})}^{2}+\norm{m}_{L^{2}(\Sigma_{b};\mathbb{R})}^{2} \nonumber\\
        &+\left[h-\int_{0}^{b}g(\cdot,x_{n})dx_{n}\right]_{\dot{H}^{-1}(\mathbb{R}^{n-1};\mathbb{R})}^{2} \nonumber\\
        \lesssim&\norm{(f,g,l,k,h,m)}_{\mathcal{Y}^{s}}^{2}.
    \end{align}
    For \(\abs{\xi}>1\), by the Cauchy-Schwarz inequality,
    \begin{align}
        \abs{\Xi(\xi)}^{2} \lesssim &\left(\int_{0}^{b}\abs{\mathcal{F}(f)(\xi,x_{n})}^{2}dx_{n}\right)\left(\int_{0}^{b}\abs{\omega_{v}(\xi,x_{n})}^{2}dx_{n}\right) \nonumber\\
        &+\left(\int_{0}^{b}\abs{\mathcal{F}(g)(\xi,x_{n})}^{2}dx_{n}\right)\left(\int_{0}^{b}\abs{\omega_{q}(\xi,x_{n})}^{2}dx_{n}\right) \nonumber\\
        &+\left(\int_{0}^{b}\abs{\mathcal{F}(l)(\xi,x_{n})}^{2}dx_{n}\right)\left(\int_{0}^{b}\abs{\omega_{\phi}(\xi,x_{n})}^{2}dx_{n}\right) \nonumber\\
        &+\abs{\mathcal{F}(k)(\xi)}^{2}\abs{\omega_{v\mid_{\Sigma_{b}}}(\xi)}^{2}+\abs{\mathcal{F}(m)(\xi)}^{2}\abs{\omega_{\phi\mid_{\Sigma_{b}}}(\xi)}^{2}+\abs{\mathcal{F}(h)(\xi)}^{2} \nonumber\\
        \lesssim&\left(\int_{0}^{b}\abs{\mathcal{F}(f)(\xi,x_{n})}^{2}dx_{n}\right)\min\{\abs{\xi}^{2},\abs{\xi}^{-2}\} \nonumber\\
        &+\left(\int_{0}^{b}\abs{\mathcal{F}(g)(\xi,x_{n})}^{2}dx_{n}\right)(1+\abs{\xi}^{2})^{-1/2} +\left(\int_{0}^{b}\abs{\mathcal{F}(l)(\xi,x_{n})}^{2}dx_{n}\right)\min\{\abs{\xi}^{2},\abs{\xi}^{-2}\} \nonumber\\
        &+\abs{\mathcal{F}(k)(\xi)}^{2}(\min\{\abs{\xi}^{2},\abs{\xi}^{-1}\})^{2}+\abs{\mathcal{F}(m)(\xi)}^{2}(1+\abs{\xi}^{2})^{-1}+\abs{\mathcal{F}(h)(\xi)}^{2}.
    \end{align}
    Then
\begin{multline}
    \int_{B'(0,1)^{c}}(1+\abs{\xi}^{2})^{s+3/2}\abs{\Xi(\xi)}^{2}d\xi  
        \leq \norm{f}_{H^{s}(\Omega;\mathbb{R}^{n})}^{2}+\norm{g}_{H^{s+1}(\Omega;\mathbb{R})}^{2}+\norm{l}_{H^{s}(\Omega;\mathbb{R})}^{2} \\
         +\norm{k}_{H^{s+1/2}(\Sigma_{b};\mathbb{R}^{n})}^{2}+\norm{m}_{H^{s+1/2}(\Sigma_{b};\mathbb{R})}^{2}+\norm{h}_{H^{s+3/2}(\Sigma_{b};\mathbb{R})}^{2}          \lesssim \norm{(f,g,l,k,h,m)}_{\mathcal{Y}^{s}}^{2}.
\end{multline}
\end{proof}
By Lemma \ref{xiintestimate},
\begin{align}
    &\int_{B'(0,1)}\frac{\xi_{1}^{2}+\abs{\xi}^{4}}{\abs{\xi}^{2}}\abs{\mathcal{F}(\eta)(\xi)}^{2}d\xi+\int_{B'(0,1)^{c}}(1+\abs{\xi}^{2})^{s+5/2}\abs{\mathcal{F}(\eta)(\xi)}^{2}d\xi \nonumber \\
    \lesssim&\int_{B'(0,1)}\frac{\abs{\Xi(\xi)}^{2}}{\abs{\xi}^{2}}d\xi +\int_{B'(0,1)^{c}}(1+\abs{\xi}^{2})^{s+3/2}\abs{\Xi(\xi)}^{2}d\xi \nonumber\\
    \lesssim&\norm{(f,g,l,k,h,m)}_{\mathcal{Y}^{s}}^{2}.
\end{align}
Hence, \(\eta \in X^{s+5/2}(\mathbb{R}^{n-1};\mathbb{R})\) with \(\norm{\eta}_{X^{s+5/2}(\mathbb{R}^{n-1};\mathbb{R})} \lesssim \norm{(f,g,l,k,h,m)}_{\mathcal{Y}^{s}}\). Since \(\omega_{v_{n}\mid_{\Sigma_{b}}}(\xi)=\omega_{v}(\xi,b) \cdot e_{n}\), we may write \(\Xi(\xi)=\rho(\xi)\mathcal{F}(\eta)(\xi)\) as
\begin{align}
    0=&\int_{0}^{b}\mathcal{F}(f)(\xi,x_{n})\cdot\overline{\omega_{v}(\xi,x_{n})}-\mathcal{F}(g)(\xi,x_{n})\overline{\omega_{q}(\xi,x_{n})}+\mathcal{F}(l)(\xi,x_{n})\overline{\omega_{\phi}(\xi,x_{n})}dx_{n} \nonumber \\
    &-\biggl(\mathcal{F}(k)(\xi)+(\sigma(0)4\pi^{2}\abs{\xi}^{2}+\mathfrak{g})\mathcal{F}(\eta)(\xi)e_{n}\biggr)\cdot \overline{\omega_{v}(\xi,b)}+\mathcal{F}(m)(\xi)\overline{\omega_{\phi \mid_{\Sigma_{b}}}(\xi)} \nonumber\\
    &+\mathcal{F}(h)(\xi)-\gamma2\pi i \xi_{1}\mathcal{F}(\eta)(\xi).
\end{align}
We note that \(0=2\pi i \xi \cdot \mathcal{F}(v')(\xi,x_{n})+\partial_{n}\mathcal{F}(v_{n})(\xi,x_{n}) =2\pi i \xi \cdot \omega_{v}'(\xi,x_{n})\mathcal{F}(\chi)(\xi)+\partial_{n}\omega_{v_{n}}(\xi,x_{n})\mathcal{F}(\chi)(\xi)\). Since \(v_{n}(x',0)=0\) for all \(x' \in \mathbb{R}^{n-1}\), we obtain that \(0=\mathcal{F}(v_{n})(\xi,0)=\omega_{v_{n}}(\xi,0)\mathcal{F}(\chi)(\xi)\) for all \(\xi \in \mathbb{R}^{n-1}\). As \(\chi \in H^{s+1/2}(\mathbb{R}^{n-1};\mathbb{R})\) is arbitrary, we have that \(\omega_{v_{n}}(\xi,0)=0\). Then
\begin{align}
    \omega_{v_{n}}(\xi,b)=\omega_{v_{n}}(\xi,b)-\omega_{v_{n}}(\xi,0)=\int_{0}^{b}\partial_{n}\omega_{v_{n}}(\xi,x_{n})dx_{n}=\int_{0}^{b}-2\pi i \xi \cdot \omega_{v}'(\xi,x_{n})dx_{n}.
\end{align}
Hence,
\begin{align}
    0=&\int_{0}^{b}\biggl(\mathcal{F}(f)(\xi,x_{n})-(\mathfrak{g}\mathcal{F}(\eta)(\xi)2\pi i \xi,0)\biggr)\cdot \overline{\omega_{v}(\xi,x_{n})}-\mathcal{F}(g)(\xi,x_{n})\overline{\omega_{q}(\xi,x_{n})} \nonumber \\
    &+\mathcal{F}(l)(\xi,x_{n})\overline{\omega_{\phi}(\xi,x_{n})}dx_{n} -\biggl(\mathcal{F}(k)(\xi)+\sigma(0)4\pi^{2}\abs{\xi}^{2}\mathcal{F}(\eta)(\xi)e_{n}\biggr)\cdot \overline{\omega_{v}(\xi,b)} \nonumber \\
    &+\mathcal{F}(m)(\xi)\overline{\omega_{\phi\mid_{\Sigma_{b}}}(\xi)}+\mathcal{F}(h)(\xi) -\gamma 2\pi i \xi_{1}\mathcal{F}(\eta)(\xi). \label{line1}
\end{align}
Since \(\eta \in X^{s+5/2}(\mathbb{R}^{n-1})\), we have \((-\nabla'\eta,0) \in H^{s+3/2}(\Omega;\mathbb{R}^{n}) \subseteq H^{s}(\Omega;\mathbb{R}^{n})\), \(\Delta'\eta \in H^{s+1/2}(\mathbb{R}^{n-1})\), and \(\partial_{1}\eta \in H^{s+3/2}(\mathbb{R}^{n-1}) \cap \dot{H}^{-1}(\mathbb{R}^{n-1})\), all of which follow from Theorem \(5.6\) of \cite{leoni2023traveling}. Hence, \(f-\mathfrak{g}(\nabla'\eta,0)\in H^{s}(\Omega;\mathbb{R}^{n})\), \(g \in H^{s+1}(\Omega;\mathbb{R})\), \(l \in H^{s}(\Omega;\mathbb{R})\), \(h-\gamma\partial_{1}\eta \in H^{s+3/2}(\mathbb{R}^{n-1})\), \(k-\sigma(0)\Delta'\eta e_{n} \in H^{s+1/2}(\Sigma_{b};\mathbb{R}^{n})\), \(m \in H^{s+1/2}(\Sigma_{b};\mathbb{R})\), and \((h-\gamma\partial_{1}\eta)-\int_{0}^{b}g(\cdot,x_{n})dx_{n}\in \dot{H}^{-1}(\mathbb{R}^{n-1})\). That is,
\begin{align}
    &(f-\mathfrak{g}(\nabla'\eta,0),g,l,k-\sigma(0)\Delta'\eta e_{n},h-\gamma\partial_{1}\eta,m) \nonumber \\
    \in&H^{s}(\Omega;\mathbb{R}^{n}) \times H^{s+1}(\Omega;\mathbb{R}) \times H^{s}(\Omega;\mathbb{R}) \times H^{s+1/2}(\Sigma_{b};\mathbb{R}^{n}) \times H^{s+3/2}(\mathbb{R}^{n-1};\mathbb{R}) \times H^{s+1/2}(\Sigma_{b};\mathbb{R}).
\end{align}
Moreover, from the equation in (\ref{line1}), we have
\begin{align}
    0=&\int_{0}^{b}\mathcal{F}(f-\mathfrak{g}(\nabla'\eta,0))(\xi,x_{n})\cdot \overline{\mathcal{F}(v)(\xi,x_{n})}-\mathcal{F}(g)(\xi,x_{n})\overline{\mathcal{F}(q)(\xi,x_{n})} \nonumber\\
    &+\mathcal{F}(l)(\xi,x_{n})\overline{\mathcal{F}(\phi)(\xi,x_{n})}dx_{n} -\mathcal{F}(k-\sigma(0)\Delta'\eta e_{n})(\xi)\cdot\overline{\mathcal{F}(v)(\xi,b)}+\mathcal{F}(m)(\xi)\overline{\mathcal{F}(\phi)(\xi,b)} \nonumber\\
    &+\mathcal{F}(h-\gamma\partial_{1}\eta)(\xi,b)\overline{\mathcal{F}(\chi)(\xi)}.
\end{align}
Hence,
\begin{align}
    &\int_{\Omega}(f(x)-\mathfrak{g}(\nabla'\eta(x'),0))\cdot v(x)-g(x)q(x)+l(x)\phi(x)dx \nonumber\\
    &-\int_{\Sigma_{b}}(k(x')-\sigma(0)\Delta'\eta(x')e_{n})\cdot v(x',b)-m(x')\phi(x',b)-(h(x')-\gamma\partial_{1}\eta(x'))\chi(x')=0.
\end{align}
It follows that \((f-\mathfrak{g}(\nabla'\eta,0),g,l,k-\sigma(0)\Delta'\eta e_{n},h-\gamma\partial_{1}\eta,m) \in\mathcal{Z}^{s}\), where \(\mathcal{Z}^{s}\) is as in \eqref{Zs}. By Proposition \ref{Zsiso}, there exist \((u,\psi,p) \in \prescript{}{0}{H}^{s+2}(\Omega;\mathbb{R}^{n}) \times \prescript{}{0}{H}^{s+2}(\Omega;\mathbb{R})\times H^{s+1}(\Omega;\mathbb{R})\) such that \(T(u,\psi,p)=(f-\mathfrak{g}(\nabla'\eta,0),g,l,k-\sigma(0)\Delta'\eta e_{n},h-\gamma\partial_{1}\eta,m)\). It follows that \(\Upsilon\) is surjective.

\section{Nonlinear Analysis}
In this section, we show that all of the nonlinear maps that make up \eqref{fprsmf} are at least \(C^{1}\), which is necessary to apply an implicit function theorem as explained in Section \ref{ptc}.
\begin{lemma} \label{c2cont}
    Fix \(d \geq 1\), \(p \geq 1\), and \(q \geq 1\). Let \(\mathfrak{D} \subseteq \mathbb{R}^{d}\) be an open set and \(\mathbb{N} \ni k \geq \frac{d}{2}+1\). If \(f \in C_{b}^{k+2}(\mathbb{R}^{p};\mathbb{R}^{q})\) and \(f(0)=0\), then the map \(\Lambda_{f}:H^{k}(\mathfrak{D};\mathbb{R}^{p}) \to H^{k}(\mathfrak{D};\mathbb{R}^{q})\) defined by \(\Lambda_{f}(u)=f \circ u\) is well-defined and \(C^{1}\).
\end{lemma}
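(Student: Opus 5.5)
The plan is to prove the lemma directly via the standard Moser-type composition estimates in $H^{k}(\mathfrak{D})$ with $k > d/2$. I would first establish well-definedness, then identify a candidate derivative, and finally verify Fréchet differentiability together with continuity of that derivative. The essential tools are the Banach algebra property of $H^{k}(\mathfrak{D})\cap L^\infty$ when $k>d/2$, namely $\norm{vw}_{H^k}\lesssim \norm{v}_{H^k}\norm{w}_{L^\infty}+\norm{v}_{L^\infty}\norm{w}_{H^k}$, and the embedding $H^{k}(\mathfrak{D})\hookrightarrow L^\infty$. For a general open set $\mathfrak{D}$ I would not use an extension operator. Instead I would derive the needed Gagliardo–Nirenberg interpolation inequalities for $\partial^\alpha$ of products by reducing to the $L^\infty$ bound and the $H^k$ norm. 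If that proves delicate, I would assume, as the paper does in its applications with $\mathfrak{D}\in\{\Omega,\mathbb{R}^{n-1}\}$, that $\mathfrak{D}$ admits a bounded extension operator.

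\textbf{Well-definedness.} Since $f(0)=0$ and $\nabla f$ is bounded, $\abs{f(u)}\le \norm{\nabla f}_\infty\abs{u}$, which gives $f\circ u\in L^2$. For $1\le\abs{\alpha}\le k$, the Faà di Bruno formula writes $\partial^\alpha(f\circ u)$ as a sum of terms $(D^jf)(u)[\partial^{\beta_1}u,\dots,\partial^{\beta_j}u]$ with $\sum\beta_i=\alpha$ and $j\le k$. Each coefficient $(D^jf)(u)$ is bounded. The multilinear product of derivatives lies in $L^2$ by Hölder's inequality together with the Gagliardo–Nirenberg bound $\norm{\partial^{\beta}u}_{L^{2\abs{\alpha}/\abs{\beta}}}\lesssim \norm{u}_{L^\infty}^{1-\abs{\beta}/\abs{\alpha}}\norm{u}_{H^{\abs{\alpha}}}^{\abs{\beta}/\abs{\alpha}}$. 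This yields $\norm{f\circ u}_{H^k}\le C(\norm{u}_{L^\infty})\norm{u}_{H^k}$.

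\textbf{Differentiability.} The candidate derivative is $D\Lambda_f(u)h=(Df)(u)h$. This is bounded on $H^k$ because $(Df)(u)=(Df)(0)+[(Df)(u)-(Df)(0)]$, where the bracket lies in $H^k$ by the previous step applied to $Df-Df(0)\in C_b^{k+1}$. The algebra property then handles the product with $h$. For the remainder I would write
\[
f(u+h)-f(u)-Df(u)h=\int_0^1\!\!\int_0^1 t\,D^2f(u+sth)[h,h]\,ds\,dt .
\]
Bounding $D^2f(u+sth)-D^2f(0)$ in $H^k$ via the composition estimate, which needs $D^2f\in C_b^{k}$ and is where $k+2$ derivatives are used, and again using the algebra property, gives an $H^k$ norm of $O(\norm{h}_{H^k}^2)$ uniformly for $\norm{h}_{H^k}\le1$.

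\textbf{Continuity of the derivative.} It suffices to show that $u\mapsto Df(u)-Df(0)$ is continuous from $H^k$ into $H^k$. The operator norm of $h\mapsto (Df(u)-Df(v))h$ is then controlled by the algebra property. I would write $Df(u)-Df(v)=\int_0^1 D^2f(v+t(u-v))\,dt\,(u-v)$, the same mean-value device, and bound it by $C(\norm{u}_{H^k},\norm{v}_{H^k})\norm{u-v}_{H^k}$ using the composition estimate for $D^2f-D^2f(0)$ plus the constant part. This gives local Lipschitz continuity, hence $C^1$.

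\textbf{Main obstacle.} The hardest step is the Faà di Bruno/Gagliardo–Nirenberg composition estimate on an arbitrary open $\mathfrak{D}$. I would first try to reduce it to the whole-space case via extension, or to use the hypothesis $k\ge d/2+1$. That hypothesis gives $\nabla u\in H^{k-1}$ with $k-1>d/2-1$, which allows a cruder Sobolev-embedding counting argument: each factor $\partial^{\beta_i}u\in H^{k-\abs{\beta_i}}$, and the Sobolev product estimate closes because of the extra derivative of room. That route avoids sharp interpolation.
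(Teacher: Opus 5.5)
Your argument is sound wherever the Sobolev embeddings are available, and it is the standard route for this result. Fa\`a di Bruno plus Moser/Gagliardo--Nirenberg (or Sobolev-counting) estimates give $\norm{f\circ u}_{H^k}\le C(\norm{u}_{L^\infty})\norm{u}_{H^k}$. The second-order Taylor remainder $\int_0^1\!\int_0^1 t\,D^2f(u+sth)[h,h]\,ds\,dt$ is then handled by applying that estimate to $D^2f-D^2f(0)\in C^k_b$ and using the algebra property. This yields both Fr\'echet differentiability and local Lipschitz continuity of $u\mapsto Df(u)$, and it uses the $C^{k+2}_b$ hypothesis exactly where it is needed. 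The paper gives no proof of its own; it only defers to Theorem A.8 of Koganemaru--Tice, and your outline is the natural argument for a result of this kind.

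The genuine problem is the domain, and it is more serious than ``delicate.'' You treat the extension operator as a fallback, but for an arbitrary open set the statement is false. So no argument that avoids a regularity assumption on $\mathfrak{D}$ can succeed. Neither the hypothesis $k\ge d/2+1$ nor your Sobolev-counting variant rescues it, because both $H^k(\mathfrak{D})\hookrightarrow L^\infty$ and $H^{k-\abs{\beta}}(\mathfrak{D})\hookrightarrow L^{p}$ fail on cusp domains. For instance, take $d=k=2$, $\mathfrak{D}=\{0<x_1<1,\ 0<x_2<x_1^{10}\}$, $u=x_1^{-2}$ and $f=\sin$. Then $u\in H^2(\mathfrak{D})$ (for example, $\norm{\partial_1^2u}_{L^2}^2=36\int_0^1x_1^2\,dx_1$), yet $\partial_1 u\notin L^4(\mathfrak{D})$ and
\begin{equation*}
\partial_1^2\sin(u)=-4x_1^{-6}\sin(x_1^{-2})+6x_1^{-4}\cos(x_1^{-2}).
\end{equation*}
The second term lies in $L^2(\mathfrak{D})$. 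The first has squared norm $16\int_0^1x_1^{-2}\sin^2(x_1^{-2})\,dx_1=8\int_1^\infty z^{-1/2}\sin^2z\,dz=\infty$. So $\Lambda_f$ does not even map $H^2(\mathfrak{D})$ into itself.

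The lemma therefore needs a hypothesis such as the existence of a bounded extension operator $H^k(\mathfrak{D})\to H^k(\mathbb{R}^d)$. This holds in the only cases the paper uses, $\mathfrak{D}=\Omega$ (via reflection) and $\mathfrak{D}=\mathbb{R}^{n-1}$. With such an operator, all the embeddings and product estimates you need transfer from $\mathbb{R}^d$ by restriction. You should adopt this assumption from the start rather than as a contingency; with it, your proof goes through.
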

We omit the proof of Lemma \ref{c2cont}, as it is similar to that of Theorem A.8 of \cite{MR4785303}.
\begin{proposition} \label{8.3analog}
    Let \(n \geq 2\) and \(\mathbb{N} \ni s > n/2\). Suppose that \(\sigma\in C_{b}^{s+4}(\mathbb{R};\mathbb{R})\), \(\Gamma(\cdot,\cdot) \in C_{b}^{s+3}(\mathbb{R} \times\mathbb{R}_{sym}^{n \times n};\mathbb{R}_{sym}^{n \times n})\), and \(\Phi(\cdot,\cdot) \in C_{b}^{s+3}(\mathbb{R} \times \mathbb{R}^{n};\mathbb{R}^{n})\). Then there exists a constant \(\delta=\delta(n,s,b)>0\) such that, if for \(\gamma \in \mathbb{R}\) and \((u,\psi,p,\eta) \in U_{\delta}^{s}=\{(\mathrm{u},\varphi,\mathrm{p},\varepsilon) \in \mathcal{X}^{s} \mid \norm{\varepsilon}_{X^{s+5/2}}<\delta\} \subseteq \mathcal{X}^{s}\) we define
    \begin{align}
        &f: \Omega \to \mathbb{R}^{n} \text{ via } f=-\gamma \partial_{1}^{\mathcal{A}}u + u \cdot \nabla_{\mathcal{A}}u+\mathfrak{g}(\nabla'\eta,0) + \nabla_{\mathcal{A}}p-\nabla_{\mathcal{A}}\cdot \Gamma(\psi,\mathbb{D}_{\mathcal{A}}u) \\
        &g: \Omega \to \mathbb{R} \text{ via } g=J (\nabla_{\mathcal{A}} \cdot u) \\
        &l: \Omega \to \mathbb{R} \text{ via } l=-\gamma \partial_{1}^{\mathcal{A}}\psi + u \cdot \nabla_{\mathcal{A}}\psi +\nabla_{\mathcal{A}} \cdot \Phi(\psi,\nabla_{\mathcal{A}}\psi) \\
        &k:\Sigma_{b} \to \mathbb{R}^{n} \text{ via } k=-p\mathcal{N}(\eta)+\Gamma(\psi,\mathbb{D}_{\mathcal{A}}u)\mathcal{N}(\eta)-\sigma(\psi)\mathcal{H}(\eta)\mathcal{N}(\eta)-\nabla_{\Sigma}^{\mathcal{A}}\sigma(\psi)\sqrt{1+\abs{\nabla'\eta}^{2}} \\
        &h: \Sigma_{b} \to \mathbb{R} \text{ via } h=u \cdot \mathcal{N}(\eta)+\gamma \partial_{1}\eta \\
        &m:\Sigma_{b} \to \mathbb{R} \text{ via } m=\Phi(\psi,\nabla_{\mathcal{A}}\psi)\cdot\frac{\mathcal{N}(\eta)}{\sqrt{1+\abs{\nabla'\eta}^{2}}},
    \end{align}
    where \(\mathcal{A}\) is given by (\ref{AJ}), \(J\) is given by (\ref{AJ}), and \(\mathcal{N}(\eta)=(-\nabla'\eta,1)\), then \((f,g,l,k,h,m) \in \mathcal{Y}^{s}\). Moreover, the map \(\mathbb{R} \times U_{\delta}^{s} \to \mathcal{Y}^{s}\) defined via \((\gamma,u,\psi,p,\eta) \mapsto (f,g,l,k,h,m)\) is \(C^{1}\).
\end{proposition}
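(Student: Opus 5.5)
We follow the approach of Theorem 5.7/Proposition 8.3 in \cite{leoni2023traveling}, treating each of the six components separately. The work splits into two parts: membership of the image in the ordinary Sobolev spaces, and the divergence-trace compatibility condition defining $\mathcal{Y}^{s}$.

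\textbf{Preliminary facts on $\eta$.} We first fix $\delta$. By Theorem 5.6 of \cite{leoni2023traveling}, for $\eta\in X^{s+5/2}$ we have $\nabla'\eta\in H^{s+3/2}$ with bounded linear dependence, and $\eta\in C^{0}_{0}$ with $\norm{\eta}_{L^\infty}\lesssim\norm{\eta}_{X^{s+5/2}}$. We choose $\delta$ so that $\norm{\eta}_{L^\infty}\le b/2$. Then $b/(b+\eta)$ and $J^{-1}$ are smooth functions of $\eta$ on a region bounded away from $-b$. The entries of $\mathcal{A}-I$ are $x_n\nabla'\eta/(b+\eta)$ and $b/(b+\eta)-1$. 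Since $\eta$ itself is only in $L^\infty$ plus high-frequency Sobolev regularity, not in $L^2$, we handle this with the trick of \cite{leoni2023traveling}: the low-frequency part of $\eta$ is smooth and bounded, so composition maps $\eta\mapsto \mathrm{k}(\eta)-\mathrm{k}(0)$ and products with $H^{s+3/2}$ functions are $C^1$ (in fact smooth) into $H^{s+3/2}$-type spaces, and multiplication of $C^{m}_{b}$ functions with $H^{s+1}(\Omega)$ functions is bounded. Since $s>n/2$, $H^{s+1}(\Omega)$ and $H^{s+1/2}(\Sigma_b)$ are algebras, and $H^{s}$ is an $H^{s+1}$-module. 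With this in hand, $f$, $l$, $k$, $m$ and $g$ are finite sums of products of $C^1$ maps into these algebras.

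\textbf{Nonlinear constitutive terms.} The terms $\Gamma(\psi,\mathbb{D}_\mathcal{A} u)$, $\Phi(\psi,\nabla_\mathcal{A}\psi)$ and $\sigma(\psi)-\sigma(0)$ are handled by Lemma~\ref{c2cont} with $k=s+1$ on $\Omega$. This uses $\Gamma(\cdot,0)=0$, and the regularity assumptions are exactly $C^{k+2}_b$. These compositions are $C^1$ into $H^{s+1}(\Omega)$. Applying $\nabla_\mathcal{A}\cdot$ then gives $C^1$ maps into $H^{s}$, and traces give maps into $H^{s+1/2}(\Sigma_b)$. The term $\sigma(\psi)\mathcal{H}(\eta)$ is treated by writing $\mathcal{H}(\eta)=\nabla'\cdot(\nabla'\eta/\sqrt{1+|\nabla'\eta|^2})$, a composition of $\nabla'\eta\in H^{s+3/2}$ with a smooth function vanishing at $0$, followed by a divergence. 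The Marangoni term $\nabla^{\mathcal{A}}_\Sigma\sigma(\psi)$ is a trace of $\nabla_\mathcal{A}\sigma(\psi)\in H^{s}$; to get $H^{s+1/2}$ we write it as $\sigma'(\psi)\nabla_{\mathcal{A}}\psi$ with $\nabla\psi\in H^{s+1}$. We also note that $\Phi$ is not assumed to vanish at $(0,0)$ in the first slot beyond $\Phi(0,\cdot)=0$. We therefore apply Lemma~\ref{c2cont} to $\Phi-\Phi(0,0)$, observing that $\Phi(0,0)=0$.

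\textbf{The compatibility condition (main obstacle).} The genuinely delicate point is showing that
\[
h-\int_0^b g\,dx_n\in\dot H^{-1}
\]
with $C^1$ dependence, since $g$ and $h$ individually are only in $H^{s+1}$ and $H^{s+3/2}$. The key identity is the Piola structure: $J\nabla_\mathcal{A}\cdot u=\nabla\cdot(J\mathcal{A}^t u)$. Using it together with $u=0$ on $\Sigma_0$ and the fact that $J\mathcal{A}^t e_n$ on $\Sigma_b$ equals $\mathcal{N}(\eta)$, we obtain
\[
\int_0^b g\,dx_n=u\cdot\mathcal{N}(\eta)\big|_{\Sigma_b}+\nabla'\cdot\int_0^b (J\mathcal{A}^tu)'\,dx_n.
\]
Hence
\[
h-\int_0^b g\,dx_n=\gamma\partial_1\eta-\nabla'\cdot\int_0^b(J\mathcal{A}^tu)'\,dx_n .
\]
The first term lies in $\dot H^{-1}$ by Theorem 5.6 of \cite{leoni2023traveling}, linearly in $\eta$. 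The second is a horizontal divergence of an $L^2(\mathbb{R}^{n-1})$ function, and so lies in $\dot H^{-1}$, with norm bounded by the $L^2$ norm. Its dependence is $C^1$ because $(J\mathcal{A}^t)'$ is a $C^1$ (polynomial) function of $\eta$ and $\nabla'\eta$ into $L^\infty$, multiplied by $u\in L^2$.

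Putting these together, each component is a composition of bounded multilinear maps and $C^1$ Nemytskii maps, hence $C^1$. This proves the claim on $\mathbb{R}\times U^s_\delta$, with $\gamma$ entering only linearly.
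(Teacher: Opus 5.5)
Your proposal is correct and follows essentially the paper's route: a term-by-term check that uses the Leoni--Tice machinery for the $\eta$-dependent coefficients, together with Lemma~\ref{c2cont} at $k=s+1$ for $\Gamma$, $\Phi$, $\sigma$ and $\sigma'$; the paper uses the precise form of your low/high-frequency heuristic, namely the $Y^{r}=H^{r}+X^{r}$ multiplier and composition results (Theorems 5.14 and 5.16 of \cite{leoni2023traveling}), and that form is what is really needed since $\mathrm{k}(\eta)-\mathrm{k}(0)$ need not lie in $L^{2}$. The one real difference is that the paper only cites \cite{leoni2023traveling} for the divergence--trace condition, whereas you prove it directly via the Piola identity $J\nabla_{\mathcal{A}}\cdot u=\nabla\cdot(J\mathcal{A}^{t}u)$, which is the right argument; two small corrections: it is $J\mathcal{A}e_{n}=(-x_{n}\nabla'\eta/b,1)$, not $J\mathcal{A}^{t}e_{n}$, that reduces to $\mathcal{N}(\eta)$ on $\Sigma_{b}$, and the hypothesis Lemma~\ref{c2cont} actually needs for $\Phi$ is simply $\Phi(0,0)=0$.
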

\begin{proof}
    Before we commence the proof, let us introduce a family of function spaces that are originally defined in \cite{leoni2023traveling} and make a brief appearance in our proof. For \(t \geq 0\), \(\mathbb{N} \ni j \geq 2\), and \(\zeta \in C_{b}^{0,1}(\mathbb{R}^{j-1};\mathbb{R})\) such that \(\inf \zeta>0\), the linear space
    \begin{align}
        Y^{t}(\Omega_{\zeta})=&H^{t}(\Omega_{\zeta})+X^{t}(\mathbb{R}^{j-1}) \nonumber\\
        =&\{f \in L^{1}_{loc}(\Omega_{\zeta}) \mid \mbox{there exists \(g \in H^{t}(\Omega_{\zeta})\) and \(h \in X^{t}(\mathbb{R}^{j-1})\) such that} \nonumber\\
        &\mbox{\(f(x)=g(x)+h(x')\) for a.e. \(x \in \Omega_{\zeta}\)}\}
    \end{align}
    equipped with the norm \(\norm{f}_{Y^{t}}=\inf\{\norm{g}_{H^{t}}+\norm{h}_{X^{t}} \mid f=g+h\}\) is Banach by Theorem \(5.7\) of \cite{leoni2023traveling}. 

    To begin the proof, we fix \(n \geq 2\) and \(r> n/2\). By the second item of Theorem \(5.11\) of \cite{leoni2023traveling}, \(X^{r}(\mathbb{R}^{n-1};\mathbb{R}) \subseteq Y^{r}(\Omega;\mathbb{R})\). By the third item of Theorem \(5.11\) of \cite{leoni2023traveling} with \(k=0\) and \(s=r\), there exists \(c_{1}=c_{1}(n,r,b)>0\) such that \(\norm{f}_{C_{b}^{0}(\Omega;\mathbb{R})} \leq c_{1}\norm{f}_{Y^{r}(\Omega;\mathbb{R})}\) for any \(f \in Y^{r}(\Omega;\mathbb{R})\) and we have the continuous inclusion \(Y^{r}(\Omega;\mathbb{R}) \hookrightarrow \{f \in C_{b}^{0}(\Omega;\mathbb{R}) \mid \lim_{\abs{x'} \to \infty}f(x)=0 \} \subseteq C_{b}^{0}(\Omega;\mathbb{R})\). By Theorem \(5.14\) of \cite{leoni2023traveling}, there exists \(c_{2}=c_{2}(n,r,b)>0\) such that \(\norm{fg}_{H^{r}(\Omega;\mathbb{R})} \leq c_{2}\norm{f}_{Y^{r}(\Omega;\mathbb{R})}\norm{g}_{H^{r}(\Omega;\mathbb{R})}\) for any \(f \in Y^{r}(\Omega;\mathbb{R})\) and any \(g \in H^{r}(\Omega;\mathbb{R})\). Let \(c=c(n,r,b)>0\) denote the larger of the constants \(c_{1},c_{2}>0\) and define \(B_{Y^{r}(\Omega;\mathbb{R})}(0,b/(2c))=\{f \in Y^{r}(\Omega;\mathbb{R})\mid \norm{f}_{Y^{r}(\Omega;\mathbb{R})}<b/(2c)\}\). Then by Theorem \(5.16\) of \cite{leoni2023traveling}, \(\Gamma_{1}, \Gamma_{2}:B_{Y^{r}(\Omega;\mathbb{R})}(0,b/(2c)) \times H^{r}(\Omega;\mathbb{R}) \to H^{r}(\Omega;\mathbb{R})\) given by \(\Gamma_{1}(f,g)=\frac{g}{b+f}\) and \(\Gamma_{2}(f,g)=\frac{gf}{b+f}\) are well-defined and smooth. If \(\delta_{1}(n,r,b) = b/(2c(n,r,b))\), then \(\norm{f}_{Y^{r}(\Omega;\mathbb{R})} \leq \norm{f}_{X^{r}(\mathbb{R}^{n-1};\mathbb{R})} < \delta_{1} = b/(2c)\) for any \(f \in B_{X^{r}(\mathbb{R}^{n-1};\mathbb{R})}(0,\delta_{1})\). Hence, the maps \(\Gamma_{1}, \Gamma_{2}: B_{X^{r}(\mathbb{R}^{n-1};\mathbb{R})}(0,\delta_{1}) \times H^{r}(\Omega;\mathbb{R}) \to H^{r}(\Omega;\mathbb{R})\) given by \(\Gamma_{1}(f,g)=\frac{g}{b+f}\) and \(\Gamma_{2}(f,g)=\frac{g f}{b+f}\) are well-defined and smooth. By Theorem \(A.12\) of \cite{leoni2023traveling}, since \(r > (n-1)/2\), there exists a constant \(\delta_{2}=\delta_{2}(n,r)>0\) such that the map \(f \in B_{H^{r}(\mathbb{R}^{n-1};\mathbb{R}^{n-1})}(0,\delta_{2}) \mapsto f/\sqrt{1+\abs{f}^{2}} \in H^{r}(\mathbb{R}^{n-1};\mathbb{R}^{n-1})\) is well-defined and smooth. Similarly, there exist \(\delta_{3}=\delta_{3}(n,r)>0\) and \(\delta_{4}=\delta_{4}(n,r)>0\) such that the maps \(f \in B_{H^{r}(\mathbb{R}^{n-1};\mathbb{R})}(0,\delta_{3}) \mapsto \sqrt{1+f^{2}}-1\in H^{r}(\mathbb{R}^{n-1};\mathbb{R})\) and \(f\in B_{H^{r}(\mathbb{R}^{n-1};\mathbb{R})}(0,\delta_{4}) \mapsto 1/\sqrt{1+f^{2}}-1 \in H^{r}(\mathbb{R}^{n-1};\mathbb{R})\) are well-defined and smooth. Let \(c_{3}=c_{3}(n,r)>0\) be the constant that appears in the seventh item of Theorem \(5.6\) of \cite{leoni2023traveling} with \(d=n-1\) and \(s=r\). By Lemma \(7.1\) of \cite{leoni2023traveling}, since \(r>(n-1)/2\), there exists a constant \(\delta_{5}=\delta_{5}(n,r,b)>0\) such that if \(\eta \in X^{r}(\mathbb{R}^{n-1};\mathbb{R})\) such that \(\norm{\eta}_{X^{r}(\mathbb{R}^{n-1};\mathbb{R})}<\delta_{5}\), then \(\norm{\eta}_{C_{b}^{0}}<b/2\).

    Fix \(s>n/2\) and let \(\delta\) be the smallest of \(\delta_{1}(n,s,b)\), \(\delta_{1}(n,s+1/2,b)\), \(\delta_{1}(n,s+1,b)\), \(\delta_{2}(n,s+3/2)/c_{3}(n,s+5/2)\), \(\delta_{3}(n,s+3/2)/c_{3}(n,s+5/2)\), \(\delta_{4}(n,s+3/2)/c_{3}(n,s+5/2)\), and \(\delta_{5}(n,s+5/2,b)\). First, let us show that the map
    \begin{align}
        &(\gamma,u,\eta) \in \mathbb{R} \times \prescript{}{0}{H}^{s+2}(\Omega;\mathbb{R}^{n}) \times B_{X^{s+5/2}(\mathbb{R}^{n-1};\mathbb{R})}(0,\delta) \mapsto \nonumber\\
        &((u-\gamma e_{1}) \cdot \nabla_{\mathcal{A}}u, \mathfrak{g}(\nabla'\eta,0),J(\nabla_{\mathcal{A}} \cdot u), u \cdot (-\nabla'\eta,1),\gamma \partial_{1}\eta) \nonumber\\
        \in &H^{s}(\Omega;\mathbb{R}^{n}) \times H^{s}(\Omega;\mathbb{R}^{n})  \times H^{s+1}(\Omega;\mathbb{R}) \times H^{s+3/2}(\Sigma_{b};\mathbb{R}) \times H^{s+3/2}(\Sigma_{b};\mathbb{R})
    \end{align}
    is well-defined.
    
    For \(i \in \{1, \dots, n\}\), we note that \([(u-\gamma e_{1})\cdot \nabla_{\mathcal{A}}u]_{i}=u_{j}\mathcal{A}_{jk}\partial_{k}u_{i}-\gamma \mathcal{A}_{1l}\partial_{l}u_{i}\) in Einstein notation. If \(\mathcal{A}_{jk}\) and \(\mathcal{A}_{1l}\) are \(0\) or \(1\), then the terms inside the expression \(u_{j}\mathcal{A}_{jk}\partial_{k}u_{i}-\gamma \mathcal{A}_{1l}\partial_{l}u_{i}\) resolve into \(0\), \(u_{j}\partial_{k}u_{i}\), or \(-\gamma\partial_{l}u_{i}\). Since \(H^{r}(\Omega;\mathbb{R})\) is an algebra for \(r > n/2\), we have that \(u_{j}\partial_{k}u_{i}, -\gamma\partial_{l}u_{i} \in H^{s+1}(\Omega;\mathbb{R})\). Hence, \(u_{j}\mathcal{A}_{jk}\partial_{k}u_{i}-\gamma \mathcal{A}_{1l}\partial_{l}u_{i} \in H^{s+1}(\Omega;\mathbb{R})\). If \(\mathcal{A}_{jk}\) and \(\mathcal{A}_{1l}\) are neither \(0\) nor \(1\), then they are either \(-x_{n}\frac{\partial_{m'}\eta}{b+\eta}\) for some \(m' \in \{1, \dots, n-1\}\), or \(\frac{b}{b+\eta}\). In the former case, \(u_{j}\mathcal{A}_{jk}\partial_{k}u_{i}=u_{j}(-x_{n}\frac{\partial_{m'}\eta}{b+\eta})\partial_{k}u_{i}=\Gamma_{1}(\eta,-x_{n}u_{j}\partial_{k}u_{i}\partial_{m'}\eta)\). By the seventh item of Theorem \(5.6\) of \cite{leoni2023traveling}, \(\partial_{m'} \eta \in H^{s+3/2}(\mathbb{R}^{n-1};\mathbb{R})\) since \(\eta \in B_{X^{s+5/2}(\mathbb{R}^{n-1};\mathbb{R})}(0,\delta)\). Hence, \(-x_{n}u_{j}\partial_{k}u_{i}\partial_{m'}\eta \in H^{s+1}(\Omega;\mathbb{R})\). Since \(X^{s+5/2}(\mathbb{R}^{n-1};\mathbb{R}) \hookrightarrow X^{s+1}(\mathbb{R}^{n-1};\mathbb{R})\) by the third item of Theorem \(5.6\) of \cite{leoni2023traveling}, it follows that \(u_{j}(-x_{n}\frac{\partial_{m'}\eta}{b+\eta})\partial_{k}u_{i} \in H^{s+1}(\Omega;\mathbb{R})\). In the latter case, \(u_{j}\mathcal{A}_{jk}\partial_{k}u_{i}=u_{j}(\frac{b}{b+\eta})\partial_{k}u_{i}=\Gamma_{1}(\eta,bu_{j}\partial_{k}u_{i})\). Since \(b u_{j}\partial_{k}u_{i} \in H^{s+1}(\Omega;\mathbb{R})\), we have \(u_{j}(\frac{b}{b+\eta})\partial_{k}u_{i} \in H^{s+1}(\Omega;\mathbb{R})\). Thus, \(u_{j}\mathcal{A}_{jk}\partial_{k}u_{i} \in H^{s+1}(\Omega;\mathbb{R})\). Similarly, \(-\gamma \mathcal{A}_{1l}\partial_{l}u_{i} \in H^{s+1}(\Omega;\mathbb{R})\). It follows that \((u-\gamma e_{1})\cdot \nabla_{\mathcal{A}}u \in H^{s+1}(\Omega;\mathbb{R}^{n})\).
    
    We note that \(\mathfrak{g}(\nabla'\eta,0)=\mathfrak{g}(\partial_{1}\eta, \dots, \partial_{n-1}\eta,0) \in H^{s+3/2}(\Omega;\mathbb{R}^{n})\). Recall that \(J(\nabla_{\mathcal{A}} \cdot u)=J\partial_{j}^{\mathcal{A}}u_{j}=J\mathcal{A}_{jk}\partial_{k}u_{j}\) in Einstein notation. Depending on the value of \(\mathcal{A}_{jk}\), we note that \(J\mathcal{A}_{jk}\partial_{k}u_{j}\) resolves into \(0\), \(J\partial_{k}u_{j}\), \(J(-x_{n}\frac{\partial_{m'}\eta}{b+\eta})\partial_{k}u_{j}\), or \(J\frac{b}{b+\eta}\partial_{k}u_{j}\), where
    \begin{align}
        &J\partial_{k}u_{j}=\partial_{k}u_{j}+\frac{\eta \partial_{k}u_{j}}{b} \in H^{s+1}(\Omega;\mathbb{R}) \\
        &J(-x_{n}\frac{\partial_{m'}\eta}{b+\eta})\partial_{k}u_{j}=-x_{n}\frac{\partial_{m'}\eta}{b+\eta}\partial_{k}u_{j}+\frac{\eta}{b}(-1)x_{n}\frac{\partial_{m'}\eta}{b+\eta}\partial_{k}u_{j} \in H^{s+1}(\Omega;\mathbb{R}) \\
        &J\frac{b}{b+\eta}\partial_{k}u_{j}=\frac{b}{b+\eta}\partial_{k}u_{j}+\frac{\eta}{b}\frac{b}{b+\eta}\partial_{k}u_{j} \in H^{s+1}(\Omega;\mathbb{R}).
    \end{align}
    Hence, \(J(\nabla_{\mathcal{A}} \cdot u) \in H^{s+1}(\Omega;\mathbb{R})\). We note that \(u \cdot (-\nabla'\eta,1)\mid_{\Sigma_{b}} \in H^{s+3/2}(\Sigma_{b};\mathbb{R})\). Lastly, \(\gamma\partial_{1}\eta \in H^{s+3/2}(\Sigma_{b};\mathbb{R})\).

    From above, we learn that the map \((\gamma,u,\eta)  \in \mathbb{R} \times \prescript{}{0}{H}^{s+2}(\Omega;\mathbb{R}^{n}) \times B_{X^{s+5/2}(\mathbb{R}^{n-1};\mathbb{R})}(0,\delta) \mapsto (u-\gamma e_{1})\cdot \nabla_{\mathcal{A}}u \in H^{s}(\Omega;\mathbb{R}^{n})\) is not only well-defined, but also smooth. Similarly, the maps \(\eta \in B_{X^{s+5/2}(\mathbb{R}^{n-1};\mathbb{R})}(0,\delta)\mapsto \mathfrak{g}(\nabla'\eta,0) \in H^{s}(\Omega;\mathbb{R}^{n})\); \((u,\eta) \in \prescript{}{0}{H}^{s+2}(\Omega;\mathbb{R}^{n}) \times B_{X^{s+5/2}(\mathbb{R}^{n-1};\mathbb{R})}(0,\delta) \mapsto J (\nabla_{\mathcal{A}} \cdot u) \in H^{s+1}(\Omega;\mathbb{R})\); \((u,\eta) \in \prescript{}{0}{H}^{s+2}(\Omega;\mathbb{R}^{n}) \times B_{X^{s+5/2}(\mathbb{R}^{n-1};\mathbb{R})}(0,\delta) \mapsto u \cdot (-\nabla'\eta,1) \mid_{\Sigma_{b}} \in H^{s+3/2}(\Sigma_{b};\mathbb{R})\); and \((u,\eta) \in \prescript{}{0}{H}^{s+2}(\Omega;\mathbb{R}^{n}) \times B_{X^{s+5/2}(\mathbb{R}^{n-1};\mathbb{R})}(0,\delta) \mapsto \gamma\partial_{1}\eta \in H^{s+3/2}(\Sigma_{b};\mathbb{R})\) are not only well-defined, but also smooth.

    Now, let us show that the map
    \begin{align}
        (p,\eta)\in H^{s+1}(\Omega;\mathbb{R}) \times B_{X^{s+5/2}(\mathbb{R}^{n-1})}(0,\delta) \mapsto (\nabla_{\mathcal{A}}p, p\mid_{\Sigma_{b}}\mathcal{N})\in H^{s}(\Omega;\mathbb{R}^{n}) \times H^{s+1/2}(\Sigma_{b};\mathbb{R}^{n})
    \end{align}
    is well-defined. For \(i \in \{1, \dots, n\}\), \([\nabla_{\mathcal{A}}p]_{i}=\mathcal{A}_{ij}\partial_{j}p\) in Einstein notation, so \(\nabla_{\mathcal{A}}p \in H^{s}(\Omega;\mathbb{R}^{n})\). We note that \(p \mid_{\Sigma_{b}}\mathcal{N}(\eta)=p\mid_{\Sigma_{b}}(-\nabla'\eta,1)=(-p\mid_{\Sigma_{b}}\partial_{1}\eta, \dots, -p\mid_{\Sigma_{b}} \partial_{n-1}\eta, p\mid_{\Sigma_{b}}) \in H^{s+1/2}(\Sigma_{b};\mathbb{R}^{n})\). It follows that \((p,\eta)\in H^{s+1}(\Omega;\mathbb{R}) \times B_{X^{s+5/2}(\mathbb{R}^{n-1})}(0,\delta) \mapsto (\nabla_{\mathcal{A}}p, p\mid_{\Sigma_{b}}\mathcal{N})\in H^{s}(\Omega;\mathbb{R}^{n}) \times H^{s+1/2}(\Sigma_{b};\mathbb{R}^{n})\) is well-defined and also smooth.

    Now, let us show that the map
    \begin{align}
        &(\gamma,u,\psi,\eta) \in \mathbb{R} \times \prescript{}{0}{H}^{s+2}(\Omega;\mathbb{R}^{n}) \times \prescript{}{0}{H}^{s+2}(\Omega;\mathbb{R}) \times B_{X^{s+5/2}(\mathbb{R}^{n-1};\mathbb{R})}(0,\delta) \mapsto \nonumber \\
        &(-\gamma\partial_{1}^{\mathcal{A}}\psi,u \cdot \nabla_{\mathcal{A}}\psi, \sigma(\psi)\mathcal{H}(\eta)\mathcal{N}(\eta),\nabla_{\Sigma}^{\mathcal{A}}\sigma(\psi)\sqrt{1+\abs{\nabla'\eta}^{2}})  \nonumber \\
        \in &H^{s}(\Omega;\mathbb{R}) \times H^{s}(\Omega;\mathbb{R}) \times H^{s+1/2}(\Sigma_{b};\mathbb{R}^{n}) \times H^{s+1/2}(\Sigma_{b};\mathbb{R}^{n})
    \end{align}
    is well-defined and \(C^{1}\). We note that \(-\gamma \partial_{1}^{\mathcal{A}}\psi = -\gamma \mathcal{A}_{1j}\partial_{j}\psi\), so \(-\gamma \partial_{1}^{\mathcal{A}}\psi \in H^{s+1}(\Omega;\mathbb{R})\) and the map \((\gamma,\psi,\eta) \in \mathbb{R} \times \prescript{}{0}{H}^{s+2}(\Omega;\mathbb{R}) \times B_{X^{s+5/2}(\mathbb{R}^{n-1};\mathbb{R})}(0,\delta)\mapsto -\gamma\partial_{1}^{\mathcal{A}}\psi \in H^{s}(\Omega;\mathbb{R})\) is smooth. Since \(u \cdot \nabla_{\mathcal{A}}\psi =u_{i}\partial_{i}^{\mathcal{A}}\psi=u_{i}\mathcal{A}_{ij}\partial_{j}\psi\) in Einstein notation, \(u \cdot \nabla_{\mathcal{A}}\psi \in H^{s+1}(\Omega;\mathbb{R})\) and the map \((u,\psi,\eta) \in \prescript{}{0}{H}^{s+2}(\Omega;\mathbb{R}^{n}) \times \prescript{}{0}{H}^{s+2}(\Omega;\mathbb{R}) \times B_{X^{s+5/2}(\mathbb{R}^{n-1};\mathbb{R})}(0,\delta) \mapsto u \cdot \nabla_{\mathcal{A}}\psi \in H^{s}(\Omega;\mathbb{R})\) is smooth.
    
    We note that \(\sigma(\psi)\mathcal{H}(\eta)\mathcal{N}(\eta)=(\sigma \circ \psi)\nabla_{\mathcal{A}}^{'}\cdot (\frac{\nabla'\eta}{\sqrt{1+\abs{\nabla'\eta}^{2}}})(-\nabla'\eta,1)\). Since \(\delta \leq \delta_{2}(n,s+3/2)/c_{3}(n,s+5/2)\), the map \(\eta \in B_{X^{s+5/2}(\mathbb{R}^{n-1};\mathbb{R})}(0,\delta) \mapsto \frac{\nabla'\eta}{\sqrt{1+\abs{\nabla'\eta}^{2}}} \in H^{s+3/2}(\mathbb{R}^{n-1};\mathbb{R}^{n-1})\) is well-defined and smooth. Since the map \((f',\eta) \in H^{s+3/2}(\mathbb{R}^{n-1};\mathbb{R}^{n-1})\times B_{X^{s+5/2}(\mathbb{R}^{n-1};\mathbb{R})}(0,\delta) \mapsto \nabla_{\mathcal{A}}' \cdot f' \in H^{s+1/2}(\mathbb{R}^{n-1};\mathbb{R})\) is well-defined and smooth, the map \(\eta \in B_{X^{s+5/2}(\mathbb{R}^{n-1};\mathbb{R})}(0,\delta)\mapsto \nabla_{\mathcal{A}}' \cdot (\frac{\nabla'\eta}{\sqrt{1+\abs{\nabla'\eta}^{2}}}) \in H^{s+1/2}(\mathbb{R}^{n-1};\mathbb{R})\) is well-defined and smooth. It follows that the map \(\eta \in  B_{X^{s+5/2}(\mathbb{R}^{n-1};\mathbb{R})}(0,\delta) \mapsto \nabla_{\mathcal{A}}'\cdot(\frac{\nabla'\eta}{\sqrt{1+\abs{\nabla'\eta}^{2}}})(-\nabla'\eta,1) \in H^{s+1/2}(\mathbb{R}^{n-1};\mathbb{R}^{n})\) is well-defined and smooth. In Lemma \ref{c2cont}, we set \(d=n-1\), \(p=q=1\), \(\mathfrak{D}=\mathbb{R}^{n-1}\cong \Sigma_{b}\), \(k=s+1>\frac{n}{2}+\frac{1}{2}=\frac{n-1}{2}+1\), and \(f=\sigma\in C_{b}^{s+4}(\mathbb{R};\mathbb{R})\subseteq C_{b}^{s+3}(\mathbb{R};\mathbb{R})\) to see that the map
    \begin{align}
        \psi \in \prescript{}{0}{H}^{s+2}(\Omega;\mathbb{R}) \xmapsto{tr} \psi \mid_{\Sigma_{b}} \in H^{s+3/2}(\Sigma_{b};\mathbb{R}) \xmapsto{\Lambda_{\sigma(\cdot)-\sigma(0)}} \sigma(\psi \mid_{\Sigma_{b}})-\sigma(0) \in H^{s+1}(\Sigma_{b};\mathbb{R})
    \end{align}
    is well-defined and \(C^{1}\). Since the map \(\eta \in B_{X^{s+5/2}(\mathbb{R}^{n-1};\mathbb{R})}(0,\delta) \mapsto \mathcal{H}(\eta)\mathcal{N}(\eta) \in H^{s+1/2}(\Sigma_{b};\mathbb{R}^{n})\) is well-defined and smooth, the map \((\psi,\eta) \in \prescript{}{0}{H}^{s+2}(\Omega;\mathbb{R}) \times B_{X^{s+5/2}(\mathbb{R}^{n-1};\mathbb{R})}(0,\delta) \mapsto \sigma(\psi \mid_{\Sigma_{b}})\mathcal{H}(\eta)\mathcal{N}(\eta)=(\sigma(\psi \mid_{\Sigma_{b}})-\sigma(0))\mathcal{H}(\eta)\mathcal{N}(\eta)+\sigma(0)\mathcal{H}(\eta)\mathcal{N}(\eta) \in H^{s+1/2}(\Sigma_{b};\mathbb{R}^{n})\) is well-defined and \(C^{1}\).

    For \(i \in \{1, \dots, n-1\}\), the \(i\)th component of \(\nabla_{\Sigma}^{\mathcal{A}}\sigma(\psi)\sqrt{1+\abs{\nabla'\eta}^{2}}\) is
    \begin{align} \label{ithcomp}
        \mathcal{A}_{ij}\sigma'(\psi)\partial_{j}\psi\sqrt{1+\abs{\nabla'\eta}^{2}}-\frac{-\partial_{i}\eta}{\sqrt{1+\abs{\nabla'\eta}^{2}}}(-\partial_{k'}\eta)\mathcal{A}_{k' l}\sigma'(\psi)\partial_{l}\psi, \quad k' \in \{1, \dots, n-1\},
    \end{align}
    in Einstein notation. Since \(\delta \leq \delta_{3}(n,s+3/2)/c_{3}(n,s+5/2)\), the map \(\eta \in B_{X^{s+5/2}(\mathbb{R}^{n-1};\mathbb{R})}(0,\delta) \mapsto \sqrt{1+\abs{\nabla'\eta}^{2}}-1 \in H^{s+3/2}(\mathbb{R}^{n-1};\mathbb{R})\) is well-defined and smooth. In Lemma \ref{c2cont}, we set \(d=n-1\), \(p=q=1\), \(\mathfrak{D}=\mathbb{R}^{n-1}\cong\Sigma_{b}\), \(k=s+1>\frac{n}{2}+\frac{1}{2}=\frac{n-1}{2}+1\), and \(f=\sigma' \in C_{b}^{s+3}(\mathbb{R};\mathbb{R})\) to see that the map
    \begin{align}
        \psi \in \prescript{}{0}{H}^{s+2}(\Omega;\mathbb{R}) \xmapsto{tr} \psi \mid_{\Sigma_{b}} \in H^{s+3/2}(\Sigma_{b};\mathbb{R}) \xmapsto{\Lambda_{\sigma'(\cdot)-\sigma'(0)}} \sigma'(\psi \mid_{\Sigma_{b}})-\sigma'(0) \in H^{s+1}(\Sigma_{b};\mathbb{R})
    \end{align}
    is well-defined and \(C^{1}\). Then (\ref{ithcomp}) can be written as
    \begin{align}
        &\mathcal{A}_{ij}(\sigma'(\psi)-\sigma'(0))\partial_{j}\psi(\sqrt{1+\abs{\nabla'\eta}^{2}}-1)+\mathcal{A}_{ij}(\sigma'(\psi)-\sigma'(0))\partial_{j}\psi  \nonumber\\
        &+\mathcal{A}_{ij}\sigma'(0)\partial_{j}\psi(\sqrt{1+\abs{\nabla'\eta}^{2}}-1)+\mathcal{A}_{ij}\sigma'(0)\partial_{j}\psi-\frac{-\partial_{i}\eta}{\sqrt{1+\abs{\nabla'\eta}^{2}}}(-\partial_{k'}\eta)\mathcal{A}_{k' l}\sigma'(0)\partial_{l}\psi \nonumber \\
        &\in H^{s+1/2}(\Sigma_{b};\mathbb{R})
    \end{align}
    and the map sending \((\psi,\eta) \in \prescript{}{0}{H}^{s+2}(\Omega;\mathbb{R}) \times B_{X^{s+5/2}(\mathbb{R}^{n-1};\mathbb{R})}(0,\delta)\) to this element of \(H^{s+1/2}(\Sigma_{b};\mathbb{R})\) is \(C^{1}\). The \(n\)th component of \(\nabla_{\Sigma}^{\mathcal{A}}\sigma(\psi)\sqrt{1+\abs{\nabla'\eta}^{2}}\) is \(\frac{\partial_{i'}\eta}{\sqrt{1+\abs{\nabla'\eta}^{2}}}\mathcal{A}_{i'j}(\sigma'(\psi)-\sigma'(0))\partial_{j}\psi+\frac{\partial_{i'}\eta}{\sqrt{1+\abs{\nabla'\eta}^{2}}}\mathcal{A}_{i'j}\sigma'(0)\partial_{j}\psi \in H^{s+1/2}(\Sigma_{b};\mathbb{R})\) in Einstein notation with \(i' \in \{1, \dots, n-1\}\) and \(j \in \{1, \dots, n\}\), and the map sending \((\psi,\eta) \in \prescript{}{0}{H}^{s+2}(\Omega;\mathbb{R}) \times B_{X^{s+5/2}(\mathbb{R}^{n-1};\mathbb{R})}(0,\delta)\) to this element of \(H^{s+1/2}(\Sigma_{b};\mathbb{R})\) is \(C^{1}\).

    Next, let us show that the map
    \begin{align}
        &(u,\psi,\eta) \in \prescript{}{0}{H}^{s+2}(\Omega;\mathbb{R}^{n}) \times \prescript{}{0}{H}^{s+2}(\Omega;\mathbb{R}) \times B_{X^{s+5/2}(\mathbb{R}^{n-1};\mathbb{R})}(0,\delta) \mapsto \nonumber \\
        &(-\nabla_{\mathcal{A}}\cdot \Gamma(\psi,\mathbb{D}_{\mathcal{A}}u), \nabla_{\mathcal{A}} \cdot \Phi(\psi,\nabla_{\mathcal{A}}\psi),-\Gamma(\psi,\mathbb{D}_{\mathcal{A}}u)\mathcal{N}(\eta),\Phi(\psi,\nabla_{\mathcal{A}}\psi)\cdot \frac{\mathcal{N}(\eta)}{\sqrt{1+\abs{\nabla'\eta}^{2}}}) \nonumber\\
        \in&H^{s}(\Omega;\mathbb{R}^{n}) \times H^{s}(\Omega;\mathbb{R}) \times H^{s+1/2}(\Sigma_{b};\mathbb{R}^{n}) \times H^{s+1/2}(\Sigma_{b};\mathbb{R})
    \end{align}
    is well-defined and \(C^{1}\). In Lemma \ref{c2cont}, we set \(d=n\), \(p=1+n^{2}\), \(q=n^{2}\), \(\mathfrak{D}=\Omega\), \(k=s+1> \frac{n}{2}+1\), and \(f=\Gamma(\cdot,\cdot) \in C_{b}^{s+3}(\mathbb{R}^{1+n^{2}},\mathbb{R}^{n^{2}})\) to see that the map
    \begin{align}
        (u,\psi) \in \prescript{}{0}{H}^{s+2}(\Omega;\mathbb{R}^{n}) \times \prescript{}{0}{H}^{s+2}(\Omega;\mathbb{R}) &\mapsto (\psi,\mathbb{D}_{\mathcal{A}}u) \in \prescript{}{0}{H}^{s+2}(\Omega;\mathbb{R}) \times H^{s+1}(\Omega;\mathbb{R}_{sym}^{n \times n}) \nonumber\\
        &\mapsto \Gamma(\psi,\mathbb{D}_{\mathcal{A}}u) \in H^{s+1}(\Omega;\mathbb{R}^{n^{2}}) \nonumber\\
        &\mapsto -\nabla_{\mathcal{A}} \cdot \Gamma(\psi,\mathbb{D}_{\mathcal{A}}u) \in H^{s}(\Omega;\mathbb{R}^{n})
    \end{align}
    is well-defined and \(C^{1}\). In Lemma \ref{c2cont}, we set \(d=n\), \(p=1+n\), \(q=n\), \(\mathfrak{D}=\Omega\), \(k=s+1>\frac{n}{2}+1\), and \(f=\Phi(\cdot,\cdot) \in C_{b}^{s+3}(\mathbb{R}^{1+n};\mathbb{R}^{n})\) to see that the map
    \begin{align}
        \psi \in \prescript{}{0}{H}^{s+2}(\Omega;\mathbb{R}) &\mapsto (\psi,\nabla_{\mathcal{A}}\psi) \in \prescript{}{0}{H}^{s+2}(\Omega;\mathbb{R}) \times H^{s+1}(\Omega;\mathbb{R}^{n}) \nonumber\\
        &\mapsto \Phi(\psi,\nabla_{\mathcal{A}}\psi) \in H^{s+1}(\Omega;\mathbb{R}^{n}) \nonumber\\
        &\mapsto \nabla_{\mathcal{A}} \cdot \Phi(\psi,\nabla_{\mathcal{A}}\psi) \in H^{s}(\Omega;\mathbb{R})
    \end{align}
    is well-defined and \(C^{1}\). In Lemma \ref{c2cont}, we set \(d=n\), \(p=1+n^{2}\), \(q=n^{2}\), \(\mathfrak{D}=\Omega\), \(k=s+1>\frac{n}{2}+1\), and \(f=\Gamma(\cdot,\cdot) \in C_{b}^{s+3}(\mathbb{R}^{1+n^{2}};\mathbb{R}^{n^{2}})\) to see that the map
    \begin{align}
        (u,\psi) \in \prescript{}{0}{H}^{s+2}(\Omega;\mathbb{R}^{n}) \times \prescript{}{0}{H}^{s+2}(\Omega;\mathbb{R}) &\mapsto (\psi,\mathbb{D}_{\mathcal{A}}u) \in \prescript{}{0}{H}^{s+2}(\Omega;\mathbb{R}) \times H^{s+1}(\Omega;\mathbb{R}_{sym}^{n \times n}) \nonumber\\
        &\mapsto \Gamma(\psi,\mathbb{D}_{\mathcal{A}}u) \in H^{s+1}(\Omega;\mathbb{R}^{n^{2}}) \nonumber\\
        &\xmapsto{tr} \Gamma(\psi \mid_{\Sigma_{b}},\mathbb{D}_{\mathcal{A}}u \mid_{\Sigma_{b}}) \in H^{s+1/2}(\Sigma_{b};\mathbb{R}^{n^{2}}) \nonumber\\
        &\mapsto -\Gamma(\psi \mid_{\Sigma_{b}},\mathbb{D}_{\mathcal{A}}u \mid_{\Sigma_{b}})\mathcal{N}(\eta) \in H^{s+1/2}(\Sigma_{b};\mathbb{R}^{n})
    \end{align}
    is well-defined and \(C^{1}\). In Lemma \ref{c2cont}, we set \(d=n\), \(p=1+n\), \(q=n\), and \(\mathfrak{D}=\Omega\), \(k=s+1>\frac{n}{2}+1\), and \(f=\Phi(\cdot,\cdot) \in C_{b}^{s+3}(\mathbb{R}^{1+n};\mathbb{R}^{n})\) to see that the map
    \begin{align}
        \psi \in \prescript{}{0}{H}^{s+2}(\Omega;\mathbb{R}) &\mapsto (\psi,\nabla_{\mathcal{A}}\psi) \in \prescript{}{0}{H}^{s+2}(\Omega;\mathbb{R}) \times H^{s+1}(\Omega;\mathbb{R}^{n}) \nonumber\\
        &\mapsto \Phi(\psi,\nabla_{\mathcal{A}}\psi) \in H^{s+1}(\Omega;\mathbb{R}^{n}) \nonumber\\
        &\xmapsto{tr} \Phi(\psi \mid_{\Sigma_{b}},\nabla_{\mathcal{A}}\psi \mid_{\Sigma_{b}}) \in H^{s+1/2}(\Sigma_{b};\mathbb{R}^{n}) \nonumber\\
        &\mapsto \Phi(\psi \mid_{\Sigma_{b}},\nabla_{\mathcal{A}}\psi\mid_{\Sigma_{b}}) \cdot \frac{\mathcal{N}(\eta)}{\sqrt{1+\abs{\nabla'\eta}^{2}}} \in H^{s+1/2}(\Sigma_{b};\mathbb{R}) 
    \end{align}
    is well-defined and \(C^{1}\). Lastly, the divergence-trace compatibility condition required for membership in \(\mathcal{Y}^{s}\) is satisfied. The proof is omitted since it is identical to that of the analogous claim in the proof of Theorem \(7.3\) of \cite{leoni2023traveling}.
\end{proof}
\begin{lemma} \label{omegalemma}
    Let \(n \geq 2\) and \(\mathbb{N} \ni s>n/2\). Then there exists \(\delta>0\) such that the map \((\mathcal{T},\eta) \in H^{s+2}(\mathbb{R}^{n};\mathbb{R}_{sym}^{n \times n}) \times B_{X^{s+5/2}(\mathbb{R}^{n-1};\mathbb{R})}(0,\delta) \mapsto (\mathcal{T} \circ \mathfrak{F}_{\eta} \mid_{\Sigma_{b}})\mathcal{N}(\eta) \in H^{s+1/2}(\Sigma_{b};\mathbb{R}^{n})\) is well-defined and \(C^{1}\).
\end{lemma}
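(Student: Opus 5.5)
Split the map into the composition part and the product with the normal:
\[
(\mathcal{T}\circ\mathfrak{F}_{\eta}\mid_{\Sigma_{b}})\mathcal{N}(\eta)=(\mathcal{T}\circ\mathfrak{F}_{\eta}\mid_{\Sigma_{b}})e_{n}-(\mathcal{T}\circ\mathfrak{F}_{\eta}\mid_{\Sigma_{b}})(\nabla'\eta,0).
\]
The key observation is that on $\Sigma_{b}$ we have $\mathfrak{F}_{\eta}(x',b)=(x',b+\eta(x'))$. The map is therefore
\[
\mathcal{T}\mapsto \mathcal{T}(\cdot,b+\eta(\cdot)),
\]
which is a composition of a fixed $H^{s+2}(\mathbb{R}^{n})$ function with the graph map $x'\mapsto(x',b+\eta(x'))$. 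Choose $\delta\le\delta_{5}(n,s+5/2,b)$ from Lemma 7.1 of \cite{leoni2023traveling}, so that $\norm{\eta}_{C^0_b}<b/2$. By the seventh item of Theorem 5.6 of \cite{leoni2023traveling}, $\nabla'\eta\in H^{s+3/2}(\mathbb{R}^{n-1})$ with norm controlled by $\norm{\eta}_{X^{s+5/2}}$. Since $s>n/2$, Sobolev embedding gives $\nabla'\eta\in C^{1}_b$. Moreover, by the structure of $X^{s+5/2}$ (low frequencies in $L^{1}$, high frequencies in $H^{s+5/2}$), $\eta$ lies in $C^{2}_b$ with bounded derivatives.

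\textbf{Step 1 (well-definedness of the trace-composition).} First take $\mathcal{T}$ smooth and compactly supported. Set
\[
G(x',x_{n})=\mathcal{T}(x',x_{n}+\eta(x')),
\]
a function defined on the strip $\mathbb{R}^{n-1}\times(b-1,b+1)$, or more naturally on $\mathbb{R}^{n}$. This is a composition with the bi-Lipschitz diffeomorphism $(x',x_{n})\mapsto(x',x_{n}+\eta(x'))$ of $\mathbb{R}^{n}$, whose Jacobian has determinant $1$. The chain rule expresses $\partial^{\alpha}G$ for $\abs{\alpha}\le s+2$ as sums of $(\partial^{\beta}\mathcal{T})\circ\Theta$ times products of derivatives of $\eta$ of order $\le\abs{\alpha}$. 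Top-order derivatives of $\eta$ appear linearly and lie in $H^{s+3/2}$ horizontally, which is half a derivative short. I would therefore instead prove $G\in H^{s+3/2}$ locally in $x_{n}$, or directly estimate the trace in $H^{s+1/2}(\mathbb{R}^{n-1})$. To do this, compute $\nabla'[G(\cdot,b)]=(\nabla'\mathcal{T})\circ\Theta+(\partial_{n}\mathcal{T})\circ\Theta\,\nabla'\eta$ and iterate $s$ times. Each term is a product of a trace $(\partial^{\beta}\mathcal{T})\circ\Theta\mid_{x_n=b}$ with $\abs{\beta}\le s+1$, which lies in $H^{1/2}$ or better, and derivatives $\partial^{\gamma}\nabla'\eta\in H^{s+3/2-\abs{\gamma}}$. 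The product estimates in $H^{r}(\mathbb{R}^{n-1})$ with $r>(n-1)/2$, plus the standard product $H^{a}\cdot H^{b}\subset H^{\min}$ when $a+b>\ldots$, then close the estimate. Measure preservation of $\Theta$ gives the $L^{2}$ bounds, and density extends the result to all $\mathcal{T}\in H^{s+2}$. Multiplying by $\mathcal{N}(\eta)=(-\nabla'\eta,1)$ keeps us in $H^{s+1/2}$, since $\nabla'\eta\in H^{s+3/2}$ with $s+3/2>(n-1)/2$ makes it a multiplier on $H^{s+1/2}$.

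\textbf{Step 2 ($C^{1}$).} The map is linear in $\mathcal{T}$, so I only need $C^{1}$ in $\eta$. The candidate derivative in direction $\zeta$ is
\[
(\partial_{n}\mathcal{T}\circ\mathfrak{F}_{\eta}\mid_{\Sigma_b})\zeta\,\mathcal{N}(\eta)-(\mathcal{T}\circ\mathfrak{F}_{\eta}\mid_{\Sigma_b})(\nabla'\zeta,0).
\]
Here $\partial_{n}\mathcal{T}\in H^{s+1}$, so its composed trace lies in $H^{s+1/2}$ by Step 1 with one derivative fewer. I would verify Fréchet differentiability by writing the remainder as
\[
\mathcal{T}(x',b+\eta+\zeta)-\mathcal{T}(x',b+\eta)-\partial_{n}\mathcal{T}(x',b+\eta)\zeta=\int_{0}^{1}\bigl[\partial_{n}\mathcal{T}(x',b+\eta+t\zeta)-\partial_{n}\mathcal{T}(x',b+\eta)\bigr]\zeta\,dt.
\]
This is bounded by continuity of $\eta\mapsto\partial_{n}\mathcal{T}\circ\mathfrak{F}_{\eta}\mid_{\Sigma_b}$ into $H^{s+1/2}$, uniformly in $t$, times $\norm{\zeta}$ (using $\zeta\in X^{s+5/2}$ as a multiplier on $H^{s+1/2}$, via Theorem 5.14-type bounds). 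Continuity of composition in $\eta$ follows by density: it is clear for smooth $\mathcal{T}$, and the uniform bounds from Step 1 pass it to general $\mathcal{T}$. This approach is essentially the argument of the analogous lemma in \cite{leoni2023traveling}, which I would cite if available.

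The main obstacle is Step 1. Half a derivative is lost in $\eta$ relative to the naive chain rule, so the trace must be estimated directly in $H^{s+1/2}(\mathbb{R}^{n-1})$ rather than via an interior $H^{s+2}$ bound. In addition, $\eta$ is only in $X^{s+5/2}$, not $L^{2}$, so only derivatives of $\eta$ and $\norm{\eta}_{C^0_b}$ may enter the estimates.
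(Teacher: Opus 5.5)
Your argument is correct and is organized like the paper's. Both split off the factor $\mathcal{N}(\eta)=(-\nabla'\eta,1)$, handled by item 7 of Theorem 5.6 of \cite{leoni2023traveling} and the algebra property of $H^{s+1/2}(\mathbb{R}^{n-1})$, and treat the composition--trace map separately. The difference is in that second piece. The paper does not prove it: it takes $\delta=\delta_*$ from the third item of Proposition 5.18 of \cite{leoni2023traveling} and quotes Corollary 5.21 there. That corollary already asserts that $(\mathcal{T},\eta)\mapsto\mathcal{T}\circ\mathfrak{F}_\eta|_{\Sigma_b}\in H^{s+1/2}(\Sigma_b)$ is $C^1$ on $H^{s+2}\times B_{X^{s+5/2}}(0,\delta)$.

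You instead reprove it. You note that on $\Sigma_b$ only the graph map matters and replace $\mathfrak{F}_\eta$ by the shear $\Theta_\eta(x',x_n)=(x',x_n+\eta(x'))$. This gives a self-contained proof and shows that smallness is not really needed here. Since $\nabla'\eta\in H^{s+3/2}\subseteq L^\infty$, $\Theta_\eta$ is a volume-preserving bi-Lipschitz diffeomorphism of $\mathbb{R}^n$ for every $\eta\in X^{s+5/2}$. Hence your bounds are uniform on bounded subsets of $X^{s+5/2}$, and the appeal to $\delta_5$ is superfluous. The paper's smallness is inherited from the cited results, which are formulated via the extended flattening map $\mathfrak{G}_\eta$ of Proposition 5.18.

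The price is that you must carry out yourself what the citation packages:
\begin{itemize}
\item the multilinear product estimates in $\mathbb{R}^{n-1}$;
\item the multiplier bound for $\zeta\in X^{s+5/2}$ on $H^{s+1/2}(\mathbb{R}^{n-1})$, which is needed because $\zeta\notin L^2$ in general and follows from the $L^1$ control of $\hat\zeta$ near the origin in Lemma 5.4 of \cite{leoni2023traveling} plus the $H^{s+5/2}$ high-frequency part;
\item the density argument.
\end{itemize}

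Three points in your sketch need repair, though none is fatal.

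\emph{No half derivative is lost.} For $1\le\abs{\gamma}\le s+2$ you have $\partial^\gamma\eta\in H^{s+5/2-\abs{\gamma}}(\mathbb{R}^{n-1})\subseteq L^2$. The top-order chain-rule term $((\partial_n\mathcal{T})\circ\Theta_\eta)\,\partial^\alpha\eta$ lies in $L^2$ on strips because $\partial_n\mathcal{T}\in H^{s+1}\subseteq L^\infty$. So $\mathcal{T}\circ\Theta_\eta\in H^{s+2}$ on strips, and its trace is even in $H^{s+3/2}$. The drop to $H^{s+1/2}$ is forced by differentiability, since the $\eta$-derivative contains $\partial_n\mathcal{T}\in H^{s+1}$.

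\emph{State the bound Step 2 actually uses.} What Step 2 consumes is the uniform bound $\norm{F\circ\Theta_\eta|_{x_n=b}}_{H^{s+1/2}}\lesssim\norm{F}_{H^{s+1}}$. Your Step 1 induction gives exactly this: after $s$ horizontal derivatives, only $\partial^\beta F$ with $\abs{\beta}\le s$ are traced as $H^1$ functions. Say it that way. ``Step 1 with one derivative fewer'', applied to your stated $H^{s+2}\to H^{s+1/2}$ bound, would only give $H^{s-1/2}$.

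\emph{Joint $C^1$ needs more than linearity in $\mathcal{T}$.} Linearity does not reduce joint $C^1$ to $C^1$ in $\eta$ for each fixed $\mathcal{T}$. You also need two further facts:
\begin{itemize}
\item $\eta\mapsto[\mathcal{S}\mapsto(\mathcal{S}\circ\Theta_\eta|_{x_n=b})\mathcal{N}(\eta)]$ is continuous into $\mathcal{L}(H^{s+2},H^{s+1/2})$;
\item the $\eta$-partial derivative is jointly continuous in $(\mathcal{T},\eta)$.
\end{itemize}
Both follow from your ingredients. The mean-value formula together with the $H^{s+1}\to H^{s+1/2}$ bound gives an operator-norm Lipschitz estimate in $\norm{\eta-\eta_0}_{X^{s+5/2}}$. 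This is where the loss of a full derivative is essential: for the sharp pairing $H^{s+1}\to H^{s+1/2}$ it would fail, just as translations are not norm-continuous. Strong continuity plus the uniform bounds then give the joint continuity.
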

\begin{proof}
    Let \(n \geq 2\) and \(\mathbb{N} \ni s>n/2\). Let \(\delta>0\) be the \(\delta_{*} \in (0,1)\) which appears in the third item of Proposition \(5.18\) of \cite{leoni2023traveling}. The lemma follows from the fact that by Corollary \(5.21\) of \cite{leoni2023traveling}, the map \((\mathcal{T},\eta) \in H^{s+2}(\mathbb{R}^{n};\mathbb{R}_{sym}^{n \times n}) \times B_{X^{s+5/2}(\mathbb{R}^{n-1};\mathbb{R})}(0,\delta) \mapsto (\mathcal{T} \circ \mathfrak{F}_{\eta} \mid_{\Sigma_{b}}) \in H^{s+1/2}(\Sigma_{b};\mathbb{R}_{sym}^{n \times n})\) is well-defined and \(C^{1}\), the seventh item of Theorem \(5.6\) of \cite{leoni2023traveling}, and the fact that for any \(1 \leq d \in \mathbb{N}\), \(H^{r}(\mathbb{R}^{d};\mathbb{R})\) is an algebra for \(r>d/2\). 
\end{proof}
\begin{lemma} \label{ol2}
    Let \(n \geq 2\) and \(\mathbb{N} \ni s >n/2\). Then there exists \(\delta>0\) such that the map \((\mathfrak{H},\eta) \in H^{s+2}(\mathbb{R}^{n};\mathbb{R}) \times B_{X^{s+5/2}(\mathbb{R}^{n-1};\mathbb{R})}(0,\delta) \mapsto \mathfrak{H} \circ \mathfrak{F}_{\eta} \mid_{\Sigma_{b}} \in H^{s+1/2}(\Sigma_{b};\mathbb{R})\) is well-defined and \(C^{1}\).
\end{lemma}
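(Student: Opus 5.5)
This follows the proof of Lemma \ref{omegalemma} almost verbatim; the only change is that $\mathfrak{H}$ is scalar-valued and there is no multiplication by $\mathcal{N}(\eta)$. Let $n \geq 2$ and $\mathbb{N} \ni s > n/2$. Take $\delta>0$ to be the constant $\delta_{*} \in (0,1)$ from the third item of Proposition $5.18$ of \cite{leoni2023traveling}. That proposition guarantees that for $\norm{\eta}_{X^{s+5/2}} < \delta$ the flattening map $\mathfrak{F}_{\eta}$ is a well-behaved bi-Lipschitz diffeomorphism, with $\abs{\eta} \leq b/2$ and the required regularity. This is exactly what is needed for composition with $\mathfrak{F}_{\eta}$ to preserve Sobolev regularity.

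The key step is Corollary $5.21$ of \cite{leoni2023traveling}. It gives that the map $(\mathcal{T},\eta) \in H^{s+2}(\mathbb{R}^{n};V) \times B_{X^{s+5/2}(\mathbb{R}^{n-1};\mathbb{R})}(0,\delta) \mapsto \mathcal{T}\circ \mathfrak{F}_{\eta}\mid_{\Sigma_b} \in H^{s+1/2}(\Sigma_{b};V)$ is well-defined and $C^{1}$. We apply it componentwise with $V = \mathbb{R}$ in place of $\mathbb{R}^{n\times n}_{sym}$. The corollary is stated for matrix-valued $\mathcal{T}$, but it acts entrywise, so it holds verbatim for scalar-valued fields; alternatively, one can embed $\mathfrak{H}$ as $\mathfrak{H} I$ and read off a diagonal entry.

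In more detail, the mechanism inside Corollary $5.21$ has three parts. First, $\mathfrak{H}\circ \mathfrak{F}_{\eta}$ lies in $H^{s+2}$ on $\Omega$ (up to the $Y$-type correction). Second, restricting to $\Sigma_b$ gives $\mathfrak{H}(x', b+\eta(x'))$, whose $H^{s+3/2}\hookrightarrow H^{s+1/2}$ regularity follows from the trace theorem. Third, differentiability in $\eta$ comes from $\partial_{\eta}[\mathfrak{H}(x',b+\eta)] = \partial_{n}\mathfrak{H}(x',b+\eta)$. Here $\partial_{n}\mathfrak{H}\in H^{s+1}$, which costs one derivative; this loss is why $H^{s+2}$ rather than $H^{s+1}$ regularity is demanded of $\mathfrak{H}$. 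Continuity of this derivative in $(\mathfrak{H},\eta)$ gives $C^{1}$. Linearity in $\mathfrak{H}$ is immediate.

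The only potential obstacle is confirming that the hypotheses of Corollary $5.21$ (namely $s>n/2$ and the smallness $\delta$) match ours, and that the scalar case is covered. Both are routine. No algebra property or multiplication by $\mathcal{N}(\eta)$ is needed here, unlike in Lemma \ref{omegalemma}.
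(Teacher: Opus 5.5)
Your proposal is correct and takes the same approach as the paper: the paper's proof is a one-line appeal to Corollary 5.21 of \cite{leoni2023traveling}, exactly as you suggest. The only difference is cosmetic. The paper cites the second item of that corollary, which already covers scalar-valued data, so your componentwise reduction from the matrix-valued case is unnecessary, though harmless.
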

\begin{proof}
    This is an immediate consequence of the second item of Corollary \(5.21\) of \cite{leoni2023traveling}.
\end{proof}
\begin{lemma} \label{ol3}
    Let \(n \geq 2\) and \(\mathbb{N} \ni s >n/2\). Then there exists \(\delta>0\) such that the map \((\mathfrak{f},\eta) \in H^{s+1}(\mathbb{R}^{n};\mathbb{R}^{n}) \times B_{X^{s+5/2}(\mathbb{R}^{n-1};\mathbb{R})}(0,\delta) \mapsto \mathfrak{f} \circ \mathfrak{F}_{\eta} \in H^{s}(\Omega;\mathbb{R}^{n})\) is well-defined and \(C^{1}\).
\end{lemma}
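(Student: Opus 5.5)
This should follow directly from the composition results of \cite{leoni2023traveling}, in the same way as Lemmas \ref{omegalemma} and \ref{ol2}. Take $\delta$ to be the $\delta_{*}\in(0,1)$ from the third item of Proposition $5.18$ of \cite{leoni2023traveling}, possibly shrunk so that $\norm{\eta}_{C^{0}_{b}}<b/2$ by Lemma $7.1$ of \cite{leoni2023traveling}. Then $\mathfrak{F}_{\eta}$ is a bi-Lipschitz $C^{1}$ diffeomorphism from $\Omega$ onto $\Omega_{b+\eta}\subseteq\mathbb{R}^{n}$, with Jacobian bounded above and below uniformly.

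The core step is the bulk analogue of the trace statement already used. Corollary $5.21$ of \cite{leoni2023traveling} covers composition maps $(F,\eta)\mapsto F\circ\mathfrak{F}_{\eta}$ from $H^{r+1}(\mathbb{R}^{n})\times B_{X^{r+3/2}}(0,\delta)$, or similar index pairings, into $H^{r}(\Omega)$. I would apply it componentwise to $\mathfrak{f}=(\mathfrak{f}_{1},\dots,\mathfrak{f}_{n})$ with $r=s$. The index bookkeeping works: $\mathfrak{f}\in H^{s+1}$, the target is $H^{s}(\Omega)$, and $\eta\in X^{s+5/2}\hookrightarrow X^{s+1}$ by the third item of Theorem $5.6$ of \cite{leoni2023traveling}. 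The one derivative lost is exactly what the $C^{1}$ statement needs.

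If that corollary is only stated for traces, I would prove the claim by hand as follows.
\begin{enumerate}
\item \emph{Well-definedness.} Use the chain rule: $\partial^{\alpha}(\mathfrak{f}\circ\mathfrak{F}_{\eta})$ is a sum of terms $(\partial^{\beta}\mathfrak{f})\circ\mathfrak{F}_{\eta}$ multiplied by polynomials in $x_{n}$ and derivatives of $\eta/b$. Bound $\norm{(\partial^{\beta}\mathfrak{f})\circ\mathfrak{F}_{\eta}}_{L^{2}(\Omega)}$ by $\norm{\partial^{\beta}\mathfrak{f}}_{L^{2}(\mathbb{R}^{n})}$ via change of variables. Control the $\eta$-factors in $Y^{s}(\Omega)$ using the product estimate of Theorem $5.14$ of \cite{leoni2023traveling}.
\item \emph{$C^{1}$ regularity.} The candidate derivative is
\begin{align*}
(\dot{\mathfrak{f}},\dot\eta)\mapsto \dot{\mathfrak{f}}\circ\mathfrak{F}_{\eta}+(\partial_{n}\mathfrak{f}\circ\mathfrak{F}_{\eta})\,x_{n}\dot\eta/b .
\end{align*}
The first term is linear in $\dot{\mathfrak{f}}$ and continuous in $\eta$ by the same estimate. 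The second term involves $\partial_{n}\mathfrak{f}\in H^{s}$, so $(\partial_{n}\mathfrak{f})\circ\mathfrak{F}_{\eta}\in H^{s}(\Omega)$, multiplied by $x_{n}\dot\eta\in Y^{s}$. This is why the loss of one derivative is unavoidable and also sufficient.
\end{enumerate}

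\textbf{Main obstacle.} The hard part is continuity of $\eta\mapsto g\circ\mathfrak{F}_{\eta}$ in $H^{s}$ for a fixed $g\in H^{s}$. This is not Lipschitz; it holds only as strong continuity. I would handle it by density: approximate $g$ by Schwartz functions, for which continuity follows from the mean value theorem. Then pass to the limit using the uniform bound $\norm{g\circ\mathfrak{F}_{\eta}}_{H^{s}}\lesssim\norm{g}_{H^{s}}$ over $\eta\in B(0,\delta)$. Combined with the Taylor remainder estimate
\begin{align*}
\mathfrak{f}\circ\mathfrak{F}_{\eta+\dot\eta}-\mathfrak{f}\circ\mathfrak{F}_{\eta}-(\partial_{n}\mathfrak{f}\circ\mathfrak{F}_{\eta})x_{n}\dot\eta/b=o(\norm{\dot\eta}_{X^{s+5/2}}),
\end{align*}
obtained by writing the remainder as an integral over $t\in[0,1]$ of a difference of $\partial_{n}\mathfrak{f}$ composed with nearby flattenings, this would give the $C^{1}$ conclusion.
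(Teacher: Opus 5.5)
Your primary route matches the paper's proof. The paper simply invokes the first item of Corollary 5.21 of \cite{leoni2023traveling}, which is exactly the bulk statement for $(\mathfrak{f},\eta)\mapsto\mathfrak{f}\circ\mathfrak{F}_{\eta}$ from $H^{s+1}(\mathbb{R}^{n};\mathbb{R}^{n})\times B_{X^{s+5/2}}(0,\delta)$ into $H^{s}(\Omega;\mathbb{R}^{n})$, so neither your componentwise reduction nor your by-hand fallback is needed.

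If you did write the fallback out, note that $C^{1}$ requires continuity of $\eta\mapsto(\dot{\mathfrak{f}}\mapsto\dot{\mathfrak{f}}\circ\mathfrak{F}_{\eta})$ in the $\mathcal{L}(H^{s+1},H^{s})$-norm. This does not follow from the uniform bound you cite. It follows from the mean value theorem applied to $\partial_{n}\dot{\mathfrak{f}}\in H^{s}$, and this step, not only the $\dot\eta$-term, is where the lost derivative is essential.
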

\begin{proof}
    This is an immediate consequence of the first item of Corollary \(5.21\) of \cite{leoni2023traveling}.
\end{proof}

\begin{lemma} \label{ol4}
    Let \(n \geq 2\), \(\mathbb{N} \ni s >n/2\), and \(1 \leq d \in \mathbb{N}\). The maps \(f \in H^{s}(\mathbb{R}^{n-1};\mathbb{R}^{n}) \mapsto L_{b}f \in H^{s}(\Omega;\mathbb{R}^{n})\), \(H \in H^{s+1/2}(\mathbb{R}^{n-1};\mathbb{R}^{d})\mapsto S_{b}H \in H^{s+1/2}(\Sigma_{b};\mathbb{R}^{d})\), and \((T,\eta) \in H^{s+1/2}(\mathbb{R}^{n-1};\mathbb{R}_{sym}^{n \times n})\times X^{s+5/2}(\mathbb{R}^{n-1};\mathbb{R}) \mapsto (S_{b}T)\mathcal{N}(\eta) \in H^{s+1/2}(\Sigma_{b};\mathbb{R}^{n})\) are well-defined and smooth.
\end{lemma}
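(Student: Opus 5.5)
The plan is to handle the three maps one at a time; each is either bounded linear or a product of smooth maps.

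\emph{The map $L_b$.} For $f \in H^{s}(\mathbb{R}^{n-1};\mathbb{R}^{n})$, the extension $L_b f$ is constant in $x_n$ on $\Omega = \mathbb{R}^{n-1}\times(0,b)$. Every derivative $\partial^\alpha(L_b f)$ with $\alpha_n \neq 0$ vanishes. Every tangential derivative satisfies $\partial^{\alpha}(L_b f) = L_b(\partial^{\alpha} f)$, so by Fubini
\[
\norm{L_b f}_{H^{s}(\Omega)}^{2} = b\norm{f}_{H^{s}(\mathbb{R}^{n-1})}^{2}
\]
for integer $s$. Hence $L_b$ is bounded and linear, and therefore smooth.

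\emph{The map $S_b$.} Under the identification of $H^{s+1/2}(\Sigma_b;\mathbb{R}^{d})$ with $H^{s+1/2}(\mathbb{R}^{n-1};\mathbb{R}^{d})$ fixed at the end of the introduction, $S_b$ is literally the identity. It is therefore an isometric linear map, and in particular smooth.

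\emph{The bilinear map $(T,\eta)\mapsto (S_bT)\mathcal{N}(\eta)$.} First I decompose
\[
(S_b T)\mathcal{N}(\eta) = T e_n - T(\nabla'\eta,0),
\]
so each component is a finite sum of terms $T_{ij}$ and $T_{ij}\partial_k\eta$. The linear term $T\mapsto Te_n$ is bounded. For the other term, the seventh item of Theorem 5.6 of \cite{leoni2023traveling} shows that $\nabla' : X^{s+5/2}(\mathbb{R}^{n-1};\mathbb{R}) \to H^{s+3/2}(\mathbb{R}^{n-1};\mathbb{R}^{n-1})$ is bounded linear. Since $s+1/2 > (n-1)/2$, the space $H^{s+1/2}(\mathbb{R}^{n-1})$ is an algebra, and $H^{s+3/2}$ embeds continuously into $H^{s+1/2}$. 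This gives
\[
\norm{T_{ij}\partial_k\eta}_{H^{s+1/2}} \lesssim \norm{T}_{H^{s+1/2}}\norm{\eta}_{X^{s+5/2}}.
\]
So the map is the sum of a bounded linear map and a bounded bilinear map, and both are smooth. No smallness of $\eta$ is needed, which is why the lemma is stated on all of $X^{s+5/2}$ rather than on a ball.

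The only step requiring care is this product estimate, and it is routine once the seventh item of Theorem 5.6 is invoked. I do not expect a genuine obstacle; the one point to watch is using the algebra property at regularity $s+1/2$ on $\mathbb{R}^{n-1}$, which holds because $s > n/2$.
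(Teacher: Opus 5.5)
Your proof is correct and follows essentially the same route as the paper. The paper cites Lemma A.10 of \cite{leoni2023traveling} for $L_b$ (your Fubini computation reproves this directly for integer $s$) and the canonical diffeomorphism $(x',b)\mapsto x'$ for $S_b$; for $(S_bT)\mathcal{N}(\eta)$ it uses Lemma A.11 there, the seventh item of Theorem 5.6 of \cite{leoni2023traveling}, and the algebra property of $H^{s+1/2}(\mathbb{R}^{n-1})$, which together yield your decomposition into a bounded linear part plus a bounded bilinear part, smooth on all of $X^{s+5/2}$.
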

\begin{proof}
    That the map \(f \in H^{s}(\mathbb{R}^{n-1};\mathbb{R}^{n}) \mapsto L_{b}f \in H^{s}(\Omega;\mathbb{R}^{n})\) is well-defined and smooth is a consequence of Lemma \(A.10\) of \cite{leoni2023traveling}. The map \(H \in H^{s+1/2}(\mathbb{R}^{n-1};\mathbb{R}^{d})\mapsto S_{b}H \in H^{s+1/2}(\Sigma_{b};\mathbb{R}^{d})\) is well-defined and smooth because \((x',b) \in \Sigma_{b} \mapsto x' \in \mathbb{R}^{n-1}\) is a smooth diffeomorphism. The map \((T,\eta) \in H^{s+1/2}(\mathbb{R}^{n-1};\mathbb{R}_{sym}^{n \times n})\times X^{s+5/2}(\mathbb{R}^{n-1};\mathbb{R}) \mapsto (S_{b}T)\mathcal{N}(\eta) \in H^{s+1/2}(\Sigma_{b};\mathbb{R}^{n})\) is well-defined and smooth due to Lemma \(A.11\) of \cite{leoni2023traveling}, the seventh item of Theorem \(5.6\) of \cite{leoni2023traveling}, and the fact that for any \(1 \leq d \in \mathbb{N}\), \(H^{r}(\mathbb{R}^{d};\mathbb{R})\) is an algebra for \(r>d/2\).
\end{proof}
\begin{proposition}
    Suppose that \(\mathbb{N} \ni r \geq 1+ \lfloor n/2\rfloor\). Then we have the continuous inclusion
    \begin{align}
        \mathcal{X}^{r} \hookrightarrow C_{0}^{r+1-\lfloor n/2\rfloor}(\Omega;\mathbb{R}^{n}) \times C_{0}^{r+1-\lfloor n/2\rfloor}(\Omega;\mathbb{R}) \times C_{0}^{r-\lfloor n/2 \rfloor}(\Omega;\mathbb{R}) \times C_{0}^{r+1-\lfloor (n-1)/2\rfloor}(\mathbb{R}^{n-1};\mathbb{R}).
    \end{align}
    Moreover, if \((u,\psi,p,\eta) \in \mathcal{X}^{r}\), then $\partial^{\alpha}A(x) \to 0$ as $\abs{x'} \to \infty$, where  $\abs{\alpha} \le r+1-\lfloor n/2\rfloor$ for $A \in \{u,\psi\}$,  $\abs{\alpha}\le  r-\lfloor n/2\rfloor$ for $A=p$, and $\abs{\alpha}\le  r+1-\lfloor (n-1)/2\rfloor$ for $A=\eta$.
\end{proposition}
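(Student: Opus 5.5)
The inclusion splits into one embedding per factor of \(\mathcal{X}^{r}\), and I will treat the three Sobolev factors and the \(\eta\) factor separately. For \(u\) and \(\psi\) in \(H^{r+2}(\Omega)\), and for \(p\) in \(H^{r+1}(\Omega)\), I use the standard Sobolev embedding on the Lipschitz strip \(\Omega=\mathbb{R}^{n-1}\times(0,b)\).

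\textbf{The Sobolev factors.} The strip is an extension domain, so a bounded extension operator \(E:H^{t}(\Omega)\to H^{t}(\mathbb{R}^{n})\) exists for every \(t\ge 0\). Take \(t\in\mathbb{N}\) and \(k=t-\lfloor n/2\rfloor-1\). Then \(t-k=\lfloor n/2\rfloor+1>n/2\), so \(H^{t}(\mathbb{R}^{n})\hookrightarrow C^{k}_{0}(\mathbb{R}^{n})\). Restricting to \(\Omega\) gives \(H^{t}(\Omega)\hookrightarrow C^{k}_{0}(\Omega)\), with derivatives up to order \(k\) vanishing as \(|x|\to\infty\). Since \(x_n\) is bounded on \(\Omega\), this is the same as vanishing as \(|x'|\to\infty\).

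With \(t=r+2\) this gives \(k=r+1-\lfloor n/2\rfloor\) for \(u\) and \(\psi\). With \(t=r+1\) it gives \(k=r-\lfloor n/2\rfloor\) for \(p\). The decay at infinity follows because \(C^\infty_c(\mathbb{R}^n)\) is dense in \(H^{t}(\mathbb{R}^n)\) and the embedding is continuous into \(C^k_b\), a closed subspace of which is \(C^k_0\). The hypothesis \(r\ge 1+\lfloor n/2\rfloor\) guarantees \(k\ge 0\) in every case.

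\textbf{The \(\eta\) factor.} Here I split \(\eta\) into low and high frequencies, \(\eta=\eta_{l}+\eta_{h}\), with \(\hat\eta_{l}=\chi_{B(0,1)}\hat\eta\).

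\emph{High-frequency part.} \(\eta_h\in H^{r+5/2}(\mathbb{R}^{n-1})\) with norm bounded by \(\|\eta\|_{X^{r+5/2}}\). Its Fourier transform lies in \(L^1\) with weight \(\langle\xi\rangle^{j}\) whenever \(r+5/2-j>(n-1)/2\). Taking \(j=r+1-\lfloor (n-1)/2\rfloor\), we get \(r+5/2-j=\lfloor(n-1)/2\rfloor+3/2>(n-1)/2\). Riemann–Lebesgue then puts \(\eta_h\) in \(C^{j}_0\).

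\emph{Low-frequency part.} Lemma 5.4 of \cite{leoni2023traveling} (already quoted in the injectivity proof) gives \(\hat\eta\in L^1(B(0,1))\) with norm \(\lesssim\|\eta\|_{X}\). Since the support is compact, \(\xi^\alpha\hat\eta_l\in L^1\) for every multi-index \(\alpha\). Riemann–Lebesgue therefore makes \(\eta_l\) smooth, with every derivative in \(C_0\) and bounded by \(\|\eta\|_X\).

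Alternatively, both parts can be obtained directly from the embedding results for \(X^{s}\) in Theorem 5.6 and Lemma 7.1 of \cite{leoni2023traveling}.

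\textbf{Assembly and the delicate point.} Continuity of the full inclusion follows because each factor's bound is linear in its \(\mathcal{X}^r\)-norm component. The only step needing care is the low-frequency part of \(\eta\). Because \(\eta\) itself need not lie in \(L^2\), the Sobolev embedding cannot be applied to \(\eta\) directly. Local \(L^1\) integrability of \(\hat\eta\) is exactly what makes the argument work, and this is where I would spend the effort: checking that the \(L^1\) bound on \(B(0,1)\) holds uniformly.
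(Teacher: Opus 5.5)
Your proposal is correct and follows the paper's proof. For $u,\psi,p$ both use the standard Sobolev embedding with $t=r+2$ and $t=r+1$, and for $\eta$ both use the embedding $X^{r+5/2}(\mathbb{R}^{n-1};\mathbb{R})\hookrightarrow C_0^{r+1-\lfloor (n-1)/2\rfloor}(\mathbb{R}^{n-1};\mathbb{R})$. The only difference is that the paper simply cites the fifth item of Theorem 5.6 of \cite{leoni2023traveling} for $\eta$, whereas you reprove it by hand. Your low/high-frequency splitting checks out: Lemma 5.4 gives $\hat\eta\in L^1(B(0,1))$, Cauchy--Schwarz against $\langle\xi\rangle^{r+5/2}$ handles the high frequencies, and Riemann--Lebesgue finishes.
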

\begin{proof}
    Let \(\mathbb{N} \ni r \geq 1+ \lfloor n/2\rfloor\). By the fifth item of Theorem \(5.6\) of \cite{leoni2023traveling} with \(s=r+5/2\), \(k=r+1-\lfloor (n-1)/2\rfloor\), and \(d=n-1\), we have the continuous inclusion \(X^{r+5/2}(\mathbb{R}^{n-1};\mathbb{R}) \hookrightarrow C_{0}^{r+1-\lfloor (n-1)/2\rfloor}(\mathbb{R}^{n-1};\mathbb{R})\). Moreover, by the standard Sobolev embedding, \(H^{s+2}(\Omega;\mathbb{R}) \hookrightarrow C_{0}^{r+1-\lfloor n/2\rfloor}(\Omega;\mathbb{R})\) and \(H^{s+1}(\Omega;\mathbb{R})\hookrightarrow C_{0}^{r-\lfloor n/2 \rfloor}(\Omega;\mathbb{R})\).
\end{proof}

\begin{proof}[Proof of Theorem \ref{1.1analog}]
    Suppose that \(n \geq 2\) and \(\mathbb{N} \ni s \geq 1+ \lfloor n/2 \rfloor\). Letting \(\psi \in C_{c}^{\infty}(\mathbb{R};\mathbb{R})\) such that \(0 \leq \psi \leq 1\), \(\psi=1\) on \([-2b,2b]\), and \(\supp(\psi) \subseteq (-3b,3b)\), we define \(\varphi \in C_{c}^{\infty}(\mathbb{R};\mathbb{R})\) by \(\varphi(t)=t\psi(t)\). For any \(\eta \in X^{s+1/2}(\mathbb{R}^{n-1};\mathbb{R})\), we define \(\mathfrak{G}_{\eta}:\mathbb{R}^{n} \to \mathbb{R}^{n}\) by \(\mathfrak{G}_{\eta}(x)=(x',x_{n}+\varphi(x_{n})\eta(x')/b)\). Let \(\delta>0\) be the smaller of the constant \(\delta^{*} \in (0,1)\) which appears in the third item of Proposition \(5.18\) of \cite{leoni2023traveling} and the constant \(\delta=\delta(n,s,b)>0\) that appears in Proposition \ref{8.3analog}. If \(\norm{\eta}_{X^{s+1/2}}<\delta\), then \(\mathfrak{G}_{\eta}\) is a bi-Lipschitz homeomorphism and a \(C^{1}\) diffeomorphism. Letting \(U_{\delta}^{s}=\{(u,\psi,q,\eta) \in \mathcal{X}^{s} \mid \norm{\eta}_{X^{s+5/2}}<\delta\} \subseteq \mathcal{X}^{s}\) and
\begin{align}
    \mathcal{E}^{s}=& \mathbb{R} \setminus \{0\} \times H^{s+1}(\mathbb{R}^{n};\mathbb{R}^{n}) \times H^{s}(\mathbb{R}^{n-1};\mathbb{R}^{n}) \times H^{s+2}(\mathbb{R}^{n};\mathbb{R}_{sym}^{n \times n}) \nonumber\\
        &\times H^{s+1/2}(\mathbb{R}^{n-1};\mathbb{R}_{sym}^{n \times n}) \times H^{s+2}(\mathbb{R}^{n};\mathbb{R}) \times H^{s+1/2}(\mathbb{R}^{n-1};\mathbb{R}),
\end{align}
we define the map \(\widetilde{\Xi}:\mathcal{E}^{s} \times U_{\delta}^{s} \to \mathcal{Y}^{s}\) by
 \begin{align}
     &\widetilde{\Xi}(\gamma,\mathfrak{f},f, \mathcal{T},T,\mathfrak{H},H, u, \psi,p,\eta) \nonumber\\
     =&-(\mathfrak{f}\circ \mathfrak{F}_{\eta}+L_{b}f,0,0,(\mathcal{T} \circ \mathfrak{F}_{\eta} \mid_{\Sigma_{b}}+S_{b}T)\mathcal{N}(\eta),0,-\mathfrak{H} \circ \mathfrak{F}_{\eta}\mid_{\Sigma_{b}}-S_{b}H) \nonumber \\
     &+(-\gamma \partial_{1}^{\mathcal{A}}u + u \cdot \nabla_{\mathcal{A}}u+\mathfrak{g}(\nabla'\eta,0) + \nabla_{\mathcal{A}}p-\nabla_{\mathcal{A}}\cdot \Gamma(\psi,\mathbb{D}_{\mathcal{A}}u),J(\nabla_{\mathcal{A}}\cdot u), \nonumber \\
     &-\gamma \partial_{1}^{\mathcal{A}}\psi + u \cdot \nabla_{\mathcal{A}}\psi +\nabla_{\mathcal{A}} \cdot \Phi(\psi,\nabla_{\mathcal{A}}\psi),-p\mathcal{N}(\eta)+\Gamma(\psi,\mathbb{D}_{\mathcal{A}}u)\mathcal{N}(\eta)-\sigma(\psi)\mathcal{H}(\eta)\mathcal{N}(\eta)\nonumber\\
     &-\nabla_{\Sigma}^{\mathcal{A}}[\sigma(\psi)]\sqrt{1+\abs{\nabla'\eta}^{2}},u \cdot \mathcal{N}(\eta)+\gamma \partial_{1}\eta,\Phi(\psi,\nabla_{\mathcal{A}}\psi)\cdot\frac{\mathcal{N}(\eta)}{\abs{\mathcal{N}(\eta)}}).
 \end{align}
 By Proposition \ref{8.3analog} and Lemmas \ref{omegalemma}, \ref{ol2}, \ref{ol3}, and \ref{ol4}, the map \(\widetilde{\Xi}\) is well-defined and \(C^{1}\). Moreover, \(\widetilde{\Xi}(\gamma,0,0,0,0,0,0,0,0,0,0)=(0,0,0,0,0,0)\). If we denote by \(D_{2}\widetilde{\Xi}:\mathcal{E}^{s} \times U_{\delta}^{s} \to \mathcal{L}(\mathcal{X}^{s};\mathcal{Y}^{s})\) the partial Fr\'echet derivative of \(\widetilde{\Xi}\) with respect to the variable in the second argument, then
 \begin{align}
     D_{2}\widetilde{\Xi}(\gamma,0,0,0,0,0,0,0,0,0,0)(u,\psi,p,\eta)=\Upsilon(u,\psi,p,\eta),
 \end{align}
 where \(\Upsilon:\mathcal{X}^{s} \to \mathcal{Y}^{s}\) is the bijective bounded linear map defined in \eqref{upsilondef}. Therefore,
 \begin{align}
     D_{2}\widetilde{\Xi}(\gamma,0,0,0,0,0,0,0,0,0,0)=\Upsilon\in \mathcal{L}(\mathcal{X}^{s};\mathcal{Y}^{s})
 \end{align}
 is a linear homeomorphism by the open mapping theorem. Due to the implicit function theorem proven in \cite{abraham2012manifolds}, there exist open sets \(\mathcal{U}(\gamma) \subseteq \mathcal{E}^{s}\), \(\mathcal{O}(\gamma) \subseteq U_{\delta}^{s}\), and a \(C^{1}\) Lipschitz map \(\overline{\omega_{\gamma}}:\mathcal{U}(\gamma) \to \mathcal{O}(\gamma) \subseteq U_{\delta}^{s}\) such that \((\gamma,0,0,0,0,0,0) \in \mathcal{U}(\gamma)\); \((0,0,0,0) \in \mathcal{O}(\gamma)\); and for all \((\widetilde{\gamma},\mathfrak{f},f, \mathcal{T},T,\mathfrak{H},H) \in \mathcal{U}(\gamma)\),
 \begin{align}
     \widetilde{\Xi}(\widetilde{\gamma},\mathfrak{f},f, \mathcal{T},T,\mathfrak{H},H,\overline{\omega_{\gamma}}(\widetilde{\gamma},\mathfrak{f},f, \mathcal{T},T,\mathfrak{H},H))=(0,0,0,0,0,0).
 \end{align}
 Moreover, for any \((\widetilde{\gamma},\mathfrak{f},f, \mathcal{T},T,\mathfrak{H},H) \in \mathcal{U}(\gamma)\), \((u,\psi,p,\eta)=\overline{\omega_{\gamma}}(\widetilde{\gamma},\mathfrak{f},f, \mathcal{T},T,\mathfrak{H},H) \in \mathcal{O}(\gamma)\) is the unique solution to
 \begin{align}
     \widetilde{\Xi}(\widetilde{\gamma},\mathfrak{f},f, \mathcal{T},T,\mathfrak{H},H,u,\psi,p,\eta)=(0,0,0,0,0,0)
 \end{align}
 in \(\mathcal{O}(\gamma)\). Let \(\mathcal{U}^{s}=\cup_{\gamma \in \mathbb{R} \setminus \{0\}}\mathcal{U}(\gamma)\subseteq \mathcal{E}^{s}\) and \(\mathcal{O}^{s}=\cup_{\gamma \in \mathbb{R} \setminus \{0\}}\mathcal{O}(\gamma) \subseteq U_{\delta}^{s}\). By construction, \(\mathbb{R} \setminus \{0\} \times \{0\} \times\{0\} \times\{0\} \times\{0\} \times\{0\} \times\{0\} \subseteq \mathcal{U}^{s}\). Let us define \(\overline{\omega}:\mathcal{U}^{s} \to \mathcal{O}^{s}\) by \(\overline{\omega}(\widetilde{\gamma},\mathfrak{f},f, \mathcal{T},T,\mathfrak{H},H)=\overline{\omega_{\gamma}}(\widetilde{\gamma},\mathfrak{f},f, \mathcal{T},T,\mathfrak{H},H)\), where \((\widetilde{\gamma},\mathfrak{f},f, \mathcal{T},T,\mathfrak{H},H) \in \mathcal{U}(\gamma)\) for some \(\gamma \in \mathbb{R} \setminus \{0\}\). This map is well-defined, \(C^{1}\), and locally Lipschitz.
\end{proof}

\begin{proof}[Proof of Theorem \ref{1.3analog}]
    Suppose that \(n \geq 2\) and \(\mathbb{N} \ni s \geq 1+ \lfloor n/2 \rfloor\). For each  \(\gamma \in \mathbb{R} \setminus \{0\}\), let \(\mathcal{U}(\gamma) \subseteq \mathcal{E}^{s}\) and \(\mathcal{O}(\gamma) \subseteq U_{\delta}^{s}\) be the open sets obtained in the proof of Theorem \ref{1.1analog}. Let \(\overline{\omega}:\mathcal{U}^{s} \to \mathcal{O}^{s}\) be the map defined in that proof. If \((\gamma,\mathfrak{f},f, \mathcal{T},T,\mathfrak{H},H) \in \mathcal{U}^{s}\), then \((u,\psi,p,\eta) =\overline{\omega}(\gamma,\mathfrak{f},f, \mathcal{T},T,\mathfrak{H},H)\) solves (\ref{sysflattened}). Let \(v=u \circ \mathfrak{F}_{\eta}^{-1}\), \(\theta=\psi \circ \mathfrak{F}_{\eta}^{-1}\), and \(q=p \circ \mathfrak{F}_{\eta}^{-1}\). By Theorem \(5.17\) of \cite{leoni2023traveling}, we have \(v\in \prescript{}{0}{H}^{s+2}(\Omega_{b+\eta};\mathbb{R}^{n})\), \(\theta\in \prescript{}{0}{H}^{s+2}(\Omega_{b+\eta};\mathbb{R})\), and \(q \in H^{s+1}(\Omega_{b+\eta};\mathbb{R})\). By the usual Sobolev embedding, we have \(v \in C_{b}^{s+1-\lfloor n/2\rfloor}(\Omega_{b+\eta};\mathbb{R}^{n})\), \(\theta \in C_{b}^{s+1-\lfloor n/2\rfloor}(\Omega_{b+\eta};\mathbb{R})\), and \(q \in C_{b}^{s-\lfloor n/2 \rfloor}(\Omega_{b+\eta};\mathbb{R}^{n})\). Since \((u,\psi,q,\eta)\) solves (\ref{sysflattened}), \((v,\theta,q,\eta)\) solves (\ref{sysori}).
\end{proof}

%\printbibliography

\bibliographystyle{abbrv}
\bibliography{bibliography}

@article{leoni2023traveling,
  title={Traveling Wave Solutions to the Free Boundary Incompressible Navier-Stokes Equations},
  author={Leoni, Giovanni and Tice, Ian},
  journal={Comm. Pure Appl. Math.},
  volume={76},
  number={10},
  pages={2474--2576},
  year={2023},
  publisher={Wiley Online Library}
}

@article {leoni2021travelingwavesolutionsfree,
    AUTHOR = {Giovanni Leoni and Ian Tice},
     TITLE = {Traveling wave solutions to the free boundary incompressible Navier-Stokes equations},
      NOTE  = {Preprint, \href{https://arxiv.org/abs/1912.10091}{arXiv:1912.10091}},
      YEAR  = {2021},
    EPRINT = {1912.10091},
    ARCHIVEPREFIX = {arXiv},
    PRIMARYCLASS = {math.AP}
}

@book{abraham2012manifolds,
  title={Manifolds, tensor analysis, and applications},
  author={Abraham, Ralph and Marsden, Jerrold E and Ratiu, Tudor},
  volume={75},
  year={2012},
  publisher={Springer Science \& Business Media}
}

@article{agmon1964estimates,
  title={Estimates near the boundary for solutions of elliptic partial differential equations satisfying general boundary conditions II},
  author={Agmon, Shmuel and Douglis, Avron and Nirenberg, Louis},
  journal={Comm. Pure Appl. Math.},
  volume={17},
  number={1},
  pages={35--92},
  year={1964},
  publisher={Wiley Online Library}
}

@book{Gurtin_Fried_Anand_2010, 
place={Cambridge}, 
title={The Mechanics and Thermodynamics of Continua}, 
publisher={Cambridge University Press}, 
author={Gurtin, Morton E. and Fried, Eliot and Anand, Lallit}, 
year={2010}
}

@article{Toland_1996,
	author = "Toland, J. F.",
	fjournal = "Topological Methods in Nonlinear Analysis",
	issn = "1230-3429",
	journal = "Topol. Methods Nonlinear Anal.",
	mrclass = "35Q35 (35R35 45G10 47H15 47N20 76B15)",
	mrnumber = "1422004",
	mrreviewer = "J{\"u}rgen Socolowsky",
	number = "1",
	pages = "1--48",
	title = "{Stokes waves}",
	url = "https://doi.org/10.12775/TMNA.1996.001",
	volume = "7",
	year = "1996"
}

@article{Groves_2004,
	author = "Groves, M. D.",
	doi = "10.2991/jnmp.2004.11.4.2",
	fjournal = "Journal of Nonlinear Mathematical Physics",
	issn = "1402-9251",
	journal = "J. Nonlinear Math. Phys.",
	mrclass = "76B15 (35J25 35Q35 47J15 76B25)",
	mrnumber = "2097656",
	mrreviewer = "Mariana H\u{a}r\u{a}gu\c{s}",
	number = "4",
	pages = "435--460",
	title = "{Steady water waves}",
	url = "https://doi.org/10.2991/jnmp.2004.11.4.2",
	volume = "11",
	year = "2004"
}

@article{Strauss_2010,
	author = "Strauss, Walter A.",
	doi = "10.1090/S0273-0979-2010-01302-1",
	fjournal = "American Mathematical Society. Bulletin. New Series",
	issn = "0273-0979",
	journal = "Bull. Amer. Math. Soc. (N.S.)",
	mrclass = "76B15 (35Q35 76B03 76B25)",
	mrnumber = "2721042",
	mrreviewer = "Eugen Varvaruca",
	number = "4",
	pages = "671--694",
	title = "{Steady water waves}",
	url = "https://doi.org/10.1090/S0273-0979-2010-01302-1",
	volume = "47",
	year = "2010"
}

@article {HHSTWWW_2022,
    AUTHOR = {Haziot, Susanna V. and Hur, Vera Mikyoung and Strauss, Walter
              A. and Toland, J. F. and Wahl\'{e}n, Erik and Walsh, Samuel and
              Wheeler, Miles H.},
     TITLE = {Traveling water waves---the ebb and flow of two centuries},
   JOURNAL = {Quart. Appl. Math.},
  FJOURNAL = {Quarterly of Applied Mathematics},
    VOLUME = {80},
      YEAR = {2022},
    NUMBER = {2},
     PAGES = {317--401},
      ISSN = {0033-569X},
   MRCLASS = {35C07 (76B15 76B25 76B47)},
  MRNUMBER = {4406719},
       DOI = {10.1090/qam/1614},
       URL = {https://doi.org/10.1090/qam/1614},
}

@article {MR4337506,
    AUTHOR = {Stevenson, Noah and Tice, Ian},
     TITLE = {Traveling wave solutions to the multilayer free boundary
              incompressible {N}avier-{S}tokes equations},
   JOURNAL = {SIAM J. Math. Anal.},
  FJOURNAL = {SIAM Journal on Mathematical Analysis},
    VOLUME = {53},
      YEAR = {2021},
    NUMBER = {6},
     PAGES = {6370--6423},
      ISSN = {0036-1410},
   MRCLASS = {35Q30 (35C07 35N25 35R35 76D33 76D45)},
  MRNUMBER = {4337506},
       DOI = {10.1137/20M1360670},
       URL = {https://doi.org/10.1137/20M1360670},
}

@article {MR4787851,
    AUTHOR = {Stevenson, Noah and Tice, Ian},
     TITLE = {Well-posedness of the stationary and slowly traveling wave
              problems for the free boundary incompressible
              {N}avier-{S}tokes equations},
   JOURNAL = {J. Funct. Anal.},
  FJOURNAL = {Journal of Functional Analysis},
    VOLUME = {287},
      YEAR = {2024},
    NUMBER = {11},
     PAGES = {Paper No. 110617, 85},
      ISSN = {0022-1236,1096-0783},
   MRCLASS = {35Q30 (35C07 35R35 35S05 76D03 76D33)},
  MRNUMBER = {4787851},
       DOI = {10.1016/j.jfa.2024.110617},
       URL = {https://doi.org/10.1016/j.jfa.2024.110617},
}

@article {MR4785303,
    AUTHOR = {Koganemaru, Junichi and Tice, Ian},
     TITLE = {Traveling wave solutions to the free boundary incompressible
              {N}avier-{S}tokes equations with {N}avier boundary conditions},
   JOURNAL = {J. Differential Equations},
  FJOURNAL = {Journal of Differential Equations},
    VOLUME = {411},
      YEAR = {2024},
     PAGES = {381--437},
      ISSN = {0022-0396,1090-2732},
   MRCLASS = {35Q30 (35B30 35C07 35R35 76D03 76D33)},
  MRNUMBER = {4785303},
       DOI = {10.1016/j.jde.2024.07.045},
       URL = {https://doi.org/10.1016/j.jde.2024.07.045},
}

@article {MR4609068,
    AUTHOR = {Koganemaru, Junichi and Tice, Ian},
     TITLE = {Traveling wave solutions to the inclined or periodic free
              boundary incompressible {N}avier-{S}tokes equations},
   JOURNAL = {J. Funct. Anal.},
  FJOURNAL = {Journal of Functional Analysis},
    VOLUME = {285},
      YEAR = {2023},
    NUMBER = {7},
     PAGES = {Paper No. 110057, 75},
      ISSN = {0022-1236,1096-0783},
   MRCLASS = {35Q30 (35C07 35R35 46J10 76D45 76E05)},
  MRNUMBER = {4609068},
       DOI = {10.1016/j.jfa.2023.110057},
       URL = {https://doi.org/10.1016/j.jfa.2023.110057},
}

@article {MR4797733,
    AUTHOR = {Brownfield, John and Nguyen, Huy Q.},
     TITLE = {Slowly traveling gravity waves for {D}arcy flow: existence and
              stability of large waves},
   JOURNAL = {Comm. Math. Phys.},
  FJOURNAL = {Communications in Mathematical Physics},
    VOLUME = {405},
      YEAR = {2024},
    NUMBER = {10},
     PAGES = {Paper No. 222, 25},
      ISSN = {0010-3616,1432-0916},
   MRCLASS = {76D33 (35C07 35Q35 76S05)},
  MRNUMBER = {4797733},
       DOI = {10.1007/s00220-024-05103-6},
       URL = {https://doi.org/10.1007/s00220-024-05103-6},
}

@article{Nguyen_2026,
doi = {10.1088/1361-6544/ae4afd},
url = {https://doi.org/10.1088/1361-6544/ae4afd},
year = {2026},
publisher = {IOP Publishing},
volume = {39},
number = {3},
pages = {035008},
author = {Nguyen, Huy Q},
title = {Large travelling capillary-gravity waves for {D}arcy flow},
journal = {Nonlinearity}
}

@article {nguyen_stevenson_2025,
    AUTHOR = {Nguyen, Huy Q. and Stevenson, Noah},
     TITLE = {On large periodic traveling surface waves in porous media},
      NOTE  = {Preprint, \href{https://arxiv.org/abs/2601.11800}{arXiv:2601.11800}},
      YEAR  = {2026},
    EPRINT = {2601.11800},
    ARCHIVEPREFIX = {arXiv},
    PRIMARYCLASS = {math.AP},
}

@article {nguyen_Banihashemi_2025,
    AUTHOR = {Nguyen, Huy Q. and Banihashemi, Seyed Abdolhamid},
     TITLE = {On large periodic traveling wave solutions to the free boundary Stokes and Navier-Stokes equations },
      NOTE  = {Preprint, \href{https://arxiv.org/abs/2601.14085}{arXiv:2601.14085}},
      YEAR  = {2026},
    EPRINT = {2601.14085},
    ARCHIVEPREFIX = {arXiv},
    PRIMARYCLASS = {math.AP},
}

@article {MR4690615,
    AUTHOR = {Nguyen, Huy Q. and Tice, Ian},
     TITLE = {Traveling wave solutions to the one-phase {M}uskat problem:
              existence and stability},
   JOURNAL = {Arch. Ration. Mech. Anal.},
  FJOURNAL = {Archive for Rational Mechanics and Analysis},
    VOLUME = {248},
      YEAR = {2024},
    NUMBER = {1},
     PAGES = {Paper No. 5, 58},
      ISSN = {0003-9527,1432-0673},
   MRCLASS = {76D27 (35C07 35Q35 35R35 76S05)},
  MRNUMBER = {4690615},
       DOI = {10.1007/s00205-023-01951-z},
       URL = {https://doi.org/10.1007/s00205-023-01951-z},
}

@article {stevenson2023wellposedness,
    AUTHOR = {Stevenson, Noah and Tice, Ian},
     TITLE = {Well-posedness of the traveling wave problem for the free boundary compressible {N}avier-{S}tokes equations},
      NOTE  = {Preprint, \href{https://arxiv.org/abs/2301.00773}{arXiv:2301.00773}},
      YEAR  = {2023},
    EPRINT = {2301.00773},
    ARCHIVEPREFIX = {arXiv},
    PRIMARYCLASS = {math.AP},
}

@article {stevenson2023shallow,
    AUTHOR = {Stevenson, Noah and Tice, Ian},
     TITLE = {The traveling wave problem for the shallow water equations: well-posedness and the limits of vanishing viscosity and surface tension},
      NOTE  = {To appear},
    JOURNAL = {Comm. Math. Phys.},
    EPRINT = {2311.00160},
      ARCHIVEPREFIX = {arXiv},
      PRIMARYCLASS = {math.AP},
}

@article {stev_tice_big_waves,
    AUTHOR = {Stevenson, Noah and Tice, Ian},
     TITLE = {Stationary wave solutions to two dimensional viscous shallow water equations: theory of small and large solutions},
      NOTE  = {Preprint, \href{https://arxiv.org/abs/2502.11899}{arXiv:2502.11899}},
      YEAR  = {2025},
    EPRINT = {2502.11899},
    ARCHIVEPREFIX = {arXiv},
    PRIMARYCLASS = {math.AP},
}

@article {stev_tice_bore_waves,
    AUTHOR = {Stevenson, Noah and Tice, Ian},
     TITLE = {Gravity driven traveling bore wave solutions to the free boundary incompressible Navier-Stokes equations},
      NOTE  = {Preprint, \href{https://arxiv.org/abs/2505.24562}{arXiv:2505.24562}},
      YEAR  = {2025},
    EPRINT = {2505.24562},
    ARCHIVEPREFIX = {arXiv},
    PRIMARYCLASS = {math.AP},
}

@incollection {MR1373218,
    AUTHOR = {Amann, Herbert},
     TITLE = {Heat-conducting incompressible viscous fluids},
 BOOKTITLE = {Navier-{S}tokes equations and related nonlinear problems
              ({F}unchal, 1994)},
     PAGES = {231--243},
 PUBLISHER = {Plenum, New York},
      YEAR = {1995},
      ISBN = {0-306-45118-2},
   MRCLASS = {35Q35 (76A05 76D05)},
  MRNUMBER = {1373218},
MRREVIEWER = {Vladimir\ V.\ Shelukhin},
}

@article {MR2410236,
    AUTHOR = {Cho, Yonggeun and Kim, Hyunseok},
     TITLE = {Existence result for heat-conducting viscous incompressible
              fluids with vacuum},
   JOURNAL = {J. Korean Math. Soc.},
  FJOURNAL = {Journal of the Korean Mathematical Society},
    VOLUME = {45},
      YEAR = {2008},
    NUMBER = {3},
     PAGES = {645--681},
      ISSN = {0304-9914,2234-3008},
   MRCLASS = {35Q35 (35D05 76N10 80A20)},
  MRNUMBER = {2410236},
       DOI = {10.4134/JKMS.2008.45.3.645},
       URL = {https://doi.org/10.4134/JKMS.2008.45.3.645},
}

@article {MR3280824,
    AUTHOR = {Zhang, Xu and Tan, Zhong},
     TITLE = {The global wellposedness of the 3{D} heat-conducting viscous
              incompressible fluids with bounded density},
   JOURNAL = {Nonlinear Anal. Real World Appl.},
  FJOURNAL = {Nonlinear Analysis. Real World Applications. An International
              Multidisciplinary Journal},
    VOLUME = {22},
      YEAR = {2015},
     PAGES = {129--147},
      ISSN = {1468-1218,1878-5719},
   MRCLASS = {35Q35 (35B30)},
  MRNUMBER = {3280824},
       DOI = {10.1016/j.nonrwa.2014.08.001},
       URL = {https://doi.org/10.1016/j.nonrwa.2014.08.001},
}

@article {MR3680944,
    AUTHOR = {Zhong, Xin},
     TITLE = {Global strong solution for 3{D} viscous incompressible heat
              conducting {N}avier-{S}tokes flows with non-negative density},
   JOURNAL = {J. Differential Equations},
  FJOURNAL = {Journal of Differential Equations},
    VOLUME = {263},
      YEAR = {2017},
    NUMBER = {8},
     PAGES = {4978--4996},
      ISSN = {0022-0396,1090-2732},
   MRCLASS = {35Q35 (35B65 35D35 76D05)},
  MRNUMBER = {3680944},
       DOI = {10.1016/j.jde.2017.06.004},
       URL = {https://doi.org/10.1016/j.jde.2017.06.004},
}

@article {MR3828345,
    AUTHOR = {Wang, Wan and Yu, Haibo and Zhang, Peixin},
     TITLE = {Global strong solutions for 3{D} viscous incompressible heat
              conducting {N}avier-{S}tokes flows with the general external
              force},
   JOURNAL = {Math. Methods Appl. Sci.},
  FJOURNAL = {Mathematical Methods in the Applied Sciences},
    VOLUME = {41},
      YEAR = {2018},
    NUMBER = {12},
     PAGES = {4589--4601},
      ISSN = {0170-4214,1099-1476},
   MRCLASS = {35Q30 (35B65 35D35 76D05)},
  MRNUMBER = {3828345},
       DOI = {10.1002/mma.4915},
       URL = {https://doi.org/10.1002/mma.4915},
}

@article {MR3794117,
    AUTHOR = {Xu, Hao and Yu, Haibo},
     TITLE = {Global regularity to the {C}auchy problem of the 3{D} heat
              conducting incompressible {N}avier-{S}tokes equations},
   JOURNAL = {J. Math. Anal. Appl.},
  FJOURNAL = {Journal of Mathematical Analysis and Applications},
    VOLUME = {464},
      YEAR = {2018},
    NUMBER = {1},
     PAGES = {823--837},
      ISSN = {0022-247X,1096-0813},
   MRCLASS = {35Q30 (35B65)},
  MRNUMBER = {3794117},
       DOI = {10.1016/j.jmaa.2018.04.037},
       URL = {https://doi.org/10.1016/j.jmaa.2018.04.037},
}

@article {MR3850090,
    AUTHOR = {Zhong, Xin},
     TITLE = {Global strong solution for viscous incompressible heat
              conducting {N}avier-{S}tokes flows with density-dependent
              viscosity},
   JOURNAL = {Anal. Appl. (Singap.)},
  FJOURNAL = {Analysis and Applications},
    VOLUME = {16},
      YEAR = {2018},
    NUMBER = {5},
     PAGES = {623--647},
      ISSN = {0219-5305,1793-6861},
   MRCLASS = {35Q35 (35B65 35D35 76D05)},
  MRNUMBER = {3850090},
       DOI = {10.1142/S0219530518500069},
       URL = {https://doi.org/10.1142/S0219530518500069},
}

@article {MR3904564,
    AUTHOR = {Xu, Hao and Yu, Haibo},
     TITLE = {Global strong solutions to the 3{D} inhomogeneous
              heat-conducting incompressible fluids},
   JOURNAL = {Appl. Anal.},
  FJOURNAL = {Applicable Analysis. An International Journal},
    VOLUME = {98},
      YEAR = {2019},
    NUMBER = {3},
     PAGES = {622--637},
      ISSN = {0003-6811,1563-504X},
   MRCLASS = {76D03 (35B40 35B45 35Q35 76D05)},
  MRNUMBER = {3904564},
MRREVIEWER = {Luigi\ Carlo\ Berselli},
       DOI = {10.1080/00036811.2017.1399362},
       URL = {https://doi.org/10.1080/00036811.2017.1399362},
}

@article {MR4172578,
    AUTHOR = {Zhong, Xin},
     TITLE = {Global existence and large time behavior of strong solutions
              for 3{D} nonhomogeneous heat conducting {N}avier-{S}tokes
              equations},
   JOURNAL = {J. Math. Phys.},
  FJOURNAL = {Journal of Mathematical Physics},
    VOLUME = {61},
      YEAR = {2020},
    NUMBER = {11},
     PAGES = {111503, 18},
      ISSN = {0022-2488,1089-7658},
   MRCLASS = {35Q30 (76D03 76D05)},
  MRNUMBER = {4172578},
       DOI = {10.1063/5.0012871},
       URL = {https://doi.org/10.1063/5.0012871},
}

@article {MR4423144,
    AUTHOR = {Zhong, Xin},
     TITLE = {Global well-posedness to the {C}auchy problem of
              two-dimensional nonhomogeneous heat conducting
              {N}avier-{S}tokes equations},
   JOURNAL = {J. Geom. Anal.},
  FJOURNAL = {Journal of Geometric Analysis},
    VOLUME = {32},
      YEAR = {2022},
    NUMBER = {7},
     PAGES = {Paper No. 200, 22},
      ISSN = {1050-6926,1559-002X},
   MRCLASS = {76D05 (76D03)},
  MRNUMBER = {4423144},
MRREVIEWER = {Xiaoping\ Zhai},
       DOI = {10.1007/s12220-022-00947-7},
       URL = {https://doi.org/10.1007/s12220-022-00947-7},
}

@article {MR4357119,
    AUTHOR = {Zhong, Xin},
     TITLE = {Global well-posedness to the 3{D} {C}auchy problem of
              nonhomogeneous heat conducting {N}avier-{S}tokes equations
              with vacuum and large oscillations},
   JOURNAL = {J. Math. Fluid Mech.},
  FJOURNAL = {Journal of Mathematical Fluid Mechanics},
    VOLUME = {24},
      YEAR = {2022},
    NUMBER = {1},
     PAGES = {Paper No. 14, 17},
      ISSN = {1422-6928,1422-6952},
   MRCLASS = {35Q30 (76D03 76D05)},
  MRNUMBER = {4357119},
MRREVIEWER = {Tatsu-Hiko\ Miura},
       DOI = {10.1007/s00021-021-00649-0},
       URL = {https://doi.org/10.1007/s00021-021-00649-0},
}

@article {MR4896668,
    AUTHOR = {Dong, Wenchao and Li, Qingyan},
     TITLE = {Global well-posedness for the two-dimensional incompressible
              heat conducting {N}avier-{S}tokes equations with
              temperature-dependent coefficients and vacuum},
   JOURNAL = {J. Math. Phys.},
  FJOURNAL = {Journal of Mathematical Physics},
    VOLUME = {66},
      YEAR = {2025},
    NUMBER = {4},
     PAGES = {Paper No. 041511, 24},
      ISSN = {0022-2488,1089-7658},
   MRCLASS = {35Q30 (35D35 35G61 76D03)},
  MRNUMBER = {4896668},
       DOI = {10.1063/5.0175069},
       URL = {https://doi.org/10.1063/5.0175069},
}

\end{document}